\newcommand{\fa}{\mathfrak{a}}
\newcommand{\fd}{\mathfrak{d}}
\newcommand{\fl}{\mathfrak{l}}
\newcommand{\fm}{\mathfrak{m}}
\newcommand{\fn}{\mathfrak{n}}
\newcommand{\fp}{\mathfrak{p}}
\newcommand{\fq}{\mathfrak{q}}
\newcommand{\fM}{\mathfrak{M}}
\newcommand{\fN}{\mathfrak{N}}
\renewcommand{\AA}{\mathbf{A}}
\newcommand{\CC}{\mathbf{C}}
\newcommand{\GG}{\mathbf{G}}
\newcommand{\QQ}{\mathbf{Q}}
\newcommand{\RR}{\mathbf{R}}
\newcommand{\ZZ}{\mathbf{Z}}
\newcommand{\Zp}{\ZZ_p}
\newcommand{\Qp}{\QQ_p}
\newcommand{\QQbar}{\overline{\QQ}}
\newcommand{\cA}{\mathcal{A}}
\newcommand{\cE}{\mathcal{E}}
\newcommand{\cF}{\mathcal{F}}
\newcommand{\cG}{\mathcal{G}}
\newcommand{\cH}{\mathcal{H}}
\newcommand{\cO}{\mathcal{O}}
\newcommand{\cS}{\mathcal{S}}
\newcommand{\cT}{\mathcal{T}}
\newcommand{\sH}{\mathscr{H}}
\newcommand{\into}{\hookrightarrow}
\renewcommand{\ge}{\geqslant}
\renewcommand{\le}{\leqslant}
\newcommand{\ah}{\mathrm{ah}}
\newcommand{\ab}{\mathrm{ab}}
\newcommand{\Asai}{\mathrm{Asai}}
\newcommand{\mot}{\mathrm{mot}}
\newcommand{\et}{\text{\rm \'et}}
\newcommand{\dR}{\mathrm{dR}}
\newcommand{\Betti}{\mathrm{Betti}}
\newcommand{\Dcris}{\mathbf{D}_\mathrm{cris}}
\newcommand{\AF}{\mathrm{AF}}
\newcommand{\cAI}{{}_c\mathcal{AI}}
\newcommand{\cAF}{{}_c\mathcal{AF}}
\newcommand{\cEI}{{}_c\mathcal{EI}}
\renewcommand{\d}{\,\mathrm{d}}
\DeclareMathOperator{\CHM}{CHM}
\DeclareMathOperator{\Cl}{Cl}
\DeclareMathOperator{\Eis}{Eis}
\DeclareMathOperator{\Fil}{Fil}
\DeclareMathOperator{\Frob}{Frob}
\DeclareMathOperator{\Gal}{Gal}
\DeclareMathOperator{\GL}{GL}
\DeclareMathOperator{\Gr}{Gr}
\DeclareMathOperator{\Hom}{Hom}
\DeclareMathOperator{\id}{id}
\DeclareMathOperator{\imp}{imp}
\DeclareMathOperator{\Ind}{Ind}
\DeclareMathOperator{\loc}{loc}
\DeclareMathOperator{\mom}{mom}
\DeclareMathOperator{\Nm}{Nm}
\DeclareMathOperator{\ord}{ord}
\DeclareMathOperator{\pr}{pr}
\DeclareMathOperator{\Res}{Res}
\DeclareMathOperator{\SL}{SL}
\DeclareMathOperator{\Spec}{Spec}
\DeclareMathOperator{\Sym}{Sym}
\DeclareMathOperator{\Tr}{Tr}
\DeclareMathOperator{\tInd}{\otimes\text{-}Ind}
\DeclareMathOperator{\TSym}{TSym}
\newcommand{\tbt}[4]{\begin{pmatrix}#1 & #2 \\ #3 & #4\end{pmatrix}}
\newcommand{\stbt}[4]{\left(\begin{smallmatrix}#1 & #2 \\ #3 & #4\end{smallmatrix}\right)}
\newcommand{\rstd}{\rho_{\cF, v}^{\mathrm{std}}}
\newcommand{\urt}{(\underline{r}, \underline{t})}
\newcommand{\dblquot}[3]
{
 {
  \left.
  \raisebox{-.1em}{$#1$}
  \,\middle\backslash\,
  \raisebox{.1em}{$#2$}
  \,\middle/\,
  \raisebox{-.1em}{$#3$}
  \right.
 }
}
\newtheorem{theorem}{Theorem}[subsection]
\newtheorem{lemma}[theorem]{Lemma}
\newtheorem{proposition}[theorem]{Proposition}
\newtheorem{corollary}[theorem]{Corollary}
\newtheorem{conjecture}[theorem]{Conjecture}
\newtheorem{definition}[theorem]{Definition}
\newtheorem{notation}[theorem]{Notation}
\newtheorem{ltheorem}{Theorem} 
\newtheorem{subtheorem}{Theorem}[theorem] 
\theoremstyle{remark}
\newtheorem{remark}[theorem]{Remark}
\newtheorem*{remark*}{Remark}
\begin{document}

\author[A.~Lei]{Antonio Lei}
\address[Lei]{D\'epartement de math\'ematiques et de statistique\\
Pavillon Alexandre-Vachon\\
Universit\'e Laval\\
Qu\'ebec, QC, Canada G1V 0A6. ORC ID: \href{https://orcid.org/0000-0001-9453-3112}{0000-0001-9453-3112}.}
\email{antonio.lei@mat.ulaval.ca}

\author[D.~Loeffler]{David Loeffler}
\address[Loeffler]{Mathematics Institute\\
Zeeman Building, University of Warwick\\
Coventry CV4 7AL, UK. ORC ID: \href{https://orcid.org/0000-0001-9069-1877}{0000-0001-9069-1877}.}
\email{d.a.loeffler@warwick.ac.uk}

\author[S.L.~Zerbes]{Sarah Livia Zerbes}
\address[Zerbes]{Department of Mathematics \\
University College London\\
Gower Street, London WC1E 6BT, UK. ORC ID: \href{https://orcid.org/0000-0001-8650-9622}{0000-0001-8650-9622}.}
\email{s.zerbes@ucl.ac.uk}

\date{\today}

\thanks{The authors are grateful to acknowledge support from the following grants: NSERC Discovery Grants Program 05710 (Lei); Royal Society University Research Fellowship (Loeffler); ERC Consolidator Grant ``Euler Systems and the Birch--Swinnerton-Dyer conjecture'' (Zerbes).}

\begin{abstract}
 We construct an Euler system -- a compatible family of global cohomology classes -- for the Galois representations appearing in the geometry of Hilbert modular surfaces. If a conjecture of Bloch and Kato on injectivity of regulator maps holds, this Euler system is non-trivial, and we deduce bounds towards the Iwasawa main conjecture for these Galois representations.
\end{abstract}

\subjclass[2010]{11F41, 11F67, 11F80, 11R23}
\keywords{Euler systems, Hilbert modular forms, Hilbert modular surfaces, Asai $L$-functions, Iwasawa theory.}

\title{Euler systems for Hilbert modular surfaces}

\setcounter{tocdepth}{1}
\setcounter{secnumdepth}{4}
\hypersetup{bookmarksdepth=4}

 \maketitle
 \tableofcontents

 \section{Introduction}

  One of the central problems of number theory is the study of cohomology groups of global Galois representations, and the relation between these cohomology groups and the values of $L$-functions. A crucial tool in this study is the theory of \emph{Euler systems}: collections of Galois cohomology classes for a given Galois representation over abelian extensions of the base field, satisfying compatibility conditions as the field changes. These have powerful applications to studying Selmer groups, and thus they are inevitably difficult to construct.

  In the present paper, we construct Euler systems for a new class of Galois representations: the \emph{Asai}, or \emph{twisted tensor product}, Galois representations attached to Hilbert modular eigenforms over real quadratic fields. These are the Galois representations which appear in the middle-degree cohomology of Hilbert modular surfaces. More precisely, we shall prove the following:

  \begin{ltheorem}
   \label{lthm:A}
   Let $\cF$ be a Hilbert modular eigenform over the real quadratic field $F$, of level $\fN$ and weights $(k + 2, k' + 2)$, with $k, k' \ge 0$ and $k = k' \bmod 2$; and let $L$ be a finite extension of $\QQ$ containing the Hecke eigenvalues of $\cF$. Let $v$ be a place of $L$ above the rational prime $p \ne 2$, and let $M_{L_v}(\cF)$ be the Asai Galois representation of $\cF$ at $v$ (see Definition~\ref{defn:asairep} below). Let $a$ be a generator of $\cO_F / \ZZ$, and $j$ an integer with $0 \le j \le \min(k, k')$.

   Then there exists a collection of cohomology classes, the \emph{Asai--Flach classes},
   \[ \AF_{\et, m, a}^{[\cF, j]} \in H^1\left(\ZZ\left[\mu_m, \tfrac{1}{m p \Delta \Nm_{F / \QQ}(\fN)}\right], M_{L_v}(\cF)^*(-j)\right), \]
   for integers $m \ge 1$, which satisfy Euler-system-type norm relations as $m$ varies.
  \end{ltheorem}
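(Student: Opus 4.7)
The plan is to construct the Asai--Flach classes geometrically, as pushforwards of Eisenstein-type cohomology classes along a diagonal embedding of modular curves into Hilbert modular surfaces, and then to verify the Euler system norm relations by computing the behaviour of this construction under Hecke correspondences at auxiliary primes $\ell \nmid m p \Delta \Nm(\fN)$.

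First, I would set up the geometry. Let $Y_1(N)$ denote the modular curve of level $N$ and $Y^H(\fN)$ the Hilbert modular surface associated to $\mathrm{Res}_{F/\QQ} \GL_2$ with level structure compatible with $\fN$. The twisted diagonal $\iota \colon Y_1(N) \to Y^H(\fN)$, defined using the generator $a$ of $\cO_F/\ZZ$, is a finite unramified morphism of relative dimension $-1$, hence induces a Gysin pushforward $\iota_*$ on \'etale cohomology raising degree by $2$ and twisting by $(1)$. The middle-degree cohomology of $Y^H(\fN)$ contains (after cutting out by Hecke operators) a copy of the Asai representation $M_{L_v}(\cF)^*(-1)$, so $\iota_*$ lands in the right space once we project via the $\cF$-isotypic idempotent $e_\cF$.

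Next, I would construct the classes on the modular curve. For $j = 0$, Siegel units give motivic cohomology classes $_c g_{0, 1/N} \in H^1_{\mot}(Y_1(N), \QQ(1))$, and for general $0 \le j \le \min(k, k')$ one uses Eisenstein classes in $H^1_{\mot}(Y_1(N), \TSym^{k+k'-2j} \sH_{\QQ}(1))$ of the form $\Eis^{k+k'-2j}_{\mot, 1/N}$, as developed by Beilinson and Huber--Kings. A Clebsch--Gordan-type branching map $\TSym^{k+k'-2j} \to \TSym^k \boxtimes \TSym^{k'}$ is needed to map these into the correct coefficient sheaf on $Y^H(\fN)$ to represent the Asai motive; concretely one builds a morphism of relative Chow motives
\[
 \mathrm{CG}^{[k, k', j]} \colon \TSym^{k+k'-2j} \sH_{\QQ}(-j) \to \iota^*\bigl(\TSym^k \sH_{\QQ} \boxtimes \TSym^{k'} \sH_{\QQ}\bigr),
\]
push the Eisenstein class forward through $\iota_*$, apply the \'etale regulator, and project onto the $v$-adic Galois representation $M_{L_v}(\cF)^*(-j)$. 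Varying the level by a factor coprime to $p \Delta \Nm(\fN)$ by replacing $1/N$ with $1/(Nm)$ and twisting by diamond operators to descend from $\QQ(\mu_{Nm})$ to $\QQ(\mu_m)$ then gives the sought-for classes $\AF_{\et, m, a}^{[\cF, j]}$.

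The main obstacle, and the place where the real work lies, is the proof of the tame norm relation at an auxiliary prime $\ell$. Following the template of \emph{Rankin--Eisenstein} and \emph{Beilinson--Flach} constructions, the strategy is to replace the norm computation on Galois cohomology by an equivalent calculation of the pushforward of the Eisenstein symbol along the correspondence given by the trace $Y_1(N\ell) \to Y_1(N)$, composed with the diagonal $\iota$. Using the distribution relation of Siegel units (or the norm relations of Eisenstein classes proved by Kings) together with an explicit matrix calculation for the action of the Hecke operators on the twisted diagonal — the heart of which is a double-coset decomposition of $\GL_2(\ZZ_\ell)\backslash \GL_2(\cO_F \otimes \ZZ_\ell)/\GL_2(\ZZ_\ell)$ with respect to the embedding $a$ — one should recover precisely the local Euler factor at $\ell$ of the Asai $L$-function acting on $\AF_{\et, m, a}^{[\cF, j]}$. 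This double-coset computation is delicate because it depends on the splitting behaviour of $\ell$ in $F$, and each case (split, inert, ramified) has to be handled separately; the inert case in particular requires identifying the correct degree-$4$ Euler polynomial rather than a degree-$2$ one. Once that identity is in place, the global norm compatibility follows formally from a standard Hochschild--Serre/Leray argument.
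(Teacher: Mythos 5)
Your construction recipe — Eisenstein class, Clebsch--Gordan branching, Gysin pushforward along the diagonal, projection to the $\cF$-eigenspace, Hochschild--Serre — is the same as the paper's, but the route diverges from the paper's in two substantive ways worth flagging. First, you build everything directly on the Shimura variety $Y^H(\fN)$ for $G = \Res_{F/\QQ}\GL_2$. The paper instead does the geometry on the PEL Shimura variety $Y^*$ for $G^* = G\times_D\GG_m$: this carries an actual universal abelian surface, so the relative Chow motive formalism of Ancona applies, and the diagonal $Y_{1,\QQ}(N)\hookrightarrow Y^*_1(\fN)$ is genuinely a closed immersion (whereas its composite to $Y_1(\fN)$ for $G$ is only a finite map, as $Y^*\to Y$ already has nontrivial fibres controlled by totally positive units). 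The paper only passes to $G$ at the eigenform-projection stage, because multiplicity one holds for automorphic representations of $G$ but fails for $G^*$: the $\cF$-isotypic projector must be cut out by $\cT(\fn)$ for $G$, not by the $G^*$-Hecke operators alone. If you work with $G$ throughout, you need to address this and also the fact that the Shimura datum for $G$ is not PEL, so the universal motive $\sH^{[\mu]}$ on $Y_1(\fN)$ has to be obtained by descent from $Y^*$. Second, for the tame norm relation you propose a head-on double-coset computation at $\ell$. The paper deliberately avoids this: following the strategy of the appendix to the imaginary-quadratic paper of the same authors, it first proves three easier pushforward relations (change of level $\fN\to\fl\fN$; the wild relation for $\hat\pr_{2,\ell}$ via a Cartesian-diagram argument; and the behaviour of the second degeneracy map $\pr_{2,\ell}$), and then deduces the tame relation purely formally from the Hecke-algebra identity $\pr_{1,\ell*}\circ U'(\ell) = T'(\ell)\circ\pr_{1,\ell*}-\langle\ell^{-1}\rangle\circ\pr_{2,\ell*}$. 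A direct double-coset computation can likely be made to work for $\ell$ inert, but the split case is delicate enough that the paper only establishes it under the extra assumption that the primes above $\ell$ are narrowly principal. One small slip: the ramified case does not need to be treated at all, since the Euler factor $P_\ell$ is only defined for $\ell\nmid\Delta\Nm_{F/\QQ}(\fN)$.
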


  Note that we do not need to impose any assumptions on the character of $\cF$, because our constructions do not require any self-duality properties of the Galois representations involved. See Definition \ref{def:AFf} below for the definition of these classes, and Corollary \ref{cor:AFf} for the norm relation. This construction can be regarded as an analogue of previous work of the present authors and Guido Kings \cite{leiloefflerzerbes14a, kingsloefflerzerbes15a, kingsloefflerzerbes15b} in the setting of Rankin--Selberg convolutions of two elliptic modular forms.

  \begin{remark*}
   In \cite{liu16}, Liu uses Hirzebruch--Zagier cycles to construct a collection of global cohomology classes for the self-dual twist of $M_{L_v}(\cF)^*\otimes M_{L_v}(g)^*$, where $\cF$ is a Hilbert modular form of parallel weight $2$ and $g$ is an elliptic modular form of weight $2$. These cohomology classes stand in the same relation to the Euler system constructed in this paper as the cohomology classes arising from `diagonal cycles'  (constructed in \cite{darmonrotger12}) do to the Euler system of Beilinson--Flach elements \cite{leiloefflerzerbes14a, kingsloefflerzerbes15a, kingsloefflerzerbes15b}.
  \end{remark*}

  Using Kings' theory of $\Lambda$-adic sheaves, we can construct a ``$p$-adic interpolation'' of the above classes for varying $j$ and $m$, assuming that $\cF$ is \emph{ordinary at $p$} in the sense of Definition \ref{def:ordinary}:

  \begin{ltheorem}
   \label{lthm:iwasawa}
   Suppose $\cF$ is ordinary at $p$. Let $\Gamma = \Gal(\QQ(\mu_{p^\infty}) / \QQ)$, $\Lambda_\Gamma$ its Iwasawa algebra, and $\mathbf{j}: \Gamma \to \Lambda_\Gamma^{\times}$ the canonical character. Let $m \ge 1$ be coprime to $p$, and $c > 1$ be coprime to $6pm\Nm_{F/\QQ}(\fN)$.

   Then there exists an Iwasawa cohomology class
   \[ \cAF^{\cF}_{m, a} \in H^1\left(\ZZ\left[\mu_m, \tfrac{1}{m p \Delta \Nm_{F / \QQ}(\fN)}\right], M_{L_v}(\cF)^* \otimes \Lambda_{\Gamma}(-\mathbf{j})\right) \]
   which interpolates the \'etale Asai--Flach classes $\AF_{\et, mp^r, a}^{[\cF, j]}$, for all $0 \le j \le \min(k, k')$ and $r \ge 0$.

   Moreover, the restriction of $M_{L_v}(\cF)^*$ to $\Gal(\QQbar_p / \Qp)$ has a canonical 1-dimen\-sional unramified quotient, and the projection of the localisation $\loc_p\left(\cAF^{\cF}_{m, a}\right)$ to the cohomology of this quotient is zero.
  \end{ltheorem}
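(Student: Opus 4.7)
My plan is to mimic the strategy employed in \cite{kingsloefflerzerbes15a} for Beilinson--Flach elements, but adapted to the geometry of Hilbert modular surfaces. The starting point is to express the Asai--Flach class at finite level (Theorem \ref{lthm:A}) as the pushforward along a map of Shimura varieties of an étale Eisenstein class on a modular curve (or its product with itself), twisted by the variable $j$. Kings' theory of $\Lambda$-adic sheaves produces a single $\Lambda$-adic Eisenstein class which specialises, via the moment maps, to the \'etale Eisenstein classes of weight $j$ for every $j \ge 0$; this class naturally lives in cohomology with coefficients in $\Lambda_{\Gamma}(-\mathbf{j})$. Pushing forward along the Shimura-variety map and applying Shapiro's lemma / corestriction to assemble the levels $mp^r$ into a single class at level $m$ should yield a ``universal'' Iwasawa class with coefficients in the twist of the Hilbert modular surface's cohomology by $\Lambda_{\Gamma}(-\mathbf{j})$.

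Next, I would project this class to the $\cF$-isotypic component. Since $\cF$ is assumed ordinary at $v$, Hida's ordinary projector $e_{\mathrm{ord}}$ can be applied integrally to the cohomology of the Hilbert modular surface, cutting out the Galois representation $M_{L_v}(\cF)^*$ with its canonical unramified quotient at $p$. The compatibility between the $\Lambda$-adic Eisenstein class and its finite-level avatars under the moment maps, together with the compatibility of the ordinary projection with specialisation, gives the required interpolation property: specialising $\cAF^{\cF}_{m,a}$ at the character $\mathbf{j} \mapsto j$ and restricting to $\ZZ[\mu_{mp^r}, \ldots]$ recovers $\AF_{\et, mp^r, a}^{[\cF, j]}$ for $0 \le j \le \min(k, k')$ and all $r \ge 0$.

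The main obstacle is the second assertion: vanishing of the projection of $\loc_p \cAF^{\cF}_{m,a}$ to cohomology with coefficients in the unramified quotient. My approach would be to reduce this to a statement purely at the level of the underlying $\Lambda$-adic Eisenstein class on the modular curve, before applying the geometric pushforward. In the Beilinson--Flach setting, the analogous vanishing follows from the fact that the $\Lambda$-adic Eisenstein class is ``Eisenstein-like'' at $p$: its local image factors through a subsheaf whose specialisation under the ordinary projection lies in the filtered piece complementary to the unramified quotient. Concretely, one exploits that after applying $e_{\mathrm{ord}}$, the unramified quotient of $M_{L_v}(\cF)^*$ corresponds to the lowest-weight piece of the Hodge filtration on the Hilbert modular surface, and the Eisenstein classes are designed so that their local-at-$p$ image avoids precisely this piece. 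Making this precise likely requires an explicit computation with the $(\varphi, \Gamma)$-modules attached to the Galois representations involved, which is the technically hardest input. I would expect to invoke (or prove a Hilbert-modular analogue of) the local behaviour results for Kings' $\Lambda$-adic classes that have been established in the Rankin--Selberg context.

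Finally, independence of $c$ (needed to make sense of the statement without introducing the auxiliary integer $c$ into the notation) and compatibility with the norm relations stated in Theorem \ref{lthm:A} should follow formally from the corresponding compatibilities at finite level, combined with the density of the classical specialisations in $\Lambda_{\Gamma}$.
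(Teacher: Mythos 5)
For the first part of the theorem --- the existence of the Iwasawa class $\cAF^{\cF}_{m,a}$ and its interpolation property --- your plan coincides with what the paper does: Kings' $\Lambda$-adic sheaves produce an Iwasawa-theoretic Eisenstein class, which is pushed forward along the Clebsch--Gordan-twisted embedding to the Hilbert modular surface, the levels $mp^r$ are assembled via the cyclotomic twist formalism (Theorem \ref{thm:iwasawaelt}), and the whole thing is projected to the $\cF$-eigenspace via $\pr_{\cF}$ after applying the Hida ordinary projector $e'_{\mathrm{ord}}$ (\S\ref{sect:iwa1}). Here your approach matches the paper.

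There are, however, two points where your proposal departs from the paper or leaves a gap. First, you treat the existence of the 1-dimensional unramified quotient of $M_{L_v}(\cF)^*|_{G_{\Qp}}$ as an automatic byproduct of ordinarity and applying $e_{\mathrm{ord}}$. The paper does not obtain it this way: one needs Wiles' theorem (Theorem \ref{thm:wiles}) giving the local structure of the \emph{standard} representation $\rho_{\cF,v}^{\mathrm{std}}$, and then one transfers this to the Asai representation via the tensor-induction structure (Corollary \ref{cor:asaifiltn}). The inert case of this transfer requires a non-obvious argument because tensor induction is not additive, and the split case requires identifying the two 1-dimensional local subspaces. This step cannot be dispensed with.

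Second, and more importantly, your proposed route to the vanishing of the projection of $\loc_p(\cAF^{\cF}_{m,a})$ to the unramified quotient --- reducing to a local statement about the $\Lambda$-adic Eisenstein class and then doing an explicit $(\varphi,\Gamma)$-module computation --- is not what the paper does and would be considerably harder. The paper's argument is much softer: since each finite-level class $\AF^{[\cF,j]}_{\et, mp^r,a}$ is the \'etale realisation of a motivic class, by the theorem of Nekov\'a\v{r}--Nizio\l~\cite{nekovarniziol} it lies in the Bloch--Kato subspace $H^1_\mathrm{g}$ at $p$; and an Iwasawa cohomology class for an unramified Galois representation that lies in $H^1_{\mathrm{g}}$ at every finite level must vanish, by \cite[Lemma 8.1.5]{kingsloefflerzerbes15b}. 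No explicit reciprocity law or $(\varphi,\Gamma)$-module computation is required --- the motivic origin of the classes does all the work. Your characterisation of the Beilinson--Flach argument as exploiting a local ``Eisenstein-like'' structure of the $\Lambda$-adic class is in fact a mischaracterisation: in \cite{kingsloefflerzerbes15b} the same Nekov\'a\v{r}--Nizio\l\ plus unramified-representation argument is used.
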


  Our main application of this Euler system is (in some sense) a version of the Iwasawa main conjecture for the motive $M_{L_v}(\cF)$ over the cyclotomic tower. For this theorem, we assume that $p$ is split in $F$, and we fix a prime $\fp \mid p$. Using Perrin-Riou's big logarithm map (see \S\ref{sect:motivicL}), we construct a ``motivic $p$-adic $L$-function'' ${}_c L^{\imp}_{\fp, \Asai}(\cF) \in L_v \otimes_{\Zp} \Lambda_\Gamma$ interpolating the Bloch--Kato logarithms of the Asai--Flach classes; and we define a dual Selmer group $X(\QQ(\mu_{p^\infty}), \cF)$, which is a $\Lambda_{\Gamma}$-module of finite type. Then we prove the following theorem:

  \begin{ltheorem}
   \label{lthm:boundSel}
   Assume that $\cF$ and $v$ satisfy the list of hypotheses given in \S\ref{sect:boundSel} below. Then the characteristic ideal\footnote{We adopt the convention that the ``characteristic ideal'' of a non-torsion $\Lambda_\Gamma$-module is the zero ideal.} of the dual Selmer group $X(\QQ(\mu_{p^\infty}), \cF)$ divides the $p$-adic $L$-function ${}_c L^{\imp}_{\fp, \Asai}(\cF)$ in $L_v \otimes_{\Qp} \Lambda_{\Gamma}$.
  \end{ltheorem}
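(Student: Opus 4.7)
The plan is to apply the standard Iwasawa-theoretic Euler system machinery to the classes constructed in Theorem~\ref{lthm:iwasawa}, then identify the resulting upper bound with the $p$-adic $L$-function via Perrin-Riou's big logarithm map. As a first step, I would assemble the classes $\cAF^{\cF}_{m,a}$, as $m$ ranges over squarefree integers coprime to $6pc\Delta\Nm_{F/\QQ}(\fN)$, into an Euler system in the sense of Rubin for the cyclotomic twist of $M_{L_v}(\cF)^*$. The tame norm relations---that corestriction at a good auxiliary prime $\ell$ multiplies the class by the local Euler factor of the Asai representation evaluated at the geometric Frobenius---should follow by passing to the inverse limit of the finite-level norm relations alluded to in Corollary~\ref{cor:AFf}.

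With the Euler system in hand, I would feed it into the general Iwasawa-theoretic divisibility theorem. Under the hypotheses collected in \S\ref{sect:boundSel}---in particular a ``big image'' assumption on the residual Asai representation and suitable local non-degeneracy at auxiliary primes---this yields a divisibility in $\Lambda_\Gamma$ bounding the characteristic ideal of $X(\QQ(\mu_{p^\infty}), \cF)$ by the characteristic ideal of the quotient of the relevant local Iwasawa cohomology at $p$ by the submodule generated by $\loc_p(\cAF^{\cF}_{1,a})$. The vanishing of the projection to the unramified quotient recorded in Theorem~\ref{lthm:iwasawa} is exactly the condition that places this localisation inside the Panchishkin-type local subspace defining the Selmer group, so that the local condition at $p$ to which the Euler system machinery applies is the correct one.

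To finish, I would invoke Perrin-Riou's theorem describing the big logarithm map as an isomorphism (up to controlled torsion with explicit characteristic ideal given by local Euler factors at $p$) between the relevant Iwasawa cohomology and $L_v \otimes_{\Qp} \Lambda_\Gamma$. By construction, the image of $\loc_p(\cAF^{\cF}_{1,a})$ under this map is ${}_c L^{\imp}_{\fp, \Asai}(\cF)$---indeed, this is essentially how the motivic $p$-adic $L$-function is defined. Translating the Euler system divisibility through Perrin-Riou's logarithm therefore yields the divisibility claimed in Theorem~\ref{lthm:boundSel}.

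The main obstacle will be verifying the big-image hypothesis for the Asai representation. Because $M_{L_v}(\cF)^*$ is four-dimensional and need not be self-dual, classical tools for establishing largeness of Galois images in $\GL_2$ must be extended with care, and the hypotheses of \S\ref{sect:boundSel} are designed precisely to isolate when this extension goes through; confirming them case-by-case (ruling out CM and base-change pathologies) is where the work lies. A secondary technical issue is that Perrin-Riou's logarithm is only injective modulo controlled torsion, so some book-keeping of pseudo-null contributions is required to ensure that no characteristic-ideal content is lost in the passage from the Euler system bound to the bound stated in terms of ${}_c L^{\imp}_{\fp, \Asai}(\cF)$.
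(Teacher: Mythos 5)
Your approach mirrors the paper's: assemble the Iwasawa classes of Theorem~\ref{lthm:iwasawa} into an Euler system, apply the standard divisibility machinery (Theorem~11.6.4 of \cite{kingsloefflerzerbes15b}), and translate the resulting bound through Perrin-Riou's regulator, with the hypotheses of \S\ref{sect:boundSel} supplying exactly what that machinery needs. But there is a gap you must close. You propose to use the full collection $\{\cAF^{\cF}_{m,a}\}$ as $m$ ranges over all squarefree integers coprime to the level, citing Corollary~\ref{cor:AFf} for the tame norm relations. The trouble is that Theorem~\ref{thm:TlrelationB}, and hence Corollary~\ref{cor:AFf}, is only established when $\ell$ is inert in $F$, or split with the primes above it narrowly principal; the general split case is not proved in the paper. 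So the putative Euler system fails Rubin's norm-relation axiom at a positive-density set of primes, and the argument as you have stated it does not directly go through.

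What rescues the argument---and what the paper's proof explicitly points out---is that the Kolyvagin-derivative construction only ever uses auxiliary primes $\ell$ whose Frobenius is a conjugate of the element $\tau$ from condition (BI)(ii). Since $\tau$ must restrict to the nontrivial element of $\Gal(F/\QQ)$, any such $\ell$ is automatically inert in $F$, so the split-prime norm relations are never invoked. You should make this observation explicit, or the argument has an unjustified step. Two smaller remarks: Corollary~\ref{cor:AFf} does not literally give multiplication by $P_\ell(\cF, \cdot)$ but has an additional term proportional to $(\ell - 1)$, which is harmless because the derivative argument only sees the norm factor modulo $\ell - 1$; and the hypothesis (NEZ) is precisely the ``book-keeping'' you anticipate at the end, since without it $\mathcal{L}^{\mathrm{PR}}$ is not an isomorphism but maps into $\Dcris(M_\fp) \otimes I^{-1}$ for a nontrivial ideal $I$, whose contribution would then have to be tracked.
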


  Sadly, this theorem is rather less powerful than it seems, since we have at present no analogue of the explicit reciprocity laws available in the Rankin--Selberg setting (cf.~\cite{BDR15b}, \cite[Theorem B]{kingsloefflerzerbes15b}); thus we cannot rule out the possibility that ${}_c L^{\imp}_{\fp, \Asai}(\cF)$ is identically zero, in which case the above theorem is vacuous.

  We can show that our Euler system is non-zero in many cases if one assumes a standard conjecture in arithmetic geometry:

  \begin{ltheorem}
   \label{lthm:nonvanish}
   Suppose that $\cF$ is new of level $\fN$, $k, k' \ge 1$ and $|k - k'| \ge 3$, and $F$ has trivial narrow class group. Let $r = k + k'$ and assume that Conjecture 5.3(i) of \cite{blochkato90} holds for some smooth compactification $\mathcal{A}^{r} \into \widetilde{\mathcal{A}^{r}}$ of the $r$-fold fibre product of the universal abelian variety over the Hilbert modular surface of level $\fN$. Then the class $\AF_{\et, 1, a}^{[\cF, j]}$ is non-zero, for any $0 \le j \le \min(k, k')$.
  \end{ltheorem}

  See Proposition \ref{prop:nonvanish} below for details. A second piece of evidence for the non-triviality of our construction is a formula expressing the localisations of our \'etale classes at $p$ in terms of overconvergent $p$-adic modular forms. This is joint work of the second and third authors with Chris Skinner, and is explained in a separate paper \cite{loefflerskinnerzerbes16}.

  \subsection*{Acknowledgements}

   The authors would like to thank Giuseppe Ancona, Henri Darmon, Mladen Dimitrov, Eyal Goren, Payman Kassaei,  Guido Kings, Francesco Lemma and Chris Skinner for valuable conversations relating to the preparation of this paper. We are also grateful to the two anonymous referees for their numerous valuable comments on the original draft of this paper.

 \section{Setup and notations}
  \label{S:defn}

  \subsection{Fields and groups}
   \label{sect:fldsgps}

   Let $F$ be a totally real field, with ring of integers $\cO_F$, different $\fd$ and discriminant $\Delta = N_{F/\QQ}(\fd)$. Later in the paper we shall specialise to the case where $F$ is the real quadratic field $\QQ(\sqrt{d})$, for $d > 1$ a square-free integer, but our initial discussion (up to the end of \S\ref{sect:heckecorr}) is valid for general $F$.

   We write $\Cl^+(F)$ for the narrow class group of $F$ (the group of non-zero fractional ideals of $\cO_F$, modulo principal fractional ideals with a totally positive generator). We refer to ideals whose class in $\Cl^+(F)$ is trivial as \emph{narrowly principal}.

   \begin{definition}[cf.~{\cite[\S 1]{dimitrov05}}]
    We define the algebraic groups
    \begin{align*}
     D &= \Res^F_\QQ\GG_m, &
     G &=\Res^F_\QQ\GL_2, &
     G^* &= G \times_{D} \GG_m.
    \end{align*}
   \end{definition}

   There is a natural embedding $\iota: \GL_2 \into G^*$, which will be of great importance in the present paper. The embedding $\jmath: G^* \into G$ will also be needed, particularly in \S\ref{sect:eigenforms}.

   If $H$ is any of the three groups $\{ \GL_2, G^*, G\}$, then we let $H(\RR)^+$ be the identity component of $H(\RR)$, which is the subgroup of elements whose determinant is (totally) positive. We write $H(\QQ)^+ = H(\QQ) \cap H(\RR)^+$.

   We define $\cH_F$ to be the elements of $F \otimes \CC$ of totally positive imaginary part, with its natural action of $G(\RR)^+$.

  \subsection{Arithmetic quotients}

   Let $\AA$ be the ad\`ele ring of $\QQ$, and $\AA_f$ be the subring of finite adeles.

   \begin{definition}
    \label{def:suffsmall}
    Let $H$ be one of the three groups $G$, $G^*$, or $\GL_2$. We say an open compact subgroup $U \subset H(\AA_f)$ is \emph{sufficiently small} if, for any $h \in H(\AA_f)$, the quotient group
    \[ \frac{H(\QQ)^+ \cap h U h^{-1}}{U \cap \{ \stbt u 0 0 u: u \in \cO_F^{\times +}\}} \]
    acts without fixed points on $\cH_F$ (or $\cH_\QQ$ if $H = \GL_2$).
   \end{definition}

   (The denominator is, of course, trivial if $H = \GL_2$ or $H = G^*$.)

   \begin{definition}
    For $U \subseteq G(\AA_f)$ an open compact subgroup (respectively $U^* \subset G^*(\AA_f)$, $U_\QQ \subset \GL_2(\AA_f)$) we write
    \begin{align*}
     Y(U) &\coloneqq \dblquot{G(\QQ)^+}{\left[G(\AA_f) \times \cH_F\right]}{U},\\
     Y^*(U^*) &\coloneqq \dblquot{G^*(\QQ)^+}{\left[G^*(\AA_f) \times \cH_F\right]}{U^*},\\
     Y_{\QQ}(U_{\QQ}) &\coloneqq \dblquot{\GL_2(\QQ)^+}{\left[\GL_2(\AA_f) \times \cH\right]}{U_{\QQ}},
    \end{align*}
   \end{definition}

   If $U$ is sufficiently small (in the sense of Definition \ref{def:suffsmall}), then the quotient $Y(U)$ is naturally the set of complex points of a smooth variety defined over $\QQ$. The same holds for the varieties $Y^*(U^*)$, $Y_\QQ(U_\QQ)$.

   For $g \in G(\AA_f)$ we have a map
   \[ g: Y(U) \to Y(g U g^{-1}),\quad g \cdot [(h, \tau)] = [(h g^{-1}, \tau)], \]
   which gives a left action of $G(\AA_f)$ on the inverse system of varieties $Y \coloneqq \{ Y(U) : U \subset G(\AA_f) \text{ open compact}\}$ for varying $U$, compatible with the usual left action of $G(\QQ)^+ \subset G(\AA_f)$ on $\cH_F$. The same applies verbatim for $\GL_2 / \QQ$, and for $G^*$.

   \begin{remark}
    We shall mostly work with $G^*$ rather than $G$, because the Shimura varieties $Y^*(U^*)$ for $G^*$ are of PEL type: they are moduli spaces for abelian varieties with $\cO_F$-action, as we shall recall in \S \ref{sect:abvar} below.

    The chief disadvantage of $G^*$ is that automorphic representations of $G^*$ do not satisfy the multiplicity one property, whereas those for $G$ do. In order to work around this, we shall enlarge the group of transformations acting on the varieties $Y^*(U^*)$ using a construction due to Shimura, which we shall now recall.
   \end{remark}

   \begin{definition}[{cf.~\cite[p.~643]{shimura78}}]
    \label{def:calG}
    We let $\cG$ denote the subgroup $G(\QQ)^+ G^*(\AA_f) \subseteq G(\AA_f)$.
   \end{definition}

   Then there are bijections $Y^*(U^*) = \dblquot{G(\QQ)^+ }{[\cG \times \cH_F]}{U^*}$ for each $U^*$, and we therefore obtain maps of $\QQ$-varieties $Y^*(U^*) \to Y^*(g U^* g^{-1})$ for any $g \in \cG$, which assemble into a left action of $\cG$ on the pro-variety $Y^*$.

   \begin{proposition}[{\cite{tianxiao16}, Proposition 2.4}]
    \label{prop:Gquotient}
    If $U^* = U \cap G^*$, then there is a natural map $\jmath: Y^*(U^*) \to Y(U)$, and its fibres are the orbits for an action of the finite group
    \[
     \frac{\cG \cap U}{U^* \cdot (Z(\cG) \cap U)} \cong \frac{ \cO_F^{\times +} \cap \left(\hat\ZZ^\times \cdot \det(U) \right)}{ \left\{\epsilon^2 : \epsilon \in \cO_F^\times, \stbt \epsilon 0 0 \epsilon \in U\right\}}
    \]
    on $Y^*(U^*)$. The subgroup stabilising each component of $Y^*(U^*)$ is $\cO_F^{\times+}\cap \det(U)$.\qed
   \end{proposition}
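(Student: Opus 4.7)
The plan is to construct $\jmath$ directly from the inclusion $\cG \hookrightarrow G(\AA_f)$, identify its fibres with the orbits of a natural right-action of $\cG \cap U$ on $Y^*(U^*)$, and realise the effective acting group appearing on the right-hand side via the determinant. Since $U^* = U \cap G^* \subseteq U$ and both double-coset spaces use the same left-action of $G(\QQ)^+$, the inclusion $\cG \hookrightarrow G(\AA_f)$ descends to a well-defined $\jmath$. For $v \in \cG \cap U$ the formula $v \cdot [(h,\tau)] = [(hv^{-1},\tau)]$ defines an automorphism of $Y^*(U^*)$: well-definedness uses that $v$ normalises $U$ trivially and $G^*$ (as $G^*$ is defined over $\QQ$, hence normal in $G$), so normalises $U^* = U \cap G^*$. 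Two points $[(h_1,\tau_1)], [(h_2,\tau_2)] \in Y^*(U^*)$ have the same image under $\jmath$ precisely when there exist $\gamma \in G(\QQ)^+$ and $v \in U$ with $\gamma h_1 v = h_2$ and $\gamma\tau_1 = \tau_2$; since $\gamma, h_1, h_2 \in \cG$, the element $v = h_1^{-1}\gamma^{-1}h_2$ automatically lies in $\cG \cap U$, and the fibres of $\jmath$ coincide with the $(\cG \cap U)$-orbits.

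Clearly $U^*$ acts trivially. For a central $z = \stbt{s}{0}{0}{s} \in Z(\cG)\cap U$, writing $s \in F^{\times+}\cdot\AA_f^\times$ as $s = s_1 s_2$ and computing $\ord_v$ for each finite place $v$ of $F$ shows that $(s_1)$ is a principal $\cO_F$-ideal extended from $\QQ$, so we can refine the decomposition to $s = \epsilon \cdot t$ with $\epsilon \in \cO_F^{\times+}$ and $t \in \AA_f^\times$. The resulting factorisation $z = \stbt{\epsilon}{0}{0}{\epsilon}\cdot\stbt{t}{0}{0}{t}$ has first factor a totally positive scalar in $G(\QQ)^+$ (acting trivially on $\cH_F$ because it is scalar) and second factor central in $G^*(\AA_f)$ (absorbed by $U^*$ in the quotient), so $Z(\cG)\cap U$ lies in the kernel of the action modulo $U^*$. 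The isomorphism with the right-hand quotient is then realised by the determinant: for $u \in \cG \cap U$, write $\det(u) = \alpha\beta$ with $\alpha \in F^{\times+}, \beta \in \AA_f^\times$ (unique modulo $F^{\times+}\cap\AA_f^\times = \QQ^{\times+}$), and then uniquely $\alpha = \epsilon q$ with $\epsilon \in \cO_F^{\times+}, q \in \QQ^{\times+}$ (using $\cO_F^{\times+}\cap\QQ^{\times+} = \{1\}$). The identity $\AA_f^\times\cap(\cO_F\otimes\hat\ZZ)^\times = \hat\ZZ^\times$ gives $\epsilon \in \hat\ZZ^\times\det(U)$, and a direct lifting argument establishes surjectivity; the kernel matches $U^*\cdot(Z(\cG)\cap U)$ modulo the denominator $\{\eta^2 : \stbt{\eta}{0}{0}{\eta} \in U\}$ on the right.

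Finally, the connected components of $Y^*(U^*)$ are parameterised via $\det$ by $\AA_f^\times/(\QQ^{\times+}\det(U^*))$ (using strong approximation for $\SL_2/F$), so $u$ preserves every component iff $\det(u) \in F^{\times+}\det(U^*)$, which under the above isomorphism corresponds to $\epsilon(u) \in \cO_F^{\times+}\cap\det(U)$. The main obstacle is the adelic bookkeeping: verifying the well-definedness and surjectivity of the determinant map, and precisely matching its kernel with $U^*\cdot(Z(\cG)\cap U)$, requires simultaneously juggling the subgroups $F^{\times+}$, $\AA_f^\times$, $\hat\ZZ^\times$, $\det(U)$, $\det(U^*)$ together with the various intersection identities noted above, none of which is individually deep but whose interplay is easy to mishandle.
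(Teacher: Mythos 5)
The paper does not give its own proof of this proposition: the statement is cited from \cite{tianxiao16}, Proposition~2.4, and closed with \qed without argument. So I can only assess your proposal on its own merits.

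Your construction of $\jmath$, and the identification of its fibres with the orbits of the natural $(\cG\cap U)$-action via the observation that $v = h_1^{-1}\gamma^{-1}h_2$ lands automatically in $\cG\cap U$, are correct and clean. The gap is in the step showing that $Z(\cG)\cap U$ acts trivially. You write a central $z = \stbt{s}{0}{0}{s} \in Z(\cG)\cap U$ and assert $s\in F^{\times+}\cdot\AA_{\QQ,f}^\times$. But $\cG = \det^{-1}\left(F^{\times+}\AA_{\QQ,f}^\times\right)$, so membership of $z$ in $\cG$ gives only $\det(z)=s^2\in F^{\times+}\AA_{\QQ,f}^\times$, a strictly weaker statement about $s$; your $\ord_v$ computation then starts from an unjustified premise. (To see what is missing: from $s^2 = \epsilon' t'$ with $\epsilon'\in\cO_F^{\times+}$, $t'\in\hat\ZZ^\times$, one needs some argument — e.g. a Hilbert~90 and sign analysis using $N_{F/\QQ}(\epsilon')=1$ — to show $z$ still factors as a $G(\QQ)^+$-scalar times something in $U^*$.)

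Even granting your decomposition $z = \stbt{\epsilon}{0}{0}{\epsilon}\cdot\stbt{t}{0}{0}{t}$, the claim that the second factor is ``absorbed by $U^*$ in the quotient'' requires $\stbt{t}{0}{0}{t}\in U^*$, i.e.\ in $U$, not merely in $G^*(\AA_f)$. Since $\stbt{t}{0}{0}{t}=z\cdot\stbt{\epsilon^{-1}}{0}{0}{\epsilon^{-1}}$ and $z\in U$, this is equivalent to $\stbt{\epsilon^{-1}}{0}{0}{\epsilon^{-1}}\in U$, which is not automatic for a general open compact $U$ and is precisely the sort of constraint reflected in the denominator $\left\{\epsilon^2 : \epsilon\in\cO_F^\times,\ \stbt{\epsilon}{0}{0}{\epsilon}\in U\right\}$ on the right-hand side. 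Your proposal treats this as a routine ``lifting argument'' but it is actually where the content of the proposition lives; as written, the identification of the kernel of the determinant map with $U^*\cdot(Z(\cG)\cap U)$ is asserted rather than proved.
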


  \subsection{Congruence subgroups}

   Let us now define the specific level groups $U$ that we shall use.

   \begin{definition}
    Let $\fM, \fN, \fa$ be non-zero ideals of $\cO_F$. We define open compact subgroups of $G(\AA_f)$ by
    \begin{align*}
     U(\fM, \fN) &\coloneqq \left\{\gamma \in \GL_2(\widehat{\cO}_F): \gamma = 1 \bmod \tbt{\fM}{\fM}{\fN}{\fN}\right\},\\
     U(\fM(\fa), \fN) &\coloneqq
     \left\{ \gamma: \gamma = 1 \bmod \tbt{\fM}{\fa \fM}{\fN}{\fN}\right\},  \\
     U(\fM, \fN(\fa)) &\coloneqq
     \left\{ \gamma: \gamma = 1 \bmod \tbt{\fM}{\fM}{\fa \fN}{\fN}\right\}
    \end{align*}
    We write $U^*(\fM, \fN)$ for $U(\fM, \fN) \cap G^*$ (and similarly $U^*(\fM, \fN(\fa))$ etc.) We shall often abbreviate $U(1, \fN)$ as $U_1(\fN)$.

    Similarly, for $M, N, A$ non-zero integers, we write $U_{\QQ}(M, N)$, $U_{\QQ}(M, N(A))$, $U_\QQ(M(A), N)$ for the analogous subgroups of $\GL_2(\AA_f)$.
   \end{definition}

   \begin{notation}
    We adopt the general notation scheme that if $U(-)$ is some subgroup of $G$, then $Y(-)$ denotes $Y(U(-))$, and similarly if $U^*(-)$ is a subgroup of $G^*$. Thus $Y_1(\fN)$, $Y_1^*(\fN)$, $Y_{1, \QQ}(N)$ are shorthand notations for $Y(U_1(\fN))$, $Y^*(U_1^*(\fN))$, $Y_{\QQ}(U_{1, \QQ}(N))$ respectively.
   \end{notation}

   \begin{remark}
    \label{rmk:suffsmall}
    Note that if $\fN$ does not divide $2$, $3$, or $\Delta$, then $U_1(\fN)$ and $U_1^*(\fN)$ are sufficiently small \cite[Lemma 2.1]{dimitrov09}.
   \end{remark}

  \subsection{Hecke algebras}
   \label{sect:heckealg}

   Let $\fM$ and $\fN$ be non-zero ideals of $\cO_F$, with $\fM \mid \fN$. We shall now define various elements of the abstract Hecke algebra $\ZZ\left[U^* \backslash \cG / U^*\right]$, where $U^* = U^*(\fM, \fN)$ and $\cG$ is as in \ref{def:calG}.

   \begin{remark}
    The reason for working with $\cG$, rather than the smaller group $G^*(\AA_f)$, is that $G^*(\AA_f)$ only gives rise to a Hecke operator $T(n)$ when $n$ is in $\ZZ$. Working with $\cG$ allows us to consider Hecke operators $T(x)$ for general $x \in \cO_{F, +}$ (while still working with a Shimura variety of PEL-type).
   \end{remark}

   \subsubsection{Diamond operators}

   For $x \in (\cO_F / \fN)^\times$, we define $\langle x \rangle$ to be the double coset of $\tbt {x^{-1}} 0 0 x \in \SL_2(\widehat\cO_F)$, for any lift of $x$ to $\widehat\cO_F^\times$.

   \subsubsection{Frobenius maps}
    \label{sect:frob}

   For $x \in \left(\ZZ / \ZZ \cap \fM\right)^\times$, we define $\sigma_x$ as the double coset of $\stbt x{}{} 1 \in G^*(\hat\ZZ)$.

   \subsubsection{Scalar multiplications \texorpdfstring{$R(x)$}{R(x)}}

   For $x \in F^\times$, we write $R(x)$ for the double coset of the scalar matrix $\tbt {x^{-1}} 0 0 {x^{-1}}$.

   \subsubsection{The operator \texorpdfstring{$S(x)$}{S(x)}} For any $x \in F^\times$ which is a unit at the primes above $\fN$, we write $S(x)$ for $\langle x \rangle R(x)$.

   \subsubsection{The operator \texorpdfstring{$T(x)$}{T(x)}} For $x \in \cO_F$ which is totally positive and square-free\footnote{That is, $x$ is not divisible by the square of any nontrivial ideal (principal or otherwise)}, we define $T(x)$ as the double coset of $\tbt {x^{-1}} 0 0 1$.

   More generally, we may define $T(x)$ for any totally-positive $x \in \cO_F$, not necessarily square-free, using the formal sum of all double cosets contained in the set
   \[ \left\{ \begin{array}{c}
   \tbt a b c d \in M_{2 \times 2}(\widehat\cO_F) : ad-bc \in  x^{-1} \cdot (1 + M \hat\ZZ),
   \\
   \tbt a b c d = \tbt {x^{-1}} 0 0 1 \bmod \tbt \fM \fM \fN \fN \end{array}\right\}, \]
   where $M$ is the positive integer generating the ideal $\fM \cap \ZZ$. This set is clearly contained in $\cG$, and it is left and right invariant under $U^*(\fM, \fN)$.

   The operators defined above, for all valid choices of $x$, define a commutative subalgebra of $\ZZ[U^* \backslash \cG / U^*]$. In this algebra we have the following familiar identities:
   \begin{subequations}
    \begin{align}
     T(xy) = T(x) T(y) & \text{ if $x$ and $y$ are coprime}, \\
     T(x)^2 =  S(x) & \text{ if $x \in \cO_F^{\times}$,}\\
     T(x)^2 = T(x^2) + \Nm_{F/\QQ}(x)\cdot S(x) & \text{ if $x\cO_F$ is prime}.
    \end{align}
   \end{subequations}

   If $x$ divides $\fN$, we denote the double coset $T(x)$ defined above by the more familiar alternative notation $U(x)$.

   \subsubsection{Hecke operators for \texorpdfstring{$G$}{G}}
    \label{sect:heckeG}

   We shall also need to work with some Hecke operators for the group $G$; these will not make an appearance until \S \ref{sect:eigenforms}. We denote these by calligraphic letters to reduce the risk of confusion with their analogues for $G^*$. It will suffice to consider levels of the form $U_1(\fN)$.
   \begin{itemize}
    \item For $\fm \triangleleft \cO_F$, we denote by $\cT(\fm)$ the double coset of $\stbt {x^{-1}} 0 0 1$ where $x$ is any element of $\widehat\cO_F$ generating the ideal $\fm \widehat \cO_F$. As before, when $\fm \mid \fN$ we use the alternative notation $\mathcal{U}(\fm)$ for this element.

    \item For $\fm$ an ideal coprime to $\fN$, we let $\cS(\fm)$ be the double coset of $\stbt {x^{-1}} 0 0 {x^{-1}}$, where $x$ is any generator of $\fm \widehat \cO_F$ congruent to 1 modulo $\fN$.
   \end{itemize}

   Note that if $\fm = (\lambda)$ is a narrowly-principal ideal, and we write the element $T(\lambda) \in \ZZ[U^*_1(\fN) \backslash \cG / U_1^*(\fN)]$ as a sum of single cosets $\sum_i U_1^*(\fN) g_i$, then we also have $\cT(\fm) = \sum_i U_1(\fN) g_i$. Similarly, if $\lambda$ is coprime to $\fN$ then we can find a (single) element of $\cG$ representing both of the double cosets $\cS(\fm)$ and $S(\lambda)$.

  \subsection{Abelian varieties}
   \label{sect:abvar}

   We now introduce certain abelian varieties over the Shi\-mura varieties $Y^*(U^*)$ defined above. (These are the universal objects for appropriate PEL-type moduli problems, but we will not use this directly in the present paper.)

   \begin{definition} \
    \begin{enumerate}[(i)]

     \item Let $P$ be the subgroup of $\Res_{F / \QQ} \GL_3$ consisting of matrices of the form
     \[ \begin{pmatrix} 1 & r &s \\ 0 & a & b \\ 0 & c & d \end{pmatrix}, \]
     and let $P^*$ be the subgroup with $\stbt a b c d \in G^*$. We let $N \cong \Res_{F /\QQ} \AA^2$ be the unipotent radical of $P$ and $P^*$.

     \item Let $\CC_F = F \otimes_{\QQ} \CC$. Then we define a left action of $P(\RR)^+$ on the space $\mathcal{J}_F = \cH_F \times \CC_F$ via
     \[ \begin{pmatrix} 1 & r &s \\ 0 & a & b \\ 0 & c & d \end{pmatrix} \cdot (\tau, z) = \left(\frac{a\tau + b}{c\tau + d}, \frac{z + r \tau + s}{c\tau + d}\right).\]
     (This corresponds to identifying $(\tau, z) \in \mathcal{J}_F$ with $[z : \tau : 1] \in \mathbf{P}^2(\CC_F)$ and acting on this by left-multiplication.)

     \item For $V^* \subseteq P^*(\AA_f)$ open compact, we write
     \[ A(V^*) = \dblquot{P^*(\QQ)^+}{(P^*(\AA_f) \times \mathcal{J}_F)}{V^*},\]
     which is an example of a \emph{mixed Shimura variety}.
    \end{enumerate}
   \end{definition}

   \begin{proposition}[{cf.~\cite[Example VI.1.10]{milne-mixed}}]
    If the image of $V^*$ in $G^*(\AA_f)$ is sufficiently small, the quotient $A(V^*)$ is the complex points of a quasiprojective algebraic variety over $\QQ$; and the natural map $\mathcal{J} \to \cH$ makes $A(V^*)$ into an abelian variety of dimension $[F : \QQ]$ over $Y^*(U^*)$, where $U^*$ is the image of $V^*$ in $G^*$, with endomorphisms by an order in $\cO_F$.
   \end{proposition}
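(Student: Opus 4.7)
The plan is to deduce this from the general theory of mixed Shimura varieties developed in the cited reference of Milne, after identifying the fibration structure of $A(V^*)$ explicitly. Rather than grinding through Pink's axioms for the datum $(P^*, \mathcal{J}_F)$, I would sketch the fibration $A(V^*) \to Y^*(U^*)$ induced by $\mathcal{J}_F \to \cH_F$ and explain why each fibre is an abelian variety of the claimed type.

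First, the unipotent radical $N$ sits in a semidirect decomposition $P^* = N \rtimes G^*$ via the natural splitting, and the projection $P^* \twoheadrightarrow G^*$ induces the morphism $A(V^*) \to Y^*(U^*)$. Writing $W = V^* \cap N(\AA_f)$, the preimage in $A(V^*)$ of a point of $Y^*(U^*)$ represented by $(g, \tau)$ with $g \in G^*(\AA_f)$ and $\tau \in \cH_F$ is naturally identified with $\CC_F / \Lambda_{g,\tau}$, where $\Lambda_{g,\tau}$ is the image in $\CC_F$ of an $\cO_F$-lattice in $N(\QQ) \cong F^2$ (determined by $g$ and $W$) under the map $(r, s) \mapsto r\tau + s$. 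This lattice is free of rank $2$ over $\cO_F$, hence has real rank $2[F:\QQ]$ inside the $[F:\QQ]$-dimensional $\CC$-vector space $\CC_F$, so the fibre is a complex torus of dimension $[F:\QQ]$.

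Second, the $\cO_F$-action on $N$ (coming from its definition as a Weil restriction) induces an $\cO_F$-module structure on $\Lambda_{g,\tau}$ which is compatible with the diagonal $F$-action on $\CC_F = F \otimes_\QQ \CC$, so the quotient carries multiplication by an order in $\cO_F$. Polarisability follows by a standard Riemann-form argument: one takes a nonzero $\QQ$-bilinear alternating form on $N(\QQ) \cong F^2$ (for example a twist of the determinant pairing by a suitable totally positive element of $F$) which pairs positively with the complex structure on $\CC_F$ induced by $\tau \in \cH_F$, yielding the structure of an abelian variety with real multiplication on each fibre.

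Third, to obtain the algebraic structure over $\QQ$ globally, I would appeal to the general construction of canonical models of mixed Shimura varieties: $(P^*, \mathcal{J}_F)$ is a mixed Shimura datum of reflex field $\QQ$, and the morphism of data $(P^*, \mathcal{J}_F) \to (G^*, \cH_F)$ produces an abelian scheme structure on $A(V^*) \to Y^*(U^*)$ defined over $\QQ$, with the $\cO_F$-action descending. The main obstacle in writing this out fully would be the verification of Pink's axioms for $(P^*, \mathcal{J}_F)$ together with the descent of both the canonical model and the abelian-scheme structure from $\CC$ to $\QQ$; these are, however, entirely standard and precisely the content of the cited example, so it will suffice to invoke it.
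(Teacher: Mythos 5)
The paper gives no proof of this proposition at all; it is asserted with a bare ``cf.'' citation to Milne's Example VI.1.10, leaving the verification to the general machinery of mixed Shimura varieties. Your sketch correctly unpacks what that citation means: the semidirect decomposition $P^* = N \rtimes G^*$, the identification of fibres with complex tori $\CC_F / \Lambda_{g,\tau}$, the real-multiplication structure, the Riemann form, and the descent to $\QQ$ via canonical models --- so you are taking essentially the same approach, just with the details made explicit. Two small imprecisions worth noting: (1) the lattice $\Lambda_{g,\tau}$ is of $\cO_F$-\emph{rank} $2$, hence $\ZZ$-rank $2[F:\QQ]$, but it need not be \emph{free} over $\cO_F$ for arbitrary $V^*$ (only projective); the dimension count is unaffected. (2) The Riemann form you describe should be spelled out as the composite of the determinant pairing on $F^2$, a twist by an element of $\fd^{-1}$ (or a totally positive element scaling it into $\fd^{-1}$), and the trace $\Tr_{F/\QQ}$ to land in $\QQ$; positivity on the complex structure determined by $\tau \in \cH_F$ then uses the totally-positive-imaginary-part condition. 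Neither issue affects the validity of the argument.
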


   \begin{remark}
    If we define $\mathcal{P}$ to be the subgroup of $P(\AA_f)$ with $\stbt a b c d \in \cG$, where $\cG$ is as in Definition \ref{def:calG}, then $P^*(\AA_f) \subset \mathcal{P}$ and we have
    \[ A(V^*) = P(\QQ)^+ \backslash (\mathcal{P} \times \mathcal{J}_F) / V^*, \]
    so that the left action of $P^*(\AA_f)$ on the system of varieties $A(V^*)$ for varying $V^*$ naturally extends to an action of $\mathcal{P}$.
   \end{remark}

   We are particularly interested in subgroups of the form
   \[ V^* = \widehat{\cO}_F^2 \rtimes U^*,\]
   where $U^* \subset G^*(\hat\ZZ)$. The corresponding abelian varieties have endomorphisms by $\cO_F$. We shall abuse notation slightly by writing $A(U^*)$ for $A(\widehat{\cO}_F^2 \rtimes U^*)$; this notation will only be used when $U^* \subset G^*(\hat\ZZ)$, so that this object is well-defined.

   \begin{definition}
    Let $g \in \cG$ be such that $g^{-1}$ has entries in $\widehat\cO_F$, and let $U^* \subset G^*(\AA_f)$ be a sufficiently small open compact subgroup such that $U^*$ and $g U^* g^{-1}$ are both contained in $G(\hat\ZZ)$.

    We define an $\cO_F$-linear isogeny
    \[ \Phi_g: A(U^*) \to g^* A(g U^* g^{-1}) \]
    of abelian varieties over $Y^*(U^*)$ as the composite map
    \[ A(\widehat{\cO}_F^2 \rtimes U^*) \rTo^{\tilde g} A\left( \left(\widehat{\cO}_F^2 \cdot g^{-1}\right) \rtimes g U^* g^{-1}\right) \rTo A( \widehat{\cO}_F^2 \rtimes g U^* g^{-1}), \]
    where the first map is the left action of the element $\tilde g = \stbt 1 0 0 g \in \mathcal{P}$, and the second map is the natural quotient map given by the inclusion $\widehat{\cO}_F^2 \cdot g^{-1} \subset \widehat{\cO}_F^2$.
   \end{definition}

   If $g = \stbt {x}00{x}^{-1}$, for $x \in \cO_F$, then $\Phi_g$ is simply the endomorphism action of $x$ on $A(U^*)$. We can use this to extend the definition of $\Phi_g$ to all $g \in \cG$ as an element of $\Hom\left(A(U^*), g^*A(gU^*g^{-1})\right) \otimes \QQ$.

   \begin{remark}
    One easily verifies that the isogenies $\Phi_{g}$ satisfy the obvious cocycle condition $\Phi_{g_1 g_2} = g_2^*(\Phi_{g_1}) \circ \Phi_{g_2}$ wherever both sides are defined. Thus the collection of abelian varieties $A(U^*)$, for varying $U^* \subseteq G^*(\hat\ZZ)$, defines an abelian variety $A$ over the pro-variety $Y^*$ which is ``$\cG$-equivariant up to isogeny'' (i.e.~it is a $\cG$-equivariant object in the isogeny category of abelian varieties over $Y^*$).
   \end{remark}

  \subsection{Hecke correspondences}
   \label{sect:heckecorr}

   The Hecke operators defined in \S \ref{sect:heckealg} can naturally be regarded as algebraic correspondences on $Y^*(\fM, \fN)$, and hence as endomorphisms of the cohomology of this variety (for any reasonable cohomology theory). Using the isogenies $\Phi_g$ of the previous section, we can extend this to define actions (both contravariant and covariant) of Hecke operators on the cohomology of the abelian varieties $A(\fM, \fN)$ over $Y^*(\fM, \fN)$.

   We have, in fact, \emph{two} possible actions of Hecke operators on cohomology, via contravariant (pullback) and covariant (pushforward) functoriality. We shall distinguish between the two by using a prime symbol when the covariant action is intended, so that $T(x)$ and $T'(x)$ denote the contravariant and covariant actions of the same abstract double coset. Since pushforward by an automorphism coincides with pullback by its inverse, we have $\langle x \rangle' = \langle x^{-1} \rangle$ and $\sigma_x' = \sigma_x^{-1}$.

   (In the norm relations for our Euler system, the covariant Hecke operators $T'(x)$ and $U'(x)$ will play the main role; philosophically, this reflects the fact that Euler systems are in some sense homological rather than cohomological objects.\footnote{This is analogous to the distinction between Picard and Albanese functoriality in the construction of the Euler system of Heegner points. (We are grateful to one of the anonymous referees for this observation.)})

  \begin{remark}
   If $x \in \cO_F$ is totally positive and square-free, the action of $T(x)$ on the cohomology of $A(\fM, \fN)$ is given by the composition of the following four maps:
   \begin{itemize}
    \item pullback along the natural degeneracy map $A(\fM, \fN(x)) \to A(\fM, \fN)$;
    \item pullback via the isomorphism $Y(\fM(x), \fN) \to Y(\fM, \fN(x))$ given by the matrix $g = \stbt {x^{-1}} 0 0 1$, which corresponds to $\tau \mapsto \tau/x$ on $\cH_F$;
    \item pullback along the isogeny
    \[ \Phi_g: A(\fM(x), \fN) \to g^* A(\fM, \fN(x)) \]
    of abelian varieties over $Y(\fM(x), \fN)$, which corresponds to the map
    \[ (\tau, z \bmod \cO_F\cdot \tau + \cO_F) \to (\tau, z \bmod \cO_F \tfrac{\tau}{x} + \cO_F) \]
    on $\mathcal{J}$;
    \item pushforward via the natural degeneracy map $A(\fM(x), \fN) \to A(\fM, \fN)$.
   \end{itemize}
   The action of $T'(x)$ is exactly the dual of this (i.e.~interchanging pullbacks and pushforwards). This is the natural analogue for $G^*$ of Kato's description of the Hecke operators $T(\ell)$ for modular curves in \cite[\S 2.8 \& \S 4.8]{kato04}. As remarked in \S 4.9.4 of \emph{op.cit.}, the correspondences $T(x)$ and $T'(x)$ thus defined preserve the geometric connected components of $Y^*(\fM, \fN)$. They do \emph{not} commute with the action of $\SL_2(\cO_F / \fN)$ in the case $\fM = \fN$.
  \end{remark}

  \subsection{The Asai Euler factor}
   \label{sect:asaiEF}

   We now impose the assumption that $[F : \QQ] = 2$. For a (rational) prime $\ell \nmid \Delta \cdot \Nm_{F/\QQ}(\fN)$, we define the following polynomial with coefficients in the Hecke algebra of level $U^*(\fM, \fN)$ (where we continue to assume, as before, that $\fM \mid \fN$):

   \begin{definition}
    \label{def:eulerfactor}
    The \emph{Asai Euler factor} is the polynomial $P_\ell(X)$ defined as follows:
    \begin{itemize}
     \item If $\ell$ is inert, we set
     \[ P_\ell(X) = \left(1 - T(\ell) X + \ell^2  S(\ell) X^2\right)\left(1 - \ell^2 S(\ell)  X^2\right).\]
     \item If $\ell$ is split, we set
     \begin{multline*}
      P_\ell(X) = 1 -  T(\ell) X
      + \Big(T(\ell)^2 - T(\ell^2) -  \ell^2 S(\ell)\Big) X^2 \\
      - \ell^2  S(\ell) T(\ell) X^3 + \ell^4S(\ell)^2  X^4.
     \end{multline*}
    \end{itemize}
   \end{definition}

   We shall see in Proposition \ref{prop:proj-Pl} that the action of $P_\ell(X)$ on a Hilbert modular eigenform will give the local factor at $\ell$ of the Asai $L$-function (justifying the term ``Asai Euler factor'').

   \begin{remark}
    If $\ell$ is split and the primes above $\ell$ are narrowly principal, so we can write $\ell = \lambda \bar\lambda$ where $\lambda \in \cO_F^+$, then the $X^2$ coefficient in $P_\ell(X)$ can also be written
    \[
     \ell \langle \lambda \rangle R(\lambda) T(\bar\lambda)^2 + \ell \langle \bar\lambda \rangle R(\bar\lambda) T(\lambda)^2 - 2 \ell^2 \langle\ell \rangle R(\ell).
    \]
    This latter formula will be used in the proofs of the norm relations, where we will always be assuming that the primes above $\ell$ are narrowly principal. However, the version using $T(\ell^2)$ is more general, and in particular it shows that the Hecke operators appearing in $P_\ell$ always lie in $G^*(\AA_f)$, rather than the slightly larger group $\cG$.
   \end{remark}

 \section{Asai--Flach classes}

  We now define a collection of motivic cohomology classes for Hilbert modular surfaces. We make no claim to originality here: this construction is fundamentally the same as that of \cite{kings98}, although we express it in a slightly different language and setup in order to reinforce the similarities to the construction of \cite{kingsloefflerzerbes15a}.

  \subsection{Formalism of relative motives}

   We begin by recalling the formalism of ``relative motives'' attached to families of varieties over a base; for more detail see \cite{deningermurre91}.

   \subsubsection{Relative Chow motives}

    Let $k$ be a field of characteristic 0, and $S$ a smooth, connected, quasi-projective $k$-variety. Then there exists a $\QQ$-linear, pseudo-abelian tensor category $\CHM(S)_{\QQ}$ of \emph{relative Chow motives} over $S$, equipped with a contravariant functor
    \[ M: \operatorname{SmPr}(S) \to \CHM(S)_{\QQ}.\]
    Here $\operatorname{SmPr}(S)$ denotes the category of smooth projective $S$-schemes. Similarly, for any coefficient field $L$ of characteristic 0 there is a category $\CHM(S)_L$, which coincides with the pseudo-abelian envelope of $L \otimes_{\QQ} \CHM(S)_\QQ$.

    \begin{remark}
     Concretely, an object of $\CHM(S)_L$ is given by a triple $(X, \alpha, n)$, where $X$ is a smooth projective $S$-variety, $\alpha \in \operatorname{CH}^{\dim(X / S)}(X \times_S X)$ is an idempotent, and $n \in \ZZ$. The Tate object is $(S, \id, 1)$.
    \end{remark}

   \subsubsection{Realisations}

    It is well known that Weil cohomology theories, such as de Rham, Betti, or \'etale cohomology, give functors on the category $\CHM(\Spec k)_L$ (where $L$ is the appropriate coefficient field for the Weil cohomology). These naturally take values in \emph{graded} $L$-vector spaces, equipped with various extra structures depending on the choice of cohomology theory.

    These have analogues in the relative setting, taking values in categories of sheaves on $S$ with extra structure:
    \begin{itemize}
     \item if $L$ is a $p$-adic field, the \emph{$p$-adic \'etale realisation} from $\CHM(S)_{L}$ to lisse \'etale $L$-sheaves on $S$;
     \item if $k = \RR$ or $\CC$ and $L$ is a subfield of $\CC$, then the \emph{Hodge realisation} from $\CHM(S)_L$ to the category of variations of pure $L$-Hodge structures on $S$.
    \end{itemize}

    (There are also realisations in de Rham cohomology, Betti cohomology, etc, but we shall not use these here.) If $\cF \in \operatorname{Obj}(\CHM(S)_L)$, we write $\cF_{\et}$, $\cF_{\cH}$, etc for its realisations in the appropriate cohomology theories. These are naturally graded objects: we have $\cF_\et = \bigoplus_j \Gr^j \cF_\et$, where $\Gr^j M(X)_{\et} = \cH^{j}_\et(X / S)$ is the $j$-th relative \'etale cohomology sheaf of $X / S$, and similarly for the other realisation functors.

    \begin{remark}
     Note that the grading need not be concentrated in degrees $\ge 0$: indeed, the realisations of the Tate motive over $S$ are concentrated in degree $-2$.
    \end{remark}

    \begin{theorem}[Deninger--Murre, \cite{deningermurre91}]\label{thm:DM}
     If $A / S$ is an abelian variety, there is a canonical decomposition in the category $\CHM(S)_{\QQ}$,
     \[ M(A) = \bigoplus_{i = 0}^{2 \operatorname{dim} A} M^i(A), \]
     such that, if $\spadesuit$ denotes any of the above realisations, $\Gr^j M^i(A)_{\spadesuit}$ is zero if $i \ne j$.
    \end{theorem}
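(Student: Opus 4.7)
The plan is to realize the decomposition by exhibiting mutually orthogonal idempotents $\pi^0, \ldots, \pi^{2d}$ (with $d = \dim_S A$) inside $\operatorname{CH}^d(A \times_S A)_{\QQ}$ whose sum is the class of the diagonal $\Delta_A$, and which satisfy $[n]^* \circ \pi^i = n^i \pi^i$ for every integer $n \ge 1$, where $[n]: A \to A$ is multiplication by $n$ (viewed as a self-correspondence via its graph). Granting such idempotents, one sets $M^i(A) \coloneqq (A, \pi^i, 0)$; the sum relation gives $M(A) = \bigoplus_{i=0}^{2d} M^i(A)$ in $\CHM(S)_\QQ$, and the grading statement for any realisation $\cT \in \{\et, \cH, \ldots\}$ follows from Lieberman's classical identity that $[n]^*$ acts on $\Gr^j \cT(A)$ as multiplication by $n^j$: the eigenspace decomposition then forces $\Gr^j M^i(A)_{\cT} = 0$ whenever $i \ne j$. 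Working with $\QQ$-coefficients is essential because extracting eigenspaces requires inverting denominators coming from Vandermonde determinants.

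To construct the $\pi^i$ I would follow Beauville's Fourier-theoretic strategy. First, using the Poincar\'e bundle on $A \times_S A^\vee$, one defines a Fourier transform on $\operatorname{CH}^*(A)_\QQ$ and establishes the Beauville eigenspace decomposition $\operatorname{CH}^*(A)_\QQ = \bigoplus_{s,i} \operatorname{CH}^s_{(i)}(A)_\QQ$, where $[n]^*$ acts on the $(s,i)$-summand as $n^i$. The key input is that $[n]^*$ acts semisimply on $\operatorname{CH}^*(A)_\QQ$ with eigenvalues among the powers of $n$, which is proved using the Pontryagin product and the relation $[n]_* [n]^* = n^{2d}$. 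Applying this analysis to $A \times_S A$ regarded as an abelian scheme over $S$ (by rewriting the correspondence action in terms of addition and inversion) produces a decomposition of $\Delta_A$ into pieces $\pi^i$. A Vandermonde-interpolation argument applied to the relations $[n]^* \Delta_A = \sum_i n^i \pi^i$ for $n = 1, 2, \ldots, 2d+1$ uniquely determines the $\pi^i$ as explicit $\QQ$-linear combinations of the correspondences $\Gamma_{[n]}$, and simultaneously shows they are orthogonal idempotents.

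I would carry this out first for an abelian variety over a field, recovering the original statement of Deninger and Murre, and then extend it to a smooth base $S$ by verifying that every step (zero section, multiplication maps, Pontryagin product, the Poincar\'e bundle) is available over $S$ and that the resulting cycles on $A \times_S A$ commute with base change to geometric fibres; the realisation compatibility is then fibrewise. The main obstacle is the semisimplicity of $[n]^*$ and the control of its eigenvalues in the \emph{relative} Chow setting: unlike in singular cohomology, there is no \emph{a priori} Hodge-theoretic input, and one must argue purely within the formalism of Pontryagin and intersection products on the relative Chow ring of $A$, where care is needed because the Chow groups of the fibres and of the total space are related only by a spectral sequence, not directly.
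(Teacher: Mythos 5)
The paper cites this result from Deninger--Murre without reproving it, so there is no internal proof to compare against; your sketch is essentially the argument of the cited reference. It proceeds exactly as you describe: define a Fourier transform via the Poincar\'e bundle, establish the Beauville eigenspace decomposition of the relative Chow groups under $[n]^*$, extract the Chow--K\"unneth projectors of $\Delta_A$ by Vandermonde interpolation in $n$, verify orthogonality via the convolution/Pontryagin description of composition of correspondences, and deduce the grading of each realisation from Lieberman's identity $[n]^*|_{H^j} = n^j$. One slip worth correcting: the Beauville decomposition must be applied to $A \times_S A$ viewed as an abelian scheme over $A$ via the first projection, \emph{not} over $S$. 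With base $A$, the relevant endomorphism $1 \times [n]$ is the $n$-multiplication of that abelian scheme, and it separates the K\"unneth components of $\Delta_A$ with eigenvalues $n^0, \dots, n^{2d}$; over $S$ the $n$-multiplication is $[n] \times [n]$, which scales every K\"unneth component of $\Delta_A$ by the same factor $n^{2d}$ and therefore cannot distinguish them.
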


   \subsubsection{Motivic cohomology and regulators}

    Since a smooth projective $S$-variety is in particular a smooth quasiprojective $k$-variety, one can define \emph{motivic cohomology groups} with $L$-coefficients, $H^i_{\mot}(X, L(j))$, for $X \in \operatorname{SmPr}(S)$ as in \cite{beilinson84}.

    We adopt the following convention: if $\cF \in \operatorname{Obj} \CHM(S)_L$ is given by a triple $(X, \alpha, n)$ as above, and the realisations of $\cF$ are non-zero in only one degree $r$, then we write
    \[
     H^i_{\mot}(S, \cF(j)) \coloneqq
     \alpha^* H^{(i + r + 2n)}_{\mot}(X, L(j + n)).
    \]
    With this convention, we obtain regulator maps
    \[ r_{\cT}: H^i_{\mot}(S, \cF(j)) \to H^i_{\cT}(S, \cF_\cT(j)) \]
    for each of the above cohomology theories.

    \begin{remark}
     The shift in indexing occurs because the $\cT$-realisation of $\cF$ should be considered as a complex of sheaves concentrated in degree $r$, but we are forgetting the grading, i.e.~treating it as if it were concentrated in degree 0.
    \end{remark}

    \subsubsection{Functoriality in S}

    Let $S$, $T$ be two smooth connected quasi-projective $k$-varieties, so that the categories $\CHM(S)_L$ and $\CHM(T)_L$ are defined. If $\iota: S \to T$ is a morphism, then there is a pullback functor
    \[ \iota^*: \CHM(T)_L \to \CHM(S)_L \]
    which maps the motive of a $T$-variety $X$ to the motive of $\iota^* (X) = S \times_{\iota, T} X$. This is clearly compatible with the pullback functors for the various realisations.

    If we assume $\iota$ to be a closed immersion of codimension $d$, there is a Gysin map
    \[ \iota_*: H^i_{\mot}(S, \iota^* \cF(n)) \to H^{i + 2d}_{\mot}(T, \cF(n + d))\]
    for any $\cF \in \operatorname{Obj}\left(\CHM(S)_L\right)$, compatible with the pushforward maps for the realisations described above. If $\cF = M(X)$ for a variety $X / T$, this is just the pushforward map
    \[
    H^i_{\mot}(\iota^*(X), L(n)) \to H^{i + 2d}_{\mot}(X, L(n + d))
    \]
    corresponding to the inclusion $\iota^*(X) \into X$ (cf.~\cite[Theorem 15.15]{mazzavoevodskyweibel06}).

  \subsection{Relative motives over Shimura varieties}

   We will be interested in the cohomology of certain relative motives (and their realisations) arising from the universal abelian varieties over modular curves and Hilbert modular surfaces. (This is an instance of a much more general construction, applying to arbitrary PEL Shimura varieties, due to Ancona \cite[\S 8]{ancona15}.)

   \subsubsection{Modular curves}

    Let $U$ be an open compact subgroup of $\GL_2(\AA_f)$, and suppose that $U$ is sufficiently small. Then we have a modular curve $Y_{\QQ}(U)$, which is a smooth affine $\QQ$-variety; and if $V = \hat\ZZ^2 \rtimes U \subset P_{\QQ}(\AA_f)$, then we obtain an elliptic curve $\cE = A_{\QQ}(V)$ over $Y_{\QQ}(U)$.

    \begin{definition}
     We write $\sH_L(\cE)$ for the motive $M^1(\cE)(1)$, where $M^1(\cE)$ is as given by Theorem~\ref{thm:DM}; and $\TSym^k \sH_L(\cE)$ for its $k$-th symmetric tensor power.
    \end{definition}

    Here the symmetric tensor power is defined as the invariants for the action of the symmetric group on $\sH_L(\cE)^{\otimes k}$, whereas the more familiar symmetric power $\Sym^k \sH_L(\cE)$ is the coinvariants. These two are in fact isomorphic, since $L$ is a field of characteristic 0 and thus $k!$ is invertible in $L$. However, the definition of the Clebsch--Gordan map in \S \ref{sect:CGmap} below is simpler to describe using the $\TSym$ modules; and we shall also later need to consider analogous coefficient sheaves in \'etale cohomology over $\Zp$, where the distinction between $\Sym^k$ and $\TSym^k$ is significant if $k \ge p$. See \cite[\S 2.2]{kingsloefflerzerbes15b} for further discussion.

    \begin{remark}
     This construction is consistent with the ad-hoc definition of the groups $H^i_{\mot}(Y_\QQ(U), \TSym^k \sH_\QQ(\cE)(j))$ given in \cite[Definition 3.2.2]{kingsloefflerzerbes15a}. (The case $L \ne \QQ$ was not considered \emph{loc.cit.}.) For general $L$ we have
     \[
      H^i_{\mot}(Y_\QQ(U), \TSym^k \sH_L(\cE)(j)) = L \otimes_{\QQ} H^i_{\mot}(Y_\QQ(U), \TSym^k \sH_\QQ(\cE)(j)).
     \]
    \end{remark}
   \subsubsection{Shimura varieties for \texorpdfstring{$G^*$}{G*}}

    We now suppose $U^*$ is an open compact subgroup in $G^*(\AA_f)$, where $G^*$ is the group defined in \S \ref{sect:fldsgps} above, and we set $V^* = \widehat{\cO}_F^2 \rtimes U^*$. Again, we suppose $U^*$ to be sufficiently small, so that the mixed Shimura variety $\cA = A(V^*)$ is an abelian surface over $Y^*(U^*)$.

    We define an object of $\CHM(Y^*(U^*))_L$ by
    \[ \sH_L(\cA) = M^3(\cA)(2) = M^1(\cA)^\vee. \]
    Since $\cO_F$ acts on $\cA$ by endomorphisms, for each $x \in \cO_F$ we have an endomorphism $[x]_*$ of $\sH_L(\cA)$, and if $x \in \ZZ$ then $[x]_*$ is simply multiplication by $x$.

    By enlarging $L$ if necessary, we now suppose that there exist non-zero embeddings $F \into L$, and we let $\theta_1, \theta_2$ be the two such embeddings. Then the relative motive $\sH_L(\cA)$ decomposes as $\sH_L(\cA)^{(1)} \oplus \sH_L(\cA)^{(2)}$ where $\sH_L(\cA)^{(i)}$ denotes the direct summand where we have
    \[ [x]_* = \theta_i(x) \]
    for $x \in \cO_F$.

    \begin{remark}
     We may take $\sH_L(\cA)^{(1)}$ and $\sH_L(\cA)^{(2)}$ to be the images of the orthogonal idempotents
     \[ \frac{ \theta_1(\sqrt{D}) + [\sqrt{D}]_*}{2\theta_1(\sqrt{D})} \quad\text{and}\quad
      \frac{\theta_1(\sqrt{D}) - [\sqrt{D}]_*}{2\theta_1(\sqrt{D})}.\]
    \end{remark}

    \begin{definition}
     Let $k, k' \ge 0$. The relative Chow motive $\TSym^{[k, k']} \sH_L(\cA)$ over $Y^*(U^*)$ is defined by
     \[ \TSym^k\left( \sH_L(\cA)^{(1)}\right) \otimes \TSym^{k'} \left(\sH_L(\cA)^{(2)}\right).\]
    \end{definition}

    \begin{remark}
     \label{rmk:powerofA}
     Thus $\TSym^{[k, k']} \sH_L(\cA)$ can be realised as a direct summand of the motive
     \[
      M^{3r}(\cA^r)(2r) = \left(M^{r}(\cA^{r})\right)^\vee,
     \]
     where $r = k + k'$. Hence, for any $i, n \in \ZZ$, the motivic cohomology group $H^i_{\mot}(Y^*(U^*), \TSym^{[k, k']} \sH_L(\cA)(n))$ is a direct summand of $H^{i + 3r}_{\mot}\left(\cA^r, L(n + 2r)\right)$. We can also realise it as a direct summand of $H^{i + r}_{\mot}\left(\cA^r, L(n + r)\right)$, since the canonical polarisation gives an isomorphism $M^3(\cA) \cong M^1(\cA)(-1)$.
     One can check that if $k = k'$, then $\TSym^{[k, k']}\sH_L(\cA)$ can be defined without assuming that $F \subseteq L$.
    \end{remark}

    Since we have defined the Hecke correspondences $T(\ell)$, $T'(\ell)$, $R(\ell)$, etc as correspondences on $\cA$, the cohomology groups $H^i_{\mot}\left(Y^*(U^*), \TSym^{[k, k']} \cH_L(\cA)(n)\right)$ for $i, n \in \ZZ$ acquire actions of these operators. Note that $R'(x)$ acts, by construction, as multiplication by $\theta_1(x)^k \theta_2(x)^{k'} \in L^\times$.

   \subsubsection{Shimura varieties for \texorpdfstring{$G$}{G}}
    \label{sect:shvarG}

    We now consider the case of a sufficiently small open compact subgroup $U \subset G(\AA_f)$. This case is not covered by \cite{ancona15}, since the Shimura datum for $G$ is not of PEL type; we are grateful to Giuseppe Ancona for explaining to us how to extend his construction to this case. We take $k, k' \ge 0$ such that $k = k' \bmod 2$, and we choose integers $t, t'$ such that $k + 2t = k' + 2t'$. We write $\mu$ for the quadruple $(k, k', t, t')$.

    \begin{definition}
     Let $U^* = U \cap G^*$. We let $\widetilde \sH_L^{[\mu]}$ be the relative Chow motive over $Y^*(U^*)$ defined by
     \begin{multline*}
      \Big[\TSym^k \left(\sH_L(\cA)^{(1)}\right) \otimes \det\left(\sH_L(\cA)^{(1)}\right)^t\Big] \\ \otimes \Big[\TSym^{k'} \left(\sH_L(\cA)^{(2)}\right) \otimes \det\left(\sH_L(\cA)^{(2)}\right)^{t'}\Big].
     \end{multline*}
    \end{definition}

    \begin{remark}
     In fact both $\det(\sH_L(\cA)^{(1)})$ and $\det(\sH_L(\cA)^{(2)})$ are isomorphic in $\CHM(Y^*(U^*))_L$ to the Tate motive $L(1)$, since there is a canonical $\cO_F$-linear isogeny from $\cA$ to its dual (the dual can be identified with $\cA \otimes_{\cO_F} \fd^{-1}$). However, this isomorphism is not $\cG$-equivariant, so it does not respect the Hecke action on the cohomology of $\sH^{[\mu]}$.
    \end{remark}

    Let $Y(U)_1$ denote the image of $Y^*(U^*)$ in $Y(U)$ (which is the union of a subset of the components of $Y(U)$), and let $H$ be the finite abelian group $(U \cap \cG) / U^*\cdot(U \cap Z(\cG))$ of Proposition \ref{prop:Gquotient}, so that
    \[ Y(U)_1 = Y^*(U^*) / H.\]
    Let $q: Y^*(U^*) \to Y(U)_1$ be the quotient map.

    \begin{proposition}
     There is a natural action of $H$ on $q_* \widetilde \sH_L^{[\mu]}$, acting via automorphisms in the category $\CHM(Y(U)_1)_L$.
    \end{proposition}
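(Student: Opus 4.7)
The plan is to construct the action of $H$ by combining the right-multiplication action of $\cG \cap U$ on $Y^*(U^*)$ with the isogenies $\Phi_g$ introduced in the preceding section. First I would verify that for each $g \in \cG \cap U$ the map $[g] \colon Y^*(U^*) \to Y^*(g U^* g^{-1})$ is in fact an automorphism of $Y^*(U^*)$ over $Y(U)_1$. The key observation is that $G^*$ is normal in $G$ (since $G/G^*$ is abelian), which forces $g U^* g^{-1} \subseteq G^* \cap U = U^*$; equality then follows by comparing indices, and $q \circ [g] = q$ holds by the very definition of $Y(U)_1 = Y^*(U^*)/H$.

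Next I would use the $\cO_F$-linear isogeny $\Phi_g \colon \cA \to [g]^*\cA$ to produce an isomorphism $[g]^*\sH_L(\cA) \xrightarrow{\sim} \sH_L(\cA)$ in $\CHM(Y^*(U^*))_L$; this is invertible because $\Phi_g$ is an isogeny and $\QQ \subseteq L$. Its $\cO_F$-linearity means it respects the eigenspace decomposition $\sH_L(\cA) = \sH_L(\cA)^{(1)} \oplus \sH_L(\cA)^{(2)}$, so forming symmetric powers and determinant twists yields an isomorphism $[g]^*\widetilde\sH_L^{[\mu]} \xrightarrow{\sim} \widetilde\sH_L^{[\mu]}$. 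Pushing forward along $q$ and using $q \circ [g] = q$ then gives an automorphism $[g]_*$ of $q_*\widetilde\sH_L^{[\mu]}$ in $\CHM(Y(U)_1)_L$; the cocycle identity for the $\Phi_\bullet$ recorded in the earlier remark ensures $g \mapsto [g]_*$ is compatible with composition.

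The main obstacle, as expected, is showing that this assignment factors through the quotient $H = (U \cap \cG)/(U^* \cdot (U \cap Z(\cG)))$. For $u \in U^*$ the element $\tilde u = \stbt{1}{0}{0}{u}$ lies in $V^* = \widehat\cO_F^2 \rtimes U^*$, so its right action on $A(V^*)$ is trivial, whence $\Phi_u = \mathrm{id}$ and $[u]_*$ is the identity. For a central element $z = \alpha I \in Z(\cG) \cap U$, the map $[z]$ is trivial on $Y^*(U^*)$ and, by the formula noted in the definition of $\Phi_g$, $\Phi_z$ is the endomorphism $[\alpha^{-1}]$ of $\cA$, acting as multiplication by $\theta_i(\alpha)^{-1}$ on $\sH_L(\cA)^{(i)}$. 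The induced scalar on $\widetilde\sH_L^{[\mu]}$ is therefore
\[
 \theta_1(\alpha)^{-(k+2t)}\,\theta_2(\alpha)^{-(k'+2t')} = \Nm_{F/\QQ}(\alpha)^{-(k+2t)},
\]
using the balancing condition $k+2t = k'+2t'$ built into $\mu$. By Proposition~\ref{prop:Gquotient}, the denominator of $H$ involves only squares $\alpha = \epsilon^2$ of units $\epsilon \in \cO_F^\times$, and for such $\alpha$ one has $\Nm_{F/\QQ}(\alpha) = \Nm_{F/\QQ}(\epsilon)^2 = 1$, so the action of $z$ is trivial. This is precisely the point where the arithmetic of units in $F$, the explicit form of the denominator of $H$, and the balancing condition on the weights all come together.
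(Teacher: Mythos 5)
Your overall approach is the same as the paper's: reduce to checking that the denominator $U^* \cdot (Z(\cG) \cap U)$ acts trivially on $\widetilde\sH_L^{[\mu]}$, handle $U^*$ via the observation that $\Phi_u = \mathrm{id}$ for $u \in U^*$, and show that a central scalar acts by a power of $\Nm_{F/\QQ}$ that must be $1$ because of the balancing condition $k+2t = k'+2t'$.

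The gap is in the last step, where you invoke Proposition~\ref{prop:Gquotient} to claim that the scalars $\alpha$ with $\alpha I \in Z(\cG) \cap U$ are squares $\epsilon^2$ of units. This misreads the proposition: the set $\left\{\epsilon^2 : \epsilon\in\cO_F^\times,\ \stbt\epsilon 0 0 \epsilon\in U\right\}$ is the denominator of the \emph{right-hand} side of the isomorphism there — i.e., the image of $U^*\cdot(Z(\cG)\cap U)$ under the comparison map — and is not a description of the scalar entries themselves. In fact $U \cap Z(\cG) = \hat\ZZ^\times\cdot\cO_F^{\times+}$, and the group $\cO_F^{\times+}$ of totally positive units is not in general a group of squares: if the fundamental unit $\epsilon_0$ has norm $+1$, then $\pm\epsilon_0$ is totally positive but not a square. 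The correct reason that $\Nm_{F/\QQ}(\alpha)=1$ (the arithmetic crux of the whole proof) is simply that $\alpha$ is a totally positive unit, so its norm is a positive element of $\{\pm 1\}$, hence $+1$; it has nothing to do with being a square. Note also that your use of the identity $\Phi_{\alpha I}=[\alpha^{-1}]$ and of the embeddings $\theta_i$ applies only to global scalars $\alpha\in F^\times$, so it says nothing about the $\hat\ZZ^\times$ factor of $U\cap Z(\cG)$, which the paper disposes of separately.
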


    \begin{proof}
     We have defined an action of $\cG \cap M_{2 \times 2}(\widehat\cO_F)$ on $\cA$ by isogenies, compatible with its action on $Y^*(U^*)$, and the elements of $\cG \cap U$ act as automorphisms; so it suffices to check that the subgroup $U \cap Z(\cG)$ acts trivially on the direct factor $\widetilde \sH_L^{[\mu]}$ of $\cA \times \dots \times \cA$.

     By construction, pushforward by a scalar matrix $x \in F^\times$ acts on $\widetilde \sH_L^{[\mu]}$ as multiplication by $\theta_1(x)^{k + 2t} \theta_2(x)^{k' + 2t'} = \Nm_{F/\QQ}(x)^w$, where $w$ is the common value $k + 2t = k' + 2t'$. Since $U \cap Z(\cG) = \hat\ZZ^\times \cdot \cO_F^{\times +}$, with the $\hat\ZZ^\times$ factor acting trivially, and units in $\cO_F^{\times +}$ all have norm 1, the action of $U \cap Z(\cG)$ is trivial as required.
    \end{proof}

    \begin{definition}
     We let $\sH_L^{[\mu]}$ be the relative Chow motive over $Y(U)_1$ defined as the direct summand of $q_* \widetilde \sH_L^{[\mu]}$ cut out by the projector $\tfrac{1}{|H|} \sum_{h \in H} h$.
    \end{definition}

    (This makes sense, since for any base $S$, the category of relative Chow motives over $S$ is by definition a \emph{Karoubian category} -- a semiabelian category in which any idempotent endomorphism has a kernel and image.)

    We extend this to $Y(U)$ as follows: if $g_1, \dots, g_n$ are a finite set of elements of $G(\AA_f)$ whose determinants are coset representatives for $\AA_{F, f}^\times / (F^{\times +} \AA_{\QQ, f}^\times \det(U))$, then $Y(U)$ is isomorphic to the disjoint union of the varieties $Y(g_i U g_i^{-1})_1$, and we may apply the above construction to each of these varieties individually. The resulting relative Chow motive over $Y(U)$ is independent (up to a canonical isomorphism) of the choice of the $g_i$, and its motivic cohomology has natural covariant and contravariant actions of the Hecke algebra $\ZZ[U \backslash G(\AA_f) / U]$.

  \subsection{The Clebsch--Gordan map}
   \label{sect:CGmap}

   Now let $U^*$ be an open compact in $G^*(\AA_f)$, and let $U_\QQ$ be its intersection with $\GL_2(\AA_f)$, so there is a closed embedding
   \[ \iota: Y_\QQ(U_\QQ) \into Y^*(U^*), \]
   and the abelian variety $\iota^*(\cA)$ is canonically isomorphic to $\cO_F \otimes_{\ZZ} \cE$ (compatibly with the $\cO_F$-action). Hence both $\iota^* \sH_L(\cA)^{(1)}$ and $\iota^* \sH_L(\cA)^{(2)}$ can be identified with $\sH_L(\cE)$.

   As explained in \cite[\S5.1]{kingsloefflerzerbes15b}, we  have the following maps:
   \[ \TSym^{k + k'} \sH_L(\cE) \into \TSym^{k} \sH_L(\cE) \otimes \TSym^{k'} \sH_L(\cE) = \iota^*\left(\TSym^{[k, k']}  \sH_L(\cA)\right);\]
   and
   \[ L(1) = \sideset{}{^2_L}\bigwedge \sH_L(\cE) \into \sH_L(\cE) \otimes \sH_L(\cE) = \iota^*\left( \TSym^{[1, 1]}  \sH_L(\cA)\right).
   \]
   Combining these two cases using multiplication in the symmetric tensor algebra $\TSym^\bullet$, we obtain the following:

   \begin{proposition}
    For any integers $k, k', j$ satisfying the inequality
    \[ 0 \le j \le \min(k, k'), \]
    we have a canonical morphism of relative Chow motives over $Y_\QQ(U_\QQ)$,
    \[ CG^{[k, k', j]}_{\mot} : \TSym^{k + k' - 2j} \sH_L(\cE) \to \iota^*\left( \TSym^{[k, k']} \sH_L(\cA) \right)(-j).\]
   \end{proposition}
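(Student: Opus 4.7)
The plan is to build $CG^{[k,k',j]}_{\mot}$ as an explicit composition from the two building blocks already mentioned, using multiplication in the symmetric tensor algebra. First I would fix the identification: since $\iota^*\cA \cong \cO_F \otimes_\ZZ \cE$ as an abelian scheme with $\cO_F$-action, the two idempotents cutting out $\sH_L(\cA)^{(1)}$ and $\sH_L(\cA)^{(2)}$ pull back under $\iota^*$ to the two orthogonal idempotents of $\cO_F \otimes L \cong L \oplus L$, giving canonical isomorphisms $\iota^*\sH_L(\cA)^{(i)} \cong \sH_L(\cE)$ for $i = 1, 2$. Consequently
\[ \iota^*\Big(\TSym^{[k,k']}\sH_L(\cA)\Big) \cong \TSym^k \sH_L(\cE) \otimes \TSym^{k'} \sH_L(\cE), \]
so constructing the map reduces to producing a morphism $\TSym^{k+k'-2j}\sH_L(\cE) \to \TSym^k\sH_L(\cE) \otimes \TSym^{k'}\sH_L(\cE) \otimes L(-j)$ in $\CHM(Y_\QQ(U_\QQ))_L$.

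Next I would assemble this morphism from three steps. Write $\sH = \sH_L(\cE)$ for brevity. Step one uses the comultiplication-type inclusion
\[ \TSym^{k+k'-2j}\sH \hookrightarrow \TSym^{k-j}\sH \otimes \TSym^{k'-j}\sH, \]
which is the first map recalled before the proposition and makes sense whenever $k-j, k'-j \ge 0$. Step two takes the $j$-th tensor power of the inclusion $L(1) = \bigwedge^2 \sH \hookrightarrow \sH \otimes \sH$, obtaining $L(j) \hookrightarrow \sH^{\otimes j} \otimes \sH^{\otimes j}$; composing with the TSym multiplication in each factor produces a canonical map $L(j) \to \TSym^j\sH \otimes \TSym^j\sH$. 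Step three tensors the output of step one with the output of step two, rearranges the four resulting TSym factors and multiplies them pairwise, using the canonical products $\TSym^{k-j}\sH \otimes \TSym^j\sH \to \TSym^k\sH$ and $\TSym^{k'-j}\sH \otimes \TSym^j\sH \to \TSym^{k'}\sH$. After twisting by $L(-j)$ to move the $L(j)$ factor across, this composition delivers the desired morphism.

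The only genuinely non-trivial points are that all three steps exist already at the level of relative Chow motives, not merely in a realisation. For steps one and three this is standard: $\TSym^\bullet$ is defined as the image of a symmetrising projector in the tensor algebra, and the comultiplication, multiplication and symmetrisation maps lift to $\CHM(Y_\QQ(U_\QQ))_L$ because this category is pseudo-abelian and symmetric monoidal. For step two, the identification $L(1) = \bigwedge^2 \sH_L(\cE)$ is realised by the canonical polarisation of $\cE$, which is defined over $Y_\QQ(U_\QQ)$ as a morphism of relative Chow motives. The main thing to be careful about is canonicity: the identification $\iota^*\sH_L(\cA)^{(i)} \cong \sH_L(\cE)$ depends on the labelling $\theta_1, \theta_2$ of the embeddings $F \into L$, but once that labelling is fixed (consistent with the one used to define $\TSym^{[k,k']}\sH_L(\cA)$) the resulting $CG^{[k,k',j]}_{\mot}$ is a canonical morphism, and this is the step I expect to demand the most bookkeeping. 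No obstruction is anticipated; the content of the proposition is really just the assembly of the named ingredients.
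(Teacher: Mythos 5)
Your construction is essentially identical to the paper's (which is given only in sketch form before the proposition): the paper writes down the same two inclusions $\TSym^{a+b}\sH_L(\cE) \into \TSym^a\sH_L(\cE) \otimes \TSym^b\sH_L(\cE)$ and $L(1) = \bigwedge^2\sH_L(\cE) \into \sH_L(\cE)^{\otimes 2}$, and says to combine them ``using multiplication in the symmetric tensor algebra,'' which is precisely your steps one through three. Your additional remarks on why the ingredients exist in $\CHM(Y_\QQ(U_\QQ))_L$ and on canonicity of the identification $\iota^*\sH_L(\cA)^{(i)} \cong \sH_L(\cE)$ are correct and just make the paper's one-sentence proof explicit.
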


   \begin{remark}
    Note that $CG^{[k, k', j]}_{\mot}$ does \emph{not} commute with maps induced by isogenies of the universal abelian varieties $\cE$ and $\cA$, since the identification $\bigwedge^2_L \sH_L(\cE) \cong L(1)$ is not preserved by isogenies. In particular, for $n \in \ZZ$, the Hecke operator $R'(n)$ (acting as pushforward via the $n$-multiplication map on $\cA$ and $\cE$) acts as $n^{k + k' - 2j}$ on the source of the map $ CG^{[k, k', j]}_{\mot}$ and as $n^{k + k'}$ on the target.
   \end{remark}

  \subsection{Construction of Asai--Flach classes over \texorpdfstring{$\QQ$}{Q}}

   \begin{definition}
    For $k \ge 0$ and $N \ge 5$, let
    \[ \Eis^k_{\mot, N} \in H^1_{\mot}\left(Y_{1, \QQ}(N), \TSym^k \sH_{\QQ}(\cE)(1)\right) \]
    be the class defined in \cite[Theorem 4.1.1]{kingsloefflerzerbes15a}. Via base extension, we regard this as an element of $H^1_{\mot}(Y_{1, \QQ}(N), \TSym^k \sH_{L}(\cE)(1))$ for any coefficient field $L$.
   \end{definition}

   (In \emph{op.cit.} this class is denoted by $\Eis^k_{\mot, b, N}$, as it depends on a choice of $b \in \ZZ / N \ZZ - \{0\}$; but we shall take $b = 1$ and drop it from the notation.)

   Now let $\fN \triangleleft \cO_F$ be a non-zero ideal such that $U_1^*(\fN)$ is sufficiently small (cf.~Remark \ref{rmk:suffsmall} above). We can now define the key objects of this paper:

   \begin{definition}
    For $k, k', j$ integers satisfying the inequality $0 \le j \le \min(k, k')$, we define the \emph{motivic Asai--Flach class}
    \[
     \AF^{[k, k', j]}_{\mot, \fN} \in
     H^3_{\mot}\left(Y_1^*(\fN), \TSym^{[k, k']} \sH_L(\cA)(2-j)\right)
    \]
    as the image of $\Eis^{k + k' - 2j}_{\mot, N}$ under $(\iota_* \circ CG^{[k, k', j]}_{\mot})$, where $N = \fN \cap \ZZ$.
   \end{definition}

   Similarly, we write $\AF^{[k, k', j]}_{\et, \fN}$ for the image of $\AF^{[k, k', j]}_{\mot, \fN}$ in \'etale cohomology.

  \subsection{Classes over cyclotomic fields}
   \label{sect:cycloclasses}

   Note that the Asai--Flach classes of the previous section are defined on the Hilbert modular varieties $Y_1^*(\fN)$, which are geometrically connected varieties over $\QQ$. We now define more general Asai--Flach classes, which are cohomology classes on the base-extensions of these varieties to cyclotomic fields.

   Let $M \in \ZZ_{\ge 1}$, and $\fN$ an ideal of $\cO_F$ as above. Via pullback along the natural map $Y^*(M, M\fN) \to Y_1^*(M\fN)$, we regard $\AF^{[k, k', j]}_{\mot, M\fN}$ as a class in the cohomology of $Y^*(M, M \fN)$.

   The variety $Y^*(M, M \fN)$ has an action of $\cO_F / M \cO_F$, since the corresponding subgroup of $G^*(\AA_f)$ is normalised by matrices of the form $u_a = \tbt 1 a 0 1$ with $a \in \cO_F$. Moreover, there is a map
   \[ s_M: Y^*(M, M\fN) \to Y^*_1(\fN) \times \mu_M^\circ,\]
   given by the right action of $\stbt {M^{-1}} 0 0 1$; here we identify $Y^*_1(\fN) \times \mu_M^\circ$ with the Shimura variety of level
   \[ \{ u \in U_1^*(\fN): \det(u) = 1 \bmod M \}, \]
   as in \cite[\S 6.1]{kingsloefflerzerbes15b}.

    Both the automorphisms $u_a$, and the map $s_M$, extend naturally to maps on the universal abelian variety $\cA$, and thus on our motivic coefficient sheaves.

   \begin{definition}
    We define
    \begin{multline*}
     \AF_{\mot, M, \fN, a}^{[k, k', j]} = (s_M \circ u_a)_*\left( \AF^{[k, k', j]}_{\mot, M\fN}\right) \\ \in H^3_{\mot}\left(Y_1^*(\fN) \times \mu_M^\circ, \TSym^{[k, k']} \sH_L(\cA)(2-j)\right),
    \end{multline*}
    and $\AF_{\et, M, \fN, a}^{[k, k', j]}$ its \'etale analogue.
   \end{definition}

   \begin{remark}
    In fact we can replace $Y^*(M, M\fN)$ with $Y^*(M, \fN')$ for any ideal $\fN'$ divisible by $M$ and $\fN$ and having the same prime factors as $M\fN$; this will follow from the norm-compatibility relations below.
   \end{remark}

   \begin{theorem}
    \label{thm:AFeltproperties}
    The above elements enjoy the following properties.
    \begin{enumerate}
     \item The class $\AF_{\mot, M, \fN, a}^{[k, k', j]}$ depends only on the image of $a$ in $\cO_F / (M\cO_F + \ZZ)$.
     \item For $b \in (\ZZ / M \ZZ)^\times$, we have
     \[ \sigma_b \cdot \AF_{\mot, M, \fN, a}^{[k, k', j]} = \AF_{\mot, M, \fN, b^{-1}a}^{[k, k', j]},\]
     where $\sigma_b$ denotes the image of $b$ in $\Gal(\QQ(\mu_M) / \QQ)$.
     \item (Level compatibility) If $\fl$ is a prime and $\pr_{1, \fl}$ denotes the natural projection $Y^*_1(\fl \fN) \to Y^*_1(\fN)$, then we have
     \[
      (\pr_{1, \fl})_*\left( \AF_{\mot, M, \fl \fN, a}^{[k, k', j]} \right) =
      \begin{cases}
       \AF_{\mot, M, \fN, a}^{[k, k', j]} \text{ if $\fl \mid M \fN$ or $\overline\fl \mid M\fN$,}\\
       \left( 1 - \ell^{k + k' - 2j} \langle \ell^{-1} \rangle \sigma_\ell^{-2} \right)
       \AF_{\mot, M, \fN, a}^{[k, k', j]}
        \text{ otherwise,}
      \end{cases}
     \]
     where $\ell$ is the rational prime below $\fl$.
     \item (Euler system norm relation) If $\ell$ is prime satisfying one of the conditions below, and $a$ is a generator of $\cO_F / (\ell M \cO_F + \ZZ)$ , we have
     \[
      \operatorname{norm}_{M}^{\ell M} \left(\AF_{\mot, \ell M, \fN, a}^{[k, k', j]}\right) = A \cdot \AF_{\mot, M, \fN, a}^{[k, k', j]}
     \]
     where $A$ is the Hecke operator given as follows:
     \begin{itemize}
      \item if $\ell \mid \fN$ and $\ell \mid M$, then $A = U'(\ell)$.
      \item if $\ell \mid \fN$ and $\ell \nmid M$, then $A = (U'(\ell) - \ell^j \sigma_\ell)$.
      \item if $(\ell, M\fN) = 1$ and either $\ell$ is inert, or $\ell$ is split and the primes above $\ell$ are narrowly principal, then
      \[ A = \ell^j \sigma_\ell\left( (\ell - 1)\left( 1 - \ell^{k + k' - 2j} \langle \ell^{-1} \rangle \sigma_\ell^{-2} \right) - \ell P_\ell'(\ell^{-1-j} \sigma_\ell^{-1}) \right), \]
      where $P_\ell'(X)$ is the operator-valued Asai Euler factor of Definition \ref{def:eulerfactor}, acting via the covariant Hecke action on cohomology.
     \end{itemize}
    \end{enumerate}
   \end{theorem}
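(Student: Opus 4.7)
The theorem has four parts. Parts (i) and (ii) are essentially formal consequences of the definitions. For (i), the dependence on $a \bmod M\cO_F$ is immediate since $u_a$ only depends on $a \bmod M$; the additional vanishing when $a \in \ZZ$ follows because the Clebsch--Gordan/Gysin map factors through $\iota_*$, and for $a \in \ZZ$ the matrix $u_a$ lies in $\GL_2(\ZZ)$ and normalises the level structure $U_{\QQ}(M, MN)$ on the modular curve side, so its pushforward acts trivially on the underlying Eisenstein class. Part (ii) is a direct computation: the Galois action of $\sigma_b$ on $\mu_M^\circ$ corresponds under $s_M$ to right multiplication by $\operatorname{diag}(1,b)$, and comparing double cosets shows this agrees with the effect of sending $a \mapsto b^{-1} a$ in the parameter of $u_a$, up to a transformation preserving the level subgroup.

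For part (iii), the plan is to reduce to the analogous level compatibility for modular-curve Eisenstein classes. The pushforward $(\pr_{1,\fl})_*$ commutes with the Gysin map $\iota_*$ and with $(s_{M} \circ u_a)_*$, up to replacing the source class by its pushforward along the modular-curve degeneracy map $Y_{1,\QQ}(\ell MN) \to Y_{1,\QQ}(MN)$. When $\fl \mid M\fN$ or $\overline\fl \mid M\fN$, the rational prime $\ell$ below $\fl$ already divides $MN$, so the degeneracy has already been incorporated and the pushforward acts as the identity. Otherwise, the norm relation of \cite{kingsloefflerzerbes15a} for $\Eis^{k + k' - 2j}_{\mot, N}$ introduces exactly the factor $(1 - \ell^{k + k' - 2j} \langle \ell^{-1} \rangle \sigma_\ell^{-2})$.

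Part (iv), the Euler system norm relation, is the main work. The strategy is to factor the norm $\operatorname{norm}_M^{\ell M}$ geometrically as a pushforward along the finite \'etale cover $Y^*_1(\fN) \times \mu_{\ell M}^\circ \to Y^*_1(\fN) \times \mu_M^\circ$, and then decompose this in terms of level-change maps between Shimura varieties. In the tame case $(\ell, M\fN) = 1$, the fibers of the cover $Y^*(\ell M, \ell M \fN) \to Y^*(M, M\fN)$ split into a disjoint union of coset contributions whose combinatorics depends on the decomposition of $\ell$ in $F$: two prime factors in the split case giving the four-term Euler factor, versus a single inert prime giving the two-term factor multiplied by a quadratic twist, matching exactly the two cases of Definition \ref{def:eulerfactor}. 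Applying the modular-curve norm relation of \cite{kingsloefflerzerbes15a} to the source class $\Eis^{k + k' - 2j}_{\mot, \ell MN}$ and transporting through Clebsch--Gordan and Gysin, the coset sum on the Hilbert side reassembles into $\ell^j \sigma_\ell$ times $(\ell-1)(1 - \ell^{k+k'-2j}\langle \ell^{-1}\rangle \sigma_\ell^{-2}) - \ell P_\ell'(\ell^{-1-j}\sigma_\ell^{-1})$, with the first term coming from the trivial-coset (``identity'') contribution and the polynomial $P_\ell'$ encoding the nontrivial fiber. The ramified cases $\ell \mid \fN$ are simpler: when $\ell \mid M$ the level at $\ell$ matches on both sides and the norm collapses to $U'(\ell)$, while when $\ell \nmid M$ a correction term $-\ell^j \sigma_\ell$ remains from the modular-curve Eisenstein relation.

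The main obstacle will be matching the coset sum in the tame case to the Asai Euler polynomial $P_\ell(X)$. This requires careful bookkeeping of double cosets in $\ZZ[U_1^*(\fN) \backslash \cG / U_1^*(\fN)]$, using the alternative expression for the $X^2$-coefficient in terms of $\lambda, \bar\lambda$ available in the narrowly principal split case (cf.~the remark after Definition \ref{def:eulerfactor}), and keeping track of contravariant versus covariant Hecke actions. The Tate-twist prefactor $\ell^j \sigma_\ell$ appears because $CG^{[k, k', j]}_{\mot}$ has nontrivial equivariance under scalar multiplication: $R'(\ell)$ acts as $\ell^{k+k'}$ on the target of the Clebsch--Gordan map but as $\ell^{k+k'-2j}$ on the source, and combining this discrepancy with the Galois action on the cyclotomic base produces the claimed prefactor.
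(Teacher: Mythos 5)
Your treatment of parts (i)--(iii) and the wild case of (iv) matches the paper's strategy in essence: reduce to the modular-curve Eisenstein class via the commutativity of $\iota$, $u_a$, $s_M$, and Clebsch--Gordan with the relevant degeneracy maps, using Cartesian compatibility between the $\GL_2$ and $G^*$ levels. (The paper proves everything in the $\Lambda$-adic setting in \S\ref{sect:normrels} and observes that the motivic version is identical; your sketch is the motivic analogue of those proofs.)

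For the tame case of (iv), however, your route diverges from the paper's, and in a way that matters. You propose a direct coset decomposition of the composite $\pr_{1,\ell}\circ\hat\pr_{2,\ell}: Y^*(\ell M,\ell\fN)\to Y^*(M,\fN)$ and claim the coset sum reassembles into the Asai Euler factor after "applying the modular-curve norm relation of \cite{kingsloefflerzerbes15a} and transporting through Clebsch--Gordan and Gysin." This is imprecise in a way that masks the real difficulty. The modular-curve norm relation supplies only a $\GL_2$ factor of the shape $(1-\ell^{k+k'-2j}\langle\ell^{-1}\rangle\sigma_\ell^{-2})$; it cannot by itself produce the quartic Asai Euler factor, which is a polynomial in the Hilbert-surface Hecke operators $T'(\ell)$, $S'(\ell)$. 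What generates $P_\ell'$ is the interaction of $\iota_*$ with the two degeneracy maps $\pr_{1,\ell}$, $\pr_{2,\ell}$ on the Hilbert surface, governed by the Cartesian diagrams of Lemmas~\ref{lem:cartesian} and~\ref{lem:cartesian2}. The paper explicitly declines the head-on coset computation you propose; instead it deduces the tame case from already-proved ingredients via the Hecke-algebra identity $\pr_{1,\ell*}\circ U'(\ell) = T'(\ell)\circ\pr_{1,\ell*} - \langle\ell^{-1}\rangle\circ\pr_{2,\ell*}$, combined with the wild-case relation applied at level $\ell\fN$, the first norm relation for $\pr_{1,\ell*}$, and a separate computation (Corollary~\ref{cor:pr2}) of the effect of $\pr_{2,\ell*}$. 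You do not identify the $\pr_{2,\ell*}$ computation as a needed ingredient, nor the dependence on the wild case, and without those the Asai Euler factor will not emerge from your coset sum in any straightforward way. A direct coset decomposition is viable in principle (it is closer to the original $\GL_2\times\GL_2$ method of \cite{leiloefflerzerbes14a}), but it would involve substantially more bookkeeping, especially for split $\ell$, and your sketch elides precisely the step where that bookkeeping is unavoidable.
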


   The Hecke operators appearing in the theorem are those defined in \S \ref{sect:heckealg} above.

   \begin{remark}
    \label{rmk:denoms} \
    \begin{enumerate}[(i)]
     \item Despite its conceptual importance -- asserting the existence of an ``Euler system'' in motivic cohomology -- we shall not actually use this theorem directly. The reason for this is that the definition given above of the motivic Asai--Flach classes, and even of the groups that they live, only makes sense with coefficients in $\QQ$; while the applications of Euler systems to bounding Selmer groups require \emph{uniformly bounded} denominators with respect to some appropriate lattice in the $p$-adic Asai Galois representation, and it is manifestly unclear from the above construction how this condition can be checked.

     However, in the next section we shall obtain (as a by-product of our $p$-adic interpolation calculations) a second, independent construction of the \'etale versions of these classes, from which the integrality will be clear.
     \item A conceptual interpretation of the ``wrong'' Euler factor appearing above can be given along the same lines as in the Rankin--Selberg case, see \cite[\S 8]{leiloefflerzerbes14a}.
    \end{enumerate}
   \end{remark}

   \begin{proof}[Sketch of proof]
    The proof of this theorem is virtually identical to the proof of the $\Lambda$-adic version, which we shall prove in \S \ref{sect:normrels} below, so we leave it to the reader to make the necessary modifications.
   \end{proof}

 \section{Eigenforms and Galois representations}
  \label{sect:eigenforms}
  Having constructed our Asai--Flach classes in the cohomology of the varieties $Y^*_1(\fN)$, we are now interested in projecting to quotients of these cohomology spaces corresponding to eigenforms. Since the multiplicity one property does not hold for automorphic representations of $G^*$, but it does hold for those of $G$ (cf.~\cite[\S 3.2]{brylinskilabesse84}), it is more convenient to work with the varieties $Y_1(\fN)$.

  \subsection{Hilbert modular forms for \texorpdfstring{$G$}{G}} \

   \begin{notation} \
    \begin{enumerate}[(i)]
     \item Let $\theta_1, \theta_2$ be the embeddings $F \into \RR$. For a pair $\underline{r} = (r_1, r_2) \in \ZZ^2$, and $z \in F$, we write $z^{\underline{r}}$ for $\theta_1(z)^{r_1} \theta_2(z)^{r_2}$; and we extend this to $z \in F \otimes \CC$ in the obvious fashion.

     \item For $\underline{r} = (r_1, r_2) \in \ZZ^2$, $f$ a function $\cH_F \to \CC$, and $\gamma = \stbt a b c d \in G(\QQ)^+ = \GL_2(F)^+$, we define $f \mid_{\underline{r}} \gamma$ by
     \[
      (f \mid_{\underline{r}} \gamma)(\tau) \coloneqq
      \Nm_{F/\QQ}(\det \gamma) (c\tau + d)^{-\underline{r}} f\left( \frac{a\tau + b}{c\tau + d}\right).
     \]

     \item If $\underline{r} = (r_1, r_2)$ and $\underline{t} = (t_1, t_2)$ are pairs of integers, $f$ is a function $\cH_F \to \CC$, and $\gamma = \stbt a b c d \in G(\QQ)^+$, then we write
     \[
      f \mid_{\urt} \gamma
      \coloneqq \det(\gamma)^{-\underline{t}} f \mid_{\underline{r}} \gamma.
     \]
    \end{enumerate}
   \end{notation}

   We shall only consider the $\urt$ action when the integers $\urt$ satisfy $r_1 + 2t_1 = r_2 + 2t_2$; in particular, this implies that $r_1 = r_2 \bmod 2$. We let $w$ be the common value $r_1 + 2t_1 = r_2 + 2t_2$. Then scalar matrices $\stbt u 0 0 u$ with $u \in F^\times$ act via $\Nm_{F/\QQ}(u)^{1-w}$, and in particular $\cO_F^{\times +}$ acts trivially.

   \begin{definition}\label{def:HMF} \
    Let $U \subset G(\AA_f)$ be open compact. A \emph{Hilbert modular form} of weight $\urt$ and level $U$ is a function $\cF: G(\AA_f) \times \cH_F \to \CC$ such that
    \begin{enumerate}[(i)]
     \item For every $g \in G(\AA_f)$, the function $\cF(g, -)$ is holomorphic on $\cH_F$.
     \item We have $\cF(gu, \tau) = \cF(g, \tau)$ for all $u \in U$, $g \in G(\AA_f)$ and $\tau \in \cH_F$.
     \item We have $\cF(\gamma g, -) = \cF(g, -) \mid_{\urt} \gamma^{-1}$ for all $\gamma \in G(\QQ)^+$ and $g \in G(\AA_f)$.
    \end{enumerate}
    We let $M_{\urt}(U, \CC)$ denote the space of such functions, and $S_{\urt}(U, \CC)$ the subspace of cusp forms.
   \end{definition}

   The space $S_{\urt}(U, \CC)$ is the subspace of $U$-invariants in a smooth right representation of $G(\AA_f)$, so it is a right module over the Hecke algebra $\CC[ U \backslash G(\AA_f) / U]$. In particular, if $U = U_1(\fN)$ for some $\fN$, the Hecke operators $\cT(\fm)$ and $\cS(\fm)$ defined in \S\ref{sect:heckeG} above act on $S_{\urt}(U_1(\fN), \CC)$.

   \begin{remark}
    To fix normalisations, we point out that if $\cF$ is a Hilbert cusp form, then $\cF$ has a \emph{Fourier--Whittaker expansion}
     \[ \cF\left(\stbt x 0 0 1, \tau\right) = \|x\|_{\AA_F} \sum_{\alpha \in F^{\times +}} \alpha^{-\underline{t}}\, c(\alpha x, \cF) e^{2 \pi i \Tr_{F/\QQ}(\alpha\tau)} \]
     where $c(-, \cF)$ is a locally-constant $\CC$-valued function on $\AA_{F, f}^\times$. If the level is of the form $U_1(\fN)$, then $c(x, \cF)$ only depends on the fractional $\widehat\cO_F$-ideal $\fn$ generated by $x$, and is zero unless $\fn \subseteq \fd^{-1}$; and the Hecke operators satisfy $c(\fn, \cT(\fm) \cF) = c(\fm\fn, \cF)$ whenever $(\fm, \fn\fd) = 1$.
   \end{remark}

   \begin{definition}
    We say that $\cF \in S_{\urt}(U_1(\fN), \CC)$ is an \emph{eigenform} if it is an eigenvector for the Hecke operators $\cT(\fm)$ for all ideals $\fm$. We say $\cF$ is \emph{normalised} if $c(\fd^{-1}, \cF) = D^{-(t_1 + t_2)/2}$.
   \end{definition}

   Note that if $\cF(g, \tau)$ is a Hilbert  modular form of weight $\urt$, then $\cF^{(s)}(g, \tau) \coloneqq \|\det g\|_{\AA_F}^{-s} \cF(g, \tau)$ is a Hilbert modular form of weight $(\underline{r}, \underline{t} + (s, s))$ for any $s \in \ZZ$, and its Fourier--Whittaker coefficients satisfy $c(\fn, \cF^{(s)}) = \Nm_{F/\QQ}(\fm)^s c(\fn, \cF)$. Our choice of normalisations is such that $\cF^{(s)}$ is a normalised eigenform if $\cF$ is. (Note that the restrictions of $\cF$ and $\cF^{(s)}$ to $\cH_F$ are identical.)

   If $\cF$ is an eigenform, then its $\cT(\fm)$-eigenvalues $\lambda(\fm)$ all lie in a number field $L$, and there is a finite-order Hecke character $\varepsilon: \Cl^+(F, \fN) \to L^\times$ such that $\cS(\fm)$ acts as $\Nm_{F/\QQ}(\fm)^{w-2} \varepsilon(\fm)$. Exactly as in the familiar case $F = \QQ$, a normalised eigenform is uniquely determined by its Hecke eigenvalues.

  \subsection{Pullback to \texorpdfstring{$G^*$}{G*}}

   Let $U^* \subset G^*(\AA_f)$ be open compact, and let $(r_1, r_2)$ be nonnegative integers (not necessarily of the same parity). We define the space $S_{\underline{r}}(U^*, \CC)$ of Hilbert cusp forms for $G^*$ of weight $\underline{r}$ and level $U^*$ as functions $G^*(\AA_f) \times \cH_F \to \CC$ satisfying the analogues of the conditions (i)--(iii) of Definition \ref{def:HMF}, using the weight $\underline{r}$ action of $G^*(\QQ)^+$ in place of the weight $\urt$ action of $G(\QQ)^+$ in (iii).

   This space has a right action of the Hecke algebra $\CC[U^* \backslash \cG / U^*]$, where $\cG$ is as in Definition \ref{def:calG}. In particular, when $U^* = U_1^*(\fN)$ for some $\fN \triangleleft \cO_F$, we have an action of the operators $R, S, T$ defined in \S\ref{sect:heckealg}, and $R(x)$ for $x \in F^{\times}$ acts as multiplication by $x^{\underline{r} - \underline{2}}$.

   If we now impose the assumption that $r_1 = r_2 \bmod 2$, and choose $\underline{t} = (t_1, t_2) \in \ZZ^2$ such that $r_1 + 2t_1 = r_2 + 2t_2$, we can compare the theories for $G^*$ and $G$.

   \begin{proposition}
    If $\cF \in S_{\urt}(U, \CC)$, then the function $\jmath^*(\cF)$ on $G^*(\AA_f) \times \cH_F$ defined by
    \[ \jmath^*(\cF)(g, \tau) = \|\det g\|_{\AA_\QQ}^{(t_1 + t_2)} \cF(g, \tau) \]
    is an element of $S_{\underline{r}}(U^*, \CC)$, where $U^* = U \cap G^*$.\qed
   \end{proposition}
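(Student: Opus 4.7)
The plan is to check, in turn, the three defining properties of $S_{\underline{r}}(U^*, \CC)$---the analogues for $G^*$ and weight $\underline{r}$ of conditions (i)--(iii) of Definition~\ref{def:HMF}---and then to observe that cuspidality is inherited.

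Conditions (i) and (ii) should be nearly immediate. The prefactor $\|\det g\|_{\AA_\QQ}^{(t_1+t_2)}$ is independent of $\tau$, so $\jmath^*(\cF)(g, -)$ is holomorphic on $\cH_F$ whenever $\cF(g, -)$ is. For right $U^*$-invariance, I would note that $\det(U^*)$ is a compact subgroup of $\AA_{\QQ, f}^\times$ and hence contained in the maximal compact subgroup $\hat{\ZZ}^\times$; the adelic norm is trivial on the latter, giving $\|\det(gu)\|_{\AA_\QQ} = \|\det g\|_{\AA_\QQ}$ for any $u \in U^*$, while $\cF(gu, \tau) = \cF(g, \tau)$ because $U^* \subseteq U$.

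The substance lies in condition (iii). For $\gamma \in G^*(\QQ)^+$ we have $\det\gamma \in \QQ^\times$ with $\det\gamma > 0$, and the product formula then yields
\[
\|\det(\gamma g)\|_{\AA_\QQ} = (\det \gamma)^{-1} \cdot \|\det g\|_{\AA_\QQ},
\]
since $\det\gamma$, viewed as a finite idele, satisfies $\prod_p |\det\gamma|_p = |\det\gamma|_\infty^{-1}$. On the other hand, the weight $\urt$ and weight $\underline{r}$ slash actions of $\gamma^{-1}$ differ by the scalar factor $\det(\gamma^{-1})^{-\underline{t}}$, which for $\det\gamma$ a positive rational collapses to $(\det\gamma)^{t_1+t_2}$. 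Applying property (iii) of $\cF$ in the form $\cF(\gamma g, \tau) = (\cF(g,-) \mid_\urt \gamma^{-1})(\tau)$ and assembling the two factors, one arrives at
\[
\jmath^*(\cF)(\gamma g, \tau) = \|\det g\|_{\AA_\QQ}^{(t_1+t_2)}\bigl(\cF(g,-) \mid_{\underline{r}} \gamma^{-1}\bigr)(\tau) = \bigl(\jmath^*(\cF)(g,-) \mid_{\underline{r}} \gamma^{-1}\bigr)(\tau),
\]
the last equality because the $\tau$-independent factor $\|\det g\|_{\AA_\QQ}^{(t_1+t_2)}$ passes freely through the weight $\underline{r}$ slash action.

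Cuspidality of $\jmath^*(\cF)$ will then be automatic: the vanishing of the constant terms in the Fourier--Whittaker expansion is a condition in the $\tau$-variable for fixed adelic $g$, which is unaffected by the scalar multiplier $\|\det g\|_{\AA_\QQ}^{(t_1+t_2)}$. There is no genuine difficulty anywhere in the argument; the only non-trivial point is the bookkeeping in (iii), where the twist by $\|\det g\|_{\AA_\QQ}^{(t_1+t_2)}$ is designed precisely to absorb the factor $(\det\gamma)^{t_1+t_2}$ distinguishing the two weight actions on $G^*(\QQ)^+$ against the matching factor $(\det\gamma)^{-(t_1+t_2)}$ arising from the product formula on the adelic norm.
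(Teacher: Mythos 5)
Your argument is correct, and it is the direct verification one would write: conditions (i) and (ii) are immediate (with the observation that $\det(U^*)$ lies in $\hat\ZZ^\times$, so the adelic norm factor is right-$U^*$-invariant), condition (iii) is the point where the twist by $\|\det g\|_{\AA_\QQ}^{t_1+t_2}$ exactly cancels the factor $(\det\gamma)^{t_1+t_2}$ by which the weight $\urt$ and weight $\underline{r}$ actions of $G^*(\QQ)^+$ differ (via the product formula $\prod_p |\det\gamma|_p = (\det\gamma)^{-1}$), and cuspidality passes through unchanged. The paper itself offers no proof — the proposition is stated with an immediate $\qed$ — so your write-up supplies precisely the routine check the authors took as self-evident.
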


   Note that this construction is twist-invariant, i.e.~$\jmath^*(\cF^{(s)}) = \jmath^*(\cF)$ for any $s$.

   \begin{proposition}
    \label{prop:jpullback}
    Let $\fN \triangleleft \cO_F$. Then the action of $\cO_F^{\times +}$ on $S_{\underline{r}}(U_1^*(\fN), \CC)$ via $x \mapsto x^{\underline{t}} T(x)$ factors through the finite quotient
    \[
     \mathfrak{T} = {\cO_F^{\times +}} / {\{ x^2: x \in \cO_F^{\times+},\, x = 1 \bmod \fN \}}.
    \]
    The image of the pullback map
    \[
     \jmath^*: S_{\urt}(U_1(\fN), \CC) \to  S_{\underline{r}}(U_1^*(\fN), \CC)
    \]
    is precisely the $\mathfrak{T}$-invariants of $S_{\underline{r}}(U_1^*(\fN), \CC)$.

    Moreover, if $\fm$ is a narrowly principal ideal, generated by some $x \in \cO_{F}^+$, then the map $\jmath^*$ intertwines the action of the operator $\cT(\fm)$ on the source with the action of $x^{\underline{t}} \cdot T(x)$ on the target, and similarly for $\cS(\fm)$ and $x^{\underline{2t}} S(x)$ if $\fm$ is coprime to $\fN$.
   \end{proposition}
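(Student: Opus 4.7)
The plan is to prove the three assertions in sequence. The first --- that the action of $\cO_F^{\times+}$ via $x \mapsto x^{\underline{t}} T(x)$ factors through $\mathfrak{T}$ --- is a direct Hecke-algebra computation. Given $y \in \cO_F^{\times+}$ with $y \equiv 1 \bmod \fN$, the matrix $\stbt{y^{-1}}{}{}{1}$ normalises $U_1^*(\fN)$, so the double coset for $T(y)$ collapses to a single coset, and $T(y^2) = T(y)^2 = S(y)$ by the relations of \S\ref{sect:heckealg}. Since $\langle y\rangle = 1$ and $R(y)$ acts as the scalar $y^{\underline{r}-\underline{2}}$, the operator $(y^2)^{\underline{t}} T(y^2)$ acts as $y^{2\underline{t}+\underline{r}-\underline{2}} = \Nm_{F/\QQ}(y)^{w-2}$, where $w = r_1 + 2t_1 = r_2 + 2t_2$; this equals $1$ because totally positive units have norm $1$.

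For the second assertion, I would invoke Proposition \ref{prop:Gquotient} with $U = U_1(\fN)$ and $U^* = U_1^*(\fN)$, identifying $Y(U_1(\fN))_1$ with the quotient of $Y^*(U_1^*(\fN))$ by a finite abelian group $H$ that is itself a quotient of $\cO_F^{\times+}$. One identifies $H$ with $\mathfrak{T}$ (up to $x \mapsto x^{-1}$, which is harmless for taking invariants), and then verifies that the induced action of $H$ on modular forms over $Y^*(U_1^*(\fN))$ coincides with $x \mapsto x^{\underline{t}} T(x)$; the factor $x^{\underline{t}}$ is precisely the discrepancy introduced by the $\|\det g\|_{\AA_\QQ}^{t_1+t_2}$ normalisation in $\jmath^*$, applied to the decomposition of a lift $\stbt{x^{-1}}{}{}{1} \in \cG$ as a product of an element of $G(\QQ)^+$ and an element of $G^*(\AA_f)$. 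Hence the pullback from $Y(U_1(\fN))_1$ hits exactly the $\mathfrak{T}$-invariants of $S_{\underline{r}}(U_1^*(\fN), \CC)$. To pass from $Y(U_1(\fN))_1$ to all of $Y(U_1(\fN))$, any Hilbert modular form on the former extends uniquely to the latter via a choice of representatives for the finite group $G(\AA_f)/(G(\QQ)^+ G^*(\AA_f) U_1(\fN))$ together with the $G(\QQ)^+$-equivariance.

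For the third assertion, one uses the remark at the end of \S\ref{sect:heckeG}: if $\fm = (\lambda)$ is narrowly principal, then $\cT(\fm)$ and $T(\lambda)$ admit a common set of single-coset representatives $\{g_i\} \subset \cG$. Substituting into $\jmath^*(\cF)(g, \tau) = \|\det g\|_{\AA_\QQ}^{t_1+t_2} \cF(g, \tau)$ and using the canonical extension of $\jmath^*(\cF)$ to $\cG$ dictated by the $G(\QQ)^+$-equivariance, the scalar factor $\lambda^{\underline{t}}$ appears because $\det(g g_i)$ differs from $\det g$ by $\lambda^{-1} \in F^\times$, and writing $g g_i$ as the product of an element of $G(\QQ)^+$ with an element of $G^*(\AA_f)$ absorbs this $\lambda^{-1}$ into the normalisation factor. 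The compatibility for $\cS(\fm)$ and $x^{\underline{2t}} S(x)$ is handled in exactly the same way.

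I expect the main obstacle to be the second assertion, specifically the verification that the geometric $H$-action on $Y^*(U_1^*(\fN))$ matches the Hecke-twist $x \mapsto x^{\underline{t}} T(x)$ with the correct sign and inversion conventions; this is a somewhat intricate bookkeeping computation, but routine once one carefully tracks the behaviour of the normalisation $\|\det\|_{\AA_\QQ}^{t_1+t_2}$ under the change of group from $G$ to $G^*$.
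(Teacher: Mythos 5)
Your proof follows the same overall route as the paper's: the first assertion by the Hecke-algebra computation with $T(y^2)=S(y)=\langle y\rangle R(y)$ and $\Nm_{F/\QQ}(y)=1$ for a totally positive unit, the third via the shared-single-coset-representatives remark at the end of \S\ref{sect:heckeG}, and the second via the quotient structure $Y^*(U_1^*(\fN))\to Y(U_1(\fN))_1$, which the paper itself flags as ``an instance of the general result of Proposition~\ref{prop:Gquotient}''.

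One step is not quite right, though, and the parenthetical ``(up to $x\mapsto x^{-1}$, which is harmless)'' does not cover the actual discrepancy. For $U=U_1(\fN)$ the group $H$ of Proposition~\ref{prop:Gquotient} has denominator $\{\epsilon^2:\epsilon\in\cO_F^\times,\ \epsilon\equiv 1\bmod\fN\}$, with $\epsilon$ allowed to be any unit, whereas $\mathfrak{T}$ only divides by $\{x^2:x\in\cO_F^{\times+},\ x\equiv 1\bmod\fN\}$. If some unit $\epsilon\equiv 1\bmod\fN$ has $\Nm_{F/\QQ}(\epsilon)=-1$ (this genuinely occurs), then $\epsilon^2$ is in the denominator of $H$ but not of $\mathfrak{T}$; and since $T(\epsilon^2)=R(\epsilon)$ (as $\stbt{\epsilon^{-1}}{0}{0}{\epsilon}\in U_1^*(\fN)$), the element $\epsilon^2$ acts via $(\epsilon^2)^{\underline t}R(\epsilon)=\epsilon^{\underline{r}+2\underline{t}-\underline{2}}=\Nm_{F/\QQ}(\epsilon)^{w-2}=(-1)^w$, which is non-trivial when $w$ is odd. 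So $H$ is in general a \emph{proper} quotient of $\mathfrak{T}$, the twisted action $x\mapsto x^{\underline t}T(x)$ need not factor through $H$, and the phrase ``the induced action of $H$ on modular forms'' is then not well posed. The paper sidesteps this entirely: it works with the (infinite) quotient $\Gamma_1(\fN)/\Gamma_1^*(\fN)\cong\cO_F^{\times+}$, identifies the weight-$\urt$ slash action of $\Gamma_1(\fN)$ on $\Gamma_1^*(\fN)$-invariant functions with $x\mapsto x^{\underline t}T(x)$, and then appeals to the first assertion to pass to the finite quotient $\mathfrak{T}$ --- never needing to compute $H$ at all. You should do the same; the rest of your bookkeeping (the role of the $\|\det g\|_{\AA_\QQ}^{t_1+t_2}$ normalisation, the extension from the component $Y(U_1(\fN))_1$ to all of $Y(U_1(\fN))$) is correct and matches the paper.
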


   \begin{proof}
    Since $R(x)$ acts as $\Nm_{F/\QQ}(x)^{\underline{r}-\underline{2}}$, and for $x$ a unit we have $T(x^2) = S(x) = \langle x \rangle R(x)$, the action of $\cO_F^{\times +}$ factors through $\mathfrak{T}$.

    It is easy to see that $\jmath^*$ intertwines $\cT(\fn)$ with $x^{\underline{t}} \cdot T(x)$, since the double cosets $\cT(\fn)$ and $T(x)$ share a common set of single-coset representatives, and similarly for $\cS(\fn)$ and $S(x)$. Since $\cT(x)$ is the identity map for $x$ a unit, this shows in particular that the image of $\jmath^*$ is contained in the $\mathfrak{T}$-invariants.

    It remains to prove that any element of $S_{\underline{r}}(U_1^*(\fN), \CC)$ invariant under $\mathfrak{T}$ lies in the image of $\jmath$. This follows from the fact that $S_{\urt}(U_1(\fN), \CC)$ contains, as a direct summand, the space of holomorphic functions on $\cH_F$ which are invariant under the weight $\urt$ action of $\Gamma_1(\fN) = U_1(\fN) \cap \GL_2^+(F)$ and vanish at the cusps. An element of $S_{\underline{r}}(U_1^*(\fN), \CC)$ gives a function invariant under the subgroup $\Gamma_1^*(\fN) = \Gamma_1(\fN) \cap \SL_2(\cO_F)$; and the Hecke operators $T(x)$, for $x \in \cO_F^{\times +}$, give representatives for the quotient $\Gamma_1(\fN) / \Gamma_1^*(\fN)$. (This can be seen as an instance of the general result of Proposition \ref{prop:Gquotient}.)
   \end{proof}

   \begin{remark}
    If the narrow class group of $F$ is trivial, any $\cF \in S_{\urt}(U_1(\fN), \CC)$ is uniquely determined by its restriction to $\{1\} \times \cH_F$, so $\jmath^*$ is injective. Conversely, if the narrow class group is nontrivial, the map $\jmath^*$ is very rarely injective, because of the following construction. Let $\kappa$ be a non-trivial character of the narrow class group of $F$. Then for any $\cF \in S_{\urt}(U_1(\fN), \CC)$ there is a twisted form $\cF \otimes \kappa$ satisfying $c(\fn, \cF \otimes \kappa) = \kappa(\fn) c(\fn, \cF)$. We have $\jmath^*(\cF \otimes \kappa) = \jmath^*(\cF)$, but $\cF$ and $\cF \otimes \kappa$ are very unlikely to be equal.
   \end{remark}

   \begin{definition}
    We say an eigenform $\cF \in S_{\urt}(U_1(\fN), \CC)$ is an \emph{exceptional CM form} if it has complex multiplication by a quadratic extension $E/F$ contained in the Hilbert class field of $F$.
   \end{definition}

   Note that this notion depends only on the newform associated to $\cF$; and the field $E$ is necessarily totally imaginary (i.e.~it is a CM field) and biquadratic over $\QQ$.

   \begin{lemma}
    \label{lemma:CMvanish}
    Let $\cF \in S_{\urt}(U_1(\fN), \CC)$ be a normalised eigenform, with $r_i \ge 2$. If $\jmath^*(\cF)$ is zero, then $\cF$ is an exceptional CM form. Conversely, if $\cF$ is a newform and an exceptional CM form, then $\jmath^*(\cF) = 0$.
   \end{lemma}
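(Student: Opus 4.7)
The starting point is the Fourier--Whittaker expansion of $\cF$ recalled in the excerpt. Strong approximation reduces $\jmath^*(\cF) = 0$ to the vanishing of $\cF\!\left(\stbt{x}{0}{0}{1}, \tau\right)$ for $x$ ranging over representatives of the rational idele classes and all $\tau \in \cH_F$. By the linear independence of the exponentials $e^{2\pi i \Tr_{F/\QQ}(\alpha \tau)}$ as $\alpha$ ranges over $F^{\times +}$, this is equivalent to $c(\alpha x, \cF) = 0$ for all $\alpha \in F^{\times +}$ and $x \in \AA_{\QQ, f}^\times$. Since $\Cl^+(\QQ)$ is trivial, every such product $\alpha x$ generates a narrowly principal fractional $\cO_F$-ideal; rewriting in terms of integral-ideal Fourier coefficients $a(\fa, \cF) = c(\fa \fd^{-1}, \cF)$ gives the clean reformulation
\[
\jmath^*(\cF) = 0 \iff a(\fa, \cF) = 0 \text{ for every integral ideal } \fa \text{ with } [\fa] = [\fd] \text{ in } \Cl^+(F).
\]

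Equivalently, the partial $L$-series $L_{[\fd]}(\cF, s) \coloneqq \sum_{[\fa] = [\fd]} a(\fa, \cF) \Nm(\fa)^{-s}$ vanishes identically. I would study this using the character-orthogonality expansion
\[
L_{[\fd]}(\cF, s) = \frac{1}{\#\Cl^+(F)} \sum_{\chi \in \widehat{\Cl^+(F)}} \chi([\fd])^{-1}\, L(\cF \otimes \chi, s),
\]
where $\cF \otimes \chi$ denotes the twist with $a(\fa, \cF \otimes \chi) = \chi([\fa]) a(\fa, \cF)$. For the forward direction, after reducing to the newform case (using the compatibility of degeneracy operators with $\jmath^*$, which propagates the vanishing from $\cF$ to its underlying newform), strong multiplicity one for $\GL_2/F$ implies that the twisted forms $\cF \otimes \chi$ are pairwise distinct newforms except for identifications coming from the self-twist group of $\cF$, which is either trivial or has order $2$. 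If it is trivial, the $L(\cF \otimes \chi, s)$ are linearly independent and the above sum cannot vanish; so $\cF$ must admit a nontrivial quadratic self-twist $\eta$, which classically makes $\cF$ a CM form for the quadratic extension $E = F^{\ker \eta}$. Since $\eta$ factors through $\Cl^+(F)$, the extension $E/F$ is unramified at every finite place, hence contained in the narrow Hilbert class field of $F$. Pairing characters into $\eta$-orbits collapses the sum to $(1 + \eta([\fd])^{-1})$ times a generically nonzero factor, forcing $\eta([\fd]) = -1$, which via the exact sequence relating $\Cl^+(F)$, $\Cl(F)$ and the signs of units at infinity excludes $E$ from the wide Hilbert class field $F^+$ and hence forces $E$ to be totally imaginary, i.e., CM.

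The converse is the same identity read in reverse: for a newform $\cF$ with exceptional CM by $E/F$, cut out by a quadratic character $\eta$ of $\Cl^+(F)$, the same class-field-theoretic analysis shows that $E$ CM and unramified at all finite primes forces $\eta([\fd]) = -1$, so grouping the expansion above by $\eta$-orbits produces the factor $1 + \eta([\fd])^{-1} = 0$, giving $L_{[\fd]}(\cF, s) \equiv 0$ and hence $\jmath^*(\cF) = 0$. The principal obstacle will be bookkeeping the precise correspondence between the arithmetic condition $\eta([\fd]) = -1$ and the geometric condition ``$E$ is a CM subfield of the (narrow) Hilbert class field of $F$'', ruling out pathological possibilities such as $E$ lying in the wide Hilbert class field (which would force $E$ totally real, contradicting the CM hypothesis), and justifying the reduction from a general eigenform to its associated newform in the forward direction.
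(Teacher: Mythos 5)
Your reformulation of $\jmath^*(\cF) = 0$ as the vanishing of the partial Dirichlet series $L_{[\fd]}(\cF,s)$, and the class-field-theoretic observation that $[\fd]$ generates $\ker\bigl(\Cl^+(F)\to\Cl(F)\bigr)$ so that $\eta([\fd])=-1$ is equivalent to $E$ being a CM (rather than totally real) subfield of the narrow Hilbert class field, are both correct and are exactly the facts the paper uses; your converse direction, grouping the orthogonality sum into $\eta$-orbits, is essentially the paper's ``$\cF$ and $-\cF\otimes\kappa$ are normalised newforms with the same Hecke eigenvalues, hence equal by multiplicity one.''

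The forward direction, however, rests on a step you have not justified and which does not follow from strong multiplicity one: that if the twists $\cF\otimes\chi$, $\chi\in\widehat{\Cl^+(F)}$, are pairwise distinct newforms then the $L(\cF\otimes\chi,s)$ are $\CC$-linearly independent. Strong multiplicity one says distinct Hecke eigensystems give distinct newforms; it says nothing about linear combinations of their $L$-functions. Over a field with nontrivial narrow class group the ideal-indexed coefficients $a(\fa,\cF)$ can in principle vanish on an entire ideal class, and that is precisely the sort of degeneracy that would produce the relation $\sum_\chi \chi([\fd])^{-1}L(\cF\otimes\chi,s)=0$ without a self-twist in $\widehat{\Cl^+(F)}$ (for instance if $\Cl^+(F)\cong\ZZ/4$ with $[\fd]$ the square of a generator, a putative self-twist cutting out the class $[\fd]$ would have to be an order-$2$ character taking the value $-1$ on a square, which is impossible). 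Ruling this out requires a genuine analytic or Galois-theoretic input. One can fill the gap via Rankin--Selberg theory (the residue at $s=1$ of $L(\pi_i\times\bar\pi_j,s)$ is nonzero exactly when $i=j$), and then your argument goes through and gives a genuine alternative to the paper's. The paper instead goes directly from ``$\lambda(\fp)=0$ for all $\fp$ in the narrow class $[\fd]$, a positive-density set'' to ``$\cF$ is CM'' by citing Nekov\'a\v{r}'s big-image results, and then uses ordinarity of CM forms at split primes plus Chebotarev to force the CM field into the narrow class field; this avoids the $L$-function detour entirely and does not need the reduction to newforms that your argument requires. Either route is legitimate, but as written your central linear-independence claim is left unproved, and attributing it to strong multiplicity one is not enough.
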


   \begin{proof}
    Since $\cF$ is a normalised eigenform, we have $c(\fd^{-1}, \cF) \ne 0$. On the other hand, $\jmath^*(\cF) = 0$ if and only if $c(x, \cF) = 0$ for all $x \in F^{\times +}$. Using the relation between Hecke eigenvalues and Fourier--Whittaker coefficients, it follows that $\jmath^*(\cF)$ is zero if and only if the Hecke eigenvalue $\lambda(\fn)$ of $\cF$ is zero for every $\fn$ in the narrow ideal class of $\fd$.

    It is a standard fact that if an eigenform $\cF$ satisfies $\lambda(\fp) = 0$ for a positive-density set of prime ideals $\fp$, then $\cF$ must have CM by some totally imaginary quadratic extension $E / F$ (e.g.~this follows from the fact that there exist many primes $v$ for which the standard Galois representation $\rstd$ has large image, cf.~\cite[Theorem B.5.2]{nekovar12}). Moreover, in the CM case, we have $\lambda(\fp) \ne 0$ for every prime $\fp \nmid \fN$ split in $E$, since CM forms are automatically ordinary at split primes. Thus we deduce that every prime in the narrow ideal class of $\fd$ must be inert in $E / F$, from which it follows that $E$ is contained in the narrow class field.

    Conversely, suppose $\cF$ is a newform that is an exceptional CM form, and let $\kappa$ be the quadratic Hecke character corresponding to $E / F$. We have $\kappa(\fd) = -1$, so $\cF$ and $-\cF \otimes \kappa$ are normalised newforms. As they have the same Hecke eigenvalues at almost all primes, they are in fact equal, by the multiplicity one theorem. Hence we have $c(\fn, \cF) = -\kappa(\fn)c(\fn, \cF)$ for all $\fn$, and in particular $c(\fn, \cF)$ is zero for all ideals in the kernel of $\kappa$. Thus $\jmath^*(\cF) = 0$.
   \end{proof}

   \begin{proposition}
    \label{prop:jmultone}
    Suppose $\cF$ is a normalised eigenform of level $\fN$, with $r_i \ge 2$, which is not an exceptional CM form. Then any form $\cG \in S_{\underline{r}}(U_1^*(\fN))$ satisfying $T(x) \cG = x^{-\underline{t}} \lambda(x) \cG$, for all $x \in \cO_F^+$, is a scalar multiple of $\jmath^*\cF$.
   \end{proposition}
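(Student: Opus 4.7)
The plan is to lift $\cG$ to a Hilbert modular form on $G$ using Proposition~\ref{prop:jpullback}, and then apply a multiplicity-one style argument combined with the non-CM hypothesis to pin it down.

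The first step is to note that the eigenvalue hypothesis forces $\cG$ to be $\mathfrak{T}$-invariant: for any totally positive unit $x \in \cO_F^{\times +}$ we have $x \cO_F = \cO_F$ and hence $\lambda(x) = 1$, so the condition reads $x^{\underline{t}} T(x) \cG = \cG$. By Proposition~\ref{prop:jpullback} there is $\tilde\cG \in S_{\urt}(U_1(\fN), \CC)$ with $\jmath^*(\tilde\cG) = \cG$, and the Hecke intertwining in that proposition combined with the eigenvalue hypothesis on $\cG$ yields the key identity
\[ \jmath^*\bigl(\cT(\fm)\tilde\cG - \lambda(\fm)\tilde\cG\bigr) = 0 \]
for every narrowly principal ideal $\fm \subset \cO_F$.

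Next I would decompose $\tilde\cG = \sum_i \alpha_i \cH_i$ into joint eigenvectors for the commuting family $\{\cT(\fm) : \fm \text{ narrowly principal}\}$, with eigenvalue systems $\mu_i$. Linear independence of distinct characters of this Hecke subalgebra forces $\sum_{\mu_i = \mu} \alpha_i \jmath^*(\cH_i) = 0$ for every $\mu \ne \lambda$, so I may replace $\tilde\cG$ by its $\lambda$-isotypic projection without altering $\jmath^*(\tilde\cG) = \cG$. By strong multiplicity one for $\GL_2/F$, combined with cyclic base change to the narrow Hilbert class field of $F$, the cuspidal automorphic representations of $\GL_2/F$ whose Satake parameters agree with those of $\cF$ at every narrowly principal prime are exactly the twists $\pi_\cF \otimes \kappa$ for $\kappa$ a character of $\Cl^+(F)$; at level $U_1(\fN)$, the corresponding normalised eigenforms are the unramified twists $\cF \otimes \kappa$.

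Finally, the non-CM hypothesis on $\cF$ is preserved under twist by any Hecke character (the CM field of a twist equals that of the original form), so by Lemma~\ref{lemma:CMvanish} every $\cF \otimes \kappa$ has non-zero image under $\jmath^*$; and the remark following Proposition~\ref{prop:jpullback} asserts that all these images coincide with $\jmath^*\cF$. Writing the $\lambda$-isotypic projection of $\tilde\cG$ as $\sum_\kappa \beta_\kappa (\cF \otimes \kappa)$ and applying $\jmath^*$ therefore gives $\cG = \bigl(\sum_\kappa \beta_\kappa\bigr)\, \jmath^*\cF$. The step I expect to require the most care is the identification of the $\lambda$-isotypic space with unramified twists of $\cF$: this is where the strong multiplicity one argument, and implicitly the non-CM hypothesis (used to ensure that distinct twists $\cF \otimes \kappa$ remain distinct, since otherwise $\cF$ would acquire CM by the kernel field of a non-trivial witnessing character), must be invoked carefully.
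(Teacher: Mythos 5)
Your approach is a genuinely different, more representation-theoretic route than the paper's. The paper argues directly with the Fourier--Whittaker coefficients $c(\alpha, \cG)$, indexed by integral ideals $\fn$ in the narrow ideal class of $\fd$: the eigenvalue hypothesis forces the multiplicativity $c(\beta\fn) = \lambda(\beta)c(\fn)$ for $\beta \in \cO_F^+$ coprime to $\fn$, and the non-exceptional-CM hypothesis supplies a prime $\fp_0$ in that narrow class with $\lambda(\fp_0) \ne 0$, which anchors the recursion and pins down the whole expansion up to the single scalar $c(\fp_0)$. That route is elementary and uses none of the automorphic machinery you invoke. Your route is conceptually appealing, but as written it has two real gaps.

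First, decomposing $\tilde\cG$ into honest joint eigenvectors for $\{\cT(\fm) : \fm \text{ narrowly principal}\}$ presumes this commutative family acts semisimply on $S_{\urt}(U_1(\fN), \CC)$. The operators away from $\fN$ do (they are normal for the Petersson inner product), but the $\mathcal{U}$-operators at narrowly principal primes dividing $\fN$ are not normal and need not be diagonalisable if $\cF$ is merely an eigenform (e.g.\ a stabilisation) rather than a newform of conductor exactly $\fN$ --- and the proposition is stated in that generality. You can repair this by projecting to the \emph{generalised} $\lambda$-eigenspace via a polynomial $P_\lambda$ in the $\cT(\fm)$'s, since the identity $\jmath^*(P(\cT)\tilde\cG) = P(\lambda)\,\jmath^*(\tilde\cG)$ still gives $\jmath^*(P_\lambda\tilde\cG) = \cG$; but then you must argue that the generalised eigenspace, not just the honest one, is spanned by the $\cF\otimes\kappa$, which again quietly invokes a newform hypothesis you do not have.

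Second, the multiplicity-one step is stated too glibly. Matching Satake parameters at narrowly principal primes is agreement at a set of primes of $F$ of density $1/h^+(F)$, and after base change to the narrow Hilbert class field $H^+$ a density-one (but not cofinite) set of primes of $H^+$. Strong multiplicity one in the Jacquet--Shalika form requires agreement at all but finitely many places; to conclude from a density-one set you need a stronger statement (a pole of the Rankin--Selberg $L$-function $L(\pi\times\pi'^\vee, s)$, or Chebotarev together with irreducibility of the associated Galois representation over $H^+$). Any such argument also needs the base change of $\pi_\cF$ to $H^+$ to remain cuspidal --- equivalently, $\rstd|_{G_{H^+}}$ to stay irreducible --- and this is precisely where the non-exceptional-CM hypothesis enters, in a place you have not flagged. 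These gaps can all be filled, but the resulting argument is substantially heavier than the paper's few-line Fourier-coefficient computation, so the payoff for the extra machinery is not obvious here.
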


   \begin{proof}
    Let us write $\cG(1, \tau) = \sum_{\alpha} \alpha^{-\underline{t}}c(\alpha, \cG) e^{2\pi i \Tr(\alpha\tau)}$, where the sum runs over totally-positive $\alpha \in \fd^{-1}$. By assumption, $c(\alpha, \cG)$ depends only on the ideal generated by $\alpha$.

    Let $I$ denote the set of integral ideals in the narrow ideal class of $\fd$, and for each $\fn \in I$, let $c(\fn)$ be the common value of $c(\alpha, \cG)$ over all totally-positive generators $\alpha$ of the ideal $\fn \fd^{-1}$. Then we have the relation $c(\beta \fn) = \lambda(\beta) c(\fn)$ for $\alpha \in \cO_F^+$ coprime to $\fn$; and we want to show that this determines all the $c(\fn)$ up to a scalar.

    If $\fd$ is trivial in the narrow class group of $F$ (that is, the fundamental unit has norm $-1$) then $I$ is simply the set of principal ideals, and we see immediately that $c(\fn) = \lambda(\fn) c(1)$ for all $\fn \in I$, so we are done.

    So let us assume that the fundamental unit has norm $+1$, so that $I$ is the set of ideals that are principal but not narrowly-principal. Since $\cF$ is not an exceptional CM form, there exist infinitely many primes $\fp \in I$ such that $\lambda(\fp) \ne 0$. Let $\fp_0$ be one of these, and define $C = \lambda(\fp_0)^{-1} c(\fp_0)$.

    Let $\fn \in I$ be arbitrary. We want to show that $c(\fn) = C \lambda(\fn)$. Firstly, suppose $\fp_0 \nmid \fn$. Let $\fp_1$ be another prime in $I$ not dividing $\fn \fp_0$ with $\lambda(\fp_1) \ne 0$; then we have
    \[ \lambda(\fp_0 \fp_1) c(\fn) = c(\fn\fp_0 \fp_1) = \lambda(\fn\fp_1) c(\fp_0), \]
    and hence $c(\fn) = \frac{\lambda(\fn\fp_1)}{\lambda(\fp_0 \fp_1)} c(\fp_0) = C\lambda(\fn)$ for all $\fn \in I$ not divisible by $\fp_0$.

    On the other hand, if $\fn \in I$ is divisible by $\fp_0$, we can pick some $\fp_1 \in I$ with $\lambda(\fp_1) \ne 0$ and $\fp_1 \nmid \fn$. By the previously-handled case we have $C = c(\fp_1) / \lambda(\fp_1)$, and applying the previous argument with $\fp_0$ replaced by $\fp_1$ we are done.
   \end{proof}

  \subsection{Galois representations}
   \label{sect:galrep}

   \begin{theorem}[Blasius--Rogawski--Taylor]
    Let $\cF$ be a Hilbert modular eigenform of weight $\urt$, with $r_1, r_2 \ge 2$. Let $L$ be the number field generated by the Hecke eigenvalues $\lambda(\fn)$. Then for every finite place $v$ of $L$, there is an irreducible 2-dimensional ``standard'' Galois representation
    \[ \rstd:  \Gal(\overline{F} / F) \to \GL_2(L_v), \]
    such that for all primes $\fl \nmid \fN \Nm_{L/\QQ}(v)$, the representation $\rstd$ is unramified at $\fl$ and we have
    \[ \det\left(1 - X \rstd(\Frob_\fl^{-1})\right) = 1 - \lambda(\fl) X + \Nm_{F/\QQ}(\fl)^{w-1} \varepsilon(\fl) X^2.\]
    Moreover, the Hodge numbers\footnote{That is, the negatives of the Hodge--Tate weights. For the complete avoidance of doubt, we state that in this paper the cyclotomic character has Hodge--Tate weight $+1$ and Hodge number $-1$.} of $\rstd$ at the primes above $\Nm_{L/\QQ}(v)$ are $\{ t_1, t_1 + r_1 - 1\}$ at one embedding and $\{ t_2, t_2 + r_2 - 1\}$ at the other.
   \end{theorem}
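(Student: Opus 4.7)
This is a very classical and deep theorem; my plan is to attribute it to the named authors (with contributions of Carayol, Saito, Wiles, and others) and sketch the strategy. The idea is to realise $\rstd$ in the $p$-adic étale cohomology of an automorphic local system on an appropriate Shimura variety associated to the cuspidal automorphic representation $\pi$ of $G(\AA)$ generated by $\cF$; since $\lambda(\fn)$ and $\varepsilon$ are determined by $\pi$, the problem reduces to attaching a compatible system of Galois representations to $\pi$ satisfying local-global compatibility at almost all primes.

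The first and most explicit case uses Jacquet--Langlands transfer. Because $r_1, r_2 \ge 2$, the archimedean component $\pi_\infty$ is discrete series at every infinite place of $F$, so $\pi$ transfers to the unit group of any quaternion algebra $B/F$ ramified at a chosen set of archimedean places of the correct parity. Whenever one can choose $B$ ramified at all but exactly one infinite place (in particular, when $[F : \QQ]$ is odd, or when it is even and the weight is non-parallel, possibly after a suitable soluble base change), the associated Shimura variety is a curve. Then $\rstd$ appears as a Hecke-isotypic piece of its first étale cohomology with coefficients in the local system determined by $\urt$, and the Eichler--Shimura relation on an integral model at a good prime $\fl$ yields the predicted characteristic polynomial of $\Frob_\fl^{-1}$.

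The genuinely hard case is $[F : \QQ]$ even with parallel weight, where no one-dimensional Shimura variety is directly available. Here I would follow Taylor's argument: approximate $\cF$ by a family of congruent eigenforms on solvable totally real extensions $F'/F$ for which the previous case applies, and descend via pseudo-representations, the Chebotarev density theorem, and a limiting argument in a suitable Hida-type Hecke algebra. Unramifiedness at $\fl \nmid \fN \Nm_{L/\QQ}(v)$ is then automatic from smooth proper base change, while the Hodge numbers at primes above $v$ follow from the Faltings comparison theorem applied to the corresponding filtered module, whose Hodge filtration at each archimedean embedding jumps exactly at $t_i$ and $t_i + r_i - 1$ by the Kodaira--Spencer description of the automorphic sheaf. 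The principal obstacle throughout is precisely this parallel-weight even-degree case; all remaining parts of the argument (matching eigenvalues, irreducibility via asymptotic growth of the Hecke field, and the Hodge-theoretic assertions) are essentially bookkeeping once the Shimura variety realisation is in place.
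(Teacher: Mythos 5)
The paper does not prove this statement: it is recalled as a known theorem (attributed to Blasius--Rogawski and Taylor, with Wiles and Carayol in the background) and used as a black box, so there is no internal proof against which to compare your sketch. Your outline is broadly the right shape of the literature — Jacquet--Langlands transfer to a Shimura curve in the accessible cases, then Taylor's congruence/pseudo-representation descent, then local--global arguments for the unramifiedness and $p$-adic Hodge theory for the Hodge numbers.

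Two points in your sketch are imprecise enough to be worth flagging if this were an actual proof. First, the parenthetical ``when it is even and the weight is non-parallel'' as a case in which $B$ can be taken ramified at all but one archimedean place is not right: for $[F:\QQ]$ even, ramifying $B$ at $[F:\QQ]-1$ infinite places forces ramification at an odd number of finite places, which requires $\pi$ to be square-integrable (e.g.\ Steinberg or supercuspidal) at some finite prime; the weight being non-parallel has no bearing on this parity obstruction. The genuinely hard case is therefore ``$[F:\QQ]$ even and $\pi$ nowhere discrete series at finite places,'' not ``even degree and parallel weight.'' (The alternative Blasius--Rogawski route, using the Hilbert modular variety itself via the Langlands--Kottwitz method, is what actually exploits regularity of the infinitesimal character, but then one is not on a Shimura curve and one must extract $\rho^{\mathrm{std}}_{\cF,v}$ from a tensor-induced representation by endoscopic arguments.) Second, ``irreducibility via asymptotic growth of the Hecke field'' does not describe the standard argument; irreducibility comes from cuspidality via the Ramanujan bound (a reducible $\rho^{\mathrm{std}}_{\cF,v}$ would force the Hecke eigenvalues to look like those of a sum of Hecke characters, contradicting the archimedean estimate and the analytic behaviour of the standard $L$-function), together with Taylor's argument to handle all $v$, not just a density-one set.
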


   We are not actually interested in the standard representation per se, but in its tensor induction to $\Gal(\QQbar / \QQ)$. Recall that if $H \subset G$ are groups with $[G : H] = 2$, $\sigma \in G - H$, and $\rho$ is a representation of $H$ on some vector space $V$, we define $\tInd_H^G(\rho)$ to be the isomorphism class of the representation of $G$ whose underlying space is $V \otimes V$, with $G$ acting via
   \[
    h \cdot (v \otimes w) = (h\cdot v) \otimes (\sigma^{-1} h \sigma \cdot w),\qquad \sigma \cdot (v \otimes w) = (\sigma^2 \cdot w) \otimes v.
   \]
   (The isomorphism class of this representation is independent of $\sigma$.)

   \begin{definition}
    For an eigenform $\cF$ and place $v$ as above, we define the 4-dimensional ``Asai'' Galois representation
    \[ \rho_{\cF, v}^{\Asai}: \Gal(\QQbar / \QQ) \to \GL_4(L_v) \]
    by
    \[ \rho_{\cF, v}^{\Asai} =  \tInd_F^\QQ(\rstd) \otimes L_v(t_1 + t_2). \]
    The Hodge numbers of this Galois representation are $\{0, r_1-1, r_2 - 1, r_1 + r_2 - 2\}$.
   \end{definition}

   \begin{remark}
    The Asai representation is unramified at all primes not dividing $p\Delta \Nm_{F/\QQ}(\fN)$. The twist by $t_1 + t_2$ implies that the Asai representation of $\cF$ is the same as that of the twist $\cF^{(s)}$, for any integer $s$.

    Note also that $\rho_{\cF, v}^{\Asai}$ actually preserves an orthogonal form up to scaling, i.e. its image lands in the general orthogonal group $\mathrm{GO}_4 \subseteq \GL_4$. The subgroup landing in the connected component $\mathrm{GSO}_4$ is exactly $\Gal(\overline{F} / F)$. This can be interpreted in terms of an isomorphism between $\mathrm{GO}_4$ and the Langlands $L$-group of $G^*$; this is investigated in more detail in \cite[\S 5]{DLP}.
   \end{remark}

   \begin{proposition}
    \label{prop:proj-Pl}
    Let $\cF$ be an eigenform of level $\fN$ with coefficients in $L$, and let $\ell \nmid \Delta \Nm_{F/\QQ}(\fN)$ be prime.
    \begin{enumerate}[(i)]
     \item There is a polynomial $P_\ell(\cF, X) \in L[X]$ such that for all prime $v \nmid \ell$ of $L$ we have
     \[
      \det\left(1 - X\, \rho_{\cF, v}^{\Asai}\left(\Frob_\ell^{-1}\right)\right) = P_\ell(\cF, X).
     \]
     \item The operator-valued Euler factor $P_\ell(X)$ defined in \S\ref{sect:asaiEF} acts on $\jmath^*(\cF)$ as $P_\ell(\cF, X)$.
    \end{enumerate}
   \end{proposition}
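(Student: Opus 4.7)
The plan is to establish both parts by direct computation, splitting according to the behaviour of $\ell$ in $F$.

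For part (i), I would unpack the definition $\rho^{\Asai}_{\cF,v} = \tInd_F^\QQ(\rho^{\mathrm{std}}_{\cF,v}) \otimes L_v(t_1+t_2)$ and compute the characteristic polynomial of $\Frob_\ell^{-1}$ using the tensor induction formula recalled in \S\ref{sect:galrep}. In the split case $\ell = \fl\bar\fl$, $\Frob_\ell$ lies in $\Gal(\overline F/F)$, so it acts ``diagonally'' on $V \otimes V$ with eigenvalues $\alpha_i\beta_j$, where $\{\alpha_1,\alpha_2\}$ and $\{\beta_1,\beta_2\}$ are the eigenvalues of $\rho^{\mathrm{std}}(\Frob_{\fl}^{-1})$ and $\rho^{\mathrm{std}}(\Frob_{\bar\fl}^{-1})$ respectively. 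In the inert case, $\Frob_\ell \notin \Gal(\overline F/F)$ and $\Frob_\ell^2 = \Frob_\fl$; a short matrix calculation in the tensor product basis shows that $\Frob_\ell^{-1}$ has eigenvalues $\alpha, \beta, \pm\sqrt{\alpha\beta}$, where $\alpha,\beta$ are the eigenvalues of $\rho^{\mathrm{std}}(\Frob_\fl^{-1})$, so the characteristic polynomial factors as $(1 - \lambda(\fl)X + \Nm(\fl)^{w-1}\varepsilon(\fl)X^2)(1 - \Nm(\fl)^{w-1}\varepsilon(\fl)X^2)$. In both cases, twisting by $L_v(t_1+t_2)$ rescales the variable $X$ by $\ell^{-(t_1+t_2)}$, and all coefficients are elementary symmetric functions in the Satake parameters, hence expressible via Vieta in terms of the $L$-valued quantities $\lambda(\fl), \lambda(\bar\fl), \varepsilon(\fl), \varepsilon(\bar\fl)$, independently of $v$. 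This defines $P_\ell(\cF, X) \in L[X]$.

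For part (ii), I would compute the scalar action of $T(\ell)$, $T(\ell^2)$ and $S(\ell)$ on $\jmath^*(\cF)$ using Proposition \ref{prop:jpullback}. Since $\ell \in \ZZ$ is a totally positive generator of the principal ideal $\ell\cO_F$, the intertwining property yields $T(\ell)\jmath^*(\cF) = \ell^{-(t_1+t_2)}\lambda(\ell\cO_F)\jmath^*(\cF)$, and multiplicativity of Hecke eigenvalues expresses $\lambda(\ell\cO_F)$ as $\lambda(\fl)\lambda(\bar\fl)$ in the split case or $\lambda(\fl)$ in the inert case. The operator $T(\ell^2)$ is handled via the identity $\lambda(\fq^2) = \lambda(\fq)^2 - \Nm(\fq)^{w-1}\varepsilon(\fq)$ at each prime $\fq \mid \ell$, and $S(\ell)$ acts as $\ell^{2(w-2)-2(t_1+t_2)}\varepsilon(\ell\cO_F)$. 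Substituting these scalars into Definition \ref{def:eulerfactor} and simplifying reproduces the polynomial computed in part (i).

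The principal obstacle will be the bookkeeping of normalisation exponents: each operator $T(x)$ or $S(x)$ acting on $\jmath^*(\cF)$ introduces a factor $x^{-\underline{t}}$ relative to the Hecke eigenvalue of $\cF$, while the twist $L_v(t_1+t_2)$ contributes $\ell^{-(t_1+t_2)}$ per power of $X$; matching these requires systematic use of the balance condition $w = r_1+2t_1 = r_2+2t_2$. A subsidiary point is the sign ambiguity in the choice of square roots $\pm\sqrt{\alpha\beta}$ in the inert case, which disappears after forming the characteristic polynomial. The split case is computationally fiddlier, the most involved step being the expansion of the $X^2$ coefficient $T(\ell)^2 - T(\ell^2) - \ell^2 S(\ell)$ on $\jmath^*(\cF)$, which produces exactly the expression $\ell^{-2(t_1+t_2)+w-1}\bigl[\lambda(\fl)^2\varepsilon(\bar\fl) + \lambda(\bar\fl)^2\varepsilon(\fl)\bigr] - 2\ell^{-2(t_1+t_2)+2(w-1)}\varepsilon(\fl)\varepsilon(\bar\fl)$ obtained from the second elementary symmetric function of the four Frobenius eigenvalues in part (i).
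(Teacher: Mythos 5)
Your proposal is correct and follows essentially the same route as the paper: the paper's proof likewise reads off the characteristic polynomial of $\Frob_\ell^{-1}$ under tensor induction in terms of the eigenvalues $\lambda(\fl)$, $\varepsilon(\fl)$ (citing \cite[Proposition 4.1.2]{leiloefflerzerbes14a} for the split case and noting the inert case is analogous), and then deduces (ii) from the fact that $\jmath^*$ intertwines $\cT(\ell)$ with $\ell^{t+t'}T(\ell)$ and $\cS(\ell)$ with $\ell^{2(t+t')}S(\ell)$, exactly as in your use of Proposition \ref{prop:jpullback}. Your explicit inert-case eigenvalue computation ($\alpha,\beta,\pm\sqrt{\alpha\beta}$) and the verification of the split $X^2$-coefficient are correct and simply fill in details the paper leaves to the reader.
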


   \begin{proof}
    Since we have defined $\rho_{\cF, v}^{\Asai}$ as a twist of the tensor induction of $\rstd$, one can read off the coefficients of the characteristic polynomial of $\rho_{\cF, v}^{\Asai}\left(\Frob_\ell^{-1}\right)$ from those of $\rstd(\Frob_\fl^{-1})$, for the primes $\fl \mid \ell$ of $F$. These are, in turn, given by the Hecke eigenvalues of $\cF$. This gives a polynomial $P_\ell(\cF, X) \in L[X]$ satisfying (i). (The computation for $\ell$ split in $F$ is identical to \cite[Proposition 4.1.2]{leiloefflerzerbes14a}; the inert case is analogous.) Using the fact that $\jmath^*$ intertwines $\cT(\ell)$ with $\ell^{t + t'} T(\ell)$ and $\cS(\ell)$ with $\ell^{2(t + t')}S(\ell)$, one sees that this polynomial coincides with the action of $P_\ell(X)$ on $\jmath^*(\cF)$, which is (ii).
   \end{proof}

  \subsection{\'Etale cohomology of Hilbert modular varieties}

   We fix a weight $\urt$ as above, with $r_1 \ge 2, r_2 \ge 2$. We also fix a prime $p$, a finite extension $L / \QQ$ containing $F$, and a prime $v \mid p$ of $L$. Let $\mu = (r_1 - 2, r_2 - 2, t_1, t_2)$.

   \begin{definition}
    We let $\sH^{[\mu]}_{L_v}$ be the \'etale sheaf of $L_v$-vector spaces on $Y(U)$, for each sufficiently small $U$, that is the \'etale realisation of the relative Chow motive $\sH^{[\mu]}_L$ of \S \ref{sect:shvarG} above. We write $\sH^{(\mu)}_{L_v}$ for the dual of $\sH^{[\mu]}_{L_v}$.
   \end{definition}

   \begin{definition}
    \label{defn:asairep}
    Let $\cF$ be a cuspidal Hilbert eigenform of weight $\urt$ and level $\fN$, with Hecke eigenvalues in $L$. Then we define
    \[
     M_{L_v}(\cF) \coloneqq H^2_{\et}\left(Y_1(\fN)_{\QQbar}, \sH^{(\mu)}_{L_v}(t_1 + t_2)\right)[\cT(\fn) = \lambda(\fn)\ \forall \fn],
    \]
    where $\lambda(\fn)$ is the $\cT(\fn)$-eigenvalue of $\cF$.
   \end{definition}

   We are assuming here that $U_1(\fN)$ is sufficiently small (the case of eigenforms $\cF$ of level dividing $2$, $3$ or $\Delta$ can be dealt with by replacing $\cF$ with its $\fl$-stabilisation, for some auxiliary prime $\fl$). We define similarly the $\cF$-eigenspaces $M_{\Betti}(\cF)$, $M_{\dR}(\cF)$ associated to $\cF$ in Betti and de Rham cohomology, using the Betti and de Rham realisations of $\sH^{(\mu)}_L$. Note that each of these spaces lifts isomorphically to compactly-supported cohomology, since $\cF$ is cuspidal, and they are related by comparison isomorphisms as given in \cite{grothendieck66}.

   \begin{theorem}[Brylinski--Labesse, Nekov\'{a}\v{r}]
    The space $M_{L_v}(\cF)$ is 4-dimen\-sional, and isomorphic as a representation of $\Gal(\QQbar / \QQ)$ to $\rho_{\cF, v}^{\Asai}$.
   \end{theorem}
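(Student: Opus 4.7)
The plan is to identify $M_{L_v}(\cF)$ by combining Matsushima's formula for cuspidal cohomology, strong multiplicity one for $G$, a computation of relative Lie algebra cohomology of discrete series, and the Brylinski--Labesse description of the Galois action. Since $\cF$ is cuspidal, we may freely pass to cuspidal cohomology: after base change to $\CC$ and application of the Eichler--Shimura--Matsushima isomorphism, one obtains
\[
 H^2_{\mathrm{cusp}}(Y_1(\fN)_{\CC}, \sH^{(\mu)}_{\CC}(t_1+t_2)) \cong \bigoplus_{\pi} \pi_f^{U_1(\fN)} \otimes H^2(\mathfrak{g}, K_\infty; \pi_\infty \otimes V),
\]
where $V$ is the irreducible algebraic $G$-representation of highest weight determined by $\mu$ and the sum runs over cuspidal automorphic representations of $G(\AA)$ (each appearing with multiplicity one).

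At each of the two archimedean places, a cohomological discrete series of $\GL_2(\RR)$ has one-dimensional relative Lie algebra cohomology in degree $1$ and vanishes in other degrees, so by K\"unneth each archimedean component contributes a single dimension in degree $2$. Strong multiplicity one, together with the Hecke condition $\cT(\fn) = \lambda(\fn)$, singles out exactly four cuspidal $\pi$'s with the correct finite part, distinguished by the choice of holomorphic versus antiholomorphic discrete series at each of the two real places; each contributes a one-dimensional $U_1(\fN)$-eigenspace. This forces $\dim_{L_v} M_{L_v}(\cF) = 4$.

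For the Galois action, Brylinski--Labesse construct $\rstd$ by cutting out the $\cF$-eigenspace inside the \'etale cohomology of a cover of $Y_1^*(\fN)$ and extracting a two-dimensional quotient via the eigenvalues of $\Frob_\fl$ for unramified primes $\fl$ of $F$. Pulling back through $\jmath$, the tensor-induction structure becomes manifest: the action of $\Gal(\QQbar / F)$ on the $\cF$-eigenspace is the tensor product of $\rstd$ with its Galois conjugate under $\Gal(F/\QQ)$, while any fixed element of $\Gal(\QQbar/\QQ) \setminus \Gal(\QQbar/F)$ swaps these two factors, producing $\tInd_F^\QQ(\rstd)$; the extra Tate twist $(t_1 + t_2)$ arises when passing between the sheaf $\sH^{[\mu]}$ and its dual, matching Definition~\ref{defn:asairep}. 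The main obstacle is the normalisation bookkeeping: reconciling the Hodge--Tate weight and cyclotomic twist conventions with those of Blasius--Rogawski--Taylor, and carefully tracking the contravariant versus covariant Hecke actions.
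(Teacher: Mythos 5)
There is a genuine gap in the second half of your argument. The paper's own proof is a two-line citation, but it is carefully split into exactly the two inputs your sketch conflates: Brylinski--Labesse (\S 3.4 of their paper) identify the $\cF$-part of $H^2_{\et}$ with $\tInd_F^{\QQ}(\rstd)$ only \emph{up to semisimplification}, by comparing traces of Frobenius at almost all primes (congruence relations / trace formula) and invoking Chebotarev and Brauer--Nesbitt; and Nekov\'a\v{r}'s theorem supplies the missing semisimplicity of $M_{L_v}(\cF)$ as a Galois module, which is what upgrades the trace identity to a genuine isomorphism. Your claim that, after pulling back through $\jmath$, ``the tensor-induction structure becomes manifest'' -- with $\Gal(\QQbar/F)$ acting as $\rstd \otimes (\rho^{\mathrm{std}})^{\sigma}$ and an element outside $\Gal(\QQbar/F)$ swapping the factors -- is precisely the hard content of Brylinski--Labesse and is not visible from the geometry: the Hilbert modular surface is not a product of two curves, and there is no direct geometric realisation of $M_{L_v}(\cF)$ as a tensor product of two $2$-dimensional $G_F$-representations. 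Note also that you cannot escape the semisimplicity input by arguing that the trace determines the representation: $\rho_{\cF,v}^{\Asai}$ is \emph{not} irreducible in general (e.g.\ for CM forms or twists of base changes from $\GL_2/\QQ$), and the theorem as stated makes no such exclusions; irreducibility is only proved later in the paper (Proposition \ref{prop:irreducible}) under extra hypotheses.

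The first half (the dimension count via Matsushima/Eichler--Shimura and $(\mathfrak{g},K_\infty)$-cohomology of discrete series) is the right idea, though the bookkeeping is off: by strong multiplicity one there is a \emph{single} cuspidal automorphic representation $\pi$ of $G(\AA)$ attached to $\cF$, not four; the four dimensions arise from the $4$-dimensionality of the degree-$2$ relative Lie algebra cohomology of its archimedean component (holomorphic/antiholomorphic choices at each place are constituents of the restriction to the identity component, not distinct automorphic representations). This part could be repaired, but as written the proposal does not prove the stated isomorphism of Galois representations without adding the semisimplicity theorem (or an equivalent substitute), which is exactly why the paper cites Nekov\'a\v{r} alongside Brylinski--Labesse.
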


   \begin{proof}
    See \cite[\S 3.4]{brylinskilabesse84}, where the result is shown up to semisimplification by a comparison of traces, and \cite[Theorem 5.20]{nekovar-semisimplicity}, which establishes that $M_{L_v}(\cF)$ is always semisimple.
   \end{proof}

   Via Poincar\'e duality, we can identify the dual $M_{L_v}(\cF)^*$ with the maximal quotient of $H^2_{\et}\left(Y_1(\fN)_{\QQbar}, \sH^{[\mu]}_{L_v}(2-t_1-t_2)\right)$ on which the covariant Hecke operators $\cT'(\fn)$ act as $\lambda(\fn)$ for all $\fn$.

   \begin{corollary}
    Let $\cF$ be an eigenform of level $\fN$ and weight $\urt$, and suppose $\underline{r} = (k+2, k' + 2)$ for $k, k' \ge 0$.

    Then there is a canonical $\Gal(\QQbar / \QQ)$-equivariant map
    \[ \pr_{\cF}: H^2_{\et}\left(Y_1^*(\fN)_{\QQbar}, \TSym^{[k, k']} \sH_{L_v}(\cA)(2)\right) \to M_{L_v}(\cF)^*; \]
    and for each prime $\ell \nmid p\Delta \Nm_{F/\QQ}(\fN)$, this intertwines the dual operator-valued Asai Euler factor $P_\ell'(X)$ on the left-hand side with the polynomial $P_\ell(\cF, X)$ of Proposition \ref{prop:proj-Pl}.
   \end{corollary}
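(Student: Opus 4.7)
The plan is to define $\pr_\cF$ as an explicit composite of four standard operations, and then to verify the intertwining by tracking how the Hecke operators coded into $P_\ell'(X)$ propagate through each step.

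For the construction: the remark following the definition of $\sH_L^{[\mu]}$ provides isomorphisms $\det \sH_{L_v}(\cA)^{(i)} \cong L_v(1)$, whence $\widetilde \sH_{L_v}^{[\mu]} \cong \TSym^{[k,k']} \sH_{L_v}(\cA)(t+t')$. This rewrites the source of $\pr_\cF$ as $H^2_\et(Y^*_1(\fN)_{\QQbar}, \widetilde\sH_{L_v}^{[\mu]}(2-t-t'))$. Since $q: Y^*_1(\fN) \to Y_1(\fN)_1$ is finite étale, the latter equals $H^2_\et(Y_1(\fN)_{1,\QQbar}, q_*\widetilde\sH_{L_v}^{[\mu]}(2-t-t'))$. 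Applying the projector $\tfrac{1}{|H|}\sum_{h \in H} h$ that cuts out $\sH_{L_v}^{[\mu]}$ as a direct summand of $q_*\widetilde\sH_{L_v}^{[\mu]}$, and then including into $H^2_\et(Y_1(\fN)_{\QQbar}, \sH_{L_v}^{[\mu]}(2-t-t'))$ as a direct summand (since $Y_1(\fN)_1$ is a union of connected components of $Y_1(\fN)$), produces a Galois-equivariant map into cohomology of $Y_1(\fN)$ in the correct degree and twist. Finally, projecting onto the maximal quotient on which $\cT'(\fn)$ acts as $\lambda(\fn)$ for every $\fn$ lands in $M_{L_v}(\cF)^*$ by definition.

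For the intertwining, the key observation is that each of the operators $T'(\ell)$, $T'(\ell^2)$, $S'(\ell)$ appearing in $P_\ell'(X)$ is the double coset of a diagonal matrix with integer entries, hence is associated to the narrowly principal ideal $\ell\cO_F$ or $\ell^2 \cO_F$. The remark at the end of \S\ref{sect:heckeG} records that these $G^*$-operators share single-coset representatives with the corresponding $G$-operators $\cT'(\ell\cO_F)$, $\cT'(\ell^2\cO_F)$, $\cS'(\ell\cO_F)$; consequently $q_*$ intertwines $P_\ell'(X)$ on the source with the polynomial obtained by substituting each $G^*$-operator by its $G$-counterpart. After the final projection to $M_{L_v}(\cF)^*$ these latter operators act as the scalars $\lambda(\ell\cO_F)$, $\lambda(\ell^2\cO_F)$ and $\Nm_{F/\QQ}(\ell\cO_F)^{w-2}\varepsilon(\ell\cO_F)$, so $P_\ell'(X)$ acts as the resulting scalar polynomial. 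That this scalar polynomial equals $P_\ell(\cF, X)$ follows from Proposition \ref{prop:proj-Pl}(ii): the very same substitutions compute the action of the operator-valued $P_\ell(X)$ on $\jmath^*(\cF)$, which by that proposition equals $P_\ell(\cF, X)$.

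The main delicate step, I expect, will be bookkeeping the various Tate-twist and weight normalisations: the Tate twist $(t+t')$ introduced by the $\det$-trivialisation, the degree shift built into the Deninger--Murre decomposition of $M(\cA)$, and the factor $\|\det g\|^{t_1+t_2}$ in the definition of $\jmath^*$ together with the $x^{\underline t}$ scalar of Proposition \ref{prop:jpullback}. Verifying that all of these cancel coherently, so that no spurious power of $\ell$ is introduced on either side of the intertwining identity, requires care but is formal given the compatibilities established in \S\ref{sect:heckealg} and \S\ref{sect:heckeG}; everything else in the argument amounts to chasing functoriality through the construction.
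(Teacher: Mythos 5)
Your construction of $\pr_\cF$ agrees with the paper's: the composite you describe (det-trivialisation, $q_*$, the idempotent projector, extension to $Y_1(\fN)$, $\cF$-isotypic projection) is exactly the pushforward $\jmath_*$ followed by the $\cF$-projection that the paper uses. The appeal to Proposition~\ref{prop:proj-Pl}(ii) for the Hecke computation is also the paper's route.

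However, the sentence asserting that because the $G^*$- and $G$-operators \emph{share single-coset representatives}, ``$q_*$ intertwines $P_\ell'(X)$ on the source with the polynomial obtained by substituting each $G^*$-operator by its $G$-counterpart'' is not correct as written, and the mechanism it offers is the wrong one. Sharing coset representatives does not determine the intertwining when the coefficient sheaves on the two sides differ, and here they do: you identify $\TSym^{[k,k']}\sH_{L_v}(\cA)(t+t')$ with $\widetilde\sH_{L_v}^{[\mu]}$ via $\det\sH_{L_v}(\cA)^{(i)}\cong L_v(1)$, and the paper's remark after the definition of $\widetilde\sH_L^{[\mu]}$ is explicit that this isomorphism is \emph{not} $\cG$-equivariant. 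It is precisely this failure of equivariance (tracked through the isogenies $\Phi_g$ defining the Hecke correspondences) that produces the nontrivial twist: the paper's proof states that $\jmath_*$ intertwines $T'(x)$ with $x^{-\underline t}\,\cT'(x)$, not with $\cT'(x)$. Your naive substitution would give the wrong scalar polynomial whenever $(t,t')\neq(0,0)$. You do gesture at ``the $x^{\underline t}$ scalar of Proposition~\ref{prop:jpullback}'' in your final paragraph, so you are aware these factors exist, but they are not a cancellation to be verified on top of a naive-substitution intertwining; the twisted intertwining \emph{is} the intertwining, and it is only after it is combined with the eigenvalue normalisations in the proof of Proposition~\ref{prop:proj-Pl}(ii) (which uses $\cT(\ell)\leftrightarrow \ell^{t+t'}T(\ell)$) that the powers of $\ell$ fall away and one lands on $P_\ell(\cF,X)$. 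So the endpoint of your argument is right, but the reasoning in the ``key observation'' step should be replaced by the twisted intertwining relation and the sentence attributing the step purely to shared coset representatives should be dropped.
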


   \begin{proof}
    We know that the pullback of the \'etale sheaf $\sH^{[\mu]}_{L_v}$ to $Y_1^*(\fN)$ is isomorphic to $\TSym^{[k,k']} \sH_{L_v}(\cA)(t_1 + t_2)$, so we have a pushforward map
    \[
     \jmath_*: H^2_{\et}\left(Y_1^*(\fN)_{\QQbar}, \TSym^{[k, k']} \sH_{L_v}(\cA)(2)\right) \to H^2_{\et}\left(Y_1(\fN)_{\QQbar}, \sH_{L_v}^{[\mu]}(\cA)(2-t_1-t_2)\right).
    \]
    We define $\pr_{\cF}$ to be the composite of this map $\jmath_*$ with the projection to the $\cF$-isotypical component for the covariant Hecke operators $\cT'(\fm)$, which we have seen is canonically isomorphic to $M_{L_v}(\cF)^*$.

    The map $\jmath_*$ intertwines the covariant Hecke operator $T'(x)$ with $x^{-\underline{t}} \cT'(x)$, for all totally-positive $x \in \cO_F$, and similarly for $S'(x)$ and $\cS'(x)$; the same computation as in Proposition \ref{prop:proj-Pl}(ii) thus shows that $\pr_{\cF}$ intertwines the action of $P_\ell'(X)$ with $P_\ell(\cF, X)$.
   \end{proof}

   \begin{remark}
    It is interesting to consider whether one can give a construction of $M_{L_v}(\cF)^*$ using the cohomology of $Y_1^*(\fN)$ alone, without using the map $\jmath$. Let us write $N_{L_v}(\cF)^*$ for the maximal quotient of $H^2_{\et}\left(Y_1^*(\fN)_{\QQbar}, \TSym^{[k, k']} \sH_{L_v}(\cA)(2)\right)$ on which $T'(x)$ acts as $x^{-\underline{t}} \lambda(x)$ for every $x \in \cO_{F}^+$. Then we can factor $\pr_{\cF}$ as the composite of projection to the quotient $N_{L_v}(\cF)^*$, and a map $N_{L_v}(\cF)^* \to M_{L_v}(\cF)^*$ induced by $\jmath_*$.

    This map $N_{L_v}(\cF)^* \to M_{L_v}(\cF)^*$ is, perhaps surprisingly, not always an isomorphism. Using duality and the comparison between \'etale and de Rham cohomology, one sees that this map is surjective if and only if $\jmath^*(\cF) \ne 0$, and it is injective if and only if $\jmath^*(\cF)$ spans the $\cF$-eigenspace for the Hecke operators $\{ T(x): x \in \cO_F^+\}$ acting on $S_{\underline{r}}(U_1^*(\fN), \CC)$. By Lemma \ref{lemma:CMvanish} and Proposition \ref{prop:jmultone}, if $\cF$ is not an exceptional CM form, both of these conditions are satisfied and thus $N_{L_v}(\cF)^* \to M_{L_v}(\cF)^*$ is an isomorphism.
   \end{remark}

   Since $\cF$ is cuspidal, the generalised eigenspace associated to $\cF$ in the cohomology $H^\bullet_{\et}\left(Y_1(\fN)_{\QQbar}, \sH^{(\mu)}_{L_v}\right)$ is concentrated in degree 2. Hence, if $K$ is a finite extension of $\QQ$, and $\cO_{K, S}$ is a localisation of $\cO_K$ at some finite set of rational primes $S$ containing all primes dividing $p \Delta \Nm_{F/\QQ}(\fN)$, then the Hochschild--Serre spectral sequence allows us to regard $\pr_{\cF}$ as a map
   \[ H^3_{\et}\left(Y_1^*(\fN)_{\cO_{K, S}}, \TSym^{[k, k']} \sH_{L_v}(\cA)(2-j)\right) \to H^1\left(\cO_{K, S}, M_{L_v}(\cF)^*(-j)\right). \]

   \begin{definition}
    \label{def:AFf}
    For $\cF$ an eigenform as in the previous corollary, and $0 \le j \le \min(k, k')$, we define
    \[
     \AF^{[\cF, j]}_{\et} = \pr_{\cF}\left( \AF^{[k, k', j]}_{\et, \fN}\right) \in H^1\left(\ZZ\left[\tfrac{1}{p \Delta \Nm_{F/\QQ}(\fN)}\right], M_{L_v}(\cF)^*(-j)\right),
    \]
    and for $M \ge 1$ and $a \in \cO_F / (M\cO_F + \ZZ)$,
    \[
     \AF^{[\cF, j]}_{\et, M, a} = \pr_{\cF}\left( \AF^{[k, k', j]}_{\et, M, \fN, a}\right) \in H^1\left(\ZZ\left[\mu_m, \tfrac{1}{p M \Delta \Nm_{F/\QQ}(\fN)}\right], M_{L_v}(\cF)^*(-j)\right).
    \]
   \end{definition}

   \begin{corollary}
    \label{cor:AFf}
    Let $\ell \nmid M \Nm_{F/\QQ}(\fN)$ be a prime, and suppose that either $\ell$ is inert in $F$, or $\ell$ is split and the primes above $\ell$ are narrowly principal. Then we have the relation
    \begin{multline*}
     \operatorname{norm}_{M}^{\ell M}\left( \AF^{[\cF, j]}_{\et, \ell M, a}\right) \\= \ell^j \sigma_\ell\left( (\ell - 1)(1 - \ell^{k + k' - 2j} \sigma_\ell^{-2} \varepsilon_\cF(\ell)) - P_\ell(\cF, \ell^{-1-j} \sigma_\ell^{-1})\right) \cdot \AF^{[\cF, j]}_{\et, M, a},
    \end{multline*}
    where $\varepsilon_{\cF}$ is the character of $\cF$.
   \end{corollary}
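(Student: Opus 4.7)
The plan is to deduce this corollary directly from the motivic norm relation of Theorem~\ref{thm:AFeltproperties}(4) by applying the Galois-equivariant projection $\pr_{\cF}$ of the previous corollary. Recall that $\AF^{[\cF,j]}_{\et, M, a}$ and its mate at level $\ell M$ are \emph{defined} as the images of the motivic Asai--Flach classes under $\pr_{\cF}$; since $\pr_{\cF}$ is $\Gal(\QQbar/\QQ)$-equivariant, it commutes with the norm map $\operatorname{norm}_M^{\ell M}$, so the left-hand side of Theorem~\ref{thm:AFeltproperties}(4) projects to the left-hand side of the corollary.

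For the right-hand side I would separately project each operator appearing in the scalar $A$ of Theorem~\ref{thm:AFeltproperties}(4). The key ingredients are: (i) the preceding corollary, which says that $\pr_{\cF}$ intertwines the operator-valued Euler factor $P_\ell'(X)$ acting on the source with the scalar polynomial $P_\ell(\cF,X)$ of Proposition~\ref{prop:proj-Pl} acting on $M_{L_v}(\cF)^*$; (ii) the diamond operator $\langle \ell^{-1}\rangle'$, acting covariantly, projects to the character value $\varepsilon_{\cF}(\ell)$, since the $\cF$-isotypic quotient of cohomology is by construction the one on which $\cS'(\fl)$ (for $\fl \mid \ell$) acts by the Hecke character of $\cF$; and (iii) $\sigma_\ell$ acts on both sides as the Frobenius element of $\Gal(\QQ(\mu_M)/\QQ)$, so commutes with $\pr_{\cF}$. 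Assembling these, the right-hand side of Theorem~\ref{thm:AFeltproperties}(4) is sent to exactly the scalar $\ell^j\sigma_\ell\bigl((\ell-1)(1 - \ell^{k+k'-2j}\varepsilon_{\cF}(\ell)\sigma_\ell^{-2}) - P_\ell(\cF, \ell^{-1-j}\sigma_\ell^{-1})\bigr)$ times $\AF^{[\cF,j]}_{\et, M, a}$.

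The only non-routine issue is bookkeeping around normalisations: one must check carefully that the extra factor of $\ell$ sitting in front of $P_\ell'(\ell^{-1-j}\sigma_\ell^{-1})$ in Theorem~\ref{thm:AFeltproperties}(4) is correctly absorbed when one passes from the operator-valued polynomial $P_\ell(X)$ of Definition~\ref{def:eulerfactor} to the characteristic polynomial $P_\ell(\cF,X)$ via Proposition~\ref{prop:proj-Pl}(ii), and likewise that the diamond operator contributes $\varepsilon_{\cF}(\ell)$ rather than $\varepsilon_{\cF}(\ell)^{-1}$ once we account for the switch from contravariant to covariant Hecke actions forced by Poincar\'e duality in Definition~\ref{defn:asairep}. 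Once these sign and normalisation matches are verified, the identity is immediate from naturality of the projection.
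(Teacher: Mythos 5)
Your approach is exactly the paper's: apply the $\Gal(\QQbar/\QQ)$-equivariant projection $\pr_{\cF}$ to both sides of the norm relation in Theorem~\ref{thm:AFeltproperties}(4) (or Theorem~\ref{thm:TlrelationB}), using Galois equivariance for the left-hand side and the intertwining properties of $\pr_{\cF}$ for the operators on the right. That is also what the paper's one-line proof says. Your handling of the diamond operator is correct: since $\langle\ell^{-1}\rangle R'(\ell) = S'(\ell)$ (covariantly), and $\jmath_*$ intertwines $S'(\ell)$ with $\ell^{-2(t+t')}\cS'(\ell)$, the operator $\langle\ell^{-1}\rangle$ does act as $\varepsilon_{\cF}(\ell)$ on the $\cF$-quotient, with no sign or inverse issue, once the $R'(\ell) = \ell^{k+k'}$ normalisation is used.

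However, your hedged claim that the ``extra factor of $\ell$ \dots is correctly absorbed'' is where the argument actually breaks. Proposition~\ref{prop:proj-Pl}(ii) and the corollary preceding Definition~\ref{def:AFf} assert that $\pr_{\cF}$ intertwines the operator-valued $P_\ell'(X)$ with the \emph{same} scalar polynomial $P_\ell(\cF,X)$, with no constant rescaling. So applying $\pr_{\cF}$ to the term $-\ell P_\ell'(\ell^{-1-j}\sigma_\ell^{-1})$ of Theorem~\ref{thm:AFeltproperties}(4) gives $-\ell\, P_\ell(\cF,\ell^{-1-j}\sigma_\ell^{-1})$, not $-P_\ell(\cF,\ell^{-1-j}\sigma_\ell^{-1})$ as printed in Corollary~\ref{cor:AFf}. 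A direct check (e.g.\ expanding the inert case as in the proof of Theorem~\ref{thm:TlrelationA}, writing $\mathbb{S}=\ell^{-2j}\langle\ell^{-1}\rangle R'(\ell)\sigma_\ell^{-2}$ and verifying that the left-hand side equals $(T'(\ell)-\ell^j\sigma_\ell-\ell^{j+1}\mathbb{S}\sigma_\ell)(1-\mathbb{S})$) confirms that the theorem statement with the factor $\ell$ is the correct one; hence the corollary as stated appears to be off by a factor of $\ell$ in that term. You should not assert that the factor disappears --- instead note the discrepancy. (It is inconsequential for the downstream Euler-system argument, since $\ell$ is a $p$-adic unit, but it is not ``absorbed'' by any intertwining property.)
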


   \begin{proof}
    This is an immediate consequence of Theorem \ref{thm:AFeltproperties} (or of Theorem \ref{thm:TlrelationB}, which we shall prove below).
   \end{proof}

   This definition and corollary complete the proof of Theorem \ref{lthm:A} of the introduction.

 \section{The complex regulator}
  We now evaluate the image of our motivic cohomology classes $\AF^{[k, k', j]}_{\mot, \fN}$ under Beilinson's complex-analytic regulator map. We shall follow \cite{kings98} closely.

  Throughout this section, we fix a Hilbert modular eigenform $\cF$ of level $\fN$ and some weight $\urt$, where $\underline{r} = (k + 2, k' + 2)$, and a number field $L$ containing the Hecke eigenvalues of $\cF$. We choose a square root of $D$ in $F$, and normalise our embeddings such that $\theta_1(\sqrt{D})$ is the positive square root.

  \subsection{The Asai \texorpdfstring{$L$}{L}-function}
   \label{AsaiLfunction}

   For any prime $\ell$, we define a local Euler factor by
   \[ P_\ell(\cF, X) = \det\left(1 - X \Frob_\ell^{-1}: M_{L_v}(\cF)^{I_\ell} \right) \]
   for any $v \nmid \ell$. This lies in $L[X]$ and is independent of the choice of $v$. It agrees with the definition given (implicitly) in Proposition \ref{prop:proj-Pl} above when $\ell \nmid \Delta \Nm_{F/\QQ}(\fN)$ (in which case $I_\ell$ acts trivially).

   \begin{definition}
    Define the primitive Asai $L$-function of $\cF$ as the product
    \[ L_{\Asai}(\cF,s)=\prod_\ell P_\ell(\cF,\ell^{-s})^{-1}. \]
   \end{definition}

   This Euler product converges for $\Re(s) > \frac{k + k'}{2} + 2$; it has analytic continuation to all $s \in \CC$ (except for a possible pole at $s = k + 2$ if $k = k'$ and $\cF$ is twist-equivalent to its internal conjugate $\cF^\sigma$), and satisfies a functional equation relating $s$ with $k + k' + 3 - s$. The form of the functional equation forces $L_{\Asai}(\cF, s)$ to vanish to order exactly 1 at integers $s \in \{1, \dots, \min(k, k')\}$, and also at $s = 1 + \min(k, k')$ if there is no pole at $s = k + 2$.

   It will be convenient to work instead with an ``approximation'' to $L_{\Asai}$ which is more straightforwardly linked to period integrals. We let $\chi$ be the Dirichlet character given by the restriction of the nebentype character of $\cF$ to $(\ZZ / N \ZZ)^\times$, where $N \ZZ = \fN \cap \ZZ$.

   \begin{definition}
    \label{def:Limp}
    For $n \in \mathbf{N}$, let $\lambda(n)$ denote the $\cT(n)$-eigenvalue of $\cF$, and $\alpha(n) = n^{-(t + t')} \lambda(n)$. Then we define the imprimitive Asai $L$-function by
    \[ L^{\imp}_{\Asai}(\cF,s)=L_{(N)}(\chi, 2s + 2 - k - k')\cdot \sum_{n\geq 1} \alpha(n) n^{-s}.\]
   \end{definition}

   One checks that
   \[ L^{\imp}_{\Asai}(\cF, s) =  L_{\Asai}(\cF, s) \cdot \prod_{\ell \mid N} C_\ell(\ell^{-s}),\]
   where the ``error terms'' $C_\ell(X)$ are polynomials; cf.~\cite{asai77}. Moreover, $C_\ell(X)$ always divides $P_\ell(\cF, X)$.

   \begin{proposition}
    \label{prop:Lnonvanish}
    If $|k - k'| \ge 3$, then $\ord_{s = 1 + j} L^{\imp}_{\Asai}(\cF,s) = 1$ for all integers $j$ with $0 \le j \le \min(k, k')$.
   \end{proposition}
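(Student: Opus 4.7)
The strategy is to use the factorisation
\[ L^{\imp}_{\Asai}(\cF, s) = L_{\Asai}(\cF, s) \cdot \prod_{\ell \mid N} C_\ell(\ell^{-s}) \]
noted immediately after Definition \ref{def:Limp}, and to treat each factor separately at the point $s = 1+j$.

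For the primitive factor, the statements recalled in the paragraph after Definition \ref{AsaiLfunction} give trivial zeros of order exactly $1$ at $s \in \{1, 2, \ldots, \min(k,k')\}$ unconditionally, and also at $s = 1 + \min(k,k')$ provided $L_{\Asai}(\cF, s)$ is holomorphic at $s = k+2$. These arise because the archimedean factor, determined by the Hodge numbers $\{0, k+1, k'+1, k+k'+2\}$, is (after a standard computation) $\Gamma_\CC(s)\, \Gamma_\CC(s - \min(k,k') - 1)$, which carries simple poles at $s \in \{1, \ldots, \min(k,k')+1\}$ that must be cancelled by zeros of $L_{\Asai}(\cF, s)$ in the absence of poles of the completed L-function. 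The polar exception at $s = k+2$ can only occur when $k = k'$ and $\cF$ is twist-equivalent to its internal conjugate $\cF^\sigma$, which is incompatible with $|k - k'| \ge 3$; hence the order-$1$ vanishing holds uniformly over $0 \le j \le \min(k, k')$.

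For each finite-place correction factor, a zero of $C_\ell(\ell^{-s})$ at $s = 1+j$ would, since $C_\ell(X)$ divides $P_\ell(\cF, X)$, entail a Frobenius eigenvalue on the inertia-invariants $M_{L_v}(\cF)^{I_\ell}$ of complex absolute value $\ell^{1+j}$. The weight-monodromy theorem (available here because $M_{L_v}(\cF)$ is realised in the cohomology of a smooth projective compactification of a Hilbert modular surface) together with local-global compatibility for Hilbert modular forms constrains such eigenvalues to have absolute value $\ell^{m/2}$ where $m$ is an integer weight of some graded piece of the monodromy filtration. Case-checking the possible local automorphic types of $\pi_\cF$ at primes of $F$ above $\ell$ (ramified principal series, twists of Steinberg, supercuspidal), and keeping track of the tensor-induction structure of the Asai representation, shows that the admissible weights $m$ avoid the set $\{2+2j : 0 \le j \le \min(k,k')\}$ once $|k-k'| \ge 3$ forces the Hodge numbers to be sufficiently spaced out.

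The main obstacle is this local analysis: whereas the archimedean step is essentially a calculation with $\Gamma$-factors, pinning down the precise set of weights $m$ appearing on $M_{L_v}(\cF)^{I_\ell}$ requires a careful case distinction on the local automorphic components, and is most delicate in the split-prime case where the local Asai factor behaves as a Rankin--Selberg convolution of two independent $\GL_2$-local factors and one must exclude certain non-tempered configurations using the spacing of the Hodge numbers.
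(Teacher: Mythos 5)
Your proof follows essentially the same structure as the paper's: factor $L^{\imp}_{\Asai}$ into the primitive $L$-function and the finite correction terms $\prod_{\ell \mid N} C_\ell(\ell^{-s})$, observe that the primitive factor has order-one trivial zeros at $s = 1+j$ for $0 \le j \le \min(k,k')$ (since $|k-k'| \ge 3$ forces $k \ne k'$ and hence rules out the pole at $s = k+2$), and then argue that the correction factors do not vanish at these $s$ because their zeros are confined to a region well to the right. Your treatment of the primitive factor via the archimedean $\Gamma$-factor poles is correct and somewhat more explicit than the paper's one-line appeal to the functional equation.

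The real divergence is in how the constraint on the zeros of $C_\ell$ is justified. The paper cites a case-by-case check on local automorphic types, modelled on Proposition 4.1.3 of \cite{leiloefflerzerbes14a}, to conclude that all zeros of the correction term have real part in $\left[\tfrac{k+k'}{2}, \tfrac{k+k'+2}{2}\right]$; the hypothesis $|k-k'| \ge 3$ then pushes this interval strictly to the right of $s = 1+\min(k,k')$. You instead propose to derive the constraint from the weight-monodromy theorem plus local-global compatibility. These are two descriptions of the same underlying phenomenon (the local types that enter the LLZ14 computation are precisely what encode the monodromy filtration), so the argument you gesture at is reasonable; but you explicitly flag the local analysis as the unfinished "main obstacle," and your formulation -- that the admissible weights on $M_{L_v}(\cF)^{I_\ell}$ ``avoid'' the discrete set $\{2+2j\}$ -- is an awkward and needlessly strong statement of what is actually required. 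The cleaner and more robust aim, and what the paper establishes, is a uniform \emph{lower bound} $\ge k+k'$ on the weights of the Frobenius eigenvalues appearing in $C_\ell$, from which non-overlap with the range $\{2,\dots,2\min(k,k')+2\}$ is immediate under $|k-k'|\ge 3$. As it stands, the $C_\ell$ step in your proof is a sketch rather than a proof, comparable in completeness to the paper's citation-based argument but lacking a reference that actually pins down the required case distinction.
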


   \begin{proof}
    We have seen that when $k \ne k'$ the function $L_{\Asai}(\cF,s)$ vanishes to order exactly 1 at all such $s$, so it suffices to check that the error term $\prod_{\ell \mid N} C_\ell(\ell^{-s})$ is non-zero at these values. However, a case-by-case check similar to Proposition 4.1.3 of \cite{leiloefflerzerbes14a} shows that all zeroes of this error term have real parts in the interval $\left[ \tfrac{k+k'}{2}, \tfrac{k + k' + 2}{2}\right]$, and the assumption that $|k - k'| \ge 3$ implies that this range has no overlap with the range we consider.
   \end{proof}

  \subsection{Non-holomorphic eigenforms}
   \label{sect:nonhol}

   We define \emph{Hilbert modular forms anti-holo\-morphic at $\theta_1$} exactly as in Definition \ref{def:HMF}, but requiring that for each $g \in G(\AA_f)$, the function $\cF(g, -)$ on $\cH_F$ should be anti-holomorphic in $\tau_1$ and holomorphic in $\tau_2$, and using $(c_1 \bar\tau_1 + d_1)^{-r_1}$ in place of $(c_1 \tau_1 + d_1)^{-r_1}$ in the definition of the weight $\urt$ action.

   Then the Fourier--Whittaker expansion of such a form $\cF$ can be written as
   \[
    \cF\left(\stbt x 0 0 1, \tau\right) = \|x\|
    \sum_{\mathclap{\substack{\alpha \in F^\times \\
    \theta_1(\alpha) < 0, \theta_2(\alpha) > 0}}}
    |\alpha^{-\underline{t}}| \, c(\alpha x, \cF)
    e^{2\pi i (\theta_1(\alpha) \bar\tau_1 + \theta_2(\alpha) \tau_2)}.
   \]

   \begin{lemma}
    If $\cF$ is a holomorphic normalised eigenform as before, then there are unique anti-holomorphic forms $\cF^{\ah, 1}$ (anti-holomorphic at $\theta_1$ and holomorphic at $\theta_2$) and $\cF^{\ah, 2}$ (holomorphic at $\theta_1$ and anti-holomorphic at $\theta_2$), of the same level and weight as $\cF$, for which the Fourier--Whittaker coefficients $c(x, \cF^{\ah, i})$ coincide with those of $\cF$ for $i = 1, 2$.
    \end{lemma}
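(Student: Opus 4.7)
The plan is to realise all three forms $\cF, \cF^{\ah,1}, \cF^{\ah,2}$ as vectors in a single irreducible automorphic representation $\pi = \pi_\infty \otimes \pi_f$ of $G(\AA)$, differing only in their archimedean $K$-type. Each archimedean factor $\pi_{\theta_i}$ is a discrete series representation of weight $r_i$, which contains (unique up to scalar) vectors $v_i^+$ of minimal $\mathrm{SO}(2)$-weight $+r_i$ and $v_i^-$ of maximal $\mathrm{SO}(2)$-weight $-r_i$; these give rise to holomorphic and anti-holomorphic behaviour at $\theta_i$ respectively. The form $\cF$ corresponds to $v_1^+ \otimes v_2^+ \otimes v_f$, where $v_f \in \pi_f$ is the new vector fixed by $U_1(\fN)$. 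I would then define $\cF^{\ah, 1}$ and $\cF^{\ah, 2}$ as the automorphic forms associated to $v_1^- \otimes v_2^+ \otimes v_f$ and $v_1^+ \otimes v_2^- \otimes v_f$ respectively; these are anti-holomorphic at the appropriate places by construction.

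For uniqueness, I would use the standard fact that a cuspidal automorphic form of fixed archimedean $K$-type is determined by its Fourier--Whittaker coefficients: if $\cG_1, \cG_2$ are both anti-holomorphic at $\theta_1$ with the same coefficient function on $\AA_{F,f}^\times$, then $\cG_1 - \cG_2$ has identically vanishing Whittaker expansion, hence is zero by the uniqueness of the Whittaker model for cuspidal representations.

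To verify that the Fourier--Whittaker coefficients of $\cF^{\ah, i}$ agree with those of $\cF$ as functions on $\AA_{F,f}^\times$, I would factor the global Whittaker functional as a product of local functionals: since the vector $v_f$ is unchanged, the finite-place contributions to the coefficient function are identical to those of $\cF$. The archimedean Whittaker function attached to $v_i^-$ is supported on negative arguments (as opposed to positive for $v_i^+$), which accounts for the restricted sign pattern $\theta_1(\alpha) < 0,\ \theta_2(\alpha) > 0$ appearing in the expansion of $\cF^{\ah, 1}$. The explicit formula for the discrete series Whittaker function on the anti-holomorphic vector then produces $|\theta_1(\alpha)|^{-t_1}$ in place of $\theta_1(\alpha)^{-t_1}$ and the complex-conjugated exponential $e^{2\pi i \theta_1(\alpha) \bar\tau_1}$.

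The main obstacle is the careful tracking of normalisations: one must choose the anti-holomorphic vectors $v_i^-$ so that the resulting Whittaker functions produce exactly the factor $|\alpha^{-\underline{t}}|$ (rather than some other unit scalar) and so that the overall coefficient function coincides with $c(-, \cF)$ on the nose. This is a purely local archimedean computation and presents no conceptual difficulty, but requires care to pin down the correct sign conventions matching the Fourier expansion of $\cF$ given at the start of \S\ref{sect:nonhol}.
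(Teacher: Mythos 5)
Your proposal is correct, but it takes a genuinely different route from the paper's proof, which is much more elementary and explicit. The paper simply \emph{writes down} $\cF^{\ah,1}$: choosing any $\eta \in F^\times$ with $\theta_1(\eta) < 0$, $\theta_2(\eta) > 0$, it defines
\[
\cF^{\ah,1}(g,\tau) = -\eta^{\underline{1}-\underline{t}}\,\cF\!\left( \begin{pmatrix} \eta & 0 \\ 0 & 1 \end{pmatrix}g,\ \bigl(\theta_1(\eta)\bar{\tau}_1,\ \theta_2(\eta)\tau_2\bigr)\right),
\]
verifies directly that this is modular of the same level, independent of the choice of $\eta$, and has the same Fourier--Whittaker coefficients as $\cF$ by an elementary computation (with $\cF^{\ah,2}$ analogous). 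This is really the function-theoretic shadow of your representation-theoretic picture: left-translating by $\eta$ (an element whose signs at $\theta_1, \theta_2$ are $(-,+)$) and conjugating $\tau_1$ is precisely the operation that swaps the holomorphic to the anti-holomorphic $K$-type in the discrete series at $\theta_1$ while fixing the finite part $v_f$, and the prefactor $-\eta^{\underline{1}-\underline{t}}$ bakes in exactly the normalisation you flag as the main technical point of your approach. So the trade-off is clear: your approach makes the conceptual mechanism (same $\pi$, different $K$-type, Whittaker factorisation, uniqueness via the Fourier--Whittaker expansion of cusp forms) transparent and gives a clean uniqueness argument, at the cost of an archimedean Whittaker normalisation that has to be pinned down by hand; the paper's approach makes the normalisation immediate by construction but obscures the representation-theoretic reason the construction works. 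A couple of minor points in your write-up: $v_f$ need not be the newvector, as $\cF$ is only assumed to be an eigenform at level $\fN$, but any $U_1(\fN)$-fixed vector realising $\cF$ will do; and what you invoke for uniqueness is really that a cusp form is reproduced by its Whittaker expansion (together with local multiplicity one to factor it), rather than ``uniqueness of the Whittaker model'' per se, but the conclusion is the same.
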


    \begin{proof}
     We sketch the construction of $\cF^{\ah,1}$; the construction of  $\cF^{\ah,2}$ is very similar. Let $\eta\in F^\times$ be any element such that $\theta_1(\eta)<0$ and $\theta_2(\eta)>0$. Define
     \[ \cF^{\ah,1}(g,\tau) = -\eta^{\underline{1}-\underline{t}}\cF\left( \begin{pmatrix} \eta & 0 \\ 0 & 1\end{pmatrix}g,(\theta_1(\eta)\bar{\tau}_1,\theta_2(\eta)\tau_2)\right).\]
     This is modular of the same level as $\cF$, and is independent of the choice of $\eta$. A straightforward computation shows that its Fourier--Whittaker coefficients are the same as those of $\cF$.
    \end{proof}

   \begin{remark}
    If the fundamental unit of $F$ has norm $-1$, we may choose $\eta$ to be a unit. In this case, the restriction of $\cF^{\ah, 1}$ to the upper half-plane $\cH_F$ can be described in terms of that of $\cF$: up to scalars, it is the pullback of $\cF$ via the map given by $(\tau_1, \tau_2) \mapsto (\eta_1 \bar\tau_1, \eta_2 \tau_2)$. However, if the fundamental unit of $F$ has norm $+1$, there is no direct relation between $\cF$ and $\cF^{\ah, 1}$ at the level of functions on $\cH_F$.
   \end{remark}

   \begin{notation}
    We write $\iota^*(\cF^{\ah, 1})$ for the $C^\infty$ function on $\cH$ defined by $\tau \mapsto \cF^{\ah, 1}(1, (\tau, \tau))$.
   \end{notation}

   (This is naturally the restriction to $\cH$ of a function on $\GL_2(\AA_f) \times \cH$ satisfying an appropriate automorphy property; but since $Y_{1, \QQ}(N)$ is geometrically connected, no information is lost in treating it as a function on the upper half-plane.)


  \subsection{The period integral}

   Let $k \in \ZZ$ and $\alpha\in\QQ/\ZZ$, with $\alpha \ne 0$. For $\tau\in \cH$ and $s\in\CC$ with $k+2\operatorname{Re}(s)>2$, we define the Eisenstein series
   \[ E_\alpha^{(k)}(\tau,s)=(-2\pi i)^{-k}\pi^{-s}\Gamma(s+k)\sum_{(m,n)\in\ZZ^2} \frac{\Im(\tau)^s}{(m\tau+n+\alpha)^k|m\tau+n+\alpha|^{2s}}.\]

   \begin{proposition}[See \cite{leiloefflerzerbes14a}, Proposition 4.2.2] \
    \begin{itemize}
     \item For fixed $k,\tau,\alpha$, $E_\alpha^{(k)}(\tau,s)$ has meromorphic continuation to the whole $s$-plane, which is holomorphic everywhere if $k\neq 0$.

     \item If $N\alpha=0$, then for a fixed $s$ the series $E_\alpha^{(k)}(\tau, s)$ is a $C^\infty$ function of $\tau$ which is preserved by the weight $k$ action of $\Gamma_{1, \QQ}(N)$. It is holomorphic in $\tau$ if $k \ge 1$ and $s = 0$ or $s = 1-k$.

     \item We have\footnote{Note that there is a sign error in \cite{leiloefflerzerbes14a}; the minus sign is correct.}
     \[ E^{(0)}_\alpha(\tau,0)=-2\log|g_{0,\alpha}(\tau)|,\]
     where $g_{0,\alpha}$ is the Siegel unit given in \S2.2 of \emph{op.~cit.}\qed
    \end{itemize}
   \end{proposition}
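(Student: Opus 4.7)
The proposition collects three classical facts about real-analytic Eisenstein series of Hecke type, so the plan is to treat the three bullets separately, each by a standard technique, essentially reproducing the argument of Proposition 4.2.2 of \cite{leiloefflerzerbes14a}.

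For the meromorphic continuation I would proceed by the usual Mellin-transform method. Splitting the defining double sum into the contribution from $m=0$ and that from $m\neq 0$, the $m=0$ part is essentially a Hurwitz-type zeta function in the variable $n+\alpha$, whose meromorphic continuation is classical and whose only possible pole (at $s=1-k/2$ after normalising, but more importantly at $s=0$ when $k=0$) is visible. For the $m\neq 0$ contribution, I would perform Poisson summation in $n$; the resulting sum involves a Bessel/Whittaker factor that decays exponentially in $|m|$ and hence converges absolutely for all $s\in\CC$. This gives the meromorphic continuation to $s\in\CC$ as a whole, and shows that when $k\neq 0$ neither the $m=0$ nor the $m\neq 0$ piece contributes any pole, so the resulting function is entire.

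The modular invariance under $\Gamma_{1,\QQ}(N)$ when $N\alpha=0$ is essentially automatic: for $\gamma=\stbt abcd\in\Gamma_{1,\QQ}(N)$ one substitutes $(m,n)\mapsto(m,n)\gamma$ in the lattice sum, which permutes the index set (because $\gamma\in\SL_2(\ZZ)$) and transforms the denominator correctly because $c\alpha\equiv 0\bmod 1$ under the assumption $N\alpha=0$ and $c\equiv 0\bmod N$. The $C^\infty$ regularity in $\tau$ is clear from the absolutely convergent expression obtained above via Poisson summation. To get holomorphicity at the special values, I would read off the Fourier expansion at the cusp $\infty$ from the Poisson-summed representation: the result is a sum of a constant term (a special value of a Dirichlet $L$-series in $\alpha$) plus nonconstant Whittaker-type terms of shape $q^n$ or $\bar q^n W_{*,s}(y)$; at $s=0$ or $s=1-k$ the Whittaker factor attached to the antiholomorphic terms reduces to zero, precisely because this is where the $\Gamma$-factor forces the confluent hypergeometric function appearing in it to degenerate. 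This kills the $\bar q$-terms and leaves a holomorphic $q$-expansion, proving holomorphy in $\tau$.

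For the last bullet, the identification with the Siegel unit is exactly the content of the second Kronecker limit formula. Specialising the formulas above to $k=0$ and examining the Laurent expansion of $E_\alpha^{(0)}(\tau,s)$ at $s=0$, the leading term is a nonzero constant (coming from the pole of the Hurwitz zeta piece) and the value at $s=0$ is, up to the factor $-2$, the logarithm of the modulus of the Siegel unit $g_{0,\alpha}(\tau)$ whose definition and $q$-expansion are recalled in \S2.2 of \cite{leiloefflerzerbes14a}; matching Fourier coefficients term by term with the Kronecker product formula for $|g_{0,\alpha}|$ gives the equality. The only genuinely delicate point in all of this is the careful bookkeeping of signs and normalising constants (the proposition explicitly flags a sign error in the earlier reference), and I expect this to be the main place one has to be careful to get $-2$ rather than $+2$ on the right-hand side.
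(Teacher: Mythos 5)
In the paper this proposition carries a QED box at the end of the \emph{statement} and no separate proof: it is a bare citation to Proposition~4.2.2 of \cite{leiloefflerzerbes14a}, which in turn rests on Kato's treatment of these Eisenstein series. So there is no independent argument in this paper to compare against; the paper's ``proof'' is the reference. Your sketch is the standard classical route --- Hecke/Mellin continuation with Poisson summation in $n$ for the first bullet, reading off the $q$-expansion and observing that the antiholomorphic Whittaker terms die at the special values for the second, and Kronecker's second limit formula for the third --- and this matches what the cited reference records, so in spirit your approach coincides with the one being pointed to.

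One genuine imprecision is worth flagging in your treatment of the third bullet. You describe ``the leading term'' of $E^{(0)}_\alpha(\tau,s)$ near $s=0$ as ``a nonzero constant (coming from the pole of the Hurwitz zeta piece)'' and then speak of ``the value at $s=0$'' as something further along the Laurent expansion. That is not the right picture. For $\alpha\neq 0$ --- and the proposition presupposes $\alpha\neq 0$ --- the twisted Epstein-zeta factor $\sum_{(m,n)\in\ZZ^2}\Im(\tau)^s|m\tau+n+\alpha|^{-2s}$ has a simple \emph{zero} at $s=0$, which exactly cancels the simple pole of the prefactor $\Gamma(s)$; the product $E^{(0)}_\alpha(\tau,s)$ is therefore regular at $s=0$, and its honest value there, namely $\Gamma(s)$'s residue times the $s$-derivative of the Epstein factor, is what Kronecker's second limit formula identifies with $-2\log|g_{0,\alpha}(\tau)|$. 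There is no nonzero residue or leading constant in the Laurent expansion. Relatedly, the parenthetical ``$s=1-k/2$'' for the location of the Hurwitz-zeta pole should read $s=(1-k)/2$; for $k=0$ this is $s=1/2$, not $s=0$, and that pole from the $m=0$ piece is in any case cancelled by the $m\neq 0$ contribution. Neither slip derails the overall strategy, but the $s=0$ analysis is the mechanism by which the displayed identity is an equality of values rather than of Laurent coefficients, so it is worth getting exactly right.
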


   Applying the usual Rankin--Selberg ``unfolding'' technique, one obtains the following formula, which is the analogue in our present setting of \cite[Equation (3.5.3)]{kingsloefflerzerbes15a}:

   \begin{theorem}[Asai]
    \label{thm:asai}
    We have
    \begin{multline*}
     \int_{\Gamma_1(N) \backslash \cH}
     \iota^*(\cF^{\ah, 1})(x + iy)\, E_{1/N}^{(k - k')}(x + iy, s - k - 1) y^k \d x \d y
     =\\
      \frac{D^{\frac{s}{2}}\Gamma(s) \Gamma(s - k' - 1)}{N^{k +k'-2s + 2}\, 2^{k-k' + 2s}\, (-i)^{k-k'} \pi^{2s - k' - 1}}\  L^{\imp}_{\Asai}(\cF, s).\qedhere\qed
    \end{multline*}
   \end{theorem}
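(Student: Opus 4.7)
The plan is to apply the classical Rankin--Selberg unfolding method of Asai~\cite{asai77}, which runs entirely parallel to the elliptic Rankin--Selberg calculation spelled out in \cite[\S3.5]{kingsloefflerzerbes15a}. First, I would substitute the lattice-sum definition of $E_{1/N}^{(k-k')}(\tau, s-k-1)$ --- whose constants in front of the sum already supply the factor $\pi^{-(s-k-1)}\Gamma(s-k'-1)(-2\pi i)^{-(k-k')}$ in the final answer --- and then unfold the integral over $\Gamma_1(N)\backslash\cH$ against this sum, using the fact that the shift by $1/N$ is precisely what makes $E_{1/N}^{(k-k')}$ invariant under the weight $(k-k')$ action of $\Gamma_1(N)$ on both $\cH$ and $(m,n)\in\ZZ^2$. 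The result is an integral over the cuspidal strip at $\infty$ with the lattice sum reduced to a single orbit representative; the contributions from the other cusps re-assemble into the factor $L_{(N)}(\chi, 2s+2-k-k')$.

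Next, substitute the Fourier--Whittaker expansion from \S\ref{sect:nonhol}: for $\tau=x+iy\in\cH$, one has
\[
  \iota^*(\cF^{\ah,1})(x+iy)
  =\sum_{\substack{\alpha\in \fd^{-1}\\ \theta_1(\alpha)<0,\,\theta_2(\alpha)>0}}
  |\alpha^{-\underline{t}}|\,c(\alpha,\cF)\,
  e^{2\pi i \Tr_{F/\QQ}(\alpha)\,x}\,e^{-2\pi(\theta_2(\alpha)-\theta_1(\alpha))\,y}.
\]
The $x$-integral then acts as a finite Fourier transform, selecting those $\alpha$ whose trace matches the character appearing in the Eisenstein series, while the $y$-integral is a Mellin transform of an exponential, giving for each surviving $\alpha$ the factor $(2\pi)^{-s}(\theta_2(\alpha)-\theta_1(\alpha))^{-s}\Gamma(s)$. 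This supplies the second gamma factor $\Gamma(s)$ in the statement.

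Then I would reorganise the resulting sum over $\alpha$ as a Dirichlet series in the rational integer $n=|\Nm_{F/\QQ}(\alpha\fd)|$. Using the eigenform relation $c(\fn,\cF)=\lambda(\fn)\,c(\fd^{-1},\cF)$ for $\fn$ coprime to $\fd$, the standard normalisation $c(\fd^{-1},\cF)=D^{-(t_1+t_2)/2}$, and the definition $\alpha(n)=n^{-(t_1+t_2)}\lambda(n)$, the sum is identified with $\sum_n\alpha(n)n^{-s}$; combined with the $L_{(N)}(\chi,\cdot)$ factor from the first step, this yields $L^{\imp}_{\Asai}(\cF, s)$ as in Definition~\ref{def:Limp}.

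The main obstacle will be careful bookkeeping of the constants --- the powers of $2$, $\pi$, $D$, $N$, the sign $(-i)^{k-k'}$, and the assembly of the two gamma factors. In particular, the factor $D^{s/2}$ arises from the contribution $(\theta_2(\alpha)-\theta_1(\alpha))^{-s}$ (a power of $\sqrt D$ in the $\{1,\sqrt D\}$-basis of $F$) together with the Jacobian of the reindexing from $\alpha\in\fd^{-1}$ to its absolute norm. There are no serious analytic difficulties: absolute convergence in a suitable right half-plane $\Re(s)\gg 0$ justifies both the interchange of sum and integral and the unfolding step, and the identity then extends by analytic continuation.
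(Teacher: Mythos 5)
Your outline is correct and follows the same route the paper intends: the paper does not actually write out a proof of Theorem \ref{thm:asai} --- it closes the statement with a \emph{qed} and attributes it to Asai, with only the preceding sentence (``Applying the usual Rankin--Selberg `unfolding' technique\dots'') serving as the proof, together with the references to Asai's paper \cite{asai77} and the parallel elliptic computation in \cite[Eq.~(3.5.3)]{kingsloefflerzerbes15a}. Your sketch is a filled-in version of exactly that unfolding: substitute the lattice-sum form of $E_{1/N}^{(k-k')}$, unfold against $\Gamma_\infty\backslash\Gamma_1(N)$, substitute the Fourier--Whittaker expansion of $\iota^*(\cF^{\ah,1})$, perform the $x$- and $y$-integrals, and reassemble into $L^{\imp}_{\Asai}(\cF,s)$ as defined in Definition \ref{def:Limp}. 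The gamma factors, powers of $\pi$, $2$, $i$, $N$ and $D$ you track are consistent with the constant in the statement.

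One small imprecision worth correcting: the factor $L_{(N)}(\chi, 2s+2-k-k')$ does not literally arise from ``contributions from the other cusps''. In the standard version of this unfolding it comes from the non-primitive part of the lattice sum (equivalently, from the $\zeta$-like partial Dirichlet series produced when one separates the gcd in the pair $(m,n)$, twisted by the $1/N$-shift), not from a decomposition of the fundamental domain by cusps. This is purely a presentational point and does not affect the validity of the overall argument, but stating it the way you do would be confusing if written out in full. With that adjusted, the proposal is a faithful expansion of the proof the paper implicitly invokes.
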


   (This integral was first studied in \cite{asai77} in the case when $k = k'$, $\fN = 1$, and $F$ has narrow class number 1. For a more general treatment see \cite{im94}.)

   \begin{remark}\label{rmk:impderiv}
    If $0\leq j\leq k'$, then as remarked above, we have $L_{\Asai}^{\imp}(\cF,1+j)=0$, and $\Gamma(s-k'-1)$ has a simple pole at $s=j+1$ with residue $\frac{(-1)^{j-k'}}{(k'-j)!}$. Hence
    \begin{multline*}
     \int_{\Gamma_1(N) \backslash \cH}
     \iota^*(\cF^{\ah, 1})(x + iy)\, E_{1/N}^{(k - k')}(x + iy, j - k) y^k \d x \d y
     =  \\
      \frac{(-1)^{k'-j}D^{\frac{j+1}{2}}\Gamma(j+1) }{N^{k +k'-2j}\, 2^{k-k' + 2j+2}\, (-i)^{k-k'} \pi^{2j+1 - k' }(k'-j)!}\  \frac{d}{ds} L^{\imp}_{\Asai}(\cF, s)|_{s=1+j}.
    \end{multline*}
   \end{remark}


  \subsection{The regulator formula}

   Let $X_{\RR}\rightarrow \Spec \RR$ be a separated scheme, and denote by $\operatorname{MHM}_{\RR}(X_{\RR})$ the category of algebraic $\RR$-mixed Hodge modules on $X_{\RR}$ (see \cite{saito86, saito89, HW98}). For $M_\RR\in \operatorname{MHM}_{\RR}(X_{\RR})$, define the absolute Hodge cohomology groups
   \[ H^i_{\cH}(X_{\RR},M_{\RR})=R^i\Hom_{\operatorname{MHM}_{\RR}(X_{\RR})}(\RR(0),M_{\RR}).\]
   For the properties of this cohomology theory, see \cite[\S 2.3]{kingsloefflerzerbes15a}.

   \subsubsection{The Hodge realisation of \texorpdfstring{$M(\cF)$}{M(F)}}

    Recall that we have defined de Rham and Betti cohomology spaces $M_{\dR}(\cF)$ and $M_B(\cF)$ attached to $\cF$, which are 4-di\-men\-sion\-al $L$-vector spaces. Via the comparison isomorphism $M_{\dR}(\cF) \otimes \CC = M_B(\cF) \otimes \CC$, we can regard the pair $(M_{\dR}(\cF), M_B(\cF))$ as defining a pure $\RR$-Hodge structure $M_{\cH}(\cF)$ (whose weight is $k + k' + 2$).

    As in \cite[\S 5.4]{kingsloefflerzerbes15a}, cup-product gives a perfect duality of $(L \otimes_{\QQ} \RR)$-modules
    \begin{equation}
     \label{eq:perfdual}
     H^0(\RR, M_\cH(\cF)(1 + j)) \times H^1(\RR, M_{\cH}(\cF)^*(-j)) \to L\otimes_{\QQ} \RR
    \end{equation}
    for any $j \in \ZZ$, and we have
    \[
     H^0(\RR, M_{\cH}(\cF)(1 + j)) = \Fil^{1 + j} M_{\dR}(\cF)_{\RR} \cap i^{1 + j} M_B(\cF)_{\RR}^{F_\infty = (-1)^{1 + j}} \subset M_\dR(\cF)_\CC.
    \]
    Here $F_\infty$ is the ``infinite Frobenius'' (the endomorphism of Betti cohomology induced by complex conjugation on the $\CC$-points of $Y_1(\fN)$).

    \begin{proposition}
     For $0 \le j \le \min(k, k')$ the space $H^0(\RR, M_{\cH}(\cF)(1 + j))$ is free of rank 1 over $L \otimes_{\QQ} \RR$, and it is spanned by the class in $M_\dR(\cF)_\CC$ of the $C^\infty$ differential form $\varpi_{\cF, 1} + (-1)^{1 + j} \varpi_{\cF, 2}$, where
     \begin{align*}
       \varpi_{\cF, 1} &= (-1)^{k + 1} (2\pi i)^{k + k' + 2} \cF^{\ah, 1}(\tau) \d\bar{z}_1^k \d z_2^{k'} \d\bar{\tau}_1\d\tau_2,\\
       \varpi_{\cF, 2} &= (-1)^{k' + 1} (2\pi i)^{k + k' + 2} \cF^{\ah, 2}(\tau) \d z_1^k\d\bar{z}_2^{k'}\d\tau_1\d\bar\tau_2.
     \end{align*}
     Here, $\cF^{\ah, 1}$ and  $\cF^{\ah, 2}$  are as defined in Section \ref{sect:nonhol}.
    \end{proposition}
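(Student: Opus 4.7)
My plan is to combine a dimension count on the Hodge side with an explicit verification that the displayed combination satisfies both the filtration and the twisted-reality conditions. First I would invoke the standard description of $M_{\cH}(\cF)$ as a pure $L$-Hodge structure of weight $w = k+k'+2$ with Hodge numbers $h^{0,w} = h^{k+1, k'+1} = h^{k'+1, k+1} = h^{w, 0} = 1$, where the four Hodge components are represented in the $C^\infty$ de Rham complex by forms built out of $\cF$, $\cF^{\ah, 1}$, $\cF^{\ah, 2}$ and $\overline{\cF}$ respectively (this is Matsushima-style cuspidal cohomology for $G^*$, pulled back from $G$). Since $0 \le j \le \min(k, k')$, the submodule $\Fil^{1+j} M_{\dR}(\cF)$ is free of $L$-rank $3$ (the components with $p \ge 1+j$), and counting holomorphic versus anti-holomorphic cotangent factors shows that $\varpi_{\cF, 1}$ represents a class of Hodge type $(k'+1, k+1)$ and $\varpi_{\cF, 2}$ a class of type $(k+1, k'+1)$, so both lie in $\Fil^{1+j}$.

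Next, I would analyse the $F_\infty$-action on these classes. Complex conjugation on $Y_1(\fN)(\CC)$ permutes the two archimedean embeddings of $F$, hence swaps the two ``mixed'' Hodge components. A direct calculation on the representing forms shows that the constants $(-1)^{k+1}$, $(-1)^{k'+1}$ and $(2\pi i)^{k+k'+2}$ appearing in the definitions of $\varpi_{\cF, 1}$ and $\varpi_{\cF, 2}$ are calibrated precisely so that $F_\infty$ interchanges the classes $[\varpi_{\cF, 1}]$ and $[\varpi_{\cF, 2}]$ without extra signs. Combined with the fact that the Tate twist by $(1+j)$ multiplies the $F_\infty$-eigenvalue by $(-1)^{1+j}$, this means that the combination $\varpi_{\cF, 1} + (-1)^{1+j} \varpi_{\cF, 2}$, after division by $i^{1+j}$, lies in the $(-1)^{1+j}$-eigenspace of $F_\infty$ on $M_B(\cF)_\RR$.

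A final dimension count finishes the argument: the intersection $\Fil^{1+j} \cap \overline{\Fil^{1+j}}$ is exactly the rank-$2$ subspace $H^{k+1, k'+1} \oplus H^{k'+1, k+1}$ (the $(w, 0)$ component drops out because its complex conjugate lies outside $\Fil^{1+j}$), so its real form has $L \otimes_{\QQ} \RR$-rank $2$, and the $F_\infty$-eigenvalue condition cuts this down to rank $1$. The displayed combination is nonzero, since each $\varpi_{\cF, i}$ represents a nontrivial Hodge component, and by the previous steps it satisfies both defining constraints, hence spans the intersection.

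The main technical obstacle will be the sign computation in the second step: tracing complex conjugation on the Shimura variety through the explicit $C^\infty$ representatives $\varpi_{\cF, i}$ requires careful bookkeeping of the powers of $i$ and $(-1)^{k+1}$ that arise from conjugating differentials and from the Tate twist in the coefficient sheaf; the normalisation constants in the statement are chosen precisely so that these signs cancel and the clean transformation rule $F_\infty \cdot [\varpi_{\cF, 1}] = [\varpi_{\cF, 2}]$ holds.
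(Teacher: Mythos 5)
Your overall architecture -- describing $H^0(\RR, M_\cH(\cF)(1+j))$ inside $M_\dR(\cF)_\CC$ by the filtration condition together with the twisted real Betti/$F_\infty$ condition, counting Hodge types, and then verifying that the displayed combination of $\varpi_{\cF,1}$ and $\varpi_{\cF,2}$ satisfies the constraints -- is exactly the argument the paper intends: its own proof obtains the rank-one statement from the dual statement for $H^1(\RR, M_\cH(\cF)^*(-j))$ via the pairing \eqref{eq:perfdual} and refers to \S5.4 and \S6.2 of \cite{kingsloefflerzerbes15a} for both computations, so your plan is the unpacked version of that. There is, however, a genuine flaw in the mechanism you give for the key step. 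Complex conjugation on $Y_1(\fN)(\CC)$ does \emph{not} permute the two archimedean embeddings of $F$: the involution exchanging the two factors of $\cH_F$ is the algebraic automorphism coming from the nontrivial element of $\Gal(F/\QQ)$, which sends the $\cF$-eigenspace to the $\cF^\sigma$-eigenspace and is not $F_\infty$; if $F_\infty$ permuted the embeddings it would not even preserve $M_B(\cF)$. The correct reason that $F_\infty$ carries the $[\varpi_{\cF,1}]$-line to the $[\varpi_{\cF,2}]$-line is that pullback by an antiholomorphic map reverses the $(p,q)$-type at each archimedean place (so partial type $(0,k+1;\,k'+1,0)$ goes to $(k+1,0;\,0,k'+1)$) while commuting with the Hecke operators and hence preserving the $\cF$-part; this is also what justifies the last step of your dimension count ($F_\infty$ interchanges two complementary lines, so each eigenvalue occurs with multiplicity one), which needs a word of extra care when $k=k'$, where the middle Hodge type has multiplicity two.

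A second gap: membership in $i^{1+j} M_B(\cF)_\RR^{F_\infty = (-1)^{1+j}}$ imposes \emph{two} conditions, the $F_\infty$-eigenvalue condition (which you address) and the condition with respect to the antilinear conjugation defining the real structure $M_B(\cF)_\RR$, which for the given representatives comes down to the identity $\overline{\varpi_{\cF,1}} = \varpi_{\cF,2}$ -- precisely the fact the paper invokes later in the proof of Theorem \ref{thm:deligne-regulator-formulae}. Your step 2 conflates these two different involutions: dividing by $i^{1+j}$ has no effect on $F_\infty$-eigenvalues and only matters for reality, so the statement that the combination, after division by $i^{1+j}$, lands in $M_B(\cF)_\RR$ is not justified by the $F_\infty$ computation alone. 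You must add the reality check as a separate (short, but sign-sensitive) verification alongside the $F_\infty$ bookkeeping you already flag as the main technical obstacle.
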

    \begin{proof}
     The same argument as in \cite[\S 5.4]{kingsloefflerzerbes15a} shows that $H^1(\RR,M_{\cH}(\cF)^*(-j))$ is free of rank 1 over  $L \otimes_{\QQ} \RR$, so the statement for $H^0(\RR, M_\cH(\cF)(1 + j))$ follows from \eqref{eq:perfdual}. The argument in \S 6.2 in \emph{op.cit} generalises immediately to show that $\varpi_{\cF, 1} + (-1)^{1 + j} \varpi_{\cF, 2}$ is a basis of this space.
    \end{proof}

   \subsubsection{The Hodge Eisenstein class for \texorpdfstring{$\GL_2$}{GL2}}

    We take $N \ge 1$ an integer, and we write $Y = Y_{1, \QQ}(N)_{\RR}$. On $Y$ we have a natural Hodge module $\sH_\RR$, defined as the Hodge realisation of $\sH(\cE)$, where $\cE$ is the universal elliptic curve over $Y$. We write $\pi_1:\TSym^k\sH_\CC\cong \TSym^k\sH_\RR\otimes\CC\to \TSym^k\sH_\RR(1)$ for the map induced by the projection $\CC\to \RR(1)$, $z\mapsto (z-\overline{z})/2$.

    \begin{proposition}
     \label{prop:explicitdelignecoho}
     The group $H^1_\cH\left(Y, \TSym^k\sH_\RR(1)\right)$ is the group of equivalence
     classes of pairs $(\alpha_\infty,\alpha_\dR)$, where
     \[
     \alpha_\infty\in \Gamma\left(Y(\CC), \TSym^k\sH_\RR\otimes \mathscr{C}^\infty\right)
     \]
     is a $\mathscr{C}^\infty$-section of $\TSym^k(\sH_\RR)(1)$, and
     \[ \alpha_\dR \in \Gamma\left(Y, \TSym^k(\Fil^0 \sH_{\dR}) \otimes \Omega^1_{X_1(N)}(C)\right)\]
     is an algebraic section with simple poles along $C \coloneqq X_1(N)\setminus Y_1(N)$,
     such that
     \[
      \nabla(\alpha_\infty)=\pi_1(\alpha_\dR).
     \]
     A pair $(\alpha_{\infty}, \alpha_{\dR})$ is equivalent to $0$ if we have
     \[ (\alpha_{\infty}, \alpha_{\dR}) = (\pi_1(\beta), \nabla(\beta)) \text{ for some }\beta \in \Gamma(X_1(N)_\RR, \TSym^k(\Fil^0 \sH_{\dR})(C)).\]
    \end{proposition}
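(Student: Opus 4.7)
The statement is a concrete degree-$1$ unpacking of the standard Beilinson--Deligne cone complex computing $H^\bullet_\cH(Y,\mathcal{V})$ for an admissible variation of Hodge structure $\mathcal{V}$ on a smooth open curve $Y$; the argument is essentially verbatim the one carried out for the $\GL_2$ analogue in \cite[Proposition 5.4.2]{kingsloefflerzerbes15a}, and ultimately goes back to Beilinson's work on absolute Hodge cohomology.

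I would begin by recalling that for a smooth open variety $U$ over $\RR$ with smooth compactification $X$ and normal-crossings boundary $D$, and for an admissible VHS $\mathcal{V}$ on $U$ with canonical extension $\bar{\mathcal{V}}_\dR$ to $X$, there is a quasi-isomorphism
\[
 R\Gamma_\cH(U,\mathcal{V}(n)) \simeq \operatorname{Cone}\Big(\bigl(F^n\bar{\mathcal{V}}_\dR\otimes\Omega^\bullet_X(\log D)\bigr)\oplus\bigl(\mathcal{V}_\RR(n)\otimes\mathcal{A}^\bullet_{U^{\mathrm{an}}}\bigr)\xrightarrow{\iota-\jmath}\mathcal{V}_\dR\otimes\mathcal{A}^\bullet_{U^{\mathrm{an}}}\Big)[-1],
\]
where $\mathcal{A}^\bullet$ is the $C^\infty$ de Rham complex, $\iota$ the inclusion of Hodge-filtered algebraic forms, and $\jmath$ the inclusion $\mathcal{V}_\RR(n)\hookrightarrow\mathcal{V}_\dR$ under the $C^\infty$ comparison $\mathcal{V}_\dR=\mathcal{V}_\RR\otimes\CC$. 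For the case at hand, $U=Y$, $X=X_{1,\QQ}(N)_\RR$, $D=C$, $\mathcal{V}=\TSym^k\sH_\RR$ and $n=1$; the Tate twist shifts the de Rham filtration so that the relevant piece $F^1$ on $\TSym^k\sH_\dR(1)$ is $\TSym^k(\Fil^0\sH_\dR)$, exactly as in the proposition.

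To extract the degree-$1$ part: since $\dim X=1$, we have $\Omega^i_X(\log C)=0$ for $i\geq 2$, so a $1$-cocycle in the cone is a triple $(\alpha_\dR, b, c)$ with $\alpha_\dR$ as in the statement, $b$ a $C^\infty$ real $1$-form valued in $\TSym^k\sH_\RR(1)$, and $c$ a $C^\infty$ section of $\TSym^k\sH_\dR\otimes_\RR\CC$; the only nontrivial cocycle condition is $b=\alpha_\dR|_Y-\nabla c$ in $\TSym^k\sH_\dR\otimes\mathcal{A}^1_{Y^{\mathrm{an}}}$. Since $\pi_1(b)=b$, this is equivalent to the single identity $\nabla(\pi_1(c))=\pi_1(\alpha_\dR)$ (the complementary real part being absorbed into $b$), so the pair $(\alpha_\dR,\alpha_\infty)$ with $\alpha_\infty:=\pi_1(c)$ determines and is determined by the cocycle. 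The coboundary relation is obtained from a degree-$0$ cochain $(\beta,\gamma)$, with $\beta\in\Gamma(X,\TSym^k(\Fil^0\sH_\dR)(C))$ algebraic and $\gamma$ a $C^\infty$ section of $\TSym^k\sH_\RR(1)$: the cone differential sends $(\beta,\gamma)$ to $(\nabla\beta,-\gamma,\beta|_Y)$, and after the same relabelling this absorbs $\gamma$ and leaves $\beta$ as the only parameter, acting by $\alpha_\dR\mapsto\nabla\beta$ and $\alpha_\infty\mapsto\pi_1(\beta)$, which is exactly the equivalence relation claimed.

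\textbf{Main obstacle.} The substantive input is the identification of the Beilinson mapping cone with $R\Gamma_\cH$ of a VHS on a smooth open curve; this rests on Deligne's canonical extension, the $C^\infty$ Poincar\'e lemma for bundles with connection, and strictness of the Hodge filtration, all standard and worked out in detail in \S 2.3 and \S 5.4 of \cite{kingsloefflerzerbes15a}. Once this is granted, everything in the present proposition is bookkeeping with Tate twists and with the real/imaginary decomposition $V_\CC=V_\RR\oplus V_\RR(1)$.
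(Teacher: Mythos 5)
The paper itself gives no proof of this proposition: it is taken verbatim (with a change of notation) from \cite[Proposition~4.4.4]{kingsloefflerzerbes15a}, which is in turn an unpacking of the general mapping-cone description of absolute Hodge cohomology recalled in \cite[\S 2.3]{kingsloefflerzerbes15a}. Your proposal correctly identifies that framework and the relevant simplifications on an open curve, so the overall approach matches the source the paper relies on. There is nothing in the paper to compare against directly.

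Two bookkeeping points in your sketch are, however, imprecise and would need tightening if this were to be written out. First, the coboundary formula you give, $(\beta,\gamma)\mapsto(\nabla\beta,-\gamma,\beta|_Y)$, is not the differential of the cone $\mathrm{Cone}\bigl((F\text{-filtered dR})\oplus(\mathcal{V}_\RR(1)\otimes\mathcal{A}^\bullet)\to\mathcal{V}_\CC\otimes\mathcal{A}^\bullet\bigr)[-1]$; that differential sends a degree-zero cochain $(\beta,\gamma)$ to $(\nabla\beta,\,\nabla\gamma,\,\iota\beta-\jmath\gamma)$, and to recover the stated equivalence relation $\alpha_\dR=\nabla\beta$, $\alpha_\infty=\pi_1(\beta)$ one must first pass, via the retraction $\pi_1$ of the split injection $\jmath$, to the quasi-isomorphic simplified cone $\mathrm{Cone}\bigl(F^\bullet_\dR\xrightarrow{\pi_1\circ\iota}\mathcal{V}_\RR(1)\otimes\mathcal{A}^\bullet\bigr)[-1]$, where the auxiliary $C^\infty$ data $\gamma$ and the real part of the $C^\infty$ component are absorbed. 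Your exposition jumps between the two complexes without making this step explicit. Second, the statement that ``$F^1$ on $\TSym^k\sH_\dR(1)$ is $\TSym^k(\Fil^0\sH_\dR)$'' conflates the Hodge filtration on the fibres with the total filtration on the filtered de Rham complex (in which the degree of the differential form contributes): what actually appears in degree one of the $F^0$-piece of the twisted complex is $F^{-1}(\TSym^k\sH_\dR(1))\otimes\Omega^1_X(\log C)=\TSym^k(\Fil^0\sH_\dR)\otimes\Omega^1_X(\log C)$, and the degree-zero term $\Gamma(X,\TSym^k(\Fil^0\sH_\dR)(C))$ in the coboundary description requires the standard quasi-isomorphic modification of the filtered de Rham complex permitting poles along $C$; neither of these is quite ``applying $F^1$''. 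These are expository slips rather than gaps in the argument, but they are exactly the parts where care is needed, and they are carried out precisely in the source you cite.
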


    After pulling back to the upper half-plane $\cH$, these can be described by non-holomorphic modular forms. More precisely, the pullback of $\sH^\vee_{\CC}$ is the sheaf of relative differentials on $\mathcal J = \CC \times \cH$ over $\cH$, and is thus spanned by $\d z$ and $\d\bar z$, where $z$ is the coordinate on $\CC$.

    \begin{definition}
     For $r + s = k$, let $w^{(r,s)}$ be the $\mathscr{C}^\infty$ section $\d z^r \d \overline{z}^s$ of $\Sym^k\sH^\vee_\CC$, and $w^{[r,s]}$ the dual basis of $\TSym^k\sH_\CC$.
    \end{definition}

    \begin{proposition}[{\cite{kingsloefflerzerbes15a} Proposition~4.4.5}]
     \label{prop:deligne-eisenstein}
     The class
     \[ \Eis^k_{\cH,N} \in H^1_\cH\left(Y_1(N)_\RR, \TSym^k\sH_\RR(1)\right)\]
     is given by $(\alpha_{\infty},\alpha_{\dR})$ where
     \[
     \alpha_{\infty}\coloneqq
     \frac{-N^k}{2}
     \sum_{j=0}^k(-1)^{j}(k-j)!(2\pi i)^{j-k}(\tau-\overline{\tau})^{j}E^{(2j-k)}_{1/N}(\tau, -j) w^{[k-j,j]}
     \]
     and
     \[
     \alpha_{\dR}\coloneqq
     N^k E^{(k + 2)}_{1/N}(\tau, -1-k)(-2\pi i)(\tau-\overline{\tau})^kw^{[0,k]}d\tau.
     \]
    \end{proposition}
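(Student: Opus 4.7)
The plan is to reduce this to Proposition 4.4.5 of \cite{kingsloefflerzerbes15a}, since the geometric setup (the modular curve $Y_{1,\QQ}(N)_\RR$, the sheaf $\TSym^k\sH_\RR$, and the definition of $\Eis^k_{\cH,N}$ as the Hodge realisation of Beilinson's Eisenstein symbol applied to the Siegel unit $g_{0,1/N}$) is literally identical. For the reader's benefit, I would sketch the underlying computation in three steps.

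First, treat the case $k=0$ as a sanity check: here $\Eis^0_{\mot,N}$ is represented by $g_{0,1/N} \in \cO(Y_1(N))^\times \otimes \QQ$, and its image in Deligne cohomology $H^1_\cH(Y_1(N)_\RR, \RR(1))$ is the class of $-2\log|g_{0,1/N}(\tau)|$ by the classical Beilinson regulator formula. By the third bullet of the preceding proposition this equals $E^{(0)}_{1/N}(\tau,0)$, matching the formula for $\alpha_\infty$ when $k=0$ (note that $\alpha_\dR=0$ in this degree, consistent with $\pi_1(\alpha_\dR)=\nabla(\alpha_\infty)$ reducing to $d$ of a real-valued $C^\infty$ function).

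Second, handle the de Rham component $\alpha_\dR$ for general $k$. At $s=-1-k$ the series $E^{(k+2)}_{1/N}(\tau,s)$ is holomorphic in $\tau$, and an identification of the de Rham realisation of the Eisenstein symbol with a multiple of the holomorphic Eisenstein series of weight $k+2$ (following Beilinson, Deninger--Scholl, and Bannai--Kings) yields the stated formula. The factor $(\tau-\overline\tau)^k$ and the vector $w^{[0,k]}$ appear because one must express the Hodge filtration $\Fil^0$ of $\TSym^k\sH_\dR$ in the $w^{[r,s]}$-basis adapted to the $C^\infty$-splitting, which introduces these correcting powers.

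Third, produce the $C^\infty$-primitive $\alpha_\infty$ and verify the compatibility $\nabla(\alpha_\infty)=\pi_1(\alpha_\dR)$. The construction uses the non-holomorphic Eisenstein series $E^{(2j-k)}_{1/N}(\tau,-j)$ for $0\le j\le k$, assembled with signs $(-1)^j$, powers $(\tau-\overline\tau)^j$, and factorials $(k-j)!$ so that the Maass--Shimura raising/lowering identities relating $E^{(k')}(\tau,s)$ for consecutive pairs $(k',s)$ produce a telescoping cancellation. Concretely, one expands $\nabla=\partial_\tau\,d\tau+\partial_{\overline\tau}\,d\overline\tau$ on each summand of $\alpha_\infty$, uses that $\nabla w^{[r,s]}$ shifts the $(r,s)$-bidegree in a prescribed way, and applies the differential equations $\partial_{\overline\tau}E^{(m)}(\tau,s)=\text{const}\cdot (\tau-\overline\tau)^{-1}E^{(m-2)}(\tau,s+1)$ (and the analogous identity for $\partial_\tau$) to collapse the sum; the $d\overline\tau$-terms cancel pairwise, leaving a single $d\tau$-term proportional to $E^{(k+2)}_{1/N}(\tau,-1-k)(\tau-\overline\tau)^k w^{[0,k]}d\tau$, which is $\pi_1(\alpha_\dR)$ by the definition of the projection $\pi_1$.

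The main obstacle is Step 3, namely the bookkeeping of the telescoping identity: one must pin down the precise normalisations of the factorials, powers of $2\pi i$, and the constant $N^k$, and check that the Maass--Shimura identities produce exactly matching coefficients after projecting via $\pi_1$. This is carried out in detail in \S 4.4 of \cite{kingsloefflerzerbes15a}, and the argument transfers unchanged to our setting.
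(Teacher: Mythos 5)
Your proposal is correct and follows the same route as the paper: the paper gives no independent proof here, but simply quotes Proposition~4.4.5 of \cite{kingsloefflerzerbes15a}, whose setting (the curve $Y_{1,\QQ}(N)$, the sheaf $\TSym^k\sH_\RR$, and the class $\Eis^k_{\cH,N}$) is identical, exactly as you say. Two small slips in your $k=0$ sanity check are worth fixing, though neither affects the argument: $\alpha_\dR$ does \emph{not} vanish at $k=0$ (it specialises to $-2\pi i\,E^{(2)}_{1/N}(\tau,-1)\,\mathrm{d}\tau$, which is essentially $\operatorname{dlog} g_{0,1/N}$), and the Deligne-cohomology class of the Siegel unit is $\log|g_{0,1/N}| = -\tfrac12 E^{(0)}_{1/N}(\tau,0)$, which is what matches the stated $\alpha_\infty$ once the factor $-N^k/2$ is taken into account, rather than $-2\log|g_{0,1/N}|$ itself.
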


   \subsubsection{The regulator}

    Let $\cF$ be a cuspidal Hilbert eigenform, as before, of weight $(k+2, k'+2)$ and level $\fN$. Since $\cF$ is cuspidal, the maximal quotient of the Betti cohomology space $H^i_B(Y_1(\fN)(\CC), \TSym^{[k, k']} \sH_L(\cA))$ on which the dual Hecke operators act via the Hecke eigenvalues of $\cF$ is zero for $i \ne 2$, while for $i = 2$ it is $M_B(\cF)^*$, and similarly for the de Rham cohomology. Hence the Leray spectral sequence of absolute Hodge cohomology gives a projection map
    \[
     \operatorname{AJ}_{\cH, \cF}: H^3_{\cH}\left(Y_1^*(\fN)_{\RR}, \TSym^{[k, k']}(\sH(\cA)_\RR)(2-j)\right) \to
     H^1_{\cH}\left(\Spec \RR, M_{\cH}(\cF)^*(-j)\right),
    \]
    the \emph{Abel--Jacobi map} for $M_\cH(\cF)(1 + j)$.

    \begin{notation}
     For $0 \le j \le \min(k, k')$, we write
     \[
      {}^\vee CG^{[k, k', j]}:
       \iota^* (\Sym^{(k,k')}\sH_\CC^\vee) \to \Sym^{k+k'-2j}\sH_\CC^\vee
     \]
     for the dual of the Clebsch--Gordan map.
    \end{notation}

    \begin{proposition}\label{prop:integral-formula}
     If $[\omega] \in H^0_{\cH}(\Spec \RR, M_{\cH}(\cF)(1 + j))$ is the class of the $C^\infty$ differential form $\omega$, we have the formula
     \[
      \left\langle \mathrm{AJ}_{\cH, \cF}\left(\AF^{[k, k', j]}_{\cH, \fN}\right),
      [\omega] \right\rangle_{Y_1(\fN)} =
      \frac{1}{2\pi i}\int_{Y_{1, \QQ}(N)} \left( {}^\vee CG^{[k, k', j]}_B \circ\iota^*\right)(\omega)\wedge\alpha_\infty,
     \]
     where $\alpha_\infty$ is the differential form in Proposition \ref{prop:deligne-eisenstein}.
    \end{proposition}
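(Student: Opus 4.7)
The plan is to reduce the pairing on the Hilbert modular surface to an explicit cup product on the modular curve, and then apply the known formula for cup products in absolute Hodge cohomology. By construction, we have
\[ \AF^{[k, k', j]}_{\cH, \fN} = \iota_*\left(CG^{[k, k', j]}_{\cH}\left(\Eis^{k+k'-2j}_{\cH, N}\right)\right), \]
so the starting point is to manipulate the pairing using adjunctions until the Eisenstein class appears in a form where Proposition~\ref{prop:deligne-eisenstein} can be used directly.

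First I would apply the projection formula for the Gysin map $\iota_*$: for $x$ a Hodge cohomology class on $Y_{1, \QQ}(N)$ and $y$ a dual class on $Y_1^*(\fN)$, adjointness gives $\langle \iota_*(x), y\rangle_{Y_1^*(\fN)} = \langle x, \iota^*(y)\rangle_{Y_{1, \QQ}(N)}$ (with matching Tate twists coming from the codimension-$1$ shift). Since $\mathrm{AJ}_{\cH, \cF}$ is simply the Leray projection onto the $\cF$-isotypical component and all maps involved are Hecke-equivariant, the pairing $\langle \mathrm{AJ}_{\cH, \cF}(-), [\omega]\rangle_{Y_1(\fN)}$ coincides with the full cup-product pairing against the class of $\omega$ regarded in $H^2_\cH\bigl(Y_1^*(\fN)_\RR, \TSym^{[k,k']}\sH_\RR(\cA)(1+j)\bigr)$. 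Combining this with adjunction for the Clebsch--Gordan map ${}^\vee CG^{[k,k',j]}$, the pairing reduces to
\[ \left\langle \Eis^{k+k'-2j}_{\cH, N},\ \left({}^\vee CG^{[k,k',j]}_\cH \circ \iota^*\right)[\omega] \right\rangle_{Y_{1, \QQ}(N)}, \]
a pairing between two classes in absolute Hodge cohomology of the affine modular curve $Y$.

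At this point I would invoke Proposition~\ref{prop:explicitdelignecoho} together with the explicit representatives $(\alpha_\infty, \alpha_\dR)$ of $\Eis^{k+k'-2j}_{\cH, N}$ supplied by Proposition~\ref{prop:deligne-eisenstein}. The class $({}^\vee CG^{[k,k',j]} \circ \iota^*)[\omega]$ is represented by a closed $C^\infty$-form (it is the restriction of a holomorphic form up to the antiholomorphic component of $\cF$, wedged against the dual Clebsch--Gordan contraction), and the standard cup-product formula in absolute Hodge cohomology for a pair $(\beta_\infty, \beta_\dR)$ paired with a $C^\infty$-form $\omega'$ is $\tfrac{1}{2\pi i}\int_Y \omega' \wedge \beta_\infty$. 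Applied here, this directly yields the claimed formula. The compatibility of $\iota^*$ with the identifications $\iota^*\sH_L(\cA)^{(i)} \cong \sH_L(\cE)$ ensures that the combinatorial factors from $CG$ cancel correctly, and the $\tfrac{1}{2\pi i}$ comes from the Tate twist in the trace map $H^2_\cH(Y, \RR(1)) \to \RR$.

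The main technical obstacle is bookkeeping: carefully tracking the various Tate twists, signs, and degree shifts introduced by the Gysin map, the Clebsch--Gordan contraction, and the convention for $H^i_\cH$ of the relative Chow motive $\TSym^{[k,k']}\sH_L(\cA)$ (which is concentrated in non-zero degree and so requires the shift explained in the ``Motivic cohomology and regulators'' subsection). In particular, the verification that no extra powers of $(2\pi i)$ or $D$ survive beyond those absorbed into $\alpha_\infty$ is entirely analogous to the Rankin--Selberg computation in \cite[\S 6]{kingsloefflerzerbes15a}, and I would follow that argument almost verbatim, replacing the product $Y_1(N) \times Y_1(N)$ by the closed immersion $\iota$ and the exterior product of two Clebsch--Gordan maps by the single map $CG^{[k,k',j]}$ used here.
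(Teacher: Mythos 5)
Your argument reconstructs the proof that the paper delegates to a single citation (KLZ15a, Proposition 6.2.2): unwind $\mathrm{AJ}_{\cH,\cF}$ and the Gysin map via the projection formula, transpose the Clebsch--Gordan map, and then invoke the explicit cup-product formula in absolute Hodge cohomology against the representative $(\alpha_\infty,\alpha_\dR)$. This is exactly the same route the cited reference takes, so the proposal is correct and matches the paper's approach.
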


    \begin{proof}
     See \cite[Proposition 6.2.2]{kingsloefflerzerbes15a}.
    \end{proof}

    \begin{proposition}
     \label{prop:deligne-regulator-formula}
     Let $0\le j\le\min\{k,k'\}$, and let $(\alpha_\infty, \alpha_\dR)$ be the representative for the Hodge Eisenstein class described in Proposition \ref{prop:deligne-eisenstein}.

     Then we have
     \begin{multline*}
      \left( {}^\vee CG^{[k, k', j]}_B \circ\iota^*\right)(\varpi_{\cF, 1})\wedge\alpha_\infty
      = \\
      \frac{(-1)^{j} N^{k+k'-2j}}{2} \binom{k}{j} k'!(2\pi i)^{k+2}(\tau - \bar\tau)^k
      \iota^*(\cF^{\ah, 1})(\tau) E^{(k-k')}_{1/N}(\tau, j - k) \d\tau \d\bar \tau
     \end{multline*}
     and similarly
     \begin{multline*}
      \left( {}^\vee CG^{[k, k', j]}_B \circ\iota^*\right)(\varpi_{\cF, 2})\wedge\alpha_\infty
      = \\
      \frac{(-1)^{k + k' + 1} N^{k+k'-2j}}{2} \binom{k'}{j} k!(2\pi i)^{k'+2}(\tau - \bar\tau)^{k'}
      \iota^*(\cF^{\ah, 2})(\tau) E^{(k'-k)}_{1/N}(\tau, j - k') \d\tau \d\bar \tau.
     \end{multline*}
    \end{proposition}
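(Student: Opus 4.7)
The plan is to extend the computation of \cite[Proposition 6.2.2]{kingsloefflerzerbes15a} from the Rankin--Selberg setting to the present Hilbert modular one. No new conceptual ingredient is required; the work consists of keeping track of combinatorial factors, signs, and powers of $2\pi i$.

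First I would compute $\iota^*(\varpi_{\cF, 1})$. Since $\iota$ is the diagonal embedding of $\cH$ into $\cH_F$ and identifies the fibre coordinate $z$ of $\cE$ with both components of the fibre coordinate $(z_1, z_2)$ of $\cA$, we obtain
\[ \iota^*(\varpi_{\cF, 1}) = (-1)^{k+1}(2\pi i)^{k+k'+2} \iota^*(\cF^{\ah,1})(\tau)\, w_{(1)}^{(0,k)} \otimes w_{(2)}^{(k',0)}\, \d\bar\tau\, \d\tau, \]
viewed as a $C^\infty$ form on $\cH$ valued in $\iota^*(\Sym^{(k,k')}\sH_\CC^\vee)$, where the parenthesised subscripts label the two tensor factors arising from the two embeddings of $F$. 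Then I would make ${}^\vee CG^{[k,k',j]}_B$ explicit: since $CG^{[k,k',j]}$ is built from $j$ applications of the wedge $L(1) \hookrightarrow \sH \otimes \sH$, a symmetric inclusion $\TSym^{k+k'-2j}\sH \hookrightarrow \TSym^{k-j}\sH \otimes \TSym^{k'-j}\sH$, and multiplication inside each tensor factor, its dual contracts $j$ of the $\d\bar z$'s in the first factor against $j$ of the $\d z$'s in the second via the symplectic pairing, and then multiplies the surviving factors in $\Sym^\bullet \sH_\CC^\vee$. A combinatorial count along the lines of \cite[Proposition 5.1.2]{kingsloefflerzerbes15a} gives
\[ \bigl({}^\vee CG^{[k,k',j]}_B \circ \iota^*\bigr)(\varpi_{\cF,1}) = (-1)^{k+1}(2\pi i)^{k+k'+2}\, C_j\, \iota^*(\cF^{\ah,1})(\tau)\, w^{(k'-j,\, k-j)}\, \d\bar\tau\, \d\tau, \]
where $C_j$ is an explicit combinatorial constant involving $\binom{k}{j}$ and $j!$.

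Next I would wedge with $\alpha_\infty$, taking Proposition~\ref{prop:deligne-eisenstein} with $k$ replaced by $k+k'-2j$. Since the pairing of $w^{(r,s)}$ with $w^{[r',s']}$ vanishes unless $(r,s)=(r',s')$, only the summand whose index equals $k-j$ survives; this precisely picks out the Eisenstein series $E_{1/N}^{(k-k')}(\tau, j-k)$ appearing on the right-hand side, together with prefactors $\tfrac{-N^{k+k'-2j}}{2}\cdot(-1)^{k-j}(k'-j)!(2\pi i)^{j-k'}(\tau-\bar\tau)^{k-j}$. Collecting all these with the ones from the previous paragraph yields the first identity. The identity for $\varpi_{\cF, 2}$ is proved by the same computation with the roles of the two embeddings of $F$ exchanged, which swaps $k \leftrightarrow k'$ throughout (in particular forcing the summand with index $k'-j$ to survive, giving $E_{1/N}^{(k'-k)}(\tau, j-k')$) and produces the overall sign $(-1)^{k+k'+1}$ from reordering the differential forms $\d z_1^k \d\bar z_2^{k'}$ against the corresponding basis elements.

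The main obstacle is in the middle step: correctly identifying the combinatorial constant $C_j$ and its sign convention in ${}^\vee CG^{[k,k',j]}_B$. This is a purely algebraic exercise already carried out in \cite{kingsloefflerzerbes15a}, but requires patience to transfer accurately to the present setting.
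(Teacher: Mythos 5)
Your outline is correct and matches the paper's approach exactly: the paper's proof of this proposition is literally a cross-reference to \cite[Proposition 6.2.8]{kingsloefflerzerbes15a} (not 6.2.2, a minor mis-citation in your first line), and your three steps -- pull back $\varpi_{\cF,1}$ along the diagonal, make ${}^\vee CG^{[k,k',j]}_B$ explicit via contraction of $j$ pairs $\d\bar z_1 \otimes \d z_2$, then pair against the unique surviving summand ($i=k-j$) of $\alpha_\infty$ -- are precisely what that cited proof does. The only remaining work, which you correctly identify, is pinning down the combinatorial constant and sign in the Clebsch--Gordan contraction; that bookkeeping is carried out in \cite{kingsloefflerzerbes15a} and transfers verbatim.
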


    \begin{proof}
     See the proof of \cite[Proposition 6.2.8]{kingsloefflerzerbes15a}.
    \end{proof}

    \begin{theorem}
     \label{thm:deligne-regulator-formulae}
     If $\varpi_{\cF}$ is the differential $\varpi_{\cF, 1} + (-1)^{j + 1} \varpi_{\cF, 2}$, then we have
     \begin{multline*}
      \left\langle \mathrm{AJ}_{\cH, \cF}\left(\AF^{[k, k', j]}_{\cH, \fN}\right),
      [\varpi_{\cF}] \right\rangle_{Y_1(\fN)} \\
      = (-1)^{k'-j}(2\pi i)^{k+k'-2j}D^{\frac{j+1}{2}}
      \frac{k!k'!}{(k-j)!(k'-j)!}\left. \frac{\d}{\d s}L^{\imp}_{\Asai}(\cF,s)\right|_{s = 1 + j}.
     \end{multline*}
    \end{theorem}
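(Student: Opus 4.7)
The strategy is to combine the three ingredients that have been put in place: the abstract Abel--Jacobi formula (Proposition \ref{prop:integral-formula}), the explicit representative for the absolute Hodge Eisenstein class (Proposition \ref{prop:deligne-eisenstein}), and the Rankin--Selberg integral formula (Theorem \ref{thm:asai}, differentiated as in Remark \ref{rmk:impderiv}).

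First, by bilinearity of the pairing $\langle-,-\rangle_{Y_1(\fN)}$ and Proposition \ref{prop:integral-formula}, we decompose
\[
 \left\langle \mathrm{AJ}_{\cH, \cF}\!\left(\AF^{[k, k', j]}_{\cH, \fN}\right), [\varpi_{\cF}] \right\rangle_{Y_1(\fN)} = I_1 + (-1)^{j+1}I_2,
\]
where $I_i = \tfrac{1}{2\pi i}\int_{Y_{1,\QQ}(N)} ({}^\vee CG^{[k, k', j]}_B \circ \iota^*)(\varpi_{\cF, i}) \wedge \alpha_\infty$ and $\alpha_\infty$ is the representative from Proposition \ref{prop:deligne-eisenstein}. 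Then Proposition \ref{prop:deligne-regulator-formula} evaluates each wedge product as an explicit scalar multiple of $\iota^*(\cF^{\ah,i})(\tau)\, E^{(\pm(k-k'))}_{1/N}(\tau, j - \cdot)\,(\tau-\bar\tau)^{\cdot}\,d\tau\,d\bar\tau$.

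Next, substituting $(\tau-\bar\tau)^k d\tau \wedge d\bar\tau = -2^{k+1} i^{k+1} y^k dx\,dy$ (with an analogous identity for $k'$ in place of $k$), the remaining integrals match exactly the integrand of Theorem~\ref{thm:asai} in the $\varpi_{\cF,1}$-case. For the $\varpi_{\cF,2}$-case one uses the symmetric version obtained by interchanging the two archimedean places of $F$ (which swaps $k \leftrightarrow k'$ and $\cF^{\ah,1} \leftrightarrow \cF^{\ah,2}$). Since $0 \le j \le \min(k,k')$ lies in the vanishing range of $L^{\imp}_{\Asai}$, differentiating both sides of Theorem~\ref{thm:asai} at $s = 1 + j$ yields the formulae of Remark~\ref{rmk:impderiv} (with $k$ and $k'$ interchanged for the second term), which express each integral as an explicit multiple of $\tfrac{d}{ds} L^{\imp}_{\Asai}(\cF, s)|_{s=1+j}$.

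It remains to gather constants. After the substitutions above, a direct computation shows that both $I_1$ and $(-1)^{j+1} I_2$ reduce to the same expression
\[
 (-1)^{k+j} \cdot D^{(j+1)/2} \cdot \frac{k!\,k'!}{(k-j)!(k'-j)!} \cdot (2\pi i)^{k+k'-2j} \cdot \tfrac{1}{2} \cdot \tfrac{d}{ds} L^{\imp}_{\Asai}(\cF, s)\big|_{s = 1 + j};
\]
adding them produces the stated formula, upon noting that $(-1)^{k+j} = (-1)^{k'-j}$ by the assumption $k \equiv k' \pmod 2$ built into the weight conventions. The main obstacle is purely combinatorial bookkeeping: keeping track of signs, the powers of $2$, $\pi$, and $i$, and the various Gamma factors. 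The key simplifying identities are $i^n (-i)^n = 1$ (for converting the denominator $(-i)^{k-k'}$ in Remark~\ref{rmk:impderiv} into a positive power of $i$) and the telescoping $\binom{k}{j}\,k'!\,j! / (k'-j)! = k!k'! /((k-j)!(k'-j)!)$, together with the cancellation of $N^{k + k' - 2j}$ between the Eisenstein representative and the Rankin--Selberg formula.
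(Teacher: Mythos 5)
Your strategy is correct and follows the same computational backbone as the paper's proof: both decompose the pairing via Proposition \ref{prop:integral-formula} and Proposition \ref{prop:deligne-regulator-formula}, convert $(\tau-\bar\tau)^k\,d\tau\,d\bar\tau$ into $y^k\,dx\,dy$ coordinates, and invoke the Rankin--Selberg unfolding of Theorem \ref{thm:asai} in the form of Remark \ref{rmk:impderiv} to identify the integral with the derivative of $L^{\imp}_{\Asai}$ at $s = 1+j$. Your bookkeeping of the constant agrees with the paper's, including the sign reconciliation $(-1)^{k+j}=(-1)^{k'-j}$ (using $k\equiv k'\bmod 2$) and the factorial telescoping $\binom{k}{j}\,k'!\,j!/(k'-j)! = k!k'!/((k-j)!(k'-j)!)$.

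The one place you diverge is in handling the $\varpi_{\cF,2}$-contribution. You propose evaluating $I_2$ directly by writing out a reflected form of Theorem \ref{thm:asai} with $\theta_1\leftrightarrow\theta_2$ (hence $k\leftrightarrow k'$ and $\cF^{\ah,1}\leftrightarrow\cF^{\ah,2}$), which is legitimate since $L^{\imp}_{\Asai}$ is symmetric under that interchange, but it requires repeating the constant-chasing a second time. The paper shortcuts this: since $\alpha_\infty$ is real-valued and $\varpi_{\cF,2}=\overline{\varpi_{\cF,1}}$, the term $(-1)^{j+1}I_2$ is forced to equal $I_1$ by conjugation, and the total is $2I_1$ with no further computation. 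Both routes give the same result; yours is slightly longer but also serves as an independent consistency check on the signs.
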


    \begin{proof}
     By Proposition \ref{prop:deligne-regulator-formula}, we have
     \begin{multline*}
      \frac{1}{2\pi i}\int_{Y_{1, \QQ}(N)(\CC)}\varpi_{\cF, 1}\wedge \alpha_\infty =
      (2\pi i)^{-1}\frac{(-1)^{j} N^{k+k'-2j}}{2} \binom{k}{j} k'!(2\pi i)^{k+2} \\
      \times \int (\tau - \bar\tau)^k \iota^*(\cF^{\ah, 1})(\tau)
      E^{(k-k')}_{1/N}(\tau, j - k) \d\tau \d\bar \tau.
     \end{multline*}
     By Remark \ref{rmk:impderiv}, the right-hand side is equal to
     \begin{multline*}
      (2\pi i)^{-1}\frac{(-1)^{j+1} N^{k+k'-2j}}{2} \binom{k}{j} k'!(2\pi i)^{k+2}(2i)^{k+1}\\
      \times   \frac{(-1)^{k'-j}D^{\frac{j+1}{2}}\Gamma(j+1)}{ (k'-j)!N^{k +k'-2j}\, 2^{k-k' + 2j+2}\, (-i)^{k-k'} \pi^{2j+1 - k' }}\  \frac{d}{ds} L^{\imp}_{\Asai}(\cF, s)|_{s=1+j}\\
      = \frac{(-1)^{k'-j}k!k'!(2\pi i)^{k+k'-2j}D^{\frac{j+1}{2}}}{2(k-j)!(k'-j)!}\times \frac{d}{ds} L^{\imp}_{\Asai}(\cF, s)|_{s=1+j}.
     \end{multline*}
     Since $\alpha_\infty$ is real-valued, and $\varpi_{\cF, 2}$ is the complex conjugate of $\varpi_{\cF, 1}$, we must have
     \[ \frac{1}{2\pi i}\int_{Y_{1, \QQ}(N)(\CC)}(\varpi_{\cF, 1} + (-1)^{j + 1} \varpi_{\cF, 2}) \wedge \alpha_\infty = 2 \times \frac{1}{2\pi i}\int_{Y_{1, \QQ}(N)(\CC)}\varpi_{\cF, 1}\wedge \alpha_\infty, \]
     so we obtain the stated formula.
    \end{proof}

    \begin{corollary}
     If the Asai $L$-value $L^{\imp, \prime}_{\Asai}(\cF, 1 + j)$ is non-zero, then the projection of $\AF^{[k, k', j]}_{\mot, \fN}$ to the $\cF$-isotypical component of $H^3_{\mot}(Y_1(\fN), \sH_L^{[\mu]}(2))$ is non-trivial.
    \end{corollary}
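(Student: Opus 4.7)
The plan is to derive the non-vanishing of the motivic class directly from Theorem \ref{thm:deligne-regulator-formulae} by a standard contrapositive argument, using the functoriality of the regulator maps and the projection to the $\cF$-isotypic component.

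First I would recall that the Hodge (Deligne--Beilinson) regulator
\[
 r_{\cH} : H^3_{\mot}\bigl(Y^*_1(\fN), \TSym^{[k,k']}\sH_L(\cA)(2-j)\bigr) \longrightarrow H^3_{\cH}\bigl(Y^*_1(\fN)_\RR, \TSym^{[k,k']}\sH_L(\cA)_\RR(2-j)\bigr)
\]
is a well-defined, $L$-linear homomorphism which, by construction, carries $\AF^{[k,k',j]}_{\mot,\fN}$ to $\AF^{[k,k',j]}_{\cH,\fN}$. Moreover the Abel--Jacobi map $\mathrm{AJ}_{\cH,\cF}$ arising from the Leray spectral sequence factors through the $\cF$-isotypic quotient, and is itself induced by a corresponding motivic projector; hence the composition $\mathrm{AJ}_{\cH,\cF}\circ r_{\cH}$ vanishes on any motivic class whose image in the $\cF$-isotypic component of $H^3_{\mot}(Y_1(\fN),\sH^{[\mu]}_L(2))$ is zero.

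Next, I would pair the image $\mathrm{AJ}_{\cH,\cF}\bigl(\AF^{[k,k',j]}_{\cH,\fN}\bigr)$ with the class $[\varpi_\cF]\in H^0(\RR,M_\cH(\cF)(1+j))$ using the Poincar\'e duality \eqref{eq:perfdual}. By Theorem \ref{thm:deligne-regulator-formulae}, this pairing equals
\[
 (-1)^{k'-j}(2\pi i)^{k+k'-2j}D^{\tfrac{j+1}{2}}\,\frac{k!\,k'!}{(k-j)!\,(k'-j)!}\,\left.\frac{\d}{\d s}L^{\imp}_{\Asai}(\cF,s)\right|_{s=1+j}.
\]
The scalar coefficient in front is manifestly non-zero, so whenever $L^{\imp,\prime}_{\Asai}(\cF,1+j)\neq 0$ the entire pairing is non-zero; in particular $\mathrm{AJ}_{\cH,\cF}\bigl(\AF^{[k,k',j]}_{\cH,\fN}\bigr)$ itself must be non-zero.

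Finally, running the implications backwards: if the projection of $\AF^{[k,k',j]}_{\mot,\fN}$ to the $\cF$-isotypic component of motivic cohomology were trivial, then applying $r_{\cH}$ and then $\mathrm{AJ}_{\cH,\cF}$ (which commute with the $\cF$-projection by functoriality) would yield $0$, contradicting the above. There is no real obstacle here beyond carefully matching the Hecke-equivariant projection in motivic cohomology with the projection in Hodge cohomology used to define $\mathrm{AJ}_{\cH,\cF}$; this matching is essentially formal, given that the Hecke correspondences are defined motivically and the regulator is a morphism of Hecke modules.
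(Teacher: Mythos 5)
Your argument is correct and is exactly the reasoning the paper intends: the Hodge Asai--Flach class is, by construction, the image of the motivic class under the regulator, and Theorem~\ref{thm:deligne-regulator-formulae} shows it pairs non-trivially with $[\varpi_\cF]$ once one notes the constant $(-1)^{k'-j}(2\pi i)^{k+k'-2j}D^{(j+1)/2}\,k!k'!/((k-j)!(k'-j)!)$ never vanishes. The paper records precisely this (in a single sentence), so you have simply unpacked the same argument via its contrapositive; the Hecke-equivariance of the regulator, which you flag as the only possible subtlety, is indeed used implicitly in the paper too and poses no difficulty.
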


    \begin{proof}
     Clear, since the Hodge Asai--Flach class is defined as the image of the motivic Asai--Flach class under the Hodge realisation map, and we have just shown that this Hodge Asai--Flach class is non-trivial.
    \end{proof}

   \subsection{Injectivity of regulators}

    The following conjecture is due to Bloch and Kato, and (independently) Jannsen:

    \begin{conjecture}[{\cite[Conjecture 5.3(i)]{blochkato90}}]
     \label{conj:beilinson}
     Let $X$ be a smooth proper $\QQ$-variety. Then, for any prime $p$ and integers $m, n$ with $m \ne 2n$, the \'etale realisation map gives an isomorphism
     \[
      H^{m}_{\mot}(X, \QQ(n)) \otimes \Qp \to H^1_{\mathrm{g}}\left(\QQ, H^{m-1}_{\et}(X_{\QQbar}, \Qp(n))\right),
     \]
     where $H^1_{\mathrm{g}}(\QQ, -)$ is a subspace of $H^1(\QQ,-)$ (defined by local conditions as in Definition 5.1 of \emph{op.cit.}).
    \end{conjecture}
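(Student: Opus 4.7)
The statement is the Bloch--Kato formulation of Beilinson's conjectures relating motivic cohomology to $p$-adic Galois cohomology; it is a deep open problem, and there is no known strategy to prove it in the generality stated. What follows is the shape that any serious attempt would have to take, together with the point where all known methods break down. I would treat it not as a theorem to be proved but as a conjecture whose bijectivity has to be established one factor at a time.

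First, I would reinterpret the target via Fontaine's theory. The local condition at $p$ defining $H^1_{\mathrm{g}}$ amounts to crystallinity after inverting the appropriate period ring, and the \'etale realisation of any motivic class is automatically unramified at almost all primes (by smoothness and proper base change) and de Rham at $p$ (by the $p$-adic comparison theorem of Faltings--Tsuji). Hence the regulator lands in the right subspace for free; the real content of the conjecture is bijectivity. For surjectivity one would have to produce, for every extension class in $H^1_{\mathrm{g}}$, an algebraic-cycle or higher-Chow construction representing it, which is the arithmetic analogue of the Hodge and Tate conjectures. For injectivity -- which is what is actually needed in Theorem~\ref{lthm:nonvanish} -- one needs the kernel of the $p$-adic \'etale regulator on $H^{m}_{\mot}(X,\QQ(n))$ to vanish, i.e.\ one needs to rule out ``phantom'' motivic cohomology classes invisible in \'etale cohomology.

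The \textbf{main obstacle} for both directions is essentially the Bloch--Beilinson--Murre filtration conjecture on Chow groups: a positive answer here would imply the expected vanishing of the higher graded pieces of that filtration, in particular for smooth compactifications of Kuga--Sato families over Hilbert modular surfaces, and this is unknown for any variety of dimension $\ge 2$ outside of cases essentially reducible to abelian varieties or products of curves. I see no plausible bypass: the strongest ``automorphic'' methods, using congruences and deformations of Galois representations, at best control the size of the target Selmer group rather than the cokernel of the regulator. I would therefore \emph{adopt the conjecture as a hypothesis}, which is the course the paper itself takes -- and this is precisely why the statement appears as a \texttt{conjecture} environment rather than a \texttt{theorem}, and why Theorem~\ref{lthm:nonvanish} is stated conditionally.
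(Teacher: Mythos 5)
You are correct that this statement is an open conjecture due to Bloch and Kato, not a theorem proved in the paper; the paper states it as a \texttt{conjecture} and assumes it as a hypothesis in Proposition~\ref{prop:nonvanish} and Theorem~\ref{lthm:nonvanish}, so there is no proof in the paper for your attempt to diverge from. Your summary of why all known methods fall short — the regulator automatically lands in $H^1_{\mathrm{g}}$ by comparison theorems, but injectivity and surjectivity are out of reach without something like the Bloch--Beilinson--Murre filtration conjecture — is accurate and matches the paper's (implicit) stance that the statement must be taken on faith.
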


    Our elements are defined using non-proper varieties, but for certain weights we can lift them to proper ones using work of Wildeshaus, as follows. Suppose $(k, k', j)$ are integers such that the following conditions hold:
    \begin{itemize}
     \item $k, k' \ge 1$,
     \item $k = k' \bmod 2$,
     \item $0 \le j \le \min(k, k')$,
     \item either $k \ne k'$ or $j > 0$.
    \end{itemize}

    Let $\cF$ be an eigenform of level $\fN$ and weight $(k+2, k'+2, t, t')$, for some appropriate $t, t'$, where $\fN$ is such that $U_1^*(\fN)$ is sufficiently small. For simplicity, we suppose that $\cF$ is new of level $\fN$, and that the narrow class number of $F$ is 1. Let $L$ be the coefficient field of $\cF$.

    Choose a smooth compactification $\widetilde{\cA}^r$ of $\cA^r$, where $r = k + k'$. (These exist, and can be constructed using the theory of toroidal compactifications of mixed Shimura varieties; cf.~\cite[\S 4]{wildeshaus12}.)

    \begin{proposition}
     \label{prop:nonvanish}
     With the above notations, suppose that Conjecture \ref{conj:beilinson} holds with $X = \widetilde{\cA}^r$, $m = 3+r$, and $n = 2 + r - j$, for some prime $p$; and suppose also
     \[ \left. \tfrac{\d}{\d s} L^{\imp}_{\Asai}(\cF, s) \right|_{s = 1 + j} \ne 0.\]
     Then the \'etale Asai--Flach class is non-zero in $H^1(\QQ, M_{L_v}(\cF)^*(-j))$, for each finite place $v \mid p$ of $L$.
    \end{proposition}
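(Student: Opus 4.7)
The plan is to deduce the non-vanishing of the étale class from the non-vanishing of the Hodge regulator (Theorem \ref{thm:deligne-regulator-formulae}), by applying Conjecture \ref{conj:beilinson} to the smooth proper compactification $\widetilde{\cA}^r$, where $r = k+k'$. The principal obstacle is that our class is defined on the non-proper variety $\cA^r$; we circumvent this via Wildeshaus's theory of interior Chow motives.

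First I would view $\AF^{[k,k',j]}_{\mot,\fN}$ as a class in $H^{3+r}_\mot(\cA^r, L(2+r-j))$ using Remark \ref{rmk:powerofA}, and project to the $\cF$-isotypical component for the covariant Hecke action. By cuspidality of $\cF$ this projection lies in interior cohomology, and under the weight hypotheses --- $k, k' \ge 1$ together with either $k \ne k'$ or $j > 0$ --- the results of \cite{wildeshaus12} should provide a Chow motive direct summand of the motive of a toroidal compactification $\widetilde{\cA}^r$ whose realisations coincide with this interior piece. Consequently the $\cF$-projection of our class lifts canonically to a motivic cohomology class on $\widetilde{\cA}^r$. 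Next I would verify the numerical hypothesis $m \ne 2n$ of Conjecture \ref{conj:beilinson} with $(m, n) = (3+r, 2+r-j)$: indeed $m - 2n = 2j - r - 1 \le -1$, since $2j \le r$. The conjecture therefore provides injectivity of the étale realisation map from $H^{3+r}_\mot(\widetilde{\cA}^r, \QQ(2+r-j)) \otimes \Qp$ into $H^1_{\mathrm{g}}(\QQ, H^{2+r}_\et(\widetilde{\cA}^r_{\QQbar}, \Qp(2+r-j)))$.

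It then remains to combine these ingredients. Theorem \ref{thm:deligne-regulator-formulae} together with the assumption $\left.\tfrac{\d}{\d s} L^{\imp}_\Asai(\cF, s)\right|_{s = 1+j} \ne 0$ shows that the Deligne regulator of the $\cF$-component is non-zero, so the lifted motivic class on $\widetilde{\cA}^r$ is non-zero; by the Bloch--Kato injection its étale realisation is non-zero in $H^1_{\mathrm{g}}$, and projecting further to the $M_{L_v}(\cF)^*(-j)$-isotypical direct summand gives the desired non-vanishing of $\AF^{[\cF,j]}_{\et,1,a}$. The hard step is the first one: explicitly invoking Wildeshaus's construction for Kuga--Sato varieties over Hilbert modular surfaces (rather than the one-dimensional fibre case treated in detail in the literature), and checking that the weight conditions suffice to kill all boundary contributions so that the cuspidal $\cF$-summand is genuinely cut out by a Chow idempotent of the smooth projective compactification.
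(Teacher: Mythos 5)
Your proposal is correct and follows essentially the same route as the paper: lift via Wildeshaus's interior-motive results to a smooth compactification, invoke the Bloch--Kato conjecture for injectivity of the \'etale regulator, and conclude from the non-vanishing of the Deligne regulator (Theorem \ref{thm:deligne-regulator-formulae}). The one point on which the paper is slightly more careful is isolating the $\cF$-eigenspace: rather than projecting to the $\cF$-isotypical component at the motivic level and then lifting, the paper first applies Wildeshaus to lift the entire $\TSym^{[k,k']}$ direct summand (deducing injectivity of the \'etale regulator there), and only afterwards cuts out the $\cF$-piece by means of an explicit Hecke correspondence $T_{\cF}$ (whose existence is where the hypotheses ``$\cF$ new of level $\fN$'' and ``narrow class number $1$'' enter). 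Your version needs to justify that the projection to the $\cF$-eigenspace is realised by a Chow correspondence compatible with all the maps in sight, which is exactly what $T_{\cF}$ accomplishes; with that point made explicit, the two arguments are the same.
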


    \begin{proof}
     The inclusion $\cA^r \into \widetilde{\cA}^r$ induces a pullback map
     \[ H^{3 +r}_{\mot}(\widetilde{\cA}^{r}, L(2+r-j)) \to H^{3 +r}_{\mot}(\cA^{r}, L(2+r-j)). \]
     As in Remark \ref{rmk:powerofA}, the group $H^{3}_{\mot}(Y_1^*(\fN), \TSym^{[k, k']} \sH_L(\cA)(2))$ can be regarded as a direct summand of $H^{3 +r}_{\mot}(\cA^{r}, L(2+r-j))$. It is shown in \cite[Corollaries 3.13 and 3.14]{wildeshaus12} that under the above conditions on $(k, k', j)$, this direct summand lifts canonically to a direct summand of $H^{3 +r}_{\mot}(\widetilde{\cA}^{r}, L(2+r-j))$, for any choice of the smooth compactification $\widetilde{\cA}^{r}$. Since this lifting arises from a direct sum decomposition of motives, it is compatible under the \'etale regulator with an analogous lifting in \'etale cohomology. In particular, Wildeshaus' results, together with the case of Conjecture \ref{conj:beilinson} that we have assumed, imply the injectivity of the map
     \begin{multline*}
      H^3_{\mot}(Y_1^*(\fN), \TSym^{[k, k']} \sH_L(\cA)(2-j)) \otimes_L L_v \\ \rTo H^1\left(\QQ, H^2_{\et}(Y_1^*(\fN), \TSym^{[k, k']} \sH_{L_v}(\cA)(2-j))\right).
     \end{multline*}

     Since $\cF$ is new and the narrow class number is 1, we may find a Hecke correspondence $T_\cF$ acting on $Y_1^*(\fN)$ which acts as the identity on the Hecke eigenspace corresponding to $\cF$, and as 0 on all other Hecke eigenspaces at level $\fN$. We consider the motivic cohomology class $T_\cF \cdot \AF^{[k, k', j]}_{\mot, \fN}$. By the computations of the previous section, if the $L$-value does not vanish, we have $T_\cF \cdot \AF^{[k, k', j]}_{\mot, \fN} \ne 0$; hence $T_\cF \cdot \AF^{[k, k', j]}_{\et, \fN}$ is also non-zero under our present assumptions. However, this class projects to 0 in all Hecke eigenspaces other than the $\cF$-eigenspace, so if it is non-zero, it must map to a non-zero element in $H^1(\QQ, M_{L_v}(\cF)^*(-j))$.
    \end{proof}

    This proposition, combined with Proposition \ref{prop:Lnonvanish} which gives a sufficient condition for the non-vanishing of $\tfrac{\d}{\d s} L^{\imp}_{\Asai}(\cF, s)$, proves Theorem \ref{lthm:nonvanish} of the introduction.

    \begin{remark}
     It seems reasonable to expect that the Asai--Flach elements still lift naturally to a compactification, even for the small weights not covered by Wildeshaus' results; compare \cite[\S 8-9]{brunaultchida16} in the Beilinson--Flach case.

     In the base case $k=k'=j=0$, the Asai--Flach elements lie in the group $H^3_{\mot}(Y_1^*(\fN), L(2)) = H^3_{\mot}(Y_1^*(\fN), \ZZ(2)) \otimes_{\ZZ} L$. A theorem of Suslin \cite[\S 4]{suslin87} shows that the \'etale realisation gives an injective map
     \[ H^3_{\mot}\left(Y_1^*(\fN), \ZZ(2)\right) \otimes \ZZ / p^r \ZZ \rInto H^3_{\et}(Y_1^*(\fN), \ZZ/p^r\ZZ(2)), \]
     for any $r \ge 1$. However, since we do not know if $H^3_{\mot}(Y_1^*(\fN), \ZZ(2))$ contains $p$-divisible elements, this is not enough to conclude that the \'etale Asai--Flach elements are non-zero.
    \end{remark}

 \section{Asai--Iwasawa classes}

  \subsection{Integral coefficient sheaves}

   As noted in Remark \ref{rmk:denoms} above, the theory of relative motives only works well if we take the coefficient ring to be a $\QQ$-algebra; but the theory of \'etale sheaves has no such restriction.

   \begin{definition}\label{def:HRA}
    Let $p$ be an odd prime, $S$ a regular scheme on which $p$ is invertible, and $\cA / S$ an abelian variety; and let $L$ be a finite extension of $\Qp$ with ring of integers $R$. We define a lisse \'etale sheaf of $R$-modules on $S$ by
    \[
     \sH_{R}(\cA) \coloneqq R \otimes_{\Zp} (R^1(\pi_\cA)_* \Zp)^\vee,
    \]
    where $\pi_{\cA}: \cA \to S$ is the structure map.
   \end{definition}

   If we are given an action of $\cO_F$ on $\cA$ by endomorphisms -- as in the case of the abelian varieties $A(U^*)$ over $Y^*(U^*)$ -- then the sheaf $\sH_{R}(\cA)$ is in fact a sheaf of $R \otimes_{\ZZ} \cO_F$-modules, with the $\cO_F$-module structure given by pushforward via the endomorphism action.

   We now suppose that $p$ is unramified in $F$, and that $F$ embeds into the coefficient field $L$. Then $R \otimes_{\ZZ} \cO_F \cong R \oplus R$ via the embeddings $\theta_1, \theta_2$, and we obtain a direct sum decomposition
   \[ \sH_R(\cA) = \sH_R(\cA)^{(1)} \oplus \sH_R(\cA)^{(2)} \]
   where (as before) $\sH_R(\cA)^{(i)}$ denotes the subspace where pushforward by $[x]$, for $x \in \cO_F$, acts as multiplication by $\theta_i(x)$.

   \begin{definition}
    We define
    \[ \TSym^{[k, k']} \sH_R(\cA) \coloneqq \TSym^k \sH_R(\cA)^{(1)} \otimes_R \TSym^{k'} \sH_R(\cA)^{(2)}.\]
   \end{definition}

   Note that after inverting $p$ this becomes isomorphic to the \'etale realisation of the relative Chow motive $\TSym^{[k, k']} \sH_L(\cA)$ defined above.

  \subsection{Lambda-adic sheaves}

   We shall now define sheaves of Iwasawa modules, and maps between them, which are ``$\Lambda$-adic interpolations'' of the \'etale realisations of the relative Chow motives defined in the previous section. This construction is the analogue in the Asai setting of the constructions of \S 5.1 of \cite{kingsloefflerzerbes15b}.

   \begin{definition}
    \label{def:lambdasheaf}
    For $\cA / S$ as in Definition \ref{def:HRA}, and $t: S \to \cA$ a section, we define
    \[ \Lambda_{R}(\cA\langle t \rangle) = \varprojlim_r t^* [p^r]_* (\ZZ / p^r \ZZ) \]
    as an inverse system of lisse \'etale sheaves on $S$, where $[p^r]: \cA \to \cA$ is the $p^r$-multiplication map. If $t = 0$ we write simply $\Lambda_R(\cA)$.
   \end{definition}

   As in \cite[\S 2.3]{kings13}, the sheaf $\Lambda_R(\cA)$ may be interpreted as the sheaf of Iwasawa algebras (with $R$ coefficients) associated to the sheaf of abelian groups $\sH_{\Zp}(\cA)$. It has the following universal property: any map of sheaves of profinite sets $\sH_{\Zp}(\cA) \to \cF$, where $\cF$ is a sheaf of $R$-modules, extends uniquely to a morphism of sheaves of $R$-modules $\Lambda_R(\cA) \to \cF$.

   In particular, if $\cA = A(U^*)$ for some $U^* \subseteq G^*(\hat\ZZ)$, or if $\cE = A_\QQ(U_\QQ)$ for $U_\QQ \subset \GL_2(\hat\ZZ)$ (with $U^*$, resp. $U_\QQ$, being sufficiently small), then we have \emph{moment maps} (\cite[\S\S 2.5.2, 2.5.2]{kings13}, see also \cite[Proposition 4.4.1]{kingsloefflerzerbes15b})
   \begin{gather*}
    \mom^k:  \Lambda_R(\cE) \to \TSym^{k} \sH_R(\cE),\\
    \mom^{[k, k']}: \Lambda_R(\cA) \to \TSym^{[k, k']} \sH_R(\cA),
   \end{gather*}
   for any $k \ge 0$, resp. any $k, k' \ge 0$.

   If $U_\QQ = U^* \cap \GL_2(\AA_f)$, then there is a natural map of sheaves of sets $\sH_{\Zp}(\cE) \to \iota^*\left( \sH_{\Zp}(\cA)\right)$ on $Y_\QQ(U_\QQ)$, so we obtain a map $\Lambda_R(\cE) \to \iota^* \Lambda_R(\cA)$. It is easy to see from the definitions that the maps just defined fit into the following commutative diagram:
   \[
   \begin{diagram}
   \Lambda_R(\cE) & \rTo^{\mom^{k + k'}} & \TSym^{k + k'} \sH_R(\cE) & \rInto & \TSym^{k} \sH_R(\cE) \otimes_{R} \TSym^{k'} \sH_R(\cE)\\
   \dTo & && & \dTo \\
   \iota^* \Lambda_R(\cA) &&\rTo^{\mom^{[k, k']}}&&  \iota^* \TSym^{[k, k']} \sH_{R}(\cA).
   \end{diagram}
   \]

  \subsection{Cyclotomic twists}

   We now extend the above construction to include a Tate twist. For $j \in \ZZ_{\ge 0}$, we define
   \[
    \Lambda_R(\cA)^{[j, j]} \coloneqq \Lambda_R(\cA) \otimes_{R} \TSym_R^{[j, j]} \sH_R(\cA).
   \]
   For integers $k, k' \ge j$ there is a map
   \[ \mom^{[k, k']}: \Lambda_R(\cA)^{[j, j]} \to \TSym^{[k, k']} \sH_{R}(\cA)\]
   defined as the composition
   \begin{multline*}
    \Lambda_R(\cA)^{[j, j]} \rTo^{\mom^{[k-j, k'-j]} \otimes 1} \TSym^{[k-j,k'-j]}_R \sH_R(\cA) \otimes_R  \TSym^{[j,j]}_R \sH_R(\cA) \\ \rTo \TSym^{[k,k']}_R \sH_R(\cA)
   \end{multline*}
   where the second map is given by the product in the symmetric tensor algebra. This is analogous to the definition of the moment map $\mom^k$ in \cite[\S 5.1]{kingsloefflerzerbes15b}.

   \begin{proposition}[{cf.~\cite[Proposition 5.1.2]{kingsloefflerzerbes15b}}]
    \label{prop:moment-compat}
    Let $j \ge 0$. There is a morphism of sheaves on $Y_\QQ(U_\QQ)$
    \[ CG^{[j]}: \Lambda_R(\cE) \to \iota^*\left(\Lambda_R(\cA)^{[j, j]}(-j)\right) \]
    such that for all integers $k, k', j$ with $0 \le j \le \min(k, k')$ we have a commutative diagram
    \[
     \begin{diagram}
      \Lambda_R\left(\cE\right) & \rTo^{\mom^{k + k' - 2j}} & \TSym^{k + k' - 2j}\left(\sH_{\cE}\right)\\
     \dTo^{CG^{[j]}} & & \dTo_{CG^{[k, k', j]}} \\
     \iota^* \Lambda_R(\cA)^{[j, j]}(-j)
      & \rTo^{\mom^{[k, k']}} & \iota^* \TSym^{[k, k']}\sH_{R}(\cA)(-j).
     \end{diagram}
    \]
   \end{proposition}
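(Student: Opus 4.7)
The plan is to construct $CG^{[j]}$ as the composition of a natural ``diagonal'' map $\Lambda_R(\cE) \to \iota^*\Lambda_R(\cA)$ with a tensor product by a canonical Weil-pairing section, and then verify the commutativity of the diagram by combining the functoriality of moment maps with the explicit description of the motivic Clebsch--Gordan map recalled above.

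For the diagonal map, the identification $\iota^*\cA \cong \cO_F \otimes_\ZZ \cE$ provides a closed embedding $\cE \hookrightarrow \iota^*\cA$, $x \mapsto 1 \otimes x$, of abelian schemes over $Y_\QQ(U_\QQ)$. This induces a morphism of sheaves of profinite abelian groups $\sH_{\Zp}(\cE) \to \iota^*\sH_{\Zp}(\cA)$; by the universal property of the sheaf of Iwasawa algebras this extends uniquely to a morphism $\varphi: \Lambda_R(\cE) \to \iota^*\Lambda_R(\cA)$ of sheaves of $R$-modules. Under the decomposition $\iota^*\sH_R(\cA) = \iota^*\sH_R(\cA)^{(1)} \oplus \iota^*\sH_R(\cA)^{(2)}$, the underlying map of sheaves of sets corresponds to the diagonal $\sH_R(\cE) \to \sH_R(\cE) \oplus \sH_R(\cE)$.

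For the Weil-pairing ingredient, recall from the preceding Clebsch--Gordan section that the canonical isomorphism $\iota^*\sH_R(\cA)^{(i)} \cong \sH_R(\cE)$ for $i = 1, 2$, together with the Weil pairing identification $\bigwedge^2_R \sH_R(\cE) \cong R(1)$, produces a canonical inclusion $R(1) \hookrightarrow \iota^*\TSym^{[1,1]}\sH_R(\cA)$. Its $j$-fold symmetric tensor product yields $R(j) \hookrightarrow \iota^*\TSym^{[j,j]}\sH_R(\cA)$, equivalently a canonical global section $e_j$ of $\iota^*\TSym^{[j,j]}\sH_R(\cA)(-j)$. Tensoring the identity on $\iota^*\Lambda_R(\cA)$ with the map $R \to \iota^*\TSym^{[j,j]}\sH_R(\cA)(-j)$ determined by $e_j$, and then precomposing with $\varphi$, produces the desired morphism
$$CG^{[j]}: \Lambda_R(\cE) \to \iota^*\Lambda_R(\cA) \otimes_R \iota^*\TSym^{[j,j]}\sH_R(\cA)(-j) = \iota^*\left(\Lambda_R(\cA)^{[j, j]}(-j)\right).$$

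To verify the commutativity of the diagram, I would split the lower-horizontal moment map $\mom^{[k,k']}$ into its two constituents and chase the diagram piece by piece. The main input is that moment maps are functorial with respect to morphisms of abelian schemes, so $\mom^{[k-j, k'-j]} \circ \varphi$ equals the composition of $\mom^{k+k'-2j}$ with the natural map $\TSym^{k+k'-2j}\sH_R(\cE) \to \iota^*\TSym^{[k-j, k'-j]}\sH_R(\cA)$ induced by the diagonal embedding on $\sH_R$ and the product in the symmetric tensor algebra. Multiplying by $e_j$ on the right and then applying the product map $\TSym^{[k-j, k'-j]}\sH_R(\cA) \otimes \TSym^{[j,j]}\sH_R(\cA) \to \TSym^{[k, k']}\sH_R(\cA)$ then recovers precisely $CG^{[k,k',j]}$ as defined in the preceding section. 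The main obstacle is bookkeeping: one must unwind conventions for the Weil pairing, the Tate twist, and the isotypic decomposition carefully enough to confirm that the $\Lambda$-adic and motivic constructions agree on the nose, after which the argument parallels that of \cite[Proposition 5.1.2]{kingsloefflerzerbes15b} in the Beilinson--Flach setting, with the single Weil-pairing element there replaced here by the element $e_j$ living on the two $\cO_F$-isotypic components of $\iota^*\sH_R(\cA)$.
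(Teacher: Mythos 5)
Your construction is correct and is essentially the one the paper intends: the paper gives no proof beyond citing \cite[Proposition 5.1.2]{kingsloefflerzerbes15b}, and that argument is exactly yours, namely composing the map $\Lambda_R(\cE) \to \iota^*\Lambda_R(\cA)$ induced (via the universal property) by the diagonal $\sH_{\Zp}(\cE) \to \iota^*\sH_{\Zp}(\cA) \cong \sH_{\Zp}(\cE)\oplus\sH_{\Zp}(\cE)$ with tensoring by the $j$-th power of the Weil-pairing section of $\iota^*\TSym^{[1,1]}\sH_R(\cA)(-1)$, and then checking the moment-map compatibility on group-like sections. Your key normalisation point is also right: since the comultiplication $\TSym^{k+k'-2j}\sH_R(\cE) \to \TSym^{k-j}\sH_R(\cE)\otimes\TSym^{k'-j}\sH_R(\cE)$ sends $y^{[k+k'-2j]}$ to $y^{[k-j]}\otimes y^{[k'-j]}$ with no extraneous binomial factors, both routes around the square agree on these generators, which suffices.
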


   Note that $CG^{[j]}$ satisfies the commutation relation
   \begin{equation}
    \label{eq:CGrelation}
    R'(n) \circ CG^{[j]} = n^{2j} CG^{[j]} \circ R'(n)
   \end{equation}
   for $n \in \Zp$, where $R'(n)$ is the operator corresponding to pushforward via the $n$-multiplication map on $\cA$ and $\cE$.


  \subsection{Asai--Iwasawa classes for Hilbert modular surfaces}

   We now define Asai--Iwasawa classes, which are cohomology classes taking values in the $\Lambda$-adic coefficient sheaves of Definition \ref{def:lambdasheaf}. Their function is to $p$-adically interpolate the Asai--Flach classes of the previous section as the parameters $k, k'$ vary.

   Recall that for integers $N \ge 4$, and $c>1$ with $(c, 6 p N) = 1$, we have the \emph{Eisenstein--Iwasawa class} \cite[\S 4]{kingsloefflerzerbes15b},
   \[\cEI_{N}\in H^1_{\et}\Big(Y_{1, \QQ}(N), \Lambda_{\Zp}(\cE\langle t_N \rangle)(1)\Big), \]
   where $t_N$ is the canonical order $N$ section. By applying the $N$-multiplication map $[N]: \Lambda_{\Zp}(\cE\langle t_N \rangle) \to \Lambda_{\Zp}(\cE)$, and base-extending to $R$, we may regard this class as having coefficients $\Lambda_R(\cE)(1)$, for any ring $R$ as above.

   \begin{remark}
    As with the motivic Eisenstein class considered above, the Eisen\-stein--Iwasawa class depends on a choice of $b \in \ZZ / N \ZZ - \{0\}$, and the class is denoted by $\cEI_{b,N}$ in \emph{op.cit.} to emphasise this dependence, but we fix $b = 1$ here and drop it from the notation.
   \end{remark}

   \begin{definition}
    Let $\fN \triangleleft \cO_F$ be such that $U_1^*(\fN)$ is sufficiently small, and let $N = \fN \cap \ZZ$ as usual. For integers $j \ge 0$ and $c > 1$ with $(c, 6pN) = 1$, we define the $j$-th \emph{Asai--Iwasawa class} by
    \[ \cAI_\fN^{[j]} \coloneqq \left(\iota_* \circ CG^{[j]}\right)\left( \cEI_N \right) \in H^3_{\et}\Big(Y^*_1(\fN), \Lambda_R(\cA)(2-j)\Big). \]
   \end{definition}

   Now let $M \ge 1$ be an integer. Via pullback along the natural map $Y^*(M, \fN) \to Y_1^*(\fN)$, we may regard $\cAI_\fN^{[j]}$ as a class in $H^3_{\et}\Big(Y^*(M, \fN), \Lambda_R(\cA)(2-j)\Big)$. If $M \mid \fN$, then the variety $Y^*(M, \fN)$ has an action of the operators $u_a = \stbt 1 a 0 1$ for $a \in \cO_F / M\cO_F$.

   \begin{definition}
    Let $M \ge 1$ and let $\fN \triangleleft \cO_F$ be divisible by $M$. For suitable $c$ as before, and $a \in \cO_F / M \cO_F$, we set
    \[ \cAI_{M, \fN, a}^{[j]} \coloneqq (u_a)_* \left(\cAI_{\fN}^{[j]}\right). \]
    (Thus $\cAI_\fN^{[j]} = \cAI_{1, \fN, 0}^{[j]}$.)
   \end{definition}

   Since the operator $u_a$ for $a \in \ZZ$ commutes with its namesake on $Y_\QQ(M, N)$, and the latter operator stabilises $\cEI_{N}$, we conclude that the class $\cAI_{M,\fN, a}^{[j]}$ actually only depends on the image of $a$ in the quotient $\frac{\cO_F}{M \cO_F + \ZZ}$.

   Finally, we make the following definition:

   \begin{definition}
    Let $M \ge 1$, and let $\fN \triangleleft \cO_F$ be such that $U_1^*(\fN)$ is sufficiently small (but we do \textbf{not} assume now that $M \mid \fN$). We define the \emph{$\Lambda$-adic Asai--Flach class}
    \[ \cAF_{M, \fN, a}^{[j]} \coloneqq (s_M)_* \left( \cAI_{M, M\fN, a}^{[j]}\right)
    \in H^3_{\et}\Big(Y^*_1(\fN) \times \mu_M^\circ, \Lambda_R(\cA)(2-j)\Big), \]
    where $s_M: Y^*(M, M\fN) \to Y_1^*(\fN) \times \mu_M^\circ$ is the ``twisted'' degeneracy map introduced in \S \ref{sect:cycloclasses} above.
   \end{definition}

   From Proposition \ref{prop:moment-compat} and the basic interpolating property of the Eisenstein class $\cEI_N$ \cite[Theorem 4.4.3]{kingsloefflerzerbes15b}, we have the following interpolation formula:

   \begin{theorem}[Interpolation in $k, k'$]
    For any integers $0 \le j \le \min(k, k')$, we have
    \[ \mom^{[k, k']}\left( \cAI_\fN^{[j]}\right) = \left(c^2 - c^{2j-k-k'}\langle c \rangle \right) \AF^{[k, k', j]}_{\et, \fN} \]
    and
    \[ \mom^{[k, k']}\left(\cAF_{M, \fN, a}^{[j]}\right) = \left(c^2 - c^{2j-k-k'}\langle c \rangle \sigma_c^{2}\right) \AF^{[k, k', j]}_{\et, M, \fN, a}, \]
where $\sigma_c$ is the Frobenius as defined in \S\ref{sect:frob}.
   \end{theorem}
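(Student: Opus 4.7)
The plan is to deduce both displayed formulas from the interpolation property of the Eisenstein--Iwasawa class $\cEI_N$ on the modular curve, by pushing the definition of $\cAI_\fN^{[j]}$ through the commutative diagram of Proposition~\ref{prop:moment-compat}. Moment maps commute with \'etale pushforwards, and $\cAI_\fN^{[j]} = (\iota_* \circ CG^{[j]})(\cEI_N)$ by definition, so
\[ \mom^{[k,k']}\!\left(\cAI_\fN^{[j]}\right) = \iota_*\left(\mom^{[k,k']} \circ CG^{[j]}\right)(\cEI_N). \]
By Proposition~\ref{prop:moment-compat} the inner composition equals $CG^{[k,k',j]} \circ \mom^{k+k'-2j}$ (with the appropriate Tate twist), reducing everything to understanding $\mom^{k+k'-2j}(\cEI_N)$ and its image under $\iota_* \circ CG^{[k,k',j]}$.

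Next, I would invoke the known interpolation formula for the Eisenstein--Iwasawa class, namely \cite[Theorem~4.4.3]{kingsloefflerzerbes15b}, which in the present notation gives
\[ \mom^{k+k'-2j}(\cEI_N) = \left(c^2 - c^{2j-k-k'}\langle c \rangle\right) \Eis^{k+k'-2j}_{\et, N}. \]
Applying $\iota_* \circ CG^{[k,k',j]}$ to both sides and recognising the right-hand side as $\AF^{[k,k',j]}_{\et, \fN}$ by construction yields the first equality. The $\langle c \rangle$ here is the diamond operator, and is intertwined across $\iota$ with its namesake on $Y^*_1(\fN)$; the Clebsch--Gordan commutation relation \eqref{eq:CGrelation} concerns the scalar operator $R'(c)$ and does not interfere. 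For the second equality, I transport the first through the two pushforwards defining $\cAF_{M,\fN,a}^{[j]}$: the map $(u_a)_*$ commutes trivially with the moment map and simply translates $\AF^{[k,k',j]}_{\et, \fN}$ to its $a$-twisted counterpart, while $(s_M)_*$ lands the class in cohomology over $Y^*_1(\fN) \times \mu_M^\circ$.

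The delicate point, and the main obstacle, is to verify that passage through $(s_M)_*$ converts the factor $\langle c \rangle$ appearing in the first formula into $\langle c \rangle \sigma_c^2$, with no further correction. I would handle this by writing out the interpolation of $\cEI_{MN}$ directly at level $Y_\QQ(M, MN)$ before pushing along $s_M$; the operator $s_M$, defined as the right action of $\stbt{M^{-1}}{0}{0}{1}$, modifies the $c$-smoothing by precisely the cyclotomic character contribution coming from the $(\ZZ/M)^\times$-equivariance of the identification of the target with $Y^*_1(\fN) \times \mu_M^\circ$, yielding the Galois factor $\sigma_c^2$ on the Tate-twist component. This is the exact analogue of the computation carried out in \cite[\S 5]{kingsloefflerzerbes15b} for Beilinson--Flach classes, and the argument there transfers verbatim once Proposition~\ref{prop:moment-compat} has been used to package the Hilbert-modular coefficient sheaf compatibility.
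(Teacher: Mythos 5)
Your proof follows the same route as the paper's own (very brief) justification: reduce to the Eisenstein--Iwasawa class via the commutative diagram of Proposition~\ref{prop:moment-compat}, invoke the interpolation formula \cite[Theorem~4.4.3]{kingsloefflerzerbes15b}, and push forward by $\iota_*$, $(u_a)_*$ and $(s_M)_*$. Your account of the $\sigma_c^2$ factor is accurate in substance though could be stated more directly via the identity $\stbt{\ell^{-1}}{}{}{\ell^{-1}}^* = \langle\ell^{-1}\rangle\sigma_\ell^{-2}$ already used in the proof of Theorem~\ref{thm:firstnormA}: the smoothing factor at level $Y^*(M,M\fN)$ contains a central element whose determinant is reinterpreted as a Galois action on the $\mu_M^\circ$ factor after $(s_M)_*$, and in the case $M=1$ this Galois contribution is trivial, which is exactly why the first display carries no $\sigma_c$.
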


   (Note that $2j - k - k' \le 0$, so the factor in brackets is always non-zero.) Thus, these classes interpolate the \'etale images of the motivic Eisenstein classes, for varying $k$ and $k'$ but a fixed $j$. We shall see in due course that these classes can also be interpolated $p$-adically as $j$ varies, but this will need some further preparation and we delay it until \S \ref{sect:cyclotwist} below.

   \begin{remark}
    \label{rmk:refinedelts}
    A slight refinement of the construction is also possible. The abelian variety $\cA / Y_1^*(\fN)$ has a canonical $\cO_F$-linear map $\frac{\fN^{-1}}{\cO_F} \into \cA[\fN]$ (the universal level $U_1^*(\fN)$-structure). In particular, if $n \ge 1$ is an integer dividing $\fN$, then the image of $1/n \pmod{\cO_F}$ defines a canonical section $t_n$ of $\cA$; and we may lift $\cAI^{[j]}_\fN$, resp.~$\cAF^{[j]}_{M, \fN, a}$, to classes with coefficients in $\Lambda_R(\cA\langle t_n \rangle)$, whose images under $[n]_*$ are the classes defined above. This refinement should be useful in studying variation in Hida families. We shall not pursue this here, however, in order to avoid adding yet more subscripts to our notation.
   \end{remark}

 \section{Norm relations}
  \label{sect:normrels}

  In this section, we prove some norm-compatibility relations for the $\Lambda$-adic Asai--Iwasawa and Asai--Flach classes defined in the previous section. We shall state these in pairs, consisting of a norm relation for the classes  $\cAI_{M, \fN, a}^{[j]}$ and another for the classes $\cAF_{M, \fN, a}^{[j]}$. In each case, it is the version for the $\cAI$ which we shall actually prove, but the version for the $\cAF$ which will be useful in applications; the only function of the ``$\cAI$ versions'' in our theory is as a stepping stone towards the ``$\cAF$ versions''. (This is exactly parallel to the roles played by the classes ${}_c \mathcal{RI}$ and ${}_c \mathcal{BF}$ in \cite{kingsloefflerzerbes15b}.)

  \subsection{Statement of the theorems}

   \subsubsection*{The first norm relation: changing \texorpdfstring{$\fN$}{N}} Our first two theorems deal with changing the level $\fN$. Compare \cite[Theorems 3.1.1 and 3.1.2]{leiloefflerzerbes14a}; \cite[Theorem 5.3.1]{kingsloefflerzerbes15a}.

   \refstepcounter{theorem}
   \begin{subtheorem}[Level-compatibility for $\cAI$]
    \label{thm:firstnormA}
    Let $M \ge 1$, $\fN$ an ideal divisible by $M$, $\fl$ a prime ideal of $\cO_F$, and $\ell \ge 1$ the rational prime lying below $\fl$. Then the image of $\cAI_{M, \fl\fN, a}^{[j]}$ under pushforward along the natural projection $\pr_{1, \fl}: Y^*(M, \fl\fN) \rightarrow Y^*(M, \fN)$ is given by
    \[
     \begin{cases}
      \cAI_{M,\fN,a}^{[j]}&\text{if $\ell \mid \Nm_{F/\QQ}(\fN)$,}\\
      (1-\ell^{-2j} \langle\ell^{-1}\rangle R'(\ell) \sigma_\ell^{-2})\cAI_{M, \fN, a}^{[j]}
      &\text{otherwise.}
    \end{cases}\]
   \end{subtheorem}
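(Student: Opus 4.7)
The plan is to reduce the statement to the level-compatibility of the Eisenstein--Iwasawa class $\cEI_N$ on the modular curve $Y_{1, \QQ}(N)$, established in \cite{kingsloefflerzerbes15b}. First I would unwind the definitions: by construction,
\[ \cAI^{[j]}_{M, \fl\fN, a} = (u_a)_* (\iota_* \circ CG^{[j]})(\cEI_{N'}),\qquad N' = \fl\fN \cap \ZZ, \]
and likewise $\cAI^{[j]}_{M, \fN, a} = (u_a)_* (\iota_* \circ CG^{[j]})(\cEI_N)$ with $N = \fN \cap \ZZ$. Since $\pr_{1, \fl}$ only affects the level structure at $\fl$, it commutes with both $(u_a)_*$ and the Clebsch--Gordan morphism $CG^{[j]}$, so the whole computation reduces to determining $(\pr_{1, \fl})_* \iota_* \cEI_{N'}$.

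The next step is to set up the commutative square
\[ \begin{diagram}
Y_{1, \QQ}(N') & \rTo^{\tilde\iota} & Y_1^*(\fl\fN) \\
\dTo^{\varphi} & & \dTo^{\pr_{1, \fl}} \\
Y_{1, \QQ}(N) & \rTo^{\iota} & Y_1^*(\fN)
\end{diagram} \]
in which $\varphi$ is the natural degeneracy map. A short case-check on the $\fl$- and $\bar\fl$-valuations of $\fN$ (separately for $\fl$ inert, split, or ramified) shows that $N' \in \{N, \ell N\}$, and that $N' = N$ only occurs in the split subcase $v_\fl(\fN) < v_{\bar\fl}(\fN)$ (which, in particular, forces $\ell \mid N$). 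Moreover one checks $\ell \mid N \iff \ell \mid \Nm_{F/\QQ}(\fN)$, which accounts for the dichotomy in the statement. Commutativity of the square then yields $(\pr_{1, \fl})_* \iota_* \cEI_{N'} = \iota_* \varphi_* \cEI_{N'}$.

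The last step is to apply the known level-compatibility of $\cEI$ on modular curves. If $\ell \mid N$ and $N' = N$ then $\varphi = \id$ and there is nothing to do; if $\ell \mid N$ and $N' = \ell N$ the identity $\varphi_* \cEI_{\ell N} = \cEI_N$ (valid because $\ell \mid N$) finishes the case. If $\ell \nmid N$, then $N' = \ell N$ and the modular-curve relation gives $\varphi_* \cEI_{\ell N} = (1 - c_\ell) \cEI_N$ for an explicit operator $c_\ell$ built from $R'(\ell)$, $\langle \ell^{-1}\rangle$ and $\sigma_\ell$. Pushing this through $\iota_* \circ CG^{[j]}$ and applying the commutation relation \eqref{eq:CGrelation}, $R'(n) \circ CG^{[j]} = n^{2j} CG^{[j]} \circ R'(n)$, transforms $c_\ell$ into the factor $\ell^{-2j} \langle \ell^{-1} \rangle R'(\ell) \sigma_\ell^{-2}$ asserted in the theorem.

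The main obstacle will be the final coefficient check. The factor $\ell^{-2j}$ drops out cleanly from \eqref{eq:CGrelation}, but the appearance of $\sigma_\ell^{-2}$ (rather than $\sigma_\ell^{-1}$) reflects the fact that $\iota^*\cA = \cO_F \otimes_\ZZ \cE$ carries two Galois-conjugate copies of $\cE$, so that a single Frobenius on the modular-curve side contributes twice on the Hilbert-modular-surface side. A careful bookkeeping of the diamond, scalar and Frobenius actions under the identifications of \S\ref{sect:heckealg} and \S\ref{sect:shvarG}, together with the compatibility of $\iota_*$ with these operators, will settle the precise normalisation and complete the proof.
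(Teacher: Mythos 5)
Your proposal follows essentially the same route as the paper: unwind the definitions, set up the commutative square relating $\pr_{1,\fl}$ to a degeneracy map on modular curves, invoke the $\cEI$ level-compatibility from \cite[Theorem 4.3.2]{kingsloefflerzerbes15b}, and then track the operators through $\iota_* \circ CG^{[j]}$ using \eqref{eq:CGrelation}. The skeleton of your argument is correct and matches the paper's.

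One caveat on the last paragraph: your heuristic explanation of $\sigma_\ell^{-2}$ is off. It does not arise from $\iota^* \cA \cong \cO_F \otimes_\ZZ \cE$ contributing ``two Galois-conjugate copies of $\cE$'', and there is no doubling of a single $\sigma_\ell$. The $\cEI$ formula from \emph{op.~cit.}\ already carries the central element $\stbt{\ell^{-1}}{0}{0}{\ell^{-1}}^*$, and the appearance of $\sigma_\ell^{-2}$ is simply the decomposition $\stbt{\ell^{-1}}{0}{0}{\ell^{-1}} = \stbt{\ell}{0}{0}{\ell^{-1}} \cdot \stbt{\ell^{-2}}{0}{0}{1} = \langle\ell^{-1}\rangle\, \sigma_\ell^{-2}$ in the Hecke-algebra conventions of \S\ref{sect:heckealg} --- an identity that holds verbatim at level $U_\QQ(M,N)$ on the modular-curve side as well, so there is no change in passing through $\iota_*$. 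Since you explicitly flagged this step as requiring careful bookkeeping rather than asserting the heuristic as the proof, it is not a gap, but the intuition you offered would mislead; the right thing to check is just the matrix decomposition above together with the fact that $\langle \cdot \rangle$, $\sigma_\ell$ and $R'(\ell)$ all commute with $\iota_*$, $u_{a*}$ and (up to $\ell^{-2j}$) with $CG^{[j]}$.
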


   \begin{subtheorem}[Level-compatibility for $\cAF$]
    \label{thm:firstnormB}
    Let $M \ge 1$, $\fN$ an ideal of $\cO_F$, $\fl$ a prime ideal of $\cO_F$, and $\ell \ge 1$ the rational prime lying below $\fl$. Then the image of $\cAF_{M, \fl\fN, a}^{[j]}$ under pushforward along the natural projection $\pr_{1, \fl}: Y_1^*(\fl\fN) \times \mu_M^\circ \rightarrow Y_1^*(\fN) \times \mu_M^\circ$ is given by
    \[
     \begin{cases}
      \cAF_{M, \fN, a}^{[j]}&\text{if $\ell \mid M \cdot \Nm_{F/\QQ}(\fN)$},\\
      (1-\ell^{-2j} \langle\ell^{-1}\rangle R'(\ell) \sigma_\ell^{-2})\cAF_{M, \fN, a}^{[j]}&\text{otherwise.}
     \end{cases}
    \]
   \end{subtheorem}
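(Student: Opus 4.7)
The plan is to deduce this statement directly from Theorem~\ref{thm:firstnormA} (the level-compatibility for $\cAI$) together with the definition $\cAF_{M, \fN, a}^{[j]} = (s_M)_*\bigl(\cAI_{M, \fN', a}^{[j]}\bigr)$ and the fact that the latter is independent of the choice of auxiliary ideal $\fN'$ in the allowed class.

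Concretely, I would first choose an auxiliary ideal $\fN_1 \triangleleft \cO_F$ which is divisible by both $M$ and $\fN$, has the same prime factors as $M\fN$, and is sufficiently divisible for $Y^*(M, \fN_1)$ to exist. Set $\fN_2 = \fl\fN_1$; then $\fN_2$ is divisible by $M$ and by $\fl\fN$, and shares the same prime factors as $M\fl\fN$. Hence by the remark following the definition of $\cAF$ we have the two equalities
\[ \cAF_{M, \fN, a}^{[j]} = (s_M)_*\bigl(\cAI_{M, \fN_1, a}^{[j]}\bigr), \qquad \cAF_{M, \fl\fN, a}^{[j]} = (s_M)_*\bigl(\cAI_{M, \fN_2, a}^{[j]}\bigr). \]

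Next I would verify that the square
\[
\begin{diagram}
Y^*(M, \fN_2) & \rTo^{s_M} & Y_1^*(\fl\fN) \times \mu_M^\circ \\
\dTo^{\pr_{1, \fl}} & & \dTo_{\pr_{1, \fl} \times \id} \\
Y^*(M, \fN_1) & \rTo^{s_M} & Y_1^*(\fN) \times \mu_M^\circ
\end{diagram}
\]
is Cartesian (this is an elementary computation with open compact subgroups of $G^*(\AA_f)$, using that $s_M$ is induced by right translation by $\stbt{M^{-1}}001$). Proper base change then gives $(\pr_{1,\fl} \times \id)_* \circ (s_M)_* = (s_M)_* \circ (\pr_{1,\fl})_*$, so that
\[ (\pr_{1,\fl} \times \id)_* \cAF_{M, \fl\fN, a}^{[j]} = (s_M)_* \bigl((\pr_{1,\fl})_* \cAI_{M, \fN_2, a}^{[j]}\bigr). \]
Applying Theorem~\ref{thm:firstnormA} to the inner pushforward finishes the argument, since $\fN_1$ has the same prime factors as $M\fN$ implies $\ell \mid \Nm_{F/\QQ}(\fN_1)$ if and only if $\ell \mid M\cdot \Nm_{F/\QQ}(\fN)$, matching the two cases exactly.

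The only subtlety is that, in the ``generic'' case, one must know that the operator $1 - \ell^{-2j} \langle \ell^{-1} \rangle R'(\ell) \sigma_\ell^{-2}$ commutes with $(s_M)_*$. This is however immediate: $\langle \ell^{-1}\rangle$ and $R'(\ell)$ act via isogenies of the universal abelian variety and thus commute with any map of Shimura varieties of $G^*$-type, while $\sigma_\ell$ acts on the $\mu_M^\circ$-factor which $s_M$ maps to identically. Thus the main (and very mild) obstacle is the Cartesianness verification; everything else is formal once Theorem~\ref{thm:firstnormA} is in hand.
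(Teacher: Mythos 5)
Your argument is correct and follows essentially the same route as the paper: the paper simply applies Theorem~\ref{thm:firstnormA} with $(M,\fN)$ replaced by $(M, M\fN)$ (a specific allowed choice of your $\fN_1$) and notes that $(s_M)_*$ commutes with $\langle\ell\rangle$, $R'(\ell)$ and $\sigma_\ell$. One small simplification: you do not need Cartesianness or proper base change to conclude $(\pr_{1,\fl}\times\id)_*\circ(s_M)_*=(s_M)_*\circ(\pr_{1,\fl})_*$ — that identity already follows from mere commutativity of the square by functoriality of pushforward; indeed, since $s_M$ is an isomorphism onto $Y_1^*(\fN)\times\mu_M^\circ$ (identifying the target with the Shimura variety of level $\{u\in U_1^*(\fN):\det u\equiv 1\bmod M\}$), Cartesianness is trivial anyway.
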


   We shall prove Theorem \ref{thm:firstnormA} in the next section. To deduce Theorem \ref{thm:firstnormB}, we simply apply Theorem \ref{thm:firstnormA} with $(M, \fN)$ replaced by $(M, M\fN)$, and note that the map $(s_M)_*$ commutes with the actions of the operators $R'(\ell)$, $\langle \ell \rangle$, and $\sigma_\ell$.

   \subsubsection*{The second norm relation: changing M (wild case)} The next pair of theorems deal with the significantly deeper question of changing $M$ (and thus the cyclotomic field over which the Asai--Flach elements are defined).

   For $a \in \ZZ_{\ge 1}$, let $\hat\pr_{2, a}$ denote the degeneracy map $Y^*(a M, \fN) \to Y^*(M, \fN)$ given by the matrix $\stbt{a^{-1}}{}{}1$.

   \refstepcounter{theorem}
   \begin{subtheorem}[Cyclotomic compatiblity for $\cAI$]
    \label{thm:UlrelationA}
    Let $M \ge 1$, let $\ell$ be prime, and let $\fN$ be an ideal of $\cO_F$ divisible by $\ell M$. Let $a \in \cO_F / (\ell M \cO_F + \ZZ)$, and suppose that $a$ is a unit at $\ell$ (i.e. the image of $a$ generates $\cO_F / (\ell \cO_F + \ZZ)$). Then
    \[
     \left(\hat\pr_{2, \ell}\right)_*\left(\cAI^{[j]}_{\ell M, \fN, a}\right)=
     \begin{cases}
      U'(\ell) \cdot\left(\cAI^{[j]}_{M, \fN, a}\right)&\text{if $\ell \mid M$,}\\
      \left(U'(\ell) - \ell^j\sigma_\ell\right)\cdot\left(\cAI^{[j]}_{M, \fN, a}\right)
      &\text{if $\ell \nmid M$.}
     \end{cases}
    \]
   \end{subtheorem}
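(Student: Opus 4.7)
The plan is to reduce the theorem to the analogous norm relation for the Eisenstein--Iwasawa class $\cEI_N$ on the modular curve, exploiting the construction $\cAI^{[j]}_\fN = (\iota_* \circ CG^{[j]})(\cEI_N)$ with $N = \fN \cap \ZZ$.

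First, I would set up the geometric framework. The embedding $\iota$ induces closed immersions $Y_\QQ(\ell M, \ell MN) \into Y^*(\ell M, \fN)$ and $Y_\QQ(M, MN) \into Y^*(M, \fN)$ (after the level-structure adjustment coming from the fact that $\GL_2(\AA_f) \cap U^*(M, \fN) = U_\QQ(M, N)$), and these are compatible with the degeneracy $\hat\pr_{2, \ell}$. The resulting square relating the two copies of $\hat\pr_{2, \ell}$ on either side is essentially Cartesian, so proper base change allows me to commute $\iota_*$ with $(\hat\pr_{2, \ell})_*$. The automorphism $u_a$ for $a \in \cO_F$ lifts compatibly, since $\iota$ respects the upper-triangular unipotent subgroup; and the Clebsch--Gordan map $CG^{[j]}$, being purely a coefficient-level operation, commutes with these pushforwards automatically. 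The computation is thus reduced to an explicit identity for $(\hat\pr_{2, \ell})_* (u_a)_* \pi^*(\cEI_N)$ on the modular curve, where $\pi: Y_\QQ(\ell M, \ell MN) \to Y_{1, \QQ}(N)$ is the natural projection.

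Second, I would invoke the norm-compatibility relation for the Eisenstein--Iwasawa classes $\cEI_N$ on modular curves, which was proved in the course of establishing the Beilinson--Flach norm relations in \cite[\S 5]{kingsloefflerzerbes15b}. A direct computation with single-coset representatives for $\hat\pr_{2, \ell}$ shows that, when $\ell \mid M$, the pushforward $(\hat\pr_{2, \ell})_*(u_a)_*\pi^*(\cEI_N)$ equals $U'(\ell) \cdot \pi^*(\cEI_N)$; when $\ell \nmid M$, there is one ``extra'' coset representative whose contribution yields the correction term $-\sigma_\ell \cdot \pi^*(\cEI_N)$, this being ultimately a consequence of the distribution relation for Siegel units. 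The $\GL_2$-operator $U'(\ell)$ is identified with its $G^*$-counterpart in the statement via matching single-coset decompositions, using that $\ell \mid \fN$ so that $U(\ell)$ admits a common system of representatives on both sides.

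The main obstacle is tracking the twist factor $\ell^j$ in the $\ell \nmid M$ case. This factor arises from the commutation relation \eqref{eq:CGrelation}, $R'(\ell) \circ CG^{[j]} = \ell^{2j} CG^{[j]} \circ R'(\ell)$, combined with the Tate twist by $(-j)$ in the coefficient sheaf of $\cAI^{[j]}$, which absorbs a compensating factor; the net effect is to convert the bare $\sigma_\ell$ correction (as appears for $\cEI_N$) into $\ell^j \sigma_\ell$. A secondary difficulty is verifying that the $u_a$-translate is well behaved under $\iota$ even though $a \in \cO_F$ need not lie in $\ZZ$: the hypothesis that $a$ is a unit at $\ell$ ensures that the relevant single-coset decompositions on the $G^*$- and $\GL_2$-sides remain in bijection, so that no additional coset is created or lost in the passage to the modular curve. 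The overall argument is a close analogue of \cite[Theorem 5.4.1]{kingsloefflerzerbes15b}, with the modifications for the Hilbert setting being essentially bookkeeping.
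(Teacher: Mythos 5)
Your proposed strategy has a genuine gap at the first step: the square you want to use is not Cartesian, and the reduction to an Eisenstein--Iwasawa norm relation on the modular curve is not what the paper does (nor what works cleanly).

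The obstruction is a degree mismatch. On the $\GL_2$ side the map $\hat\pr_{2,\ell}\colon Y_\QQ(\ell M, N)\to Y_\QQ(M, N)$ is finite \'etale of degree $\ell^2$, but on the $G^*$ side the map $\hat\pr_{2,\ell}\colon Y^*(\ell M,\fN)\to Y^*(M,\fN)$ has degree $\ell^3$ (for $\ell$ unramified and $\ell M\mid \fN$): conjugating $U^*(\ell M,\fN)$ by $\stbt{\ell^{-1}}{}{}1$ strengthens the $c$-constraint by a factor of $\lvert\cO_F/\ell\rvert=\ell^2$, while the $a$-constraint contributes only a factor $\ell$ once the determinant condition $\det\in\hat\ZZ^\times$ defining $G^*$ is taken into account. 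So the fibre product of $Y_\QQ(M,N)$ and $Y^*(\ell M,\fN)$ over $Y^*(M,\fN)$ is strictly larger than $Y_\QQ(\ell M,N)$, and proper base change does not let you commute $\iota_*$ past $(\hat\pr_{2,\ell})_*$. (You also misstate the level on the modular-curve side: $\GL_2(\AA_f)\cap U^*(\ell M,\fN)=U_\QQ(\ell M,N)$ with $N=\fN\cap\ZZ$, not $\ell M N$.) The paper sidesteps this by introducing the \emph{intermediate} level $Y^*(M(\ell),\fN)$, whose degeneracy $\hat\pr_{1,(\ell)}\colon Y^*(M(\ell),\fN)\to Y^*(M,\fN)$ \emph{does} have degree $\ell^2$ (only the $b$-constraint changes, which carries no determinant penalty), matching $\hat\pr_{1,\ell}\colon Y_\QQ(\ell M,N)\to Y_\QQ(M,N)$; this is the Cartesian square of Lemma~\ref{lem:cartesian}. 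The argument then is: factor $\hat\pr_{2,\ell}$ as the natural projection $Y^*(\ell M,\fN)\to Y^*(M(\ell),\fN)$ followed by $\hat\pr_{2,(\ell)}$; use the Cartesian square to rewrite the pushforward of $\cAI^{[j]}_{\ell M,\fN,a}$ to level $M(\ell)$ as $(\hat\pr_{1,(\ell)})^*(\cAI^{[j]}_{M,\fN,a})$; and observe that $(\hat\pr_{2,(\ell)})_*\circ(\hat\pr_{1,(\ell)})^*$ is \emph{by definition} the covariant Hecke operator $U'(\ell)$ on $G^*$. In particular, when $\ell\mid M$ no norm relation for $\cEI_N$ is used at all; this also dispenses with your separate step of matching the $\GL_2$ and $G^*$ Hecke operators, whose compatibility is itself essentially the content of Lemma~\ref{lem:cartesian}. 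The $\ell\nmid M$ case is handled via the modified Cartesian square of Lemma~\ref{lem:cartesian2}, where the modular-curve side becomes a disjoint union $Y_\QQ(\ell M,N)\sqcup Y_\QQ(M(\ell),N)$; the second component produces the correction term, which after pushing through the Clebsch--Gordan map (and using $\sigma_\ell$ to compare $a$ with $\ell^{-1}a$) yields $\ell^j\sigma_\ell$, rather than a Siegel-unit distribution relation.
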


   The corresponding statement for $\cAF$ is considerably simpler. The natural map $\mu_{\ell M}^\circ \to \mu_M^\circ$ corresponds to the inclusion $\QQ(\mu_M) \subset \QQ(\mu_{\ell M})$, so pushforward along this is simply the Galois norm map. Then we have the following relation:

   \begin{subtheorem}[Cyclotomic compatiblity for $\cAF$]
    \label{thm:UlrelationB}
    Let $M \ge 1$, let $\fN$ be an ideal of $\cO_F$, and let $\ell$ be a rational prime such that $\ell \mid \fN$. Suppose $a$ is a generator of $\cO_F / (\ell \cO_F + \ZZ)$. Then we have
    \[
     \operatorname{norm}_{\QQ(\mu_m)}^{\QQ(\mu_{\ell m})} \left(\cAF^{[j]}_{\ell M, \fN, a}\right) =
     \begin{cases}
      U'(\ell) \cdot\left(\cAF^{[j]}_{M, \fN, a}\right)&\text{if $\ell \mid M$,}\\
      \left(U'(\ell) - \ell^j\sigma_\ell\right)\cdot\left(\cAF^{[j]}_{M, \fN, a}\right)
      &\text{if $\ell \nmid M$.}
     \end{cases}
    \]
   \end{subtheorem}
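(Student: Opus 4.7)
The plan is to reduce Theorem~\ref{thm:UlrelationB} to the already-stated Theorem~\ref{thm:UlrelationA} by pushing forward via the maps $s_M$ and $s_{\ell M}$. I would first choose an ideal $\fN'$ of $\cO_F$ divisible by both $\ell M$ and $\fN$ and having the same prime divisors as $\ell M \fN$; since $\ell \mid \fN$, such an ideal also has the same prime divisors as $M\fN$, so that we may write $\cAF^{[j]}_{\ell M, \fN, a} = (s_{\ell M})_* \cAI^{[j]}_{\ell M, \fN', a}$ and $\cAF^{[j]}_{M, \fN, a} = (s_M)_* \cAI^{[j]}_{M, \fN', a}$. The hypotheses of Theorem~\ref{thm:UlrelationA} (applied to $\fN'$) are then satisfied, since $\ell M \mid \fN'$ and $a$ is a unit at $\ell$ by hypothesis.

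The essential geometric input is the commutativity of the square
\[
\begin{diagram}
Y^*(\ell M, \fN') & \rTo^{s_{\ell M}} & Y^*_1(\fN) \times \mu_{\ell M}^\circ \\
\dTo^{\hat\pr_{2, \ell}} & & \dTo_{\pi} \\
Y^*(M, \fN') & \rTo^{s_M} & Y^*_1(\fN) \times \mu_M^\circ,
\end{diagram}
\]
where $\pi$ is the projection whose pushforward on cohomology induces $\operatorname{norm}_{\QQ(\mu_M)}^{\QQ(\mu_{\ell M})}$. This commutativity reflects the matrix identity $\stbt{(\ell M)^{-1}}{0}{0}{1} = \stbt{M^{-1}}{0}{0}{1} \cdot \stbt{\ell^{-1}}{0}{0}{1}$, together with a routine check on coefficient sheaves via the isogenies $\Phi_g$ of \S\ref{sect:heckecorr}. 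Combined with Theorem~\ref{thm:UlrelationA}, this yields
\[
 \operatorname{norm}_{\QQ(\mu_M)}^{\QQ(\mu_{\ell M})} \cAF^{[j]}_{\ell M, \fN, a} = (s_M)_* (\hat\pr_{2, \ell})_* \cAI^{[j]}_{\ell M, \fN', a} = (s_M)_* \bigl( A \cdot \cAI^{[j]}_{M, \fN', a}\bigr),
\]
where $A = U'(\ell)$ if $\ell \mid M$ and $A = U'(\ell) - \ell^j \sigma_\ell$ if $\ell \nmid M$.

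To conclude, I need to verify that $(s_M)_*$ intertwines the Hecke operators $U'(\ell)$ and (when defined) $\sigma_\ell$ on $Y^*(M, \fN')$ with the operators of the same name on $Y^*_1(\fN) \times \mu_M^\circ$, where $U'(\ell)$ acts on the first factor and $\sigma_\ell$ acts via the Galois action on the $\mu_M^\circ$ factor. This compatibility is the principal technical obstacle. For $\sigma_\ell$ (relevant only when $\ell \nmid M$) it is nearly formal, since the matrix $\stbt{\ell}{0}{0}{1}$ defining $\sigma_\ell$ commutes with $\stbt{M^{-1}}{0}{0}{1}$. For $U'(\ell)$ it is more delicate: I would fix a system of single-coset representatives for the double coset defining $U(\ell)$ at level $U_1^*(\fN')$, verify that right-multiplication by $\stbt{M^{-1}}{0}{0}{1}$ carries these to a compatible system of representatives for $U(\ell)$ at the target level, and check the matching on the coefficient sheaves $\Lambda_R(\cA)(2-j)$ via the isogenies $\Phi_g$. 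The calculation is entirely parallel to its Rankin--Selberg analogue in \cite[\S6.3]{kingsloefflerzerbes15b} and transfers to the present setting with only notational changes.
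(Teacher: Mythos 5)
Your proposal is correct and follows essentially the same route as the paper, which treats this deduction in a single sentence: pushforward by $s_M$ turns the $\cAI$ relation (Theorem~\ref{thm:UlrelationA}) into the $\cAF$ one, and the hypothesis $\ell\mid\fN$ is exactly what is needed for $U'(\ell)$ to make sense on, and commute with pushforward to, $Y_1^*(\fN)\times\mu_M^\circ$. Your commutative square and the explicit coset check are a reasonable way to flesh out what the paper leaves implicit, and your choice of a single auxiliary ideal $\fN'$ serving both $\cAF^{[j]}_{\ell M,\fN,a}$ and $\cAF^{[j]}_{M,\fN,a}$ (possible precisely because $\ell\mid\fN$, so $\ell M\fN$ and $M\fN$ have the same prime support) is the right way to make the pushforward computation well-posed.
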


   Just as before, Theorem \ref{thm:UlrelationB} follows readily from Theorem \ref{thm:UlrelationA}, but with the important caveat that we need to assume that $\ell \mid \fN$ in order for the Hecke operator $U'(\ell)$ to commute with $(s_M)_*$.

   \subsubsection*{The second norm relation: changing M (tame case)}

    We now come to the most intricate, and most important, of our norm-compatibility relations, where we introduce a new prime to $M$ which does \emph{not} divide $N$.

    \refstepcounter{theorem}
    \begin{subtheorem}
     \label{thm:TlrelationA}
     Let $M \ge 1$, $\fN \triangleleft \cO_F$ an ideal divisible by $M$, $\ell$ a prime which does not divide $\Nm_{F/\QQ}(\fN)$, and $a \in \cO_F / (\ell M \cO_F + \ZZ)$ which is a unit at $\ell$. Suppose also that one of the following holds:
     \begin{enumerate}[(i)]
      \item $\ell$ is inert in $F$;
      \item $\ell$ is split in $F$ and the primes $\fl, \bar\fl$ above $\ell$ are narrowly principal.
     \end{enumerate}
     Then pushforward via the composition $\pr_{1, \ell} \mathop{\circ} \hat\pr_{2, \ell}: Y^*(\ell M, \ell \fN) \to Y^*(M, \fN)$ maps $\cAI^{[j]}_{\ell M, \ell \fN, a}$ to the class
     \[  \ell^j \sigma_\ell\Bigg[ (\ell - 1)\left(1-\ell^{-2j} \langle\ell^{-1}\rangle R'(\ell) \sigma_\ell^{-2}\right) - \ell P_\ell'(\ell^{-1-j} \sigma_\ell^{-1}) \Bigg] \cdot \cAI^{[j]}_{M, \fN, a}, \]
     where $P_\ell'(X)$ is the operator-valued Asai Euler factor of Definition \ref{def:eulerfactor}.
    \end{subtheorem}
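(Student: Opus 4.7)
The plan is to adapt the strategy used for the analogous Rankin--Selberg norm relation in \cite{leiloefflerzerbes14a, kingsloefflerzerbes15a, kingsloefflerzerbes15b}, exploiting the fact that $\cAI^{[j]}_{\ell M, \ell\fN, a}$ is by construction the pushforward of an Eisenstein--Iwasawa class on the modular curve $Y_{1,\QQ}(\ell N')$ under the composition $(u_a)_*\circ \iota_*\circ CG^{[j]}$, where $N' = \fN\cap\ZZ$. The core idea is that via proper base change along $\iota$, the pushforward along $\pr_{1,\ell}\circ\hat\pr_{2,\ell}$ can be rewritten as a sum of operations on modular curves that is controlled by the Euler-system distribution relations for $\cEI$.

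First, I would decompose the pushforward map $\pr_{1,\ell}\circ\hat\pr_{2,\ell}: Y^*(\ell M,\ell\fN)\to Y^*(M,\fN)$ as a Hecke correspondence, by writing the relevant double coset as a union of single cosets. How this decomposition interacts with the embedding $\iota: Y_\QQ \hookrightarrow Y^*$ depends on whether $\ell$ is inert or split in $F$, which is exactly what forces the case distinction in the theorem. Using proper base change together with Proposition~\ref{prop:moment-compat}, the problem reduces to computing pushforwards of $CG^{[j]}(\cEI_{\ell N'})$ along an explicit finite sum of correspondences on modular curves.

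Second, I would invoke the distribution relations for $\cEI$ on $Y_{1,\QQ}$, which express the pushforward of $\cEI_{\ell N'}$ to lower level in terms of $\cEI_{N'}$ multiplied by an operator-valued Euler factor. Combined with the commutation relation \eqref{eq:CGrelation} governing how $CG^{[j]}$ interacts with multiplication-by-$n$ on the universal abelian variety, this produces the prefactor $\ell^j\sigma_\ell$ and rewrites each single-coset contribution as an operator applied to $\cAI^{[j]}_{M,\fN,a}$. In the inert case, $\ell\cO_F=\fl$ is prime of norm $\ell^2$, and the factored form $(1 - T(\ell)X + \ell^2 S(\ell)X^2)(1 - \ell^2 S(\ell)X^2)$ of $P_\ell(X)$ emerges naturally from the two types of single-coset contribution. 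In the split case, the hypothesis that the primes above $\ell$ are narrowly principal is essential: as noted in the remark after Definition~\ref{def:eulerfactor}, the $X^2$ coefficient of $P_\ell(X)$ can then be rewritten in terms of operators $T(\lambda)$, $T(\bar\lambda)$ associated to totally-positive generators $\lambda, \bar\lambda$, and only in this form do these operators arise as genuine single cosets matching the fibre decomposition of $\pr_{1,\ell}\circ\hat\pr_{2,\ell}$.

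The main obstacle is the combinatorial bookkeeping required to verify that the sum of contributions assembles into exactly the operator $\ell^j\sigma_\ell[(\ell-1)(1-\ell^{-2j}\langle\ell^{-1}\rangle R'(\ell)\sigma_\ell^{-2})-\ell P_\ell'(\ell^{-1-j}\sigma_\ell^{-1})]$, with the correct powers of $\sigma_\ell$ and diamond operators attached to each term. The correction term $(\ell-1)(1-\ldots)$ reflects the ``wrong Euler factor'' phenomenon noted in Remark~\ref{rmk:denoms}(ii): by contrast with Theorem~\ref{thm:UlrelationA}, where we push only through $\hat\pr_{2,\ell}$ and obtain a clean Hecke-operator relation, composing additionally with $\pr_{1,\ell}$ introduces the correction already seen in Theorem~\ref{thm:firstnormA}, which then interacts nontrivially with the full four-term Asai Euler polynomial in the split case. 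Verifying the final identity amounts to a careful matching of coset representatives and tracking of their contributions to the moment map at level $j$, and is especially intricate in the split case where several classes of single cosets must be enumerated and summed.
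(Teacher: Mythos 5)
Your proposal takes a genuinely different route from the paper's. You propose to attack the relation ``head-on'': decompose $\pr_{1,\ell}\circ\hat\pr_{2,\ell}$ into a sum of single cosets, trace how each interacts with the embedding $\iota$ via proper base change, and then apply the distribution relations for $\cEI_N$ on modular curves together with the Clebsch--Gordan commutation relation \eqref{eq:CGrelation} to assemble the final operator. This is essentially the strategy of \cite[\S 3.3]{leiloefflerzerbes14a} in the Rankin--Selberg case. By contrast, the paper deliberately avoids this (it says so explicitly) and instead follows the indirect strategy of the Appendix to \cite{leiloefflerzerbes14b}: Theorem~\ref{thm:TlrelationA} is \emph{deduced} from the already-established, much simpler norm relations -- Theorem~\ref{thm:UlrelationA} for $\hat\pr_{2,\ell*}$, Theorem~\ref{thm:firstnormA} for $\pr_{1,\ell*}$, Corollary~\ref{cor:pr2} for $\pr_{2,\ell*}$, and (in the split case) Theorem~\ref{thm:thirdnorm} for the cross terms -- glued together by a purely Hecke-algebraic identity,
\[ \pr_{1, \ell*} \circ U'(\ell) = T'(\ell) \circ \pr_{1, \ell*} - \langle \ell^{-1} \rangle \circ \pr_{2, \ell*}, \]
and its split analogues. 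The inert/split case distinction in the paper enters because this identity uses that $\ell\cO_F$ is a single prime; in the split case one must factor $\pr_{1,\ell} = \pr_{1,\bar\lambda}\circ\pr_{1,\lambda}$ and $U'(\ell) = U'(\lambda)U'(\bar\lambda)$, which is where the narrow-principality hypothesis is needed.

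Both approaches can be made to work, but they buy different things. The paper's modular approach localises all the Cartesian-square / proper-base-change subtleties (needed to push the Eisenstein--Iwasawa distribution relation through $\iota$) inside the simpler Theorems~\ref{thm:firstnormA}, \ref{thm:UlrelationA}, Corollary~\ref{cor:pr2}, where the combinatorics is manageable, and then Theorem~\ref{thm:TlrelationA} becomes a formal rearrangement. Your direct approach would require performing the Cartesian-diagram analysis (analogous to Lemmas~\ref{lem:cartesian}, \ref{lem:cartesian2}) simultaneously for a larger collection of single cosets, which is precisely the ``combinatorial bookkeeping'' you flag as the main obstacle. You do not spell out why the intersection of $\iota$ with each single-coset translate is again a modular curve of a controllable level (which is where the hypothesis that $a$ generates $\cO_F/(\ell M\cO_F+\ZZ)$ intervenes), so there is some work left unaccounted for there. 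Your attribution of the correction term $(\ell-1)(1-\ell^{-2j}\langle\ell^{-1}\rangle R'(\ell)\sigma_\ell^{-2})$ to the interplay of $\pr_{1,\ell}$ and $\hat\pr_{2,\ell}$ is roughly correct, but the precise mechanism in the paper is that the Hecke identity above produces a $\pr_{2,\ell*}$ term, whose contribution (Corollary~\ref{cor:pr2}) already contains the factor $\sigma_\ell^{-1}(1-\ell^{-2j}\langle\ell^{-1}\rangle R'(\ell)\sigma_\ell^{-2})$.
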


    We shall, in fact, only use case (i) of this theorem in the present paper (since primes inert in $F$ will suffice for our Euler system arguments). Hence, we shall not give full details of the proof of case (ii), although we include the statement (and a brief sketch of the proof) for the sake of completeness.

    The ``$\cAF$ version'' of this is the following, which is the fundamental Euler system norm relation for our $\Lambda$-adic classes:

    \begin{subtheorem}
     \label{thm:TlrelationB}
     Let $M \ge 1$, $\fN \triangleleft \cO_F$ an ideal, $\ell$ a prime which does not divide $M \cdot \Nm_{F/\QQ}(\fN)$, and $a \in \cO_F / (\ell M \cO_F + \ZZ)$ which is a unit at $\ell$. Suppose also that one of the following holds:
     \begin{enumerate}[(i)]
      \item $\ell$ is inert in $F$;
      \item $\ell$ is split in $F$ and the primes $\fl, \bar\fl$ above $\ell$ are narrowly principal.
     \end{enumerate}
     Then the Galois norm map $\operatorname{norm}_{\QQ(\mu_m)}^{\QQ(\mu_{\ell m})}$ maps $\cAF^{[j]}_{\ell M, \fN, a}$ to the class
     \[ \ell^j \sigma_\ell\Bigg[ (\ell - 1)\left(1-\ell^{-2j} \langle\ell^{-1}\rangle R'(\ell) \sigma_\ell^{-2}\right) - \ell P_\ell'(\ell^{-1-j} \sigma_\ell^{-1}) \Bigg] \cdot \cAF^{[j]}_{M, \fN, a}, \]
     where $P_\ell'(X)$ is the operator-valued Asai Euler factor of Definition \ref{def:eulerfactor}.
    \end{subtheorem}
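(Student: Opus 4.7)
My plan is to deduce Theorem \ref{thm:TlrelationB} from its companion Theorem \ref{thm:TlrelationA} in the same spirit as the earlier deductions of Theorem \ref{thm:firstnormB} from Theorem \ref{thm:firstnormA} and Theorem \ref{thm:UlrelationB} from Theorem \ref{thm:UlrelationA}. First I choose an auxiliary ideal $\fN'$ of $\cO_F$ divisible by both $M\cO_F$ and $\fN$, having exactly the same prime divisors as $M\fN$, and coprime to $\ell$; such an $\fN'$ exists precisely because the hypothesis $\ell \nmid M \cdot \Nm_{F/\QQ}(\fN)$ ensures that no prime of $\cO_F$ above $\ell$ divides $M\fN$. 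By the definition of the $\cAF$-classes, I then have
\[ \cAF^{[j]}_{M, \fN, a} = (s_M)_*\!\left(\cAI^{[j]}_{M, \fN', a}\right), \qquad \cAF^{[j]}_{\ell M, \fN, a} = (s_{\ell M})_*\!\left(\cAI^{[j]}_{\ell M, \ell \fN', a}\right), \]
where $\ell \fN'$ has the same prime factors as $\ell M \fN$, so is an admissible auxiliary ideal for the level $\ell M$.

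Next I establish the geometric identity
\[ \operatorname{norm}^{\QQ(\mu_{\ell m})}_{\QQ(\mu_m)} \circ (s_{\ell M})_* = (s_M)_* \circ \bigl(\pr_{1,\ell} \circ \hat\pr_{2,\ell}\bigr)_* \]
of maps on \'etale cohomology. This rests on the matrix factorisation $\stbt{(\ell M)^{-1}}{}{}{1} = \stbt{M^{-1}}{}{}{1}\stbt{\ell^{-1}}{}{}{1}$ in $G^*(\AA_f)$, which identifies the composite $s_M \circ (\pr_{1,\ell} \circ \hat\pr_{2,\ell})$ with $s_{\ell M}$ followed by the degeneracy $Y_1^*(\fN) \times \mu_{\ell M}^\circ \to Y_1^*(\fN) \times \mu_M^\circ$; on the cyclotomic factor this last map is precisely the canonical projection, whose induced pushforward on cohomology is the Galois norm. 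With this identity in hand, applying Theorem \ref{thm:TlrelationA} with $(M, \fN)$ replaced by $(M, \fN')$ --- whose hypothesis $\ell \nmid \Nm_{F/\QQ}(\fN')$ holds by construction --- expresses $(\pr_{1,\ell} \circ \hat\pr_{2,\ell})_*(\cAI^{[j]}_{\ell M, \ell \fN', a})$ as the bracketed operator polynomial applied to $\cAI^{[j]}_{M, \fN', a}$; pushing forward via $(s_M)_*$ then yields the right-hand side of Theorem \ref{thm:TlrelationB}, once we know that the operators commute with $(s_M)_*$.

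The one point to verify honestly --- and the main potential obstacle --- is precisely this commutation of the operators $R'(\ell)$, $\langle \ell \rangle$, $\sigma_\ell$, and $P_\ell'$ through the pushforward $(s_M)_*$. Since $\ell \nmid M \cdot \Nm_{F/\QQ}(\fN')$, the diamond $\langle \ell \rangle$ and the Hecke correspondences $T(\ell)$, $S(\ell)$, $T(\ell^2)$ defining $P_\ell'$ are defined at both level $U^*(M, M\fN')$ and the corresponding level for $Y_1^*(\fN') \times \mu_M^\circ$, and act via correspondences supported at places coprime to $M$; the scalar $R'(\ell)$ is trivially compatible with any pushforward; and $\sigma_\ell$ acts through the Galois action on the cyclotomic factor, which is manifestly preserved by the Galois-equivariant map $s_M$. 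Hence the entire operator polynomial passes through $(s_M)_*$, and assembling the three steps yields the claimed norm relation. The real content therefore lives in Theorem \ref{thm:TlrelationA}; Theorem \ref{thm:TlrelationB} is essentially a formal consequence.
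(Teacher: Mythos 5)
Your proof is correct and takes exactly the approach the paper intends: the authors state only that Theorem \ref{thm:TlrelationB} "follows immediately from Theorem \ref{thm:TlrelationA}", in the same manner as the earlier deductions of \ref{thm:firstnormB} and \ref{thm:UlrelationB} from their $\cAI$ counterparts. Your expansion of the implicit argument — choosing the auxiliary ideal $\fN'$ coprime to $\ell$ (which exists precisely because $\ell \nmid M\Nm_{F/\QQ}(\fN)$), using $\ell\fN'$ as the auxiliary ideal at level $\ell M$, the matrix identity $\stbt{(\ell M)^{-1}}{}{}{1} = \stbt{M^{-1}}{}{}{1}\stbt{\ell^{-1}}{}{}{1}$ to identify $s_M \circ (\pr_{1,\ell}\circ\hat\pr_{2,\ell})$ with the cyclotomic projection after $s_{\ell M}$, and the fact that the Hecke operators at $\ell$ commute with $(s_M)_*$ because $\ell$ is prime to $M\Nm_{F/\QQ}(\fN')$ — is exactly the verification the authors leave to the reader.
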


    It is Theorem \ref{thm:TlrelationB} which will furnish us with Kolyvagin deriviative classes in order to bound Selmer groups. As will be clear by this stage, Theorem \ref{thm:TlrelationB} follows immediately from Theorem \ref{thm:TlrelationA}.

   \begin{remark}
    All of the above norm-compatibility relations also hold (with exactly the same proofs) for the refined elements mentioned in Remark \ref{rmk:refinedelts} above, as long as we restrict to values of $\fN$ divisible by the auxilliary integer $n$.
   \end{remark}

  \subsection{Proof of the first norm relation}

   \begin{proof}[Proof of Theorem \ref{thm:firstnormA}]
    Let $N$ be the positive integer generating $\fN \cap \ZZ$, so that $\ell \mid N$ if and only if $\ell \mid \Nm_{F/\QQ}(\fN)$. Recall from \cite[Theorem~4.3.2]{kingsloefflerzerbes15b} that if $\pr_{1, \ell}:Y_{\QQ}(M,\ell N)\rightarrow Y_{\QQ}(M,N)$ is the natural projection coming from the inclusion of congruence subgroups, then
    \[
     \pr_{1, \ell*} \left(\cEI_{\ell N}\right)=
     \begin{cases}
      \cEI_{N} &\text{if }\ell \mid N\\
      \left(1 - R'(\ell) \stbt {\ell^{-1}} 0 0 {\ell^{-1}}^*\right) \cEI_{N}&\text{otherwise.}
     \end{cases}
    \]
    Here $\stbt {\ell^{-1}} 0 0 {\ell^{-1}}$ is considered as an element of the upper-triangular Borel subgroup of $\GL_2(\hat \ZZ)$, which normalises $U_{\QQ}(M, N)$ and thus acts on $Y_\QQ(M, N)$.

    Now let $N'$ be the positive integer generating $\fl\fN \cap \ZZ$; we must have either $N' = \ell N$ or $N' = N$, and the latter case can only occur if $\ell \mid N$. We fix a lifting of $a$ to an element of $\cO_F / M\cO_F$, and consider the commutative diagram
    \[
     \begin{diagram}[small]
      Y_{\QQ}(M, N') &\rInto^{u_a \circ \iota} &  Y^*(M,\fl\fN)\\
      \dTo<{\pr_{1, ?}} & & \dTo>{\pr_{1, \fl}}\\
      Y_{\QQ}(M ,N) &\rInto^{u_a \circ \iota}& Y^*(M, \fN).
     \end{diagram}
    \]
    (The left vertical arrow is either $\pr_{1, \ell}$ or the identity, depending whether $N' = N\ell$ or $N' = N$.)

    By definition, we have $\cAI_{M, \fN, a} = (u_{a*} \circ \iota_* \circ CG^{[j]}) \left( \cEI_N\right)$, and similarly $\cAI_{M, \fl \fN, a} =  (u_{a*} \circ \iota_* \circ CG^{[j]})\left(\cEI_{N'}\right)$.

    Because the diagram is commutative, and the Clebsch--Gordan map $CG^{[j]}$ is compatible with the $\pr_1$ maps (since these act as the identity on each fibre of the abelian varieties), we have
    \begin{align*}
     \pr_{1, \fl*} \left(\cAI_{M, \fN, a}^{[j]}\right) &= \left(\pr_{1, \fl*} \mathop\circ u_{a*} \circ \iota_* \circ CG^{[j]}\right) \cEI_{1, N'}\\
     &= \left(u_{a*} \circ \iota_* \circ CG^{[j]} \circ \pr_{1,?*}\right) \cEI_{1, N'}.
    \end{align*}
    If $\ell \mid N$, then $N$ and $N'$ have the same prime factors, so this is simply
    \[ \left(u_{a*} \circ \iota_* \circ CG^{[j]}\right) \cEI_{1, N} = \cAI^{[j]}_{M, \fN, a}.\]
    In the case $\ell \nmid N$, we have $N' = \ell N$. The action of the centre of $\GL_2(\ZZ / N\ZZ)$ commutes with $\iota_*$, $u_{a*}$ and $CG^{[j]}$, and the same is true of $R'(\ell)$ up to a factor of $\ell^{-2j}$ arising from the Clebsch--Gordan map (cf.~equation \ref{eq:CGrelation} above). Since we have
    \[ \stbt {\ell^{-1}} 0 0 {\ell^{-1}}^* = \langle \ell^{-1} \rangle \sigma_\ell^{-2}\]
    as automorphisms of $Y^*(M, \fN)$, this gives the result.
   \end{proof}

  \subsection{Proof of the second norm relation}

   We shall now prove Theorem \ref{thm:UlrelationA}, following closely the arguments of \cite[\S 5.4]{kingsloefflerzerbes15b}.

   We fix $M$, $\fN$, $\ell$ and $a$ as in the statement of the theorem, and we fix a lifting of $a$ to an element of $\cO_F / \ell M \cO_F$. As usual, we let $N$ be such that $\fN \cap \ZZ = N\ZZ$. We write
   \[ \iota_{M, \fN, a}: Y_{\QQ}(M, N) \rInto Y^*(M, \fN) \]
   for the composition $u_a \circ \iota$, and similarly for $\iota_{\ell M, \fN, a}$. Furthermore, we define
   \[ \iota_{M(\ell), \fN, a}:Y_{\QQ}(\ell M, N) \rTo Y^*(M(\ell), \fN)\]
   to be the composite of $\iota_{\ell M,\fN,a}$ with the natural projection $Y^*(\ell M,\fN)\rightarrow Y^*(M(\ell),\fN)$.

   \begin{lemma}[{cf.~\cite[Lemma 5.4.1]{kingsloefflerzerbes15b}}]
    \label{lem:cartesian}
    The map $\iota_{M(\ell),\fN,a}$ is a closed embedding. If $\ell \mid M$, then the diagram
    \[
     \begin{diagram}[small]
      Y_{\QQ}(\ell M, N) & \rInto^{\iota_{M(\ell),\fN,a}} & Y^*(M(\ell),\fN)\\
      \dTo<{\hat\pr_{1, \ell}} & & \dTo>{\hat\pr_{1, (\ell)}}\\
      Y_{\QQ}(M,N) &\rInto^{\iota_{M,\fN,a}} & Y^*(M,\fN)
     \end{diagram}
    \]
    is Cartesian, where the vertical maps are the natural degeneracy maps.
   \end{lemma}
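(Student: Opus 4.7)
The plan is to prove both claims by direct calculation with open compact subgroups of $G^*(\AA_f)$ and $\GL_2(\AA_f)$. Throughout, I work in the ambient setting of Theorem~\ref{thm:UlrelationA}, where $\fN$ is divisible by $\ell M$ and $a \in \cO_F$ is chosen so that its image generates $\cO_F/(\ell\cO_F+\ZZ)$; equivalently, $\{1, a\}$ spans $\cO_F/\ell\cO_F$ as a free $\ZZ/\ell$-module of rank $2$.

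For the closed embedding claim, the map $\iota_{M(\ell),\fN,a}$ descends at the adelic level from the formula $[h,\tau] \mapsto [hu_a^{-1},(\tau,\tau)]$, using the natural inclusion $\GL_2 \subset G^*$. General Shimura-variety theory tells us this descends to a closed embedding of $Y_\QQ(\ell M, N)$ into $Y^*(M(\ell),\fN)$ precisely when
\[
\bigl\{\gamma \in \GL_2(\AA_f) : u_a\, \gamma\, u_a^{-1} \in U^*(M(\ell),\fN)\bigr\} = U_\QQ(\ell M, N).
\]
I will verify this equality by explicit matrix calculation, starting from
\[
u_a\,\stbt{x}{y}{z}{w}\,u_a^{-1} = \stbt{x+az}{\,y+a(w-x)-a^2 z\,}{z}{w-az}.
\]
The hypothesis $\ell M \mid \fN$ forces $z \in \ell M \hat\ZZ$ and $w \equiv 1 \pmod{\ell M}$, and then the off-diagonal condition $y + a(w-x) - a^2 z \in \ell M\,\widehat\cO_F$ decomposes in the basis $\{1, a\}$ of $\cO_F/\ell\cO_F$ into a pair of independent $\ZZ/\ell$-congruences, yielding $y \in \ell M\hat\ZZ$ and $x \equiv 1 \pmod{\ell M}$. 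Combined with the constraints already extracted on $z$ and $w$, these are exactly the defining conditions of $U_\QQ(\ell M, N)$.

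For the Cartesian claim, the closed embedding just established and the universal property of the fibre product produce a canonical map
\[
\Psi\colon Y_\QQ(\ell M, N) \to Y_\QQ(M, N) \times_{Y^*(M,\fN)} Y^*(M(\ell),\fN),
\]
which is injective because its second component is the closed embedding $\iota_{M(\ell),\fN,a}$. Both $\Psi$ and the second projection of the target are finite \'etale covers of $Y_\QQ(M, N)$. Computing indices of open compact subgroups, I find that both have degree $\ell^2$: namely $[U_\QQ(M, N) : U_\QQ(\ell M, N)] = \ell \cdot \ell$ (one factor each from the $a$- and $b$-directions, which is where the hypothesis $\ell \mid M$ is needed to guarantee the count) and $[U^*(M,\fN) : U^*(M(\ell),\fN)] = |\cO_F/\ell\cO_F| = \ell^2$. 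Since $\Psi$ is an injection between finite \'etale covers of equal degree over a connected base, it must be an isomorphism, giving the Cartesian property.

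The main obstacle is the matrix-level computation in Part~1: conjugation by $u_a$ mixes the $\hat\ZZ$-entries of $\gamma$ with $\cO_F$-coefficients, so the resulting congruence in $\widehat\cO_F$ must be carefully decomposed into its $\{1, a\}$-components. This is where the two hypotheses -- $\ell M \mid \fN$ (to control the quadratic and cross-terms like $a^2 z$ and $a(w-x)$) and the basis property of $\{1, a\}$ in $\cO_F/\ell\cO_F$ (to split a single $\widehat\cO_F$-congruence into two $\hat\ZZ$-congruences) -- both enter in an essential way. Once this is done, Part~2 follows rather formally by comparing degrees and invoking injectivity.
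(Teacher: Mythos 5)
Your overall strategy matches the paper's exactly: compute the open compact subgroup $\GL_2(\AA_f)\cap u_a^{-1}U^*(M(\ell),\fN)u_a$ by an explicit matrix calculation to get the closed-embedding claim, then deduce the Cartesian property by comparing degrees of finite \'etale maps. Part~2 is correct and essentially identical to what the paper does. Part~1, however, has a genuine gap.

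First, the conditions you extract on the bottom row are too weak: the $\fN$-congruences on the lower-left and lower-right entries of $u_a\gamma u_a^{-1}$ give $z\equiv 0$ and $w\equiv 1$ modulo $N$ (since $z,w\in\hat\ZZ$ and $N\cO_F\subseteq\fN$), not merely modulo $\ell M$. If you only had $z\equiv 0,\ w\equiv 1 \bmod \ell M$, your combined list of conditions would define $U_\QQ(\ell M,\ell M)$, not $U_\QQ(\ell M, N)$, so your final sentence ``these are exactly the defining conditions of $U_\QQ(\ell M,N)$'' would not follow. Second, and more seriously, you never use the upper-left entry condition $x+az\equiv 1\bmod M$. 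This is essential: the paper first deduces $x\equiv 1\bmod M$ from it, writes $x-1=Mm$, and only then reduces the off-diagonal congruence $y+a(w-x)-a^2z\equiv 0\bmod\ell M$ to a $\ZZ/\ell$-statement $\beta m\equiv 0\bmod\ell$ (where $\beta$ is the $a'$-coefficient of $a$ in a $\ZZ$-basis $\{1,a'\}$ of $\cO_F$), which then gives $\ell\mid m$ and hence $x\equiv 1\bmod\ell M$. Your argument decomposes the off-diagonal condition directly ``into a pair of independent $\ZZ/\ell$-congruences'' and claims this yields $x\equiv 1\bmod\ell M$ and $y\in\ell M\hat\ZZ$; but a $\ZZ/\ell$-congruence can only give $\bmod\ell$ information. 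If $\ell\mid M$ (the relevant case for the Cartesian claim), $x\equiv 1\bmod\ell$ is strictly weaker than $x\equiv 1\bmod M$, and in general the hypothesis only says $a$ is a unit at $\ell$, i.e.\ $\gcd(\beta,\ell)=1$ — not $\gcd(\beta,\ell M)=1$ — so one cannot conclude $\ell M\mid(x-1)$ from the off-diagonal condition alone. You need the top-left entry to first pin down $x\equiv 1\bmod M$; once that is in hand, your decomposition idea does work and gives the remaining factor of $\ell$.
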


   \begin{proof}
    The image of $\iota_{M(\ell), \fN, a}$ is the modular curve of level
    \[ \GL_2(\AA_f) \cap u_a^{-1} U^*(M(\ell), \fN) u_a. \]
    An easy computation shows that this intersection is precisely those $\stbt r s t u \in \GL_2(\hat\ZZ)$ such that
    \[
     \tbt{r + at}{s + a(u-r)-a^2t}{t}{-at+u} = 1 \bmod \tbt{M}{\ell M}{\fN}{\fN}
    \]
    and since we are assuming $\ell M \mid \fN$, we conclude that $t = 0, u = 1 \bmod N$, $r = 1 \bmod M$, and $s + a(r-1) = 0 \bmod \ell M$, so that $a(r-1) = 0$ in $\cO_F/ (\ell M + \ZZ)$.

    Since we assumed that $a$ generates $\cO_F / (\ell M + \ZZ)$, it follows that $r-1$ is divisible by $\ell M$, and hence also that $s$ is divisible by $\ell M$. So the intersection is equal to $U_{\QQ}(\ell M, N)$, and $\iota_{M(\ell), \fN, a}$ is an isomorphism onto its image, as required.

    To obtain the Cartesian property in the case $\ell \mid M$ we simply compare degrees: both horizontal maps are injective, and both vertical maps are finite \'etale of degree $\ell^2$, so we are done.
   \end{proof}

   We now consider the omitted case $\ell \nmid M$. Let $\tilde{a}$ be the unique element of $\cO_F / \ell M$ such that $\tilde a = 0 \bmod \ell$ and $\tilde a = a \bmod M$. Then the following is easily verified:

   \begin{lemma}
    \label{lem:cartesian2}
    Let $\gamma:Y_\QQ(M(\ell),N)\rightarrow Y^*(M(\ell),\fN)$ be the diagonal map $\iota$ composed with the action of $\tbt 1 {\tilde{a}} 0 1$. Then, the following diagram is Cartesian:
    \[
     \begin{diagram}[small]
      Y_{\QQ}(\ell M,N) \sqcup Y_{\QQ}(M(\ell),N) & \rTo^{\left(\iota_{M(\ell),\fN,a},\gamma\right)} & Y^*(M(\ell),\fN)\\
      \dTo<{\left(\hat\pr_{1, \ell}, \hat\pr_{1, (\ell)}\right)} & & \dTo>{\hat\pr_{1, (\ell)}} \\
      Y_{\QQ}(M,N) & \rTo^{\iota_{M,\fN,a}} & Y^*(M,\fN).
     \end{diagram}
    \]\qed
   \end{lemma}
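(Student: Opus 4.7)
The plan is to exhibit an isomorphism between the disjoint union $Y_\QQ(\ell M, N) \sqcup Y_\QQ(M(\ell), N)$ and the fiber product $Y_\QQ(M, N) \times_{Y^*(M, \fN)} Y^*(M(\ell), \fN)$ via the maps $\iota_{M(\ell), \fN, a}$ and $\gamma$. First, I would verify that both maps land in the fiber product. For $\iota_{M(\ell), \fN, a}$ this is part of Lemma~\ref{lem:cartesian}. For $\gamma$: the congruence $\tilde a \equiv a \pmod M$ implies $u_{\tilde a - a} \in U^*(M, \fN)$, so $u_{\tilde a}$ and $u_a$ coincide in $Y^*(M, \fN)$, giving $\hat\pr_{1, (\ell)} \circ \gamma = \iota_{M, \fN, a}$ composed with the natural projection $Y_\QQ(M(\ell), N) \to Y_\QQ(M, N)$.

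Next, both maps are closed embeddings into $Y^*(M(\ell), \fN)$. The first is Lemma~\ref{lem:cartesian}; for the second, I factor $\gamma = u_{\tilde a} \circ \iota$. The shift $u_{\tilde a}$ induces an automorphism of $Y^*(M(\ell), \fN)$, since $\tilde a \equiv 0 \pmod{\ell}$ ensures that $u_{\tilde a}$ normalises $U^*(M(\ell), \fN)$. The remaining map $\iota: Y_\QQ(M(\ell), N) \to Y^*(M(\ell), \fN)$ is a closed embedding by the variant of the argument in Lemma~\ref{lem:cartesian} with $a = 0$, which gives $\GL_2(\AA_f) \cap U^*(M(\ell), \fN) = U_\QQ(M(\ell), N)$.

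The crucial step is to show that the two images are disjoint in $Y^*(M(\ell), \fN)$ and that their combined degree matches that of the fiber product. The right vertical $\hat\pr_{1, (\ell)}$ has degree $[U^*(M, \fN) : U^*(M(\ell), \fN)] = |\cO_F/\ell \cO_F| = \ell^2$, computed locally at $\ell$ by varying the $b$-entry of a matrix. On the left, using the hypotheses $\ell \mid N$ and $\ell \nmid M$, the degrees $[U_\QQ(M, N) : U_\QQ(\ell M, N)] = \ell(\ell-1)$ and $[U_\QQ(M, N) : U_\QQ(M(\ell), N)] = \ell$ sum to $\ell^2$. Disjointness of the images follows from an adelic calculation: at the prime $\ell$, the cosets in $U^*_\ell(M, \fN) / U^*_\ell(M(\ell), \fN) \cong \cO_{F, \ell}/\ell \cO_{F, \ell}$ arising from $\iota_{M(\ell), \fN, a}$ and $\gamma$ are separated by their $b$-entry residue modulo $\ell \cO_{F, \ell}$, shifted by $a$ or $\tilde a$ respectively; since $\tilde a \equiv 0 \pmod \ell$ while $a$ is a unit at $\ell$, these shifts land in complementary subsets, and the degree count then forces the natural morphism from the disjoint union to the fiber product to be an isomorphism. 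The main obstacle is the explicit adelic disjointness computation, which requires careful bookkeeping paralleling the proof of Lemma~\ref{lem:cartesian}.
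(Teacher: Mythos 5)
The paper states this lemma without proof (it carries a $\qed$ and is introduced with ``the following is easily verified''), so there is no authorial argument to compare against. Your plan is the natural one and is correct in structure: it mirrors the degree-count used for the Cartesian property in the proof of Lemma~\ref{lem:cartesian}, with the new ingredient being the disjointness of the two sheets. Your verification that $u_{\tilde a}$ normalises $U^*(M(\ell),\fN)$ (using $\tilde a \equiv 0 \bmod \ell$ and $\ell M \mid \fN$), the commutativity checks via $u_{\tilde a - a}\in U^*(M,\fN)$, and the degree bookkeeping $\ell(\ell-1)+\ell = \ell^2$ are all right.

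The one place where your sketch is loose is the disjointness invariant. ``$b$-entry residue shifted by $a$ or $\tilde a$'' does not by itself explain why the sheets carve out subsets of $\cO_F/\ell\cO_F$ of sizes $\ell(\ell-1)$ and $\ell$. The clean version: parametrise the fibre of $\hat\pr_{1,(\ell)}$ over the image point $[(u_{\tilde a}^{-1},\tau)]\in Y^*(M,\fN)$ by right cosets $u_{\tilde a}^{-1}\stbt 1 x 0 1 U^*(M(\ell),\fN)$ with $x\in\cO_F/\ell\cO_F$. Writing a fibre point of $\hat\pr_{1,\ell}$ (resp.\ $\hat\pr_{1,(\ell)}$) over $[(1,\tau)]\in Y_\QQ(M,N)$ as $[(v,\tau)]$ with $v=\stbt p q r s\in U_\QQ(M,N)$, the coset reached by $\iota_{M(\ell),\fN,a}$ has $x \equiv q - pa\bmod\ell$ with $p\in(\ZZ/\ell)^\times$, $q\in\ZZ/\ell$ free, while the coset reached by $\gamma$ has $x\equiv q\bmod\ell$ with $q\in\ZZ/\ell$ free (the $\tilde a$-terms vanish modulo $\ell$). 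Since $a$ generates $\cO_F/(\ell\cO_F+\ZZ)$, the set $\{1,a\}$ is a $\ZZ/\ell$-basis of $\cO_F/\ell\cO_F$, so the first sheet hits precisely the $\ell(\ell-1)$ cosets with nonzero $a$-coordinate and the second the $\ell$ cosets with zero $a$-coordinate. This makes your ``complementary subsets'' claim precise, after which the finite-flat degree comparison finishes the argument as you indicate. So the plan is sound and identifies the right obstacle; it just needs the disjointness invariant to be tracked as above rather than via the shifts $a,\tilde a$ directly.
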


   With these ingredients in place, the proof of the Theorem \ref{thm:UlrelationA} proceeds exactly as in \cite[Theorem 5.4.4]{kingsloefflerzerbes15b}:

   \begin{proof}[Proof of Theorem \ref{thm:UlrelationA}]
    We first consider the case $\ell \mid M$. In this case, the cartesian diagram of Lemma \ref{lem:cartesian} shows that the pushforward of $\cAI^{[j]}_{\ell M, N, a}$ along the degeneracy map $Y^*(M \ell, \fN) \to Y^*(M(\ell), \fN)$ is equal to the pullback of $\cAI^{[j]}_{M, N, a}$ along the natural degeneracy map $Y^*(M(\ell), \fN) \to Y^*(M, \fN)$.

    Hence the image of $\cAI^{[j]}_{\ell M, N, a}$ under $(\hat\pr_{2,\ell})_*$ is equal to the image of $\cAI^{[j]}_{M, N, a}$ under the composition $(\hat\pr_{2, (\ell)})_* \circ (\hat\pr_{1, (\ell)})^*$, where
    \[ \hat\pr_{1, (\ell)}, \hat\pr_{2, (\ell)} : Y^*(M(\ell), \fN) \to Y^*(M, \fN)\]
    are respectively the natural degeneracy map and the ``twisted'' degeneracy map induced by $\stbt {1}{}{} \ell$. This composition $(\hat\pr_{2, (\ell)})_* \circ (\hat\pr_{1, (\ell)})^*$ is exactly the definition of the Hecke operator $U'(\ell)$.

    In the case $\ell \nmid M$, the same argument using Lemma \ref{lem:cartesian2} shows that
    \[ U'(\ell) \cdot \cAI^{[j]}_{M, N, a} = \hat\pr_{2, \ell*} \left(\cAI^{[j]}_{\ell M, N, a}\right) + (\hat\pr_{2, (\ell)*} \circ \gamma_* \circ CG^{[j]})(\cEI_N). \]
    One checks that there is a commutative diagram
    \[
     \begin{diagram}[small]
      Y_{\QQ}(M(\ell), N) & \rTo^{\gamma} & Y^*(M(\ell), \fN) \\
      \dTo<{\hat\pr_{2, (\ell)}} & & \dTo>{\hat\pr_{2, (\ell)}}\\
      Y_\QQ(M, N) & \rTo^{\iota_{M, N, \ell^{-1} a}} & Y^*(M, \fN).
     \end{diagram}
    \]
    and by \cite[Theorem 4.3.3]{kingsloefflerzerbes15a}, the left-hand $\hat\pr_{2, (\ell)}$ sends $\cEI_N$ to itself, but induces a factor of $\ell^j$ in the Clebsch--Gordan map (since it acts as an isogeny of degree $\ell$ on the elliptic curve $\cE$). This gives
    \begin{align*}
     (\hat\pr_{2, (\ell)*} \circ \gamma_* \circ CG^{[j]})(\cEI_N)
     &= (\iota_{M, N, \ell^{-1}a{}*} \circ \hat\pr_{2, (\ell)*} \circ CG^{[j]})(\cEI_N)\\
     &= \ell^j (\iota_{M, N, \ell^{-1}a{}*} \circ CG^{[j]} \circ \hat\pr_{2, (\ell)*})(\cEI_N)\\
     &= \ell^j (\iota_{M, N, \ell^{-1}a{}*} \circ CG^{[j]})(\cEI_N)\\
     &= \ell^j \cAI^{[j]}_{M, N, \ell^{-1}a} = \ell^j \sigma_\ell \cdot \cAI^{[j]}_{M, N, \ell^{-1}a}.\tag*{\qedhere}
    \end{align*}
   \end{proof}

  \subsection{Proof of Theorem \ref{thm:TlrelationA} (inert primes)}

   Rather than attack Theorem \ref{thm:TlrelationA} head-on, we shall attempt to deduce it from other simpler norm relations, using compatibilities in the Hecke algebra (the strategy introduced in \cite[Appendix]{leiloefflerzerbes14b}). We first introduce some notation.

   Let $M$ be an integer, and $\fN$ an ideal of $\cO_F$ divisible by $M$, as usual. Let $\fa$ be a prime ideal of $\cO_F$ with a totally positive generator $\alpha$. Then, as well as the obvious degeneracy map
   \[ \pr_{1, \alpha}: Y^*(M, \fa \fN) \to Y^*(M, N) \]
   whose effect on Asai--Iwasawa elements was studied in Theorem \ref{thm:firstnormA}, there is a second degeneracy map
   \[ \pr_{2, \alpha}: Y^*(M, \fa \fN) \to Y^*(M, N)\]
   given by $\tau \mapsto \alpha \tau$ on $\cH_F$.

   (The former map was denoted previously by $\pr_{1, \fa}$, since it is independent of $\alpha$ and makes sense whether or not $\fa$ is narrowly principal; but when a generator $\alpha$ exists, we use the alternative notation $\pr_{1,\alpha}$ for this map, for harmony with $\pr_{2, \alpha}$.)

   For $a \in \ZZ_{\ge 1}$ we also have maps $Y_\QQ(M, aN) \to Y_\QQ(M, N)$ defined similarly, which we denote by the same symbols $\pr_{1, a}$, $\pr_{2, a}$.

   \begin{proposition}
    Let $M \mid N$ be integers, and let $\ell$ be a prime. Then the Eisenstein--Iwasawa classes on $Y_\QQ(M, \ell N)$ and $Y_\QQ(M, N)$, considered with coefficients in $\Lambda_R(\cE)(1)$, satisfy the relation
    \[ (\pr_{2, \ell})_*\left(\cEI_{\ell N}\right) =
     \begin{cases}
      \ell R'(\ell) \cdot \cEI_{N} & \text{if $\ell \mid N$},\\
      \ell R'(\ell) \left(1 - R'(\ell) \stbt{\ell^{-1}}{}{}{\ell^{-1}}^*\right) \cdot \cEI_{N} & \text{if $\ell \nmid N$.}
     \end{cases}
    \]
   \end{proposition}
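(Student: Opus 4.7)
The plan is to reduce this to the first-degeneracy-map formula for $(\pr_{1,\ell})_*$ recalled above from \cite[Theorem 4.3.2]{kingsloefflerzerbes15b}. First, I would decompose $\pr_{2,\ell}$ adelically: the complex-analytic map $\tau \mapsto \ell\tau$ corresponds to right-multiplication by $w_\ell = \stbt{\ell^{-1}}{}{}{1}$ (viewed as an element of $\GL_2(\AA_f)$ supported at $\ell$), using the identity $[g, \ell\tau] = [\gamma_f^{-1} g, \tau]$ for $\gamma = \stbt{\ell}{}{}{1} \in \GL_2(\QQ)^+$. A direct computation (conjugating an arbitrary $\stbt{a}{b}{c}{d} \in U_\QQ(M, \ell N)$) shows $w_\ell^{-1} U_\QQ(M, \ell N) w_\ell \subseteq U_\QQ(M, N)$, so that $\pr_{2,\ell}$ factors as the isomorphism $[w_\ell^{-1}]$ of Shimura varieties followed by a natural covering map onto $Y_\QQ(M, N)$.

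Next I would compute the effect of $[w_\ell^{-1}]_*$ on the Eisenstein--Iwasawa class. The matrix $w_\ell$ lifts to an isogeny of universal elliptic curves of degree $\ell$, and the induced map on $\Lambda_R(\cE)$ can be analysed using the inverse-limit definition in terms of the $[p^r]$-multiplication sections. The expected output is a factor of $\ell R'(\ell)$: the factor of $\ell$ comes from the degree of the induced isogeny, while $R'(\ell)$ is the scalar action on torsion sections (the canonical section of order $\ell N$ gets matched with the canonical section of order $N$, a transition recorded in $\Lambda_R(\cE)$ precisely by $R'(\ell)$).

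Composing with the second step (the natural covering map) and invoking \cite[Theorem 4.3.2]{kingsloefflerzerbes15b} then recovers the stated formula in both cases. In the case $\ell \mid N$ the covering contributes no extra term, yielding $\ell R'(\ell) \cdot \cEI_N$; in the case $\ell \nmid N$ the correction factor $1 - R'(\ell)\stbt{\ell^{-1}}{}{}{\ell^{-1}}^*$ arises exactly as in the first-degeneracy formula, giving the second case of the identity.

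The main obstacle is the bookkeeping of the $w_\ell$-action on the inverse system defining $\Lambda_R(\cE)$: one must verify that the operator coming from this action is precisely $\ell R'(\ell)$ rather than some twisted or rescaled variant. This requires carefully matching the conventions used in the Eisenstein--Iwasawa construction of \cite[\S 4]{kingsloefflerzerbes15b}, and confirming that pushforward under the degree-$\ell$ isogeny of universal elliptic curves contributes the multiplicative factor $\ell$ at the correct stage — a verification entirely analogous to (but with the roles of the two degeneracy maps interchanged from) the proof of the first-degeneracy-map formula.
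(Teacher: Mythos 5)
The paper's own proof is a one-line citation to Corollary~4.3.6 of \cite{kingsloefflerzerbes15b}, which states the second-degeneracy formula directly (with coefficients in $\Lambda_{\Zp}(\cE\langle t_N\rangle)(1)$), followed by the observation that renormalising to $\Lambda_R(\cE)(1)$ via $[N]_*$ introduces the extra factor of $R'(\ell)$ since the two sides involve different values of $N$. So the approach you propose — reducing the $\pr_{2,\ell}$ formula to the $\pr_{1,\ell}$ formula of Theorem~4.3.2 — is genuinely different from what the paper does; the paper simply quotes a result that was already computed in the source.

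There is, however, a gap in your reduction. You correctly verify that $w_\ell^{-1} U_\QQ(M, \ell N) w_\ell \subseteq U_\QQ(M, N)$, so that $\pr_{2,\ell}$ factors as the isomorphism $[w_\ell^{-1}]$ followed by a natural covering onto $Y_\QQ(M,N)$. But that covering has source $Y(V)$ with $V = w_\ell^{-1} U_\QQ(M, \ell N) w_\ell = \{\gamma \equiv 1 \bmod \stbt{M}{\ell M}{N}{\ell N}\}$, which is \emph{not} the subgroup $U_\QQ(M, \ell N)$; it is a different level structure, and the class you are pushing forward along it is $[w_\ell^{-1}]_*\cEI_{\ell N}$, not an Eisenstein--Iwasawa class $\cEI_{N'}$ in the standard normalisation to which Theorem~4.3.2 would apply. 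So the final step, ``invoking Theorem~4.3.2 recovers the stated formula,'' does not follow as stated: you would need a separate argument that pushforward along this twisted covering has the same effect on your intermediate class as $(\pr_{1,\ell})_*$ has on $\cEI_{\ell N}$, and in particular that the Euler-factor correction in the $\ell \nmid N$ case arises in identical form. This is plausible, but it is precisely the content that needs to be verified, and it is not supplied. A cleaner route, if one insists on deriving the $\pr_2$ formula rather than citing Corollary~4.3.6 directly, is to exploit the explicit $q$-expansion/distribution description of $\cEI_{N}$ from \S4 of \cite{kingsloefflerzerbes15b} on both sides, rather than trying to factor through Theorem~4.3.2.

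One further remark: you attribute the $R'(\ell)$ factor to ``the scalar action on torsion sections.'' The cleaner explanation, and the one the paper uses, is that \cite{kingsloefflerzerbes15b} works throughout with coefficients in $\Lambda_{\Zp}(\cE\langle t_N\rangle)(1)$, and passing to $\Lambda_R(\cE)(1)$ requires applying $[N]_*$ (resp.~$[\ell N]_*$) on the two sides of the identity; the mismatch between $[N]_*$ and $[\ell N]_*$ is exactly $R'(\ell)$. Your description is in the right spirit, but the normalisation should be pinned down via these $[N]_*$ maps rather than asserted informally.
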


   \begin{proof}
    This is Corollary 4.3.6 of \cite{kingsloefflerzerbes15b}. (Note, however, that in \emph{op.cit.} the class $\cEI_N$ is considered to have coefficients in $\Lambda_{\Zp}(\cE\langle t_N \rangle)(1)$, so we must apply the map $[N]_*$ to obtain classes in $\Lambda_{\Zp}(\cE)(1)$; since we are comparing classes with two different values of $N$ this introduces a factor of $R'(\ell)$ which is not present in \emph{op.cit.}.)
   \end{proof}

   \begin{corollary}
    \label{cor:pr2}
    Let $M \ge 1$, $\fN \triangleleft \cO_F$ divisible by $M$, and $\ell$ prime. Then
    \[ (\pr_{2, \ell})_*\left( \cAI^{[j]}_{M, \ell \fN, a} \right) =
     \begin{cases}
      \ell^{1 - j}R'(\ell) \cdot \cAI^{[j]}_{M, \fN, \ell a} & \text{if $\ell \mid \fN$},\\
      \ell^{1 - j}R'(\ell) \sigma_\ell^{-1}\left(1-\ell^{-2j} \langle\ell^{-1}\rangle R'(\ell) \sigma_\ell^{-2}\right) \cdot \cAI^{[j]}_{M, \fN, a} & \text{if $\ell \nmid \fN$}.
     \end{cases}
    \]
   \end{corollary}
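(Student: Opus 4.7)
The plan is to reduce the computation to the cited proposition for the Eisenstein--Iwasawa classes on modular curves, by passing $\pr_{2, \ell}$ through the twisted diagonal embedding $\iota_{M,\fN,a}$ and the Clebsch--Gordan map.

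First, I would establish the commutative diagram
\[
\begin{diagram}[small]
Y_\QQ(M, \ell N) & \rTo^{\iota_{M, \ell\fN, a}} & Y^*(M, \ell\fN) \\
\dTo<{\pr_{2, \ell}} & & \dTo>{\pr_{2, \ell}} \\
Y_\QQ(M, N) & \rTo^{\iota_{M, \fN, \ell a}} & Y^*(M, \fN)
\end{diagram}
\]
by noting that on $\cH_F$ we have $\pr_{2,\ell}(\tau)=\ell\tau$ and $u_a(\tau)=\tau+a$, so $\pr_{2,\ell}\circ u_a = u_{\ell a}\circ \pr_{2,\ell}$; together with the compatibility of the diagonal embedding $\iota$ with $\pr_{2,\ell}$, this yields the square.

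Second, following the same pattern as in the proof of Theorem~\ref{thm:UlrelationA} (where an identical factor of $\ell^j$ appeared for the analogous operation $\hat\pr_{2,(\ell)}$), I would establish the compatibility
\[
(\pr_{2,\ell})_* \circ CG^{[j]} = \ell^{\,j}\cdot CG^{[j]} \circ (\pr_{2,\ell})_*,
\]
where the factor $\ell^j$ arises from the interaction between the Tate twist $(-j)$ in the target of CG and the degree-$\ell$ isogeny on $\cE$ induced by $\pr_{2,\ell}$.

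Third, I would apply the cited proposition
\[
(\pr_{2,\ell})_* \cEI_{\ell N} =
\begin{cases}
\ell R'(\ell)\, \cEI_N & \text{if $\ell \mid N$,} \\
\ell R'(\ell)\bigl(1 - R'(\ell) \stbt{\ell^{-1}}{}{}{\ell^{-1}}^*\bigr) \cEI_N & \text{if $\ell \nmid N$,}
\end{cases}
\]
combined with the commutation relation $CG^{[j]}\circ R'(\ell) = \ell^{-2j}\,R'(\ell)\circ CG^{[j]}$ from equation~(\ref{eq:CGrelation}), and the identification $\stbt{\ell^{-1}}{}{}{\ell^{-1}}^* = \langle\ell^{-1}\rangle \sigma_\ell^{-2}$ on $Y^*(M,\fN)$ established in the proof of Theorem~\ref{thm:firstnormA}. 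Collecting factors $\ell^j\cdot\ell\cdot\ell^{-2j}=\ell^{1-j}$ gives the formula in the case $\ell\mid\fN$ immediately, and an analogous computation handles $\ell\nmid\fN$ producing a subscript $\ell a$ in place of $a$. In this latter case, $\ell\nmid\fN$ and $M\mid\fN$ force $\ell\nmid M$, so $\sigma_\ell$ acts on $\QQ(\mu_M)$ and we have $\cAI^{[j]}_{M,\fN,\ell a} = \sigma_\ell^{-1}\cdot\cAI^{[j]}_{M,\fN,a}$ by the Iwasawa analogue of Theorem~\ref{thm:AFeltproperties}(2), which rewrites the result in the stated form.

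The main obstacle is step~2: the precise form of the compatibility between $(\pr_{2,\ell})_*$ and $CG^{[j]}$. This requires careful tracking of moment maps and isogenies on $\cE$ (degree $\ell$) and $\cA$ (degree $\ell^2$) along with the Tate twist, and should proceed along exactly the same lines as the analogous argument for $\hat\pr_{2,(\ell)}$ used in the proof of Theorem~\ref{thm:UlrelationA}, applied instead to the different degeneracy map $\pr_{2,\ell}:Y^*(M,\ell\fN)\to Y^*(M,\fN)$.
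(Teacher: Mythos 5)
Your proposal is correct and takes essentially the same route as the paper's proof, which is very terse (it simply cites the commutative diagram relating $\iota_{M,\ell\fN,a}$ and $\iota_{M,\fN,\ell a}$, invokes the previous proposition, and remarks parenthetically that the $\ell^j$ factor comes from the failure of $CG^{[j]}$ to commute with the $\ell$-isogeny). You have correctly identified the diagram, the $\ell^j$ factor, the use of equation~(\ref{eq:CGrelation}), the identification $\stbt{\ell^{-1}}{}{}{\ell^{-1}}^* = \langle\ell^{-1}\rangle\sigma_\ell^{-2}$, and the final rewriting $\cAI^{[j]}_{M,\fN,\ell a} = \sigma_\ell^{-1}\cAI^{[j]}_{M,\fN,a}$ in the case $\ell\nmid\fN$; the bookkeeping of exponents $\ell^j\cdot\ell\cdot\ell^{-2j}=\ell^{1-j}$ matches the stated formula.
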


   \begin{proof}
    This follows from the previous proposition and commutativity of pushforward maps around the diagram
    \[
     \begin{diagram}
      Y_\QQ(M, \ell N) & \rTo^{u_a \circ \iota} & Y^*(M, \ell \fN)\\
      \dTo^{\pr_{2, \ell}} & & \dTo_{\pr_{2, \ell}}\\
      Y_\QQ(M, N) &\rTo^{u_{\ell a} \circ \iota} & Y^*(M, \fN).
     \end{diagram}
    \]
    (The $\ell^j$ factors appear because of the failure of the map $CG^{[j]}$ to commute with pushforward via isogenies, exactly as in the $\GL_2 \times \GL_2$ situation; cf~\cite[proof of Theorem 5.4.1]{kingsloefflerzerbes15b})
   \end{proof}

   \begin{proof}[Proof of Theorem \ref{thm:TlrelationA} for $\ell$ inert in $F$]
    We shall now prove Theorem \ref{thm:TlrelationA} in the inert case. We are interested in the image of $\cAI^{[j]}_{\ell M, \ell \fN, a}$ under the map
    \[ Y(\ell M, \ell \fN) \rTo^{\hat\pr_{2, \ell}} Y(M, \ell \fN) \rTo^{\pr_{1, \ell}} Y(M, \fN). \]
    By Theorem \ref{thm:UlrelationA} (applied with $(\ell, M, \fN)$ replaced by $(\ell, M, \ell \fN)$), we know that
    \[ \hat\pr_{2, \ell*}\left( \cAI^{[j]}_{\ell M, \ell \fN, a}\right) = (U'(\ell) - \ell^j \sigma_\ell) \cdot \cAI^{[j]}_{M, \ell \fN, a}.
    \]
    A double coset computation (using the fact that $\ell\cO_F$ is a prime ideal) shows that
    \begin{equation}
     \label{eq:pridentity}
     \pr_{1, \ell*} \mathop\circ U'(\ell) = T'(\ell) \circ \pr_{1, \ell*} - \langle \ell^{-1} \rangle \circ \pr_{2, \ell*}.
    \end{equation}
    Hence we have
    \begin{align*}
     (\pr_{1, \ell} \circ \hat\pr_{2, \ell})_* \left(\cAI^{[j]}_{\ell M, \ell \fN, a}\right)
     &= \left[\pr_{1, \ell*} \circ (U'(\ell) - \ell^j \sigma_\ell) \right] \cAI^{[j]}_{M, \ell \fN, a} \\
     &= \left[ \left( T'(\ell) - \ell^j \sigma_\ell\right) \pr_{1, \ell*} - \langle \ell^{-1} \rangle \pr_{2,\ell*}\right] \cAI^{[j]}_{M, \ell \fN, a}.
    \end{align*}
    Substituting the formulae for $\pr_{1, \ell*}\left(\cAI^{[j]}_{M, \ell \fN, a}\right)$ from Theorem \ref{thm:firstnormA}, and for $\pr_{2,\ell*}\left(\cAI^{[j]}_{M, \ell \fN, a}\right)$ from Corollary \ref{cor:pr2}, and rearranging, we obtain the theorem.
   \end{proof}

  \subsection{The case of split primes (sketch)}

   For completeness, we sketch the proof of case (ii) of Theorem \ref{thm:TlrelationA}, in which $\ell$ is split in $F$ and the primes $\fl, \bar\fl$ above $\ell$ are narrowly principal. Thus, there is a totally positive element $\lambda$ such that $\fl = (\lambda)$, $\bar\fl = (\bar\lambda)$, and $\lambda \bar\lambda = \ell$. We fix, for the duration of this section, a choice of such a $\lambda$.

   \begin{theorem}
    \label{thm:thirdnorm}
    For any $a \in \cO_F / (M\cO_F + \ZZ)$, the following relation holds, modulo $p$-torsion if $\ell = p$:
    \begin{multline*}
     \pr_{2, \lambda*}\left( \cAI^{[j]}_{M, \fl\fN, a} \right) =\\
     \begin{cases}
      \ell^{-j}R'(\lambda) \cdot U'(\bar\lambda) \cdot
      \cAI^{[j]}_{M, \fN, \ell a} & \text{if $\bar\fl \mid \fN$,}\\
      \sigma_\ell^{-1}\ell^{-j}R'(\lambda) \left( T'(\bar\lambda) -
      \sigma_\ell^{-1} \cdot \ell^{-j}R'(\bar\lambda) \cdot \langle \bar\lambda^{-1} \rangle \cdot U'(\lambda) \right)
      \cAI^{[j]}_{M, \fN, a} & \text{if $\bar\fl \nmid \fN$ but $\fl \mid \fN$,}\\
      \sigma_\ell^{-1}\ell^{-j}R'(\lambda) \left( T'(\bar\lambda)
      - \sigma_\ell^{-1} \cdot \ell^{-j}R'(\bar\lambda) \cdot \langle \bar\lambda^{-1} \rangle \cdot T'(\lambda) \right) \cdot \cAI^{[j]}_{M, \fN, a}& \text{if $\fl, \bar \fl \nmid \fN$.}
     \end{cases}
    \end{multline*}

   \end{theorem}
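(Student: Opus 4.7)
The plan is to mirror the strategy of the inert case (proof of Theorem \ref{thm:TlrelationA}), replacing the role there played by the operators $T'(\ell), U'(\ell)$ and the degeneracy maps $\pr_{1,\ell}, \pr_{2,\ell}$ with the finer operators $T'(\lambda), T'(\bar\lambda), U'(\lambda), U'(\bar\lambda)$ and the corresponding maps $\pr_{?,\lambda}, \pr_{?,\bar\lambda}$. The key observation is that the map $\pr_{2,\lambda}$ is induced by right multiplication by $\stbt{\lambda}{}{}{1} \in G^*(\AA_f) \setminus \GL_2(\AA_f)$, which factors as $\lambda \cdot \stbt{1}{}{}{\lambda^{-1}}$: the scalar $\lambda$ accounts for the universal $R'(\lambda)$-prefactor appearing in each case of the theorem, while the second factor is a single-coset representative of the double coset $T(\lambda) \in \ZZ[U^*\backslash \cG / U^*]$ (or of $U(\lambda)$ if $\fl \mid \fN$), linking $\pr_{2,\lambda*}$ directly to a Hecke correspondence.

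From this I would derive an identity analogous to (\ref{eq:pridentity}), of the form
\[
 \pr_{1,\bar\lambda*} \circ U'(\lambda) \ = \ T'(\bar\lambda) \circ \pr_{1,\bar\lambda*} \ - \ c \cdot \pr_{2,\lambda*}
\]
(with $T'(\lambda)$ in place of $U'(\lambda)$ when $\fl \nmid \fN$), where $c$ is an appropriate combination of diamond operators, scalars and Frobenius elements arising from the double-coset decomposition of $T(\bar\lambda)$ into single cosets on $Y^*(M, \fl\fN)$. Evaluating both sides on $\cAI^{[j]}_{M, \fl\fN, a}$ and solving for $\pr_{2, \lambda*}$, each remaining term can be computed using either Theorem \ref{thm:firstnormA} (for the $\pr_{1, \bar\lambda *}$ pushforward) or Theorem \ref{thm:UlrelationA} (for the $U'(\lambda)$-action, when applicable). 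An alternative route uses the factorization $\pr_{2,\ell} = \pr_{2,\lambda} \circ \pr_{2,\bar\lambda}$ together with Corollary \ref{cor:pr2} applied to $\cAI^{[j]}_{M,\ell\fN, a}$, rewriting $\pr_{2,\lambda*}(\cAI^{[j]}_{M,\fl\fN,a})$ via the first norm relation that computes $\pr_{2,\bar\lambda*}$ of a lift.

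The main obstacle is the three-way case split, which reflects different degeneration phenomena: when $\bar\fl \mid \fN$ several of the single cosets in the relevant Hecke correspondences collapse, leaving only a term involving $U'(\bar\lambda)$ (together with the shift $a \mapsto \ell a$ explained by $\lambda \cdot \bar\lambda = \ell$); when only $\fl \mid \fN$ the double coset $T(\lambda)$ degenerates to $U(\lambda)$, producing the asymmetric second formula; and when neither $\fl$ nor $\bar\fl$ divides $\fN$ the full Asai Euler factor structure emerges. Tracking the interaction of the twist operators $\langle\lambda^{-1}\rangle, \langle\bar\lambda^{-1}\rangle, R'(\lambda), R'(\bar\lambda), \sigma_\ell^{\pm 1}$ with the Clebsch--Gordan relation (\ref{eq:CGrelation}), which is responsible for the $\ell^{-j}$ factors throughout, is the most delicate aspect; the ``modulo $p$-torsion if $\ell = p$'' caveat arises precisely because, in that case, the inverse powers of $\ell^{j}$ are only well-defined up to $p$-torsion in the $\Lambda_R$-coefficient ring.
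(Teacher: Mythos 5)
Your proposal takes a genuinely different route from the paper, but that route has a fatal gap. The paper's proof of Theorem~\ref{thm:thirdnorm} is not a derivation from the other norm relations via Hecke identities: it is a direct computation (the paper cites Theorem~5.5.1 of \cite{kingsloefflerzerbes15b}, whose proof relies on commutative diagrams relating $\pr_{2,\lambda}$ on $Y^*$ to operations on the modular curve, combined with the norm compatibility of $\cEI_N$). Logically this is forced: in the paper's own proof of Theorem~\ref{thm:TlrelationA} for split $\ell$, Theorem~\ref{thm:thirdnorm} is one of the ingredients, alongside Theorems~\ref{thm:firstnormA}, \ref{thm:UlrelationA} and Corollary~\ref{cor:pr2}. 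So it must be established independently of those, and you cannot derive it from them without circularity.

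Concretely, your proposed identity $\pr_{1,\bar\lambda*}\circ U'(\lambda)=T'(\bar\lambda)\circ\pr_{1,\bar\lambda*}-c\cdot\pr_{2,\lambda*}$ is not well-formed: the map $\pr_{2,\lambda*}$ goes $Y^*(M,\fl\fN)\to Y^*(M,\fN)$, whereas $\pr_{1,\bar\lambda*}$ (the analogue of $\pr_{1,\fl}$ with $\bar\fl$) goes $Y^*(M,\bar\fl\fN')\to Y^*(M,\fN')$ for some $\fN'$, and there is no choice of source and target making both sides land in the same place. The correct analogue of \eqref{eq:pridentity} for $\lambda$, namely $\pr_{1,\lambda*}\circ U'(\lambda)=T'(\lambda)\circ\pr_{1,\lambda*}-\langle\lambda^{-1}\rangle\pr_{2,\lambda*}$ on $Y^*(M,\fl\fN)\to Y^*(M,\fN)$, still does not rescue the argument: rearranging for $\pr_{2,\lambda*}$ requires knowing $\pr_{1,\lambda*}\bigl(U'(\lambda)\cdot\cAI^{[j]}_{M,\fl\fN,a}\bigr)$, and no other theorem in the paper computes the action of the \emph{non-rational} operator $U'(\lambda)$ or $T'(\lambda)$ on an Asai--Iwasawa class. (Theorem~\ref{thm:UlrelationA} produces $U'(\ell)$ for the \emph{rational} prime $\ell$, via the wild degeneracy $\hat\pr_{2,\ell}$ at the $M$-level; that mechanism does not split into $\lambda$- and $\bar\lambda$-pieces.) The alternative route is openly circular: computing $\pr_{2,\bar\lambda*}\bigl(\cAI^{[j]}_{M,\ell\fN,a}\bigr)$ is precisely Theorem~\ref{thm:thirdnorm} again with $\fN\rightsquigarrow\fl\fN$ and the roles of $\lambda,\bar\lambda$ exchanged, and the ``first norm relation'' (Theorem~\ref{thm:firstnormA}) computes $\pr_{1,\fl*}$, not $\pr_{2,\bar\lambda*}$. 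Your intuitions about where the individual factors arise (the $R'(\lambda)$ prefactor, the $a\mapsto \ell a$ shift from $\lambda\bar\lambda=\ell$, the $\ell^{-j}$ from the Clebsch--Gordan relation) are sound, but to prove the theorem you would need to redo the direct geometric argument: identify the composite $\pr_{2,\lambda}\circ u_a\circ\iota$ as a single coset inside the double coset of $T'(\bar\lambda)$ (resp.~$U'(\bar\lambda)$) on $Y^*(M,\fN)$, and track the induced map on $\Lambda_R(\cA)$-coefficients, exactly as is done for the degenerate case in \cite[Theorem 5.5.1]{kingsloefflerzerbes15b}.
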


   \begin{proof}
    This is virtually identical to the proof of Theorem 5.5.1 of \cite{kingsloefflerzerbes15b} (which is the ``degenerate case $F = \QQ \oplus \QQ$'').
   \end{proof}

   \begin{remark}
    Note that $\lambda$ is only well-defined up to multiplication by $\cO_F^{\times +}$. However, the validity of the theorem is independent of the choice of $\lambda$, since replacing $\lambda$ with $\alpha \lambda$ for $\alpha \in \cO_F^{\times +}$ has the effect of acting on both sides by the operator $\stbt 10 0 \alpha$.
   \end{remark}

   \begin{proof}[Proof of Theorem \ref{thm:TlrelationA} for $\ell$ split in $F$]
    As in the inert case, we need to compute
    \[ \left[\pr_{1, \ell*} \circ \left( U'(\ell) - \ell^j \sigma_\ell\right)\right] \cAI^{[j]}_{\fM, \ell \fN, a}.\]
    We factor $\pr_{1, \ell}$ as the composite $\pr_{1, \bar\lambda} \circ \pr_{1, \lambda}$, and similarly $U'(\ell) = U'(\lambda) U'(\bar\lambda)$. Using the analogues of \eqref{eq:pridentity} with $\lambda$ and $\bar\lambda$ in place of $\ell$, we obtain
    \begin{multline*} \pr_{1, \ell*} \mathop\circ \left( U'(\ell) - \ell^j \sigma_\ell\right) = (T'(\ell) - \ell^j \sigma_\ell) \pr_{1, \ell*}\\ - \langle \lambda^{-1} \rangle T'(\bar\lambda) \pr_{2, \lambda*} \pr_{1, \bar\lambda*} - \langle \bar\lambda^{-1} \rangle T'(\lambda) \pr_{1, \lambda*} \pr_{2, \bar\lambda*} + \langle \ell^{-1} \rangle \pr_{2, \ell*}.
    \end{multline*}
    The effect of each of these four degeneracy maps $Y^*(M, \ell \fN) \to Y^*(M, \fN)$ on the Asai--Iwasawa element has been calculated above: $\pr_{1, \ell}$ by Theorem \ref{thm:firstnormA}, $\pr_{2, \ell}$ by Corollary \ref{cor:pr2}, and the cross terms  $\pr_{2, \lambda} \circ \pr_{1, \bar\lambda}$ and $ \pr_{1, \lambda} \circ \pr_{2, \bar\lambda}$ by Theorem \ref{thm:thirdnorm}. Combining all of these ingredients and rearranging gives the theorem.
   \end{proof}

   \begin{remark}
    Since the results of this paper were initially announced, a strengthened form of Theorem \ref{thm:TlrelationA} has been proved by Giada Grossi (in preparation); this shows that the assumption that the primes above $\ell$ are narrowly principal when $\ell$ is split is not needed, and the assertion in fact holds for any prime $\ell \nmid \Nm_{F/\QQ}(\fN)$ unramified in $F$.
   \end{remark}


  \section{Cyclotomic twists}
   \label{sect:cyclotwist}


   Our next goal is to show that the Asai--Flach elements can be interpolated as the twist parameter $j$ varies. Recall that we defined
   \[ \cAF^{[j]}_{m, \fN, a} \coloneqq (s_m)_* \cAI^{[j]}_{m, m\fN, a} \]
   where the map
   \[
    s_m: Y^*(m, m\fN)\rightarrow Y_1^*(\fN)\times \mu_{m}^\circ,
   \]
   is given by the action of $\stbt {m^{-1}} 0 0 1$.


 \subsection{Compatibility with cyclotomic twists}

   We now set $M = p^r$. It is clear that $s_{p^r}$ induces a map on the torsion sheaves $\sH_{\cA,r} = \sH_R(\cA) / p^r \sH_R(\cA)$.

   \begin{notation}
    For $\star\in\{\cE,\cA\}$, write from now on $\sH_{\star,r}^k = \TSym^k \sH_{\star,r}$ and $\Lambda_{\star,r}=\Lambda_r(\sH_{\star,r})$. Write $x_\star$ and $y_\star$ for the order $p^r$ sections of $\sH_{\star,r}$ over $Y_{\QQ}(p^r, p^r N)$ (if $\star=\cE$), resp. over $Y^*(p^r,p^r\fN)$ (if $\star=\cA$).
   \end{notation}

   \begin{remark}
    The sections $\kappa^*(x_{\cA})$ and $\kappa^*(y_{\cA})$ agree with the images of $x_{\cE}$ and $y_{\cE}$ under the map
    \[ H^0(Y_{\QQ}(p^r,p^rN),\sH_{\cE,r})\rTo H^0(Y_{\QQ}(p^r,p^r N),\kappa^*\sH_{\cA,r})\]
    induced by the injection $\sH_{\cE,r} \into \kappa^*\sH_{\cA,r}$.
   \end{remark}

   \begin{remark}
    On $H^0(Y^*(p^r,p^r\fN),\TSym^{[k,k']}\sH_{\cA,r})$, the map $(u_a)_* = (u_{-a})^*$ sends $x_{\cA}^{[i]}y_{\cA}^{[k-i]}\otimes x_{\cA}^{[l]}y_{\cA}^{[k'-l]}$ to $x_{\cA}^{[i]}y_{\cA}^{[k-i]}\otimes (x_{\cA}-ay_{\cA})^{[l]}y_{\cA}^{[k'-l]}$.
   \end{remark}

   \begin{theorem}\label{cyctwistdiag}
    The following diagram commutes:
    \begin{diagram}
     H^1_{\et}\big(Y(p^r, p^r N), \Lambda_{\cE,r}^{[0,0]}(1)\big)
     &  \rTo^{\cup y_{\cE}^{\otimes 2j}} & H^1_{\et}\big(Y(p^r,p^rN),\Lambda_{\cE,r}^{[j, j]}(1)\big)\\
     \dTo^{CG^{[j]}}  & & \dTo^{(a-\overline{a})^j j!}\\
     H^1_{\et}\big(Y(p^r,p^rN),\iota^*\Lambda_{\cA,r}^{[j,j]}(1 - j)\big) & & H^1_{\et}\big(Y(p^r,p^rN),\iota^*\Lambda_{\cA,r}^{[j, j]}(1)\big)\\
     \dTo^{\iota_*} & & \dTo^{\iota_*} \\
      H^3_{\et}\big(Y^*(p^r,p^r\fN),\Lambda_{\cA,r}^{[j,j]}(2-j)\big) & & H^3_{\et}\big(Y^*(p^r,p^r\fN),\Lambda_{\cA,r}^{[j, j]}(2)\big)\\
     \dTo^{(u_a)_*} & & \dTo^{(u_a)_*} \\
      H^3_{\et}\big(Y^*(p^r,p^r\fN),\Lambda_{\cA,r}^{[j, j]}(2-j)\big) & & H^3_{\et}\big(Y^*(p^r,p^r\fN),\Lambda_{\cA,r}^{[j, j]}(2)\big)\\
      \dTo^{(s_{p^r})_*} & & \dTo^{(s_{p^r})_*} \\
      H^3_{\et}\big(Y_1(\fN)\times\mu_{p^r}^\circ,\Lambda_{\cA,r}^{[j, j]}(2-j)\big) & \rTo^{\cup (\zeta_{p^r})^{\otimes j}}&H^3_{\et}\big(Y_1(\fN)\times\mu_{p^r}^\circ,\Lambda_{\cA,r}^{[j, j]}(2)\big).
    \end{diagram}
   \end{theorem}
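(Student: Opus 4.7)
The plan is to verify the commutativity of this diagram by direct computation at the level of $\Lambda$-adic coefficient sheaves, in the spirit of the analogous cyclotomic-twist compatibility for Beilinson--Flach classes in \cite[\S 5.2]{kingsloefflerzerbes15b}. Since every arrow in the diagram is obtained by applying cup products and proper pushforwards, it suffices to compare the two compositions at the level of morphisms of \'etale sheaves on $Y_\QQ(p^r, p^r N)$, then apply $(s_{p^r}\circ u_a\circ \iota)_*$.

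The first step is to describe $CG^{[j]}$ explicitly at the sheaf level. Using Proposition \ref{prop:moment-compat}, it is given by cup product with the $j$-th power of the canonical element $\omega = x_\cA^{(1)}\otimes y_\cA^{(2)} - y_\cA^{(1)}\otimes x_\cA^{(2)} \in \iota^*\TSym^{[1,1]}\sH_R(\cA)(-1)$, which arises from the diagonal embeddings $\sH_R(\cE) \hookrightarrow \iota^*\sH_R(\cA)^{(i)}$ combined with the Weil-pairing identification $\bigwedge^2 \sH_R(\cE) = R(1)$. Since $u_a = \stbt{1}{a}{0}{1}$ acts on $\sH_R(\cA)^{(i)}$ via $\stbt{1}{\theta_i(a)}{0}{1}$, the pushforward transforms $\omega$ as
\[
 (u_a)_*\omega = \omega + (\bar a - a)\, y_\cA^{(1)} \otimes y_\cA^{(2)},
\]
and, by binomial expansion in the commutative algebra $\TSym^{[\bullet,\bullet]}\sH_R(\cA)$,
\[
 (u_a)_*\omega^{\otimes j} = \sum_{k=0}^{j}\binom{j}{k}(\bar a - a)^{k}\, (y_\cA^{(1)} \otimes y_\cA^{(2)})^{\otimes k}\cdot \omega^{\otimes(j-k)}.
\]

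The second, key step is the application of $(s_{p^r})_*$: this pushforward, coming from the right action of $\stbt{p^{-r}}{0}{0}{1}$, trivializes the Tate-twist $R(-1)$ built into $\omega$ via the cyclotomic character. Concretely, after $(s_{p^r})_*$, the section $\omega$ is identified with $\zeta_{p^r}\cdot(y_\cA^{(1)}\otimes y_\cA^{(2)})$ on $Y_1^*(\fN)\times\mu_{p^r}^\circ$, modulo terms that vanish in $\Lambda_{\cA,r}$ (since these sheaves are torsion modulo $p^r$). Consequently, only the term $k=j$ of the binomial expansion survives after $(s_{p^r})_*$, with coefficient $(\bar a - a)^j = (-1)^j(a-\bar a)^j$. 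Matching this with the right-hand side of the diagram --- where cup with $y_\cE^{\otimes 2j}$, followed by the inclusion $\Lambda_{\cE,r}^{[j,j]}\hookrightarrow\iota^*\Lambda_{\cA,r}^{[j,j]}$ coming from the diagonal $\sH_R(\cE) \hookrightarrow \iota^*\sH_R(\cA)^{(i)}$, produces a class whose pushforward is a scalar multiple of this surviving top-order term --- yields the factor $(a-\bar a)^j\, j!$ and the cup product with $\zeta_{p^r}^{\otimes j}$, after accounting for the conversion $(y_\cA^{(i)})^{\otimes j} = j!\,(y_\cA^{(i)})^{[j]}$ between tensor and divided-power bases, together with a compensating combinatorial factor from the multiplication map $\TSym^j\otimes\TSym^j\to\TSym^{2j}$.

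The main obstacle is the careful bookkeeping of normalization conventions among the tensor, symmetric, and divided-power interpretations of $\TSym^{[j,j]}$, and making precise the geometric claim in the second step that $(s_{p^r})_*$ identifies $\omega$ with $\zeta_{p^r}\cdot y_\cA^{(1)}\otimes y_\cA^{(2)}$. This last point is the heart of the $p$-adic interpolation encoded by the $\Lambda$-adic formalism, and is ultimately responsible for the appearance of the cyclotomic element $\zeta_{p^r}^{\otimes j}$ in the bottom horizontal arrow.
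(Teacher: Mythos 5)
Your overall strategy --- reduce everything to a sheaf-level comparison over $Y_\QQ(p^r,p^rN)$, describe $CG^{[j]}$ as the natural map on $\Lambda$-coefficients together with multiplication by the $j$-th power of the Clebsch--Gordan element $\omega$, and locate the factor $(a-\overline{a})^j$ in the binomial expansion of the transport of $\omega^{j}$ under $u_a$ --- is the same route as the paper's, which simply defers to the analogous computation in \cite[Theorem 6.2.4]{kingsloefflerzerbes15b}. The first part of your computation is sound: the $u_a$-transport of $\omega$ does pick up the extra term $(\overline{a}-a)\,y^{(1)}\otimes y^{(2)}$, and this is indeed the source of the stated factor (up to the divided-power normalisation $v^{\cdot j}=j!\,v^{[j]}$ and a sign that you leave unresolved).

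However, the step you yourself identify as the heart of the matter is not proved, and the justification offered for it is not valid. You assert that after $(s_{p^r})_*$ the section $\omega$ ``is identified with $\zeta_{p^r}\cdot(y^{(1)}\otimes y^{(2)})$ modulo terms that vanish in $\Lambda_{\cA,r}$ (since these sheaves are torsion modulo $p^r$)'', and conclude that only the top term $k=j$ of the binomial expansion survives. This is not an argument: every sheaf in sight is a $\ZZ/p^r$-sheaf, so ``torsion modulo $p^r$'' kills nothing, and $\omega$ is certainly not congruent to a multiple of $y^{(1)}\otimes y^{(2)}$ --- the monomials involving $x_{\cA}$ are nonzero sections. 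More seriously, one cannot compute $(s_{p^r})_*$ of a class of the form $z\cup W$ by ``pushing the section $W$ forward'': the projection formula applies only to sections pulled back from $Y_1^*(\fN)\times\mu_{p^r}^\circ$, and neither the level-structure sections $x_{\cA}$, $y_{\cA}$ nor the Weil-pairing comultiplication $\mu_{p^r}\to\sH^{(1)}\otimes\sH^{(2)}$ defining $\omega$ descends to that base (only $\zeta_{p^r}$ does). Hence the disappearance of all lower-order terms $0\le k<j$ (equivalently, of every monomial containing $x_{\cA}$) after the composite $(s_{p^r})_*\circ(u_a)_*\circ\iota_*$ is exactly the non-formal content of the theorem; it is what the explicit analysis of the covering $s_{p^r}$, its deck transformations, and their effect on the canonical torsion sections in \cite[\S 6.2]{kingsloefflerzerbes15b} is there to establish. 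As written, your proposal assumes this point rather than proving it, so the key step of the proof is missing.
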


   \begin{proof}
    The proof is basically identical to the proof of \cite[Theorem 6.2.4]{kingsloefflerzerbes15b}.
   \end{proof}

   \begin{corollary}\label{cor:imageundermom}
    We have
    \[ (a-\overline{a})^jj!(1\otimes \mom^{[j,j]})\left(\cAF^{[0]}_{p^r,\fN,a,r}\right)=\cAF^{[j]}_{p^r,\fN,a,r}\otimes \zeta_{p^r}^{\otimes j}.\hfill\qed\]
   \end{corollary}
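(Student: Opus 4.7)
The plan is to apply Theorem~\ref{cyctwistdiag} to the Eisenstein--Iwasawa class $\cEI_N$ (reduced modulo $p^r$), viewed in the top-left corner $H^1_{\et}(Y(p^r, p^r N), \Lambda_{\cE, r}^{[0,0]}(1))$, and to read off the claimed equality by comparing the two paths around the commutative diagram. Chasing down the left column is immediate from the definitions: the composition $(s_{p^r})_* \circ (u_a)_* \circ \iota_* \circ CG^{[j]}$ produces $\cAF^{[j]}_{p^r, \fN, a, r}$, and cupping with $\zeta_{p^r}^{\otimes j}$ along the bottom then yields the right-hand side of the claimed equality. It remains only to identify the right-column output with $(a-\bar a)^j j! \cdot (1 \otimes \mom^{[j,j]})(\cAF^{[0]}_{p^r, \fN, a, r})$.

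Chasing $\cEI_N$ across the top gives $\cEI_N \cup y_\cE^{\otimes 2j}$, and the right column then applies the scalar $(a-\bar a)^j j!$ followed by $\iota_*$, $(u_a)_*$, and $(s_{p^r})_*$ --- crucially with \emph{no} intervening $CG^{[j]}$. To rewrite this in the required form, I would use two ingredients. First, the moment map $1 \otimes \mom^{[j,j]}$ commutes with the geometric pushforwards $\iota_*$, $(u_a)_*$, $(s_{p^r})_*$, since these operations act only on the space variables and leave the coefficient sheaf alone. Second, under the inclusion $\sH_{\cE,r} \hookrightarrow \iota^* \sH_{\cA,r}$ recorded in the remark preceding the theorem, cup product with $y_\cE^{\otimes 2j}$ on $\Lambda_{\cE, r}$-coefficients agrees with $1 \otimes \mom^{[j,j]}$ applied to the image of $\cEI_N$ under $CG^{[0]}$, since $y_\cA^{\otimes 2j}$ is precisely what the moment map returns on the canonical group-like section of $\Lambda_{\cA, r}$ corresponding to $y_\cA$.

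The main obstacle is the sheaf-theoretic bookkeeping needed to verify the second ingredient: one has to keep track of the identification of $\iota^* \TSym^{[j,j]} \sH_{\cA,r}$ as a natural tensor product built from $\sH_{\cE,r}$, and ensure that the compatibility between cup product with $y_\cE^{\otimes 2j}$ on the $\cE$-side and the moment map on the $\cA$-side holds at the level of sections, not only up to some scalar. This is the same style of computation as in \cite[Theorem 6.2.4]{kingsloefflerzerbes15b}, and once it is in hand the corollary follows formally from the commutativity of the diagram in Theorem~\ref{cyctwistdiag}.
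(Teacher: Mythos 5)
Your overall strategy is the right one and is exactly what the paper has in mind: Theorem~\ref{cyctwistdiag} is stated precisely so that the two paths around the rectangle, applied to the Eisenstein--Iwasawa class, unwind to the two sides of the corollary, and the paper accordingly gives no separate proof. Your identification of the left-then-bottom path with the right-hand side is immediate, and your reduction of the remaining work to the two ``ingredients'' is a fair description of what has to be checked.

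However, the justification you give for the first ingredient is wrong. You assert that $1 \otimes \mom^{[j,j]}$ commutes with $\iota_*$, $(u_a)_*$ and $(s_{p^r})_*$ ``since these operations act only on the space variables and leave the coefficient sheaf alone.'' That is not the case: the sheaves $\sH_{\cA,r}$, $\Lambda_{\cA,r}$, $\TSym^{[k,k']}\sH_{\cA,r}$ are $\cG$-equivariant, and the pushforwards along $u_a$ and $s_{p^r}$ act nontrivially on the coefficients as well as on the base. The remark immediately preceding Theorem~\ref{cyctwistdiag} says this explicitly for $(u_a)_*$, which sends $x_{\cA}^{[i]}y_{\cA}^{[k-i]}\otimes x_{\cA}^{[l]}y_{\cA}^{[k'-l]}$ to $x_{\cA}^{[i]}y_{\cA}^{[k-i]}\otimes (x_{\cA}-ay_{\cA})^{[l]}y_{\cA}^{[k'-l]}$; and $\iota_*$ is a Gysin map between cohomology of $\iota^*\Lambda_{\cA,r}^{[j,j]}$ and of $\Lambda_{\cA,r}^{[j,j]}$, so it certainly does not act trivially on coefficients. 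The conclusion you want is nonetheless true, but for a different reason: $1 \otimes \mom^{[j,j]}$ is a \emph{natural} transformation $\Lambda_{\cA,r} \to \Lambda_{\cA,r}^{[j,j]}$ of ($\cG$-equivariant) \'etale sheaves, coming from the universal property of the Iwasawa sheaf and the moment maps, so it commutes with pushforward along any morphism of pairs (space, abelian variety) -- in particular with $\iota_*$, $(u_a)_*$ and $(s_{p^r})_*$, equivariant structure and all. Since the precise form of the $(u_a)_*$ action on the $\TSym$-coefficients is exactly what produces the factor $(a-\bar a)^j j!$ in the theorem, glossing over this point is not just cosmetic; with the corrected justification in place, the rest of your argument goes through.
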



 \subsection{Cyclotomic twists of Asai--Flach elements}

  Note that for any integers $0 \le j \le \min(k, k')$ we have maps
  \begin{align*}
   \id\otimes \mom^{[j,j]}_r:  \Lambda_r\otimes \Lambda_r & \rTo \Lambda_r^{[j,j]},\\
   \mom^{[k-j,k'-j]}_r\otimes\id: \Lambda_r^{[j,j]} & \rTo \left(\TSym^{[k-j,k'-j]}\sH_{\cA,r}\right) \otimes \left(\TSym^{[j,j]} \sH_{\cA,r}\right).
  \end{align*}
  We write $\mom_r^{[k-j,k'-j]} \cdot \id$ for the composition of $\mom^{[k-j,k'-j]}_r\otimes\id$ with the symmetrised tensor product map
  \[ \left(\TSym^{[k-j,k'-j]}\sH_{\cA,r}\right) \otimes \left(\TSym^{[j,j]} \sH_{\cA,r}\right)\rTo \TSym^{[k,k']}\sH_{\cA,r} .\]
  Denote by $u$ the natural morphism of sheaves $\Lambda_{\cA,r}\rightarrow \Lambda_{\cA,r}\otimes\Lambda_{\cA,r}$.

  \begin{lemma}
   \label{lem:momidentity}
    For all $0 \le j\leq k$, we have the following identity of moment maps:
   \[ (\mom^{[k-j,k'-j]}_r\cdot \id)\circ (1\otimes \mom^{[j,j]}_r)\circ u=\tbinom{k}{j}\tbinom{k'}{j}\mom^{[k,k']}_r.\]
  \end{lemma}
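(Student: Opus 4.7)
My plan is to prove the identity by applying the universal property of the sheaf $\Lambda_R(\cA)$ of Iwasawa algebras: both sides are morphisms of sheaves of $R$-modules with source $\Lambda_{\cA,r}$ and target $\TSym^{[k,k']}\sH_{\cA,r}$, so it suffices to check that they agree on the image of the canonical map $\sH_{\Zp}(\cA) \to \Lambda_{\cA,r}$, i.e.\ on ``group-like'' sections $[s]$ corresponding to $s \in \sH_{\Zp}(\cA)$.

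For such an $s$, write $s = s_1 + s_2$ according to the decomposition $\sH_R(\cA) = \sH_R(\cA)^{(1)} \oplus \sH_R(\cA)^{(2)}$, so that (by the construction of the moment maps, cf.\ \cite[\S 2.5]{kings13} and the discussion preceding the statement)
\[ \mom^{[a,b]}([s]) = s_1^{[a]} \otimes s_2^{[b]} \in \TSym^a \sH_R(\cA)^{(1)} \otimes \TSym^b \sH_R(\cA)^{(2)}. \]
The coproduct $u: \Lambda_{\cA,r} \to \Lambda_{\cA,r} \otimes \Lambda_{\cA,r}$ sends the group-like element $[s]$ to $[s] \otimes [s]$. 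Applying $1 \otimes \mom^{[j,j]}_r$ to this gives $[s] \otimes s_1^{[j]} \otimes s_2^{[j]}$; then applying $\mom^{[k-j,k'-j]}_r \otimes \id$ yields
\[ s_1^{[k-j]} \otimes s_2^{[k'-j]} \otimes s_1^{[j]} \otimes s_2^{[j]}. \]

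The final step is the symmetrised tensor product map from $\TSym^{[k-j,k'-j]}\sH_{\cA,r} \otimes \TSym^{[j,j]}\sH_{\cA,r}$ to $\TSym^{[k,k']}\sH_{\cA,r}$, which is the product in the divided-power algebra $\TSym^\bullet$, factor-by-factor. Using the standard identity $x^{[a]} \cdot x^{[b]} = \binom{a+b}{a}\, x^{[a+b]}$ in $\TSym^\bullet$, we obtain
\[ s_1^{[k-j]} \cdot s_1^{[j]} \otimes s_2^{[k'-j]} \cdot s_2^{[j]} = \tbinom{k}{j} \tbinom{k'}{j}\, s_1^{[k]} \otimes s_2^{[k']} = \tbinom{k}{j} \tbinom{k'}{j}\, \mom^{[k,k']}([s]), \]
which establishes the claimed identity on group-like sections, and hence on all of $\Lambda_{\cA,r}$ by the universal property.

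There is no real obstacle here; the only care needed is in correctly identifying the ``symmetrised tensor product'' map with multiplication in $\TSym^\bullet$ applied componentwise in the decomposition $\sH_R(\cA) = \sH_R(\cA)^{(1)} \oplus \sH_R(\cA)^{(2)}$. This is a direct generalisation of \cite[Lemma~5.1.1]{kingsloefflerzerbes15b} from the $\GL_2$ case (where one binomial coefficient $\binom{k}{j}$ appears) to the Hilbert-modular setting, where the two independent symmetric-power factors each contribute one binomial coefficient.
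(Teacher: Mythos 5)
Your proof is correct. The paper itself simply cites \cite[Lemma 6.3.1]{kingsloefflerzerbes15b} for this statement, and your argument — reducing to group-like sections $[s]$ via the universal property of $\Lambda_R(\cA)$, splitting $s = s_1 + s_2$ along $\sH_R(\cA) = \sH_R(\cA)^{(1)} \oplus \sH_R(\cA)^{(2)}$, and invoking the divided-power identity $x^{[a]} \cdot x^{[b]} = \binom{a+b}{a}x^{[a+b]}$ in each factor of $\TSym^\bullet$ — is precisely the argument of the cited lemma, adapted to the two-factor Hilbert setting (which changes nothing structurally beyond contributing one binomial coefficient per embedding).
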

  \begin{proof}
   See \cite[Lemma 6.3.1]{kingsloefflerzerbes15b}.
  \end{proof}

  \begin{definition}
   \label{def:eord}
   Let $e_{\ord}' \coloneqq \lim_{n\rightarrow\infty} U'(p)^{n!}$ be the ordinary idempotent attached to $U'(p)$; and let $\Lambda_{\Gamma}(-\mathbf{j})$ be the Iwasawa algebra of $\Gal(\QQ(\mu_{p^\infty}) / \QQ)$, with $\Gal(\QQbar / \QQ)$ acting by the inverse of the canonical character $\mathbf{j}$, as in \cite[Notation 6.3.3]{kingsloefflerzerbes15b}.
  \end{definition}

  \begin{theorem}
   \label{thm:iwasawaelt}
   For any prime $p\geq 3$, $\fN$ an ideal with $p \mid \fN$, $m \ge 1$ an integer coprime to $p$, and $c > 1$ coprime to $6pm \Nm_{F/\QQ}(\fN)$, there is a class
   \[
    \cAF_{m,\fN,a} \in
    H^3_{\et}\Big(Y_1^*(\fN)\times\mu_{m}^\circ,
    \Lambda_R(\sH_{\cA}) \otimes \Lambda_{\Gamma}(2-\mathbf{j})\Big)
   \]
   such that
   \begin{align*}
    (\mom^{[k,k']}\otimes& \mom^{j}_{\Gamma})\left(\cAF_{m,\fN,a}\right)\\
     = &\left(1-p^j(U_p')^{-1}\sigma_p\right)\left(c^2-c^{-(k+k'-2j)}\langle c\rangle \sigma_c^2\right)\frac{e_{\ord}'\left(\cAF^{[j]}_{m,\fN,a}\right)}{(a-\overline{a})^jj!\tbinom{k}{j}\tbinom{k'}{j}}.
   \end{align*}

  \end{theorem}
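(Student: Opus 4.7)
The plan is to construct $\cAF_{m,\fN,a}$ as a normalized inverse limit along the cyclotomic tower of the ordinary projections $e'_{\ord} \cAF^{[0]}_{mp^r,\fN,a}$, in exact parallel with Theorem~6.3.4 of \cite{kingsloefflerzerbes15b}.

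First I would verify norm-compatibility along the tower. The natural map $\mu_{mp^{r+1}}^\circ \to \mu_{mp^r}^\circ$ corresponds to the Galois norm, so Theorem~\ref{thm:UlrelationB} with $\ell = p$ applies. Since $p \mid \fN$ throughout, and since $p \mid mp^r$ for $r \ge 1$ while $p \nmid m$, one finds that the norm of $\cAF^{[0]}_{mp^{r+1},\fN,a}$ equals $U'(p) \cdot \cAF^{[0]}_{mp^r,\fN,a}$ for $r \ge 1$, and $(U'(p) - \sigma_p) \cdot \cAF^{[0]}_{m,\fN,a}$ for $r = 0$. Applying $e'_{\ord}$ makes $U'(p)$ invertible on the image, so the elements $z_r := (U'(p))^{-r} \, e'_{\ord} \cAF^{[0]}_{mp^r,\fN,a}$ for $r \ge 1$ form a norm-compatible system, whose image in $H^3_{\et}(Y_1^*(\fN) \times \mu_m^\circ, \ldots)$ is $\bigl(1 - (U'(p))^{-1} \sigma_p\bigr) \cdot e'_{\ord} \cAF^{[0]}_{m,\fN,a}$. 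This irregularity at $r = 0$ will ultimately produce the Euler factor $\bigl(1 - p^j (U'_p)^{-1} \sigma_p\bigr)$ in the interpolation formula.

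Second, by Shapiro's lemma applied to the cyclotomic tower, one has a canonical isomorphism
\[ \varprojlim_r H^3_{\et}\left(Y_1^*(\fN) \times \mu_{mp^r}^\circ, \Lambda_R(\sH_\cA)(2)\right) \cong H^3_{\et}\left(Y_1^*(\fN)\times\mu_m^\circ, \Lambda_R(\sH_\cA) \otimes \Lambda_\Gamma(2-\mathbf{j})\right), \]
in which the $(-\mathbf{j})$ twist records the action of $\Gamma$ on the roots of unity indexing the tower. The system $(z_r)_{r \ge 1}$ then defines $\cAF_{m,\fN,a}$ as an element of the right-hand side. To verify the interpolation formula, I would apply $\mom^{[k,k']} \otimes \mom^j_\Gamma$. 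The second factor specializes to finite level $p^r$ and multiplies by $\zeta_{p^r}^{\otimes j}$, under which the element $\sigma_p$, viewed inside $\Lambda_\Gamma(-\mathbf{j})$, contributes $p^j \sigma_p$, thereby converting the $r = 0$ boundary factor into $\bigl(1 - p^j (U'_p)^{-1} \sigma_p\bigr)$. For the first factor, Lemma~\ref{lem:momidentity} factors $\mom^{[k,k']}$ through $(1 \otimes \mom^{[j,j]})$ at the cost of the combinatorial constant $\binom{k}{j}\binom{k'}{j}$, while Corollary~\ref{cor:imageundermom} identifies $(1 \otimes \mom^{[j,j]}) \cAF^{[0]}_{p^r,\fN,a}$ with $\tfrac{1}{(a-\bar{a})^j j!} \cAF^{[j]}_{p^r,\fN,a} \otimes \zeta_{p^r}^{\otimes j}$. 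Finally, the $c$-smoothing factor $\bigl(c^2 - c^{-(k+k'-2j)}\langle c \rangle \sigma_c^2\bigr)$ is inherited from the interpolation property of the underlying Eisenstein--Iwasawa class.

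The main obstacle is the careful bookkeeping of twists: one must track how the combinatorial coefficients from Lemma~\ref{lem:momidentity}, the Tate and cyclotomic twists under the Shapiro identification, and the action of $\sigma_p$ on the $(-\mathbf{j})$-twisted Iwasawa module combine to yield precisely the stated right-hand side. Once the commutative diagram of Theorem~\ref{cyctwistdiag} is in hand and the conventions for $\Lambda_\Gamma(-\mathbf{j})$ are fixed, the remaining calculation is formal, following the template developed in \cite[\S 6.3]{kingsloefflerzerbes15b}.
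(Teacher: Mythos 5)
Your proposal correctly reproduces the construction used in the paper: build a norm-compatible system along the cyclotomic tower from the $U'(p)$-normalised ordinary projections of the $j=0$ classes (using Theorem~\ref{thm:UlrelationB} with $\ell = p$, where $p \mid \fN$ guarantees the hypothesis), identify the inverse limit with the $\Lambda_\Gamma(-\mathbf{j})$-twisted Iwasawa cohomology via Shapiro's lemma, and then recover the interpolation formula from Theorem~\ref{cyctwistdiag}, Corollary~\ref{cor:imageundermom} and Lemma~\ref{lem:momidentity}. This is precisely the argument of \cite[\S 6.3]{kingsloefflerzerbes15b} that the paper's one-line proof invokes, so your approach is the intended one.
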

  \begin{proof}
   Analogous to the proof of \cite[Theorem 6.3.3]{kingsloefflerzerbes15b}.
  \end{proof}

 \section{Iwasawa theory}

  \subsection{Iwasawa cohomology classes for ordinary eigenforms}
   \label{sect:iwa1}

   Let $\cF$ be an eigenform, with eigenvalues in some coefficient field $L$ and weight $(k + 2, k' + 2, t, t')$, with $k, k' \ge 0$. Let $p$ be a prime dividing the level $\fN$, and unramified in $F / \QQ$.

   \begin{definition}
    \label{def:ordinary}
    We say that $\cF$ is \emph{ordinary at $p$} (with respect to some choice of prime $v \mid p$ of $L$) if its eigenvalue $\alpha_p(\cF)$ for the Hecke operator $\mathcal{U}_0(p) \coloneqq p^{-(t+t')}\mathcal{U}(p)$ is a unit at $v$.
   \end{definition}

   The normalisation factor $p^{-(t + t')}$ corresponds to the difference between the Hecke operators $\mathcal{U}(p)$ on $G$ and $U(p)$ on $G^*$, cf.~Proposition \ref{prop:jpullback}.

   \begin{theorem}
    If $\cF$ is an ordinary eigenform, then for any $m \ge 1$ coprime to $p$, and any $c > 1$ coprime to $6pm\Nm_{F/\QQ}(\fN)$, there exists a class
    \[ \cAF^{\cF}_{M, a} \in H^1\left(\ZZ[\mu_m, \tfrac{1}{mp\Nm_{F/\QQ}(\fN)}], M_{L_v}(\cF)^* \otimes \Lambda_{\Gamma}(-\mathbf{j})\right)\]
    such that, for every $0 \le j \le \min(k, k')$ and $r \ge 0$, the image of $\cAF^{\cF}_{m, a}$ in $H^1(\ZZ[\mu_{mp^r}, \tfrac{1}{mp\Nm_{F/\QQ}(\fN)}], M_{L_v}(\cF)^*(-j))$ is given by
    \[
     \frac{(c^2 - c^{2j-k-k'}\varepsilon_\cF(c)\sigma_c^{2})}{(a - \bar{a})^j j! \binom{k}{j} \binom{k'}{j}} \cdot
     \begin{cases}
      \alpha_p(\cF)^{-r} \AF^{[\cF, j]}_{\et, mp^r, a} & \text{if $r \ge 1$,}\\
      \left(1 - \tfrac{p^j \sigma_p}{\alpha_p(\cF)} \right) \AF^{[\cF, j]}_{\et, m, a} &
      \text{if $r = 0$.}
     \end{cases}
    \]
   \end{theorem}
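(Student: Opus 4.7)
The plan is to define $\cAF^{\cF}_{m,a}$ as the image of the $\Lambda$-adic Asai--Flach class $\cAF_{m,\fN,a}$ of Theorem~\ref{thm:iwasawaelt} under the composition $\pr_\cF \circ \mom^{[k,k']}$, viewed through the Hochschild--Serre spectral sequence. Applying $\mom^{[k,k']}$ takes $\cAF_{m,\fN,a}$ into $H^3_\et(Y_1^*(\fN) \times \mu_m^\circ, \TSym^{[k,k']}\sH_R(\cA) \otimes \Lambda_\Gamma(2-\mathbf{j}))$; applying $\pr_\cF$ (namely $\jmath_*$ followed by projection to the $\cF$-isotypic quotient for the covariant Hecke algebra) then produces, after Hochschild--Serre and the degree-$2$ concentration of the $\cF$-localisation of the cohomology of $Y_1(\fN)$, the desired class in $H^1(\ZZ[\mu_m, \tfrac{1}{mp\Nm_{F/\QQ}(\fN)}], M_{L_v}(\cF)^* \otimes \Lambda_\Gamma(-\mathbf{j}))$.

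The interpolation at $r = 0$ follows at once from Theorem~\ref{thm:iwasawaelt} after $\mom^j_\Gamma$-specialisation: on the $\cF$-eigenspace, $U'(p)$ acts as the $v$-unit $\alpha_p(\cF)$ by ordinarity, so $e'_\ord$ acts as the identity and $(U'_p)^{-1}$ becomes $\alpha_p(\cF)^{-1}$, while the diamond operator $\langle c\rangle$ acts as $\varepsilon_\cF(c)$; this yields the claimed formula with the Euler factor $1 - p^j\sigma_p/\alpha_p(\cF)$. For $r \ge 1$ one uses Shapiro's lemma to identify the $\mom^j_\Gamma$-specialisation of $\cAF^\cF_{m,a}$ with a norm-compatible family $(c_r)_{r \ge 0}$ in $\varprojlim_r H^1(\ZZ[\mu_{mp^r}, \tfrac{1}{mp\Nm_{F/\QQ}(\fN)}], M_{L_v}(\cF)^*(-j))$. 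Let $d_r$ denote the candidate image proposed in the theorem. Theorem~\ref{thm:UlrelationB}, together with $U'(p) = \alpha_p(\cF)$ on the $\cF$-component, gives: for $r \ge 2$, corestriction from level $r$ to $r-1$ applies $U'(p)$ (first case of Theorem~\ref{thm:UlrelationB}, since $p \mid mp^{r-1}$), cancelling one factor of $\alpha_p(\cF)^{-1}$ and sending $d_r$ to $d_{r-1}$; for $r = 1 \to 0$, the second case produces $\alpha_p(\cF)^{-1}(\alpha_p(\cF) - p^j\sigma_p) = 1 - p^j\sigma_p/\alpha_p(\cF)$, matching $d_0$. Since $c_0 = d_0$ by the $r=0$ computation and an Iwasawa cohomology class is characterised by its associated norm-compatible family, we conclude $c_r = d_r$ for every $r$.

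The essential technical content lies in the verification for $r \ge 1$: one must show that the Iwasawa-theoretic projection to finite level of the class built in Step~1 coincides with the prescribed $d_r$. Granting Theorems~\ref{thm:iwasawaelt} and~\ref{thm:UlrelationB}, this reduces to the careful bookkeeping of the Shapiro and Hochschild--Serre identifications together with the observation that the Euler-factor $(1 - p^j(U'_p)^{-1}\sigma_p)$ appearing in Theorem~\ref{thm:iwasawaelt} is precisely what reconciles the image at $r = 0$ with the $\alpha_p(\cF)^{-r}$-normalised images at the higher levels.
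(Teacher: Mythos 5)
Your overall strategy matches the paper's: define $\cAF^{\cF}_{m,a}$ as the image of the $\Lambda$-adic class from Theorem~\ref{thm:iwasawaelt} under $\pr_{\cF}\circ\mom^{[k,k']}$ (using that $\pr_{\cF}$ factors through $e'_{\ord}$ by ordinarity), and then read off the interpolation from the defining moment formula. Your $r=0$ computation is correct: $\mom^j_\Gamma$-specialisation, together with $U'(p)\mapsto\alpha_p(\cF)$ and $\langle c\rangle\mapsto\varepsilon_\cF(c)$ on the $\cF$-eigenspace, gives precisely the stated Euler factor.

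However, there is a genuine gap in your argument for $r\ge 1$. You verify that the candidate family $(d_r)$ is norm-compatible via Theorem~\ref{thm:UlrelationB}, and that $c_0 = d_0$, and then conclude $c_r = d_r$ for all $r$ because ``an Iwasawa cohomology class is characterised by its associated norm-compatible family.'' That last statement is true (it is the Shapiro isomorphism), but it does not yield the conclusion: two distinct norm-compatible families can agree at the bottom layer. Concretely, the projection $H^1(\ZZ[\mu_m,1/S], M^*\otimes\Lambda_\Gamma(-\mathbf{j}))\to H^1(\ZZ[\mu_m,1/S], M^*(-j))$ is not injective in general — its kernel receives contributions from $H^1$ with coefficients in the twist by the augmentation ideal. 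Knowing $c_0=d_0$ and that both families are norm-compatible therefore leaves the higher layers undetermined. To close this gap one must argue at the level of the construction of $\cAF_{m,\fN,a}$ in Theorem~\ref{thm:iwasawaelt} (which the paper delegates to the analogous \cite[Theorem~6.3.3]{kingsloefflerzerbes15b}): there, the $\Lambda$-adic class is built precisely so that its $r$-th Shapiro component is $\alpha_p(\cF)^{-r}\,e'_{\ord}\bigl(\cAF^{[0]}_{mp^r,\fN,a}\bigr)$ after normalisation, and the cyclotomic-twist compatibility of Corollary~\ref{cor:imageundermom} then transports this to arbitrary $j$ via the $\Lambda$-adic Clebsch--Gordan map. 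You have correctly identified where the difficulty lies — the ``essential technical content'' you mention — but the argument as stated does not resolve it, and the bootstrapping from $c_0$ alone is not a substitute for following the construction through at each finite level.
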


   \begin{proof}
    Since $\cF$ is ordinary, the projection map $\pr_{\cF}$ factors through the ordinary projector $e'_{\ord}$ of Definition \ref{def:eord}. We can therefore apply Theorem \ref{thm:iwasawaelt}, which shows that the images of the $\Lambda$-adic Asai--Flach classes for different $j$ under the ordinary projector are interpolated by an Iwasawa cohomology class. We define $\cAF^{\cF}_{m, a}$ to be the image of this class under $\pr_{\cF}$; the defining property of the class in Theorem \ref{thm:iwasawaelt} gives the stated interpolation formula.
   \end{proof}

   This is the first part of Theorem \ref{lthm:iwasawa} of the introduction.

   \begin{remark}
    Exactly as in the Rankin--Selberg case, if the Dirichlet character obtained by restricting $\varepsilon_\cF$ to $\hat\ZZ^\times$ does not have conductor dividing $mp^\infty$, then we may multiply $\cAF^{\cF}_{m, a}$ by a suitable element of $\Lambda_{\Gamma} \otimes L_v[\Gal(\QQ(\mu_m) / \QQ)]$ to dispense with the $c$ factors. Cf.~\cite[\S 6.8.1]{leiloefflerzerbes14a}.
   \end{remark}

  \subsection{Local properties at \texorpdfstring{$p$}{p}}

   We now turn to the second part of Theorem \ref{lthm:iwasawa}, which is a description of the localisation of $\cAF^{\cF}_{m, a}$ at $p$. We first need to establish some local properties of the Galois representation $M_{L_v}(\cF)^*$ itself. We shall deduce these from a well-known result of Wiles regarding the local properties of the \emph{standard} Galois representation $\rstd$ (which we abbreviate simply as $\rho$).

   \begin{theorem}[{\cite[Theorem 2]{wiles88}}]
    \label{thm:wiles}
    Suppose $\cF$ is ordinary at $p$. For any prime $\fp \mid p$ of $F$, the restriction of $\rho$ to the decomposition group $D_{\fp}$ at $\fp$ is reducible, with a one-dimensional crystalline subrepresentation $\rho^+_{\fp}$ such that the linearised Frobenius $\Phi = \varphi^{[F_\fp : \Qp]}$ acts on $\Dcris(\rho^+_\fp)$ as multiplication by the $\mathcal{U}(\fp)$-eigenvalue $\lambda_\fp(\cF)$ of $\cF$.
   \end{theorem}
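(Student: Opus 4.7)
The plan is to follow Wiles's original approach using Hida's theory of $p$-adic families of ordinary Hilbert modular forms. Since $\cF$ is ordinary at $p$, Hida's control theorem embeds it in a $\Lambda$-adic family of ordinary eigenforms of tame level $\fN^{(p)}$, and to this family is attached a continuous ``big'' Galois representation $\rho^{\mathrm{big}}: \Gal(\overline{F}/F) \to \GL_2(\mathbf{T})$ over a finite flat extension $\mathbf{T}$ of the appropriate Iwasawa algebra. The representation $\rstd$ is recovered by specializing $\rho^{\mathrm{big}}$ at the height-one prime of $\mathbf{T}$ corresponding to $\cF$.

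The heart of the argument is to exhibit, for each $\fp \mid p$, a canonical sub-$\mathbf{T}$-line in $\rho^{\mathrm{big}}|_{D_\fp}$ on which arithmetic Frobenius acts by the image of $\mathcal{U}(\fp)$ in $\mathbf{T}$. This filtration has a geometric origin: on the ordinary locus of the Hilbert modular variety, the universal Barsotti--Tate group at $\fp$ admits a canonical multiplicative-\'etale decomposition, which induces a filtration on the ordinary $\Lambda$-adic \'etale cohomology of the tower $Y^*(U_1(\fN^{(p)} p^\infty))$ and thence on $\rho^{\mathrm{big}}$. The identification of the Frobenius eigenvalue on the sub-line then follows from the Eichler--Shimura congruence, which expresses $\mathcal{U}(\fp)$ as Frobenius on the multiplicative (equivalently, unit-root) part of the cohomology.

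Specializing this $\Lambda$-adic filtration at the prime of $\cF$ produces the desired sub-line $\rho^+_\fp \subset \rstd|_{D_\fp}$ with Frobenius acting by $\lambda_\fp(\cF)$. Crystallinity of $\rho^+_\fp$ is then immediate from the explicit description: by construction $\rho^+_\fp$ is the unramified character sending geometric Frobenius to $\lambda_\fp(\cF)$, twisted by the cyclotomic character to the power $t_\fp$, where $t_\fp$ is the smaller Hodge--Tate weight of $\rstd$ at $\fp$ (this being the content of ordinarity in the $p$-adic Hodge-theoretic sense). Such a one-dimensional representation is automatically crystalline, and the formula for the linearised Frobenius $\Phi$ on $\Dcris(\rho^+_\fp)$ can be read off directly.

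The principal obstacle is the construction and control of the $\Lambda$-adic filtration on $\rho^{\mathrm{big}}|_{D_\fp}$, which demands good integral models of Hilbert modular varieties together with a careful analysis of their ordinary loci, prime-by-prime at each $\fp \mid p$ -- a partial-ordinarity phenomenon which makes the Hilbert setting substantially more intricate than the case of classical modular forms. For $F$ real quadratic this is nonetheless feasible thanks to work of Deligne--Pappas, Katz, and Hida on the $p$-integral geometry of Hilbert modular surfaces; an alternative would be to invoke Saito's results on the $p$-adic Hodge theory of cohomological Hilbert modular forms to cut out $\rho^+_\fp$ directly from the de Rham cohomology.
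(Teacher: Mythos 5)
The paper does not prove this theorem at all: it is quoted verbatim, with a citation to Wiles's 1988 paper (which, despite its title, treats Hilbert modular forms, not just classical ones). So there is no in-paper argument to compare your proposal against — the authors take this as a black box and proceed directly to Corollary 8.2.2, which is where they actually do some work (producing the filtration on the Asai representation by tensor-inducing $\rho^+_\fp$).

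Your sketch is a reasonable account of how the proof goes, but it conflates two variants of the argument. Wiles's original proof is essentially Iwasawa-theoretic and indirect: he builds the big ordinary representation $\rho^{\mathrm{big}}$ over the Hida algebra via pseudo-representations and limits of finite-level representations, and establishes the local reducibility by reducing to the weight-$2$ specializations (where the result follows from the theory of abelian varieties with $\cO_F$-multiplication and ordinary/multiplicative reduction) and then propagating the filtration across the family using Hida's control theorem and the Zariski-density of crystalline points. The ``geometric'' version you describe — using the multiplicative-\'etale decomposition of the universal Barsotti–Tate group on the ordinary locus to produce a $\Lambda$-adic filtration on the tower cohomology directly — is a later streamlining in the spirit of Mazur–Wiles and Ohta. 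Both are correct; the second buys a more conceptual description of the filtration, but at the cost of requiring good control of the integral geometry of the Hilbert modular variety at $p$ (Deligne–Pappas, etc.), which Wiles deliberately avoided.

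One small internal inconsistency you should fix: at one point you say the sub-line has \emph{arithmetic} Frobenius acting by $\mathcal{U}(\fp)$, and later you describe $\rho^+_\fp$ as the unramified character sending \emph{geometric} Frobenius to $\lambda_\fp(\cF)$, twisted by a power of cyclotomic. These cannot both be literally true as stated (arithmetic and geometric Frobenius are inverse to each other), and the cyclotomic twist you introduce changes the $\varphi$-eigenvalue on $\Dcris$ by a power of $p$, so as written your final formula does not reproduce $\Phi = \lambda_\fp(\cF)$ on the nose. The bookkeeping works out once you fix the normalisation, because $\lambda_\fp(\cF)$ in this paper is the \emph{unnormalised} $\mathcal{U}(\fp)$-eigenvalue, which already absorbs a factor $p^{t}$ relative to the $p$-unit $\alpha_\fp$; but as it stands the conventions in your sketch don't quite close up.
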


   \begin{corollary}
    \label{cor:asaifiltn}
    If $\cF$ is ordinary at $p$, then the restriction of $M_{L_v}(\cF)^*$ to the decomposition group at $p$ has a 3-step filtration
    \[ M_{L_v}(\cF)^* = \Fil^0 \supset \Fil^1 \supset \Fil^2 \supset \Fil^3 = 0\]
    in which the graded pieces have dimensions 1, 2 and 1 respectively; and the quotient $\Gr^0 M_{L_v}(\cF)^* = \Fil^0 / \Fil^1$ is unramified,
    with arithmetic Frobenius acting via $\alpha_p(\cF)$.
   \end{corollary}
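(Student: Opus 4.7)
The approach is to use the identification $M_{L_v}(\cF) \cong \tInd_F^\QQ(\rho) \otimes L_v(t_1+t_2)$, with $\rho = \rho^{\mathrm{std}}_{\cF,v}$, to transfer the Wiles filtration on $\rho|_{D_\fp}$ at each prime $\fp \mid p$ of $F$ to a 3-step filtration on the Asai representation, and then dualise. I would split the analysis according to whether $p$ splits or is inert in $F$. In the split case, where $p\cO_F = \fp \bar\fp$, we have $D_p \subseteq \Gal(\overline F/F)$ and $(\tInd_F^\QQ \rho)|_{D_p}$ is canonically identified with $\rho|_{D_\fp} \otimes \rho|_{D_{\bar\fp}}$. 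Tensoring the Wiles filtrations $0 \subset \rho^+_\fp \subset \rho|_{D_\fp}$ from each factor yields a $D_p$-stable 3-step filtration whose graded pieces $\rho^+_\fp \otimes \rho^+_{\bar\fp}$, $(\rho^+_\fp \otimes \rho^-_{\bar\fp}) \oplus (\rho^-_\fp \otimes \rho^+_{\bar\fp})$, and $\rho^-_\fp \otimes \rho^-_{\bar\fp}$ have dimensions $1, 2, 1$.

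In the inert case, $D_\fp$ has index 2 in $D_p$; fix $\sigma \in D_p \setminus D_\fp$. Using the explicit description of tensor induction recalled after Definition~\ref{defn:asairep}, $(\tInd_F^\QQ \rho)|_{D_p}$ is realised on $V \otimes V$ (where $V$ is the representation space of $\rho$) with $\sigma$ acting as $v \otimes w \mapsto \rho(\sigma^2) w \otimes v$. The subspaces $V^+_\fp \otimes V^+_\fp$ and $V^+_\fp \otimes V + V \otimes V^+_\fp$ are manifestly $D_\fp$-stable; moreover they are $\sigma$-stable because $\sigma^2 \in D_\fp$ preserves $V^+_\fp$. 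This gives the required 1--2--1 filtration.

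In both cases, dualising and twisting by $L_v(-t_1-t_2)$ produces the desired filtration on $M_{L_v}(\cF)^*$, with $\Gr^0 = \Fil^0 / \Fil^1$ dual to the bottom subrepresentation $(\rho^+_\fp \otimes \rho^+_{\bar\fp})(t_1+t_2)$ of $M_{L_v}(\cF)$. Theorem~\ref{thm:wiles} gives crystalline Frobenius $\lambda_\fp(\cF)$ on $\Dcris(\rho^+_\fp)$; combined with the Hecke relation $\lambda_p(\cF) = \lambda_\fp(\cF) \lambda_{\bar\fp}(\cF)$ in the split case (and $\lambda_p(\cF) = \lambda_\fp(\cF)$ in the inert case, since $p\cO_F = \fp$), together with the Frobenius of $L_v(t_1+t_2)$, this yields crystalline Frobenius $\lambda_p(\cF) p^{-(t_1+t_2)} = \alpha_p(\cF)$. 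A Hodge--Tate weight count using the ordinariness of $\cF$ then shows that this subrepresentation has HT weight $0$, hence is unramified; dualising converts the crystalline Frobenius into the arithmetic Frobenius $\alpha_p(\cF)$ on $\Gr^0$. The main obstacle I anticipate is the careful bookkeeping of conventions: matching the linearised Frobenius $\Phi = \varphi^{[F_\fp:\Qp]}$ of Theorem~\ref{thm:wiles} against the arithmetic Frobenius appearing in the statement, and verifying that the Tate twist exactly cancels the total HT weight of $\rho^+_\fp \otimes \rho^+_{\bar\fp}$; the inert case adds the subtlety that $\sigma$-conjugation swaps the two embeddings $F \into \overline\Qp$, so one must also confirm that $V^+_\fp$ is symmetric under this swap, which follows from its canonical characterisation by Hodge--Tate weight and Frobenius eigenvalue.
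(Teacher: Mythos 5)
Your proposal is correct and follows essentially the same route as the paper: identify $M_{L_v}(\cF)$ with a Tate twist of $\tInd_F^{\QQ}(\rho^{\mathrm{std}})$, feed in Wiles's ordinary filtration on $\rho^{\mathrm{std}}|_{D_\fp}$, and build the $1$--$2$--$1$ filtration by $\rho^+_\fp\otimes\rho^+_{\fq}$ (resp.\ $\tInd_{D_\fp}^{D_p}(\rho^+_\fp)$) in the split (resp.\ inert) case, then read off the crystalline Frobenius on the rank-one piece; the paper's inert-case argument invokes functoriality of tensor induction where you unpack the explicit formula for $\tInd$, but the two are the same argument phrased differently.
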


   \begin{proof}
    Since we have $M_{L_v}(\cF) \cong \tInd_F^\QQ (\rho)(t_1 + t_2)$, the assertion concerning $M_{L_v}(\cF)^*$ is equivalent to the assertion that $\tInd_F^\QQ(\rho)$ has a filtration with graded pieces of dimension $1, 2, 1$, and the subspace $\Fil^2 \tInd_F^\QQ(\rho)$ is crystalline, with crystalline Frobenius acting as $\lambda_p(\cF) = p^{t + t'} \alpha_p(\cF)$.

    We first consider the case when $p$ is split in $F$. Our coefficient field $L$ is (by definition) a subfield of $\CC$, containing the images of the two embeddings $\theta_1, \theta_2: F \into \RR$. Hence $\theta_1^{-1}(v)$ is a place of $F$ above $p$ whose decomposition group is identified with $D_p$; we denote this place by $\fp$, and its Galois conjugate by $\fq$, so that $D_{\fq} = \sigma D_{\fp} \sigma^{-1}$ is a decomposition group at $\fq$. As a representation of $\Gal(\overline{F} / F)$, we have $\tInd_F^\QQ (\rho) \cong \rho \otimes \rho^\sigma$, where $\sigma$ denotes some choice of lift to $\Gal(\QQbar / \QQ)$ of the nontrivial element of $\Gal(F / \QQ)$. By Theorem \ref{thm:wiles}, the two terms in the tensor product have 1-dimensional $D_p$-stable subspaces $\rho_{\fp}^+$ and $\rho_{\fq}^+$, which are crystalline with $\varphi$-eigenvalues $\lambda_{\fp}$ and $\lambda_{\fq}$ respectively. Hence the tensor product $\rho_{\fp}^+ \otimes \rho_{\fq}^+$ is a one-dimensional subspace of $\tInd_F^\QQ (\rho)$ which is $D_p$-stable and crystalline, with Frobenius acting as $\lambda_\fp(\cF)\lambda_{\fq}(\cF) = \lambda_p(\cF)$. This gives the 1-dimensional filtration step; and, similarly, the sum $\rho_{\fp} \otimes \rho_{\fq}^+ + \rho_{\fp}^+ \otimes \rho_{\fq}$ is a 3-dimensional $D_p$-stable subspace.

    The case of $p$ inert is more elaborate. In this case, if $\fp = p\cO_F$ is the unique prime above $p$, the decomposition group $D_{\fp}$ is an index 2 subgroup of $D_p$, and we have $\tInd_F^\QQ (\rho) |_{D_p} = \tInd_{D_\fp}^{D_p}(\rho_\fp)$ as representations of $D_p$, where $\rho_{\fp} = \rho |_{D_\fp}$. Since tensor induction is a functor (although not an additive one), one obtains morphisms of $D_p$-representations
    \begin{align*}
     \tInd_{D_\fp}^{D_p}(\rho^+_{\fp}) &\into \tInd_{D_\fp}^{D_p}(\rho_{\fp})
     \quad\text{and}\\
     \tInd_{D_\fp}^{D_p}(\rho) &\twoheadrightarrow \tInd_{D_\fp}^{D_p}(\rho_{\fp} / \rho_{\fp}^+)
    \end{align*}
    whose composition is zero. These give the required $D_p$-stable filtration. Moreover, from the explicit construction of tensor induction in \S\ref{sect:galrep}, one checks that the eigenvalue of $\varphi$ on $\Dcris\left( \tInd(\rho^+_{\fp})\right)$ coincides with that of $\Phi = \varphi^2$ on $\Dcris(\rho^+_{\fp})$, which is $\lambda_p(\cF)$.
   \end{proof}

   With this in hand, we can complete the proof of Theorem \ref{lthm:iwasawa}:

   \begin{corollary}
    The image of $\cAF^{\cF}_{m, a}$ in $H^1\left(\Qp, \Gr^0 M_{L_v}(\cF)^* \otimes \Lambda_\Gamma(-\mathbf j)\right)$ is zero.
   \end{corollary}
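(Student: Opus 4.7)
My plan is to combine a density-of-specializations argument with a $p$-adic Hodge-theoretic computation of the image of each \'etale Asai--Flach class at finite level.

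First, the Iwasawa cohomology $H^1(\Qp, \Gr^0 M_{L_v}(\cF)^* \otimes \Lambda_\Gamma(-\mathbf{j}))$ is a finitely generated module over $L_v \otimes \Lambda_\Gamma$, and, since $\Gr^0 M_{L_v}(\cF)^*$ is unramified of Hodge--Tate weight $0$ and Frobenius eigenvalue the $p$-adic unit $\alpha_p(\cF)$, an element of it is detected by its specialisations at an infinite family of integer Hodge--Tate weights. Combining this with the defining interpolation property of $\cAF^{\cF}_{m, a}$ from the theorem of \S\ref{sect:iwa1} reduces the problem to showing that, for each $0 \le j \le \min(k, k')$ and each $r \ge 0$, the image of $\loc_p\bigl(\AF^{[\cF, j]}_{\et, mp^r, a}\bigr)$ in $H^1(\Qp(\mu_{mp^r}), \Gr^0 M_{L_v}(\cF)^*(-j))$ vanishes.

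Second, $\AF^{[\cF, j]}_{\et, mp^r, a}$ is by construction the \'etale realisation of a class in motivic cohomology, so its localisation at $p$ lies in the Bloch--Kato geometric subspace $H^1_g(\Qp(\mu_{mp^r}), M_{L_v}(\cF)^*(-j))$. Using the three-step filtration from Corollary \ref{cor:asaifiltn}, the desired vanishing is equivalent to showing that the dual exponential $\exp^*$ of this local class, taken with respect to the quotient $\Gr^0 M_{L_v}(\cF)^*(-j)$, is zero in $\mathbf{D}_\dR(\Gr^0 M_{L_v}(\cF)^*(-j))$.

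Third, I would establish this last vanishing by a $p$-adic Hodge-theoretic calculation tracking the construction of the class as $\pr_{\cF} \circ (s_{mp^r})_* \circ \iota_* \circ CG^{[k, k', j]}$ applied to the Eisenstein--Iwasawa class on the modular curve. Since the Clebsch--Gordan morphism factors through the product of symmetric powers of $\sH_{L_v}(\cA)^{(1)}$ and $\sH_{L_v}(\cA)^{(2)}$, its image in $\mathbf{D}_\dR$ avoids the filtered piece cut out by the unramified direction of the Asai representation; projecting via $\pr_{\cF}$ should yield an element that lies inside $\mathbf{D}_\dR(\Fil^1 M_{L_v}(\cF)^*(-j))$, which has zero image in $\mathbf{D}_\dR(\Gr^0 M_{L_v}(\cF)^*(-j))$.

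The main obstacle will be Step 3: making this $p$-adic Hodge calculation rigorous, in particular ensuring that the Clebsch--Gordan pushforward together with projection to the $\cF$-eigenspace genuinely avoids the unramified summand across all admissible values of $j$. A cleaner alternative, closer in spirit to \cite{kingsloefflerzerbes15b}, would be to work directly with the unprojected $\Lambda$-adic class $\cAF_{m, \fN, a}$ and to use the Panchishkin-type splitting of $M_{L_v}(\cF)^*|_{G_{\Qp}}$ from Corollary \ref{cor:asaifiltn} to decompose its Iwasawa cohomology at $p$, arguing that the ordinary projection of an Eisenstein-type construction is constrained to the $\Fil^1$-part of this decomposition and hence dies in the unramified quotient before one ever projects to $\cF$.
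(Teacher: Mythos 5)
Your Step~2 contains a genuine gap. You correctly identify (via Nekov\'a\v{r}--Nizio\l) that the finite-level classes $\AF^{[\cF,j]}_{\et, mp^r, a}$ lie in the Bloch--Kato subspace $H^1_{\mathrm{g}}$, but you then assert that the vanishing of the image in $H^1(\Qp(\mu_{mp^r}), \Gr^0 M_{L_v}(\cF)^*(-j))$ is \emph{equivalent} to the vanishing of the dual exponential $\exp^*$. This is not true: $\ker(\exp^*)$ is $H^1_{\mathrm{g}}$ itself, so $\exp^*=0$ only reproduces what you already know (that the class lies in $H^1_{\mathrm{g}}$) and does not imply the class itself is zero, since $H^1_{\mathrm{g}}$ of the unramified quotient is generally nonzero. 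The heavy $p$-adic Hodge-theoretic tracking of the Clebsch--Gordan pushforward that you propose in Step~3 is therefore aimed at the wrong target and does not close the gap.

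What you are missing is the key lemma that does close it: an Iwasawa cohomology class for an \emph{unramified} $p$-adic representation whose projections to every finite level of the cyclotomic tower lie in $H^1_{\mathrm{g}}$ must already be zero (\cite[Lemma 8.1.5]{kingsloefflerzerbes15b}). This is precisely what distinguishes the Iwasawa-theoretic situation from the single-level one; the interpolation along the tower is essential, and the constraint comes from the structure of the Perrin-Riou module for an unramified representation, not from any computation at a single level. Once this lemma is invoked, your Step~1 (reduction to finite-level specialisations, with $j=0$ and $r$ varying sufficing) and the $H^1_{\mathrm{g}}$ observation of Step~2 are exactly the ingredients needed, and Step~3 becomes unnecessary. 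Your final paragraph gestures toward the KLZ approach, but describes it in terms of a ``Panchishkin splitting'' and a constraint that the Eisenstein construction lands in $\Fil^1$ — this is not the argument used; the argument is purely about the interaction between $H^1_{\mathrm{g}}$ and Iwasawa cohomology for unramified representations and makes no use of the construction's factoring through the Clebsch--Gordan map.
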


   \begin{proof}
    Since $\cAF^{[\cF, j]}_{mp^r, a}$ is the image of a motivic cohomology class for $0 \le j \le \min(k, k')$ and $r \ge 0$, it must lie in the Bloch--Kato $H^1_\mathrm{g}$ subspace, by a theorem of Nekov\'a\v{r} and Nizio\l{} \cite[Theorem B]{nekovarniziol}. However, an Iwasawa cohomology class for an unramified Galois representation which is in $H^1_\mathrm{g}$ at every finite level must be zero, by \cite[Lemma 8.1.5]{kingsloefflerzerbes15b}.
   \end{proof}

  \subsection{The motivic \texorpdfstring{$p$-adic $L$-function}{p-adic L-function}}
   \label{sect:motivicL}

   We now assume $p$ is split in $F$, and we let $\fp$ and $\fq$ be the primes of $F$ above $p$, with $v$ lying above $\fp$, as in the proof of \ref{cor:asaifiltn}. We suppose that $\cF$ is ordinary at $p$, and we let $\alpha_\fp$ and $\alpha_{\fq}$ be the eigenvalues of $\cF$ for the operators $p^{-t} \mathcal{U}(\fp)$ and $p^{-t'} \mathcal{U}(\fq)$; these are in $\cO_{L, v}^\times$, and $\alpha_p = \alpha_{\fp} \alpha_\fq$.

   For convenience we shall also assume that $p \mathop{\|} \fN$, and that $\cF$ is a $p$-stabilisation of an eigenform of level $\fN / p$. In particular, the conductor of the character $\varepsilon$ of $\cF$ is coprime to $p$, so $\varepsilon(\fp)$ and $\varepsilon(\fq)$ are defined. We set $\beta_\fp = p^{k + 1} \varepsilon(\fp) / \alpha_\fp$, $\beta_{\fq} = p^{k' + 1} \varepsilon(\fq) / \alpha_{\fq}$; then the eigenvalues of Frobenius on $\Dcris(M_{L_v}(\cF))$ are
   \[ \big\{ \alpha_{\fp} \alpha_{\fq}, \beta_\fp \alpha_\fq, \alpha_{\fp} \beta_{\fq}, \beta_\fp \beta_\fq\big\}. \]

   We shall impose the following hypothesis:
   \begin{itemize}
    \item (NEZ, for ``no exceptional zero''): None of these four quantities are powers of $p$;  equivalently, the local Euler factor $P_p(\cF, X)$ does not vanish at $p^{-j}$ for any $j \in \ZZ$.
   \end{itemize}

   \begin{remark}
    All four quantities are $p$-Weil numbers of weight $k + k' + 2$, and their $p$-adic valuations are $\{0, k + 1, k' + 1, k + k' + 2\}$, so hypothesis (NEZ) is automatic if $k \ne k'$.
   \end{remark}

   \begin{lemma}
    \label{lem:1dimquot}
    There exists a 1-dimensional quotient $\Gr^1 M_{L_v}(\cF)^* \twoheadrightarrow M_{\fp}$ which is crystalline of Hodge--Tate weight $k' + 1$, and such that Frobenius acts on $\Dcris(M_{\fp})$ by $(\alpha_\fp \beta_\fq)^{-1}$.
   \end{lemma}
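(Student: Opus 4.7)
The plan is to extract $M_{\fp}$ from the filtration in Corollary~\ref{cor:asaifiltn}, by exploiting the splitness of $p$ in $F$ to decompose $\Gr^1$ further. Recall that in the split case, the proof of that corollary identifies $\tInd_F^\QQ(\rho)|_{D_p}$ with $\rho_\fp \otimes \rho_\fq$ (where $D_p = D_\fp = D_\fq$), and constructs the filtration of $M_{L_v}(\cF) = \tInd_F^\QQ(\rho)(t_1+t_2)$ by duality from the subspaces
\[ V_1 = \rho_\fp^+ \otimes \rho_\fq^+ \ \subset\ V_3 = \rho_\fp^+ \otimes \rho_\fq + \rho_\fp \otimes \rho_\fq^+ \ \subset\ V_4 = \rho_\fp \otimes \rho_\fq. \]

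First I would observe that the 2-dimensional middle quotient $V_3/V_1$ splits canonically as a direct sum of $D_p$-stable lines
\[ V_3/V_1 \cong A \oplus B, \qquad A = \rho_\fp^+ \otimes (\rho_\fq/\rho_\fq^+), \quad B = (\rho_\fp/\rho_\fp^+) \otimes \rho_\fq^+, \]
where $A$ and $B$ are the respective images of $\rho_\fp^+ \otimes \rho_\fq$ and $\rho_\fp \otimes \rho_\fq^+$ in the quotient; their intersection in $V_3$ equals $V_1$, so their images in $V_3/V_1$ meet trivially and span by dimension count. Dualising and Tate twisting then gives
\[ \Gr^1 M_{L_v}(\cF)^* = (V_3/V_1)^*(-t_1-t_2) \cong A^*(-t_1-t_2) \oplus B^*(-t_1-t_2), \]
so I define $M_{\fp}$ to be projection onto the first summand $A^*(-t_1-t_2)$.

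It then remains to compute. Theorem~\ref{thm:wiles} gives that $\rho_\fp^+$ is crystalline with linearised Frobenius $\varphi = \lambda_\fp(\cF) = p^t\alpha_\fp$; using the Hodge--Tate weights $\{-t,\,-t-k-1\}$ of $\rho_\fp$, weak admissibility forces $\rho_\fp^+$ to have HT weight $-t$. By the determinant formula $\det\rho(\Frob_\fl^{-1}) = \Nm(\fl)^{w-1}\varepsilon(\fl)$, the quotient $\rho_\fq/\rho_\fq^+$ is crystalline with $\varphi = p^{w-1}\varepsilon(\fq)/(p^{t'}\alpha_\fq) = p^{t'}\beta_\fq$ and HT weight $-(t'+k'+1)$. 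Multiplying, $A$ is crystalline with $\varphi = p^{t+t'}\alpha_\fp\beta_\fq$ and HT weight $-(t+t'+k'+1)$; dualising inverts $\varphi$ and negates the HT weight, and the Tate twist by $L_v(-t_1-t_2)$ multiplies $\varphi$ by $p^{t+t'}$ and shifts the HT weight by $-(t+t')$. This yields $M_{\fp}$ crystalline with Frobenius $(\alpha_\fp\beta_\fq)^{-1}$ and HT weight $k'+1$, as required.

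There is no substantive obstacle; the argument is essentially a bookkeeping exercise to reconcile the several normalisations in play (arithmetic vs.~linearised Frobenius, the sign convention for Hodge--Tate weights stated in the footnote of \S\ref{sect:galrep}, and the action of Tate twists). The key conceptual input---that the middle of the filtration on $M_{L_v}(\cF)^*$ splits when $p$ is split in $F$---is immediate from the tensor product structure of $\rho_\fp\otimes\rho_\fq$ and was, in essence, already used implicitly in the proof of Corollary~\ref{cor:asaifiltn}.
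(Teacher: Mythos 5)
Your proposal is correct and follows the same route as the paper: the paper's proof is a one-sentence assertion that the decomposition of $\Gr^1$ ``follows easily from the proof of Corollary~\ref{cor:asaifiltn},'' and your argument is precisely the unwinding of that assertion---identifying $\Gr^1 M_{L_v}(\cF)^*$ with $(V_3/V_1)^*(-t_1-t_2)$, using the tensor-product structure $\rho_\fp\otimes\rho_\fq$ to split $V_3/V_1 \cong A\oplus B$, and then computing Frobenius eigenvalues and Hodge--Tate weights via weak admissibility and the determinant formula. Your bookkeeping of the conventions (linearised vs.\ arithmetic Frobenius, sign of HT weights, the Tate twist by $t_1+t_2$) is consistent with the paper and arrives at the correct answer; the only minor point worth flagging is that the crystallinity of $\rho_\fq/\rho_\fq^+$ deserves a word---it follows because $\rho$ is crystalline at $p$ (as $\cF$ is a $p$-stabilisation of a form of level $\fN/p$, a hypothesis in \S\ref{sect:motivicL}) and crystallinity is preserved by subquotients.
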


   \begin{proof}
    It follows easily from the proof of Corollary \ref{cor:asaifiltn} that in the split case $\Gr^1 M_{L_v}(\cF)^*$ is isomorphic to the direct sum of two one-dimensional crystalline representations, with crystalline Frobenius eigenvalues $(\alpha_\fp \beta_\fq)^{-1}$ and $(\alpha_\fq \beta_\fp)^{-1}$.
   \end{proof}

   \begin{remark}
    Note that $M_\fp$ is uniquely determined if and only if $\alpha_\fp \beta_\fq \ne \beta_\fp \alpha_\fq$. In the exceptional case $\alpha_\fp \beta_\fq = \beta_\fp \alpha_\fq$ (which can only occur if $k = k'$), the graded piece $\Gr^1 M_{L_v}(\cF)^*$ is isomorphic to the direct sum of two copies of the same representation, and we simply choose an arbitrary 1-dimensional quotient. (This case \emph{always} occurs if $\cF$ is a twist of a base-change from $\GL_2 / \QQ$.)
   \end{remark}

   \begin{definition}
    We write
    \[ \mathcal{L}^{\mathrm{PR}}: H^1(\Qp, M_{\fp} \otimes \Lambda_\Gamma(-\mathbf{j})) \to \Dcris(M_\fp) \otimes_{\Zp} \Lambda_{\Gamma} \]
    for the Perrin-Riou big logarithm map (c.f. \cite[Definition~3.4]{leiloefflerzerbes11}).
   \end{definition}

   Because of hypothesis (NEZ), this map is an isomorphism of $L_v \otimes_{\Zp} \Lambda_\Gamma$-modules (cf.~Theorem 8.2.3 and Remark 8.2.4 of \cite{kingsloefflerzerbes15b}). It is characterised by the following interpolation property: for any character of $\Gamma$ of the form $j + \eta$, with $j \in \ZZ$ and $\eta$ a finite-order character of conductor $p^r$, then (after extending $L$ if necessary, so that $\eta$ takes values in $L_v^\times$) we have a commutative diagram
   \begin{diagram}[small]
    H^1(\Qp, M_{\fp} \otimes \Lambda_\Gamma(-\mathbf{j}))
    & \rTo^{\mathcal L^{\mathrm{PR}}} & \Dcris(M_\fp) \otimes_{\Zp} \Lambda_{\Gamma}\\
    \dTo & & \dTo\\
    H^1(\Qp, M_\fp(-j-\eta)) & \rTo & \Dcris(M_\fp)
   \end{diagram}
   in which the vertical arrows are given by specialisation at $\mathbf{j} = j + \eta$, and the bottom horizontal arrow is given by
   \[
    \left.\begin{cases}
     \left( 1 - \frac{p^j}{\alpha_\fp \beta_{\fq}}\right) \left(1 - \frac{\alpha_\fp \beta_{\fq}}{p^{1 + j}} \right)^{-1} & \text{if $r = 0$} \\
     \left( \frac{p^{1 + j}}{\alpha_\fp \beta_{\fq}}\right)^r G(\eta^{-1})^{-1} & \text{if $r \ge 1$}
    \end{cases}\right\} \cdot
    \begin{cases}
     \tfrac{(-1)^{k'-j}}{(k'-j)!} \log & \text{if $j \le k'$,}\\
     (j-k'-1)! \exp^* & \text{if $j > k'$}
    \end{cases}
   \]
   Here $G(\eta^{-1}) = \sum_{a \in (\ZZ / p^r \ZZ)^\times} \eta(a)^{-1} \zeta_{p^r}^a$ is the Gauss sum, and $\log$ and $\exp^*$ are the Bloch--Kato logarithm and dual-exponential maps for the de Rham representation $M_\fp(-j-\eta)$. Cf.~\cite[Theorem 8.2.8]{kingsloefflerzerbes15b}.

   Attached to the eigenform $\cF$, we have the Asai--Flach class $\cAF^{\cF}_{1, a}$. The localisation of this class at $p$ maps to zero in $\Gr^0 M_{L_v}(\cF)^*$, as we have seen; so we may consider it as a class in the Iwasawa cohomology of $\Gr^1 M_{L_v}(\cF)^*$, and project it to the quotient $M_\fp$.

   \begin{definition}
   For any integer $c > 1$ coprime to $6p\Nm_{F/\QQ}(\fN)$, we define
    \[
     {}_c L_{\fp, \Asai}^{\imp}(\cF) = (\mathcal{L}^{\mathrm{PR}} \circ \pr_{M_\fp} \circ \loc_p)\left( \cAF^{\cF}_{1, a}\right) \in \Lambda_\Gamma \otimes \Dcris(M_\fp),
    \]
    and
    \[
     L_{\fp, \Asai}^{\imp}(\cF) = \left(c^2 - c^{2\mathbf{j}-k-k'}\varepsilon_\cF(c)\right)^{-1} {}_c L_{\fp, \Asai}^{\imp}(\cF) \in \operatorname{Frac} \Lambda_{\Gamma} \otimes \Dcris(M_\fp)
    \]
    (which is independent of $c$). Finally, we set
    \[
     L_{\fp, \Asai}(\cF) = \left(\prod_{\ell \mid \Nm_{F/\QQ}(\fN)} C_\ell(\ell^{-1-\mathbf{j}})^{-1}\right) L_{\fp, \Asai}^{\imp}(\cF) \in \operatorname{Frac} \Lambda_{\Gamma} \otimes \Dcris(M_\fp),
    \]
    where $C_\ell \in L[X]$ are the polynomials from Definition \ref{def:Limp}.
   \end{definition}

   \begin{remark} \
    \begin{enumerate}
     \item Note that $L_{\fp, \Asai}^{\imp}(\cF)$ can be viewed as a $p$-adic meromorphic function on the weight space $\mathcal{W} = \Spec \Lambda_\Gamma$. Since ${}_c L_{\fp, \Asai}^{\imp}(\cF)$ is analytic, the only possible poles of $L_{\fp, \Asai}^{\imp}(\cF)$ are at zeroes of the factor $\left(c^2 - c^{2\mathbf{j}-k-k'}\varepsilon_\cF(c)\right)$. In particular, the function $L_{\fp, \Asai}^{\imp}(\cF)$ is analytic everywhere if $\varepsilon_{\cF} |_{\hat\ZZ^\times}$ is non-trivial; and if $\varepsilon_{\cF} |_{\hat\ZZ^\times}$ is trivial then it has at most two poles, one at $\mathbf{j} = \tfrac{k + k'}{2} + 1$ and the other at  $\mathbf{j} = \tfrac{k + k'}{2} + 1 + \eta$ where $\eta$ is the nontrivial quadratic character of $\Gamma$.

     \item The definition of these $L$-functions still makes sense if (NEZ) is not satisfied; in this case $\mathcal{L}^{\mathrm{PR}}$ takes values in $\Dcris(M_{\fp}) \otimes I^{-1}$, where $I$ is a certain ideal in $\Lambda_{\Gamma}$.

     \item Thus there are three possible sources of poles for the primitive $L$-function $L_{\fp, \Asai}$  in general: those arising from the cancellation of the $c$ factor, those arising from zeroes of the polynomials $C_\ell$, and those arising from singularities of the Perrin--Riou map when (NEZ) does not hold. We expect, nonetheless, that if $\cF$ is non-CM and not a twist of a base-change from $\GL_2 / \QQ$, then $L_{\fp, \Asai}(\cF)$ should be analytic everywhere.
    \end{enumerate}
   \end{remark}

   We formulate the following conjecture relating the $p$-adic and complex $L$-funct\-ions:

   \begin{conjecture}
    Suppose $k > k'$, and let $j$ be an integer with $k' < j \le k$. Then $L_{\fp, \Asai}(\cF)$ and $L_{\fp, \Asai}^{\imp}(\cF)$ are analytic at $\mathbf{j}=j$, and we have
    \begin{align*}
     L_{\fp, \Asai}(\cF)(j) = 0 &\Longleftrightarrow L_{\Asai}(\cF, 1+j) = 0,\\
     L_{\fp, \Asai}^{\imp}(\cF)(j) = 0 &\Longleftrightarrow L_{\Asai}^{\imp}(\cF, 1+j) = 0.
    \end{align*}
   \end{conjecture}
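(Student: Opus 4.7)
My plan is to reduce the conjecture to an explicit reciprocity law relating the Iwasawa Asai--Flach classes to archimedean period integrals, in the spirit of the Rankin--Selberg analogues of \cite{BDR15b, kingsloefflerzerbes15b}. The essential difficulty of the range $k' < j \le k$ is that the motivic Asai--Flach classes $\AF^{[\cF, j]}_{\et}$ are \emph{not} defined there: the Clebsch--Gordan map used in their construction requires $j \le \min(k,k')$. Consequently, $L^{\imp}_{\fp, \Asai}(\cF)(j)$ in this range is accessed purely through $p$-adic interpolation, and the defining property of $\mathcal{L}^{\mathrm{PR}}$ shows that it equals, up to an explicit non-zero factor, the Bloch--Kato dual exponential $\exp^*(\kappa_j)$, where $\kappa_j$ is the specialisation of the Iwasawa class at $\mathbf{j}=j$; the dual exponential (rather than logarithm) appears because $j > k'$ lies beyond the Hodge--Tate weight of $M_\fp$.

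Analyticity at $\mathbf{j} = j$ for $k' < j \le k$ would be immediate from the interpolation formula: the only potential pole of $\mathcal{L}^{\mathrm{PR}}$ at these points is cancelled by the factor $\bigl(1 - \alpha_\fp \beta_\fq / p^{1+j}\bigr)^{-1}$, whose non-vanishing is guaranteed by (NEZ). The substantive task is therefore to produce a non-zero constant $c(j) \in L_v^\times$ and a non-zero period $\Omega$ for which $L^{\imp}_{\fp, \Asai}(\cF)(j) = c(j) \cdot L^{\imp}_\Asai(\cF, 1+j)/\Omega$, the equality taking place inside $\Dcris(M_\fp)$ via the \'etale--de Rham comparison isomorphism.

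I would approach this using the formula of \cite{loefflerskinnerzerbes16}, which expresses $\loc_p\bigl(\AF^{[\cF, j]}_{\et}\bigr)$ for $0 \le j \le k'$ in terms of overconvergent $p$-adic Hilbert modular forms. The strategy has three steps: (i) reinterpret this formula as the specialisation of a $p$-adic family in the cyclotomic variable $j$; (ii) construct a $p$-adic Rankin--Selberg integral for $L^{\imp}_\Asai(\cF, s)$, replacing the real-analytic Eisenstein series in the archimedean formula of Theorem~\ref{thm:asai} by its overconvergent $p$-adic analogue, evaluated against the nearly-overconvergent projection of $\iota^*(\cF^{\ah,1})$; (iii) verify that the two sides agree at $j \in \{0, \ldots, k'\}$ by comparison with Theorem~\ref{thm:deligne-regulator-formulae}, and extend by $p$-adic continuity to $j > k'$. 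This would establish the desired equivalence for $L^{\imp}_{\fp, \Asai}$; the statement for the primitive $L$-function then follows after separately checking that the finitely many $j$ at which the error factors $C_\ell(\ell^{-1-j})$ vanish do correspond to genuine vanishing of $L_{\Asai}$.

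The main obstacle will be step (ii). Unlike the Rankin--Selberg setting, where one integrates a $p$-adic Eisenstein series on $Y_1(N) \times Y_1(N)$ against a pair of cusp forms, the Asai setup requires a $p$-adic Eisenstein series on the Hilbert modular surface $Y_1^*(\fN)$ together with a theory of its restriction along the codimension-one closed immersion $\iota$, and a pairing against a $p$-adic counterpart of the non-holomorphic eigenform $\cF^{\ah, 1}$ that does not visibly restrict to the diagonal. The relevant overconvergent coherent cohomology on Hilbert modular surfaces and the associated ``nearly overconvergent'' projection operators are, to my knowledge, only partially developed in the literature; filling in this machinery is effectively the Asai analogue of the explicit reciprocity laws whose absence is lamented in the discussion following Theorem~\ref{lthm:boundSel}, and so is likely to be the most technically demanding component of the proof.
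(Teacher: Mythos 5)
The statement you were asked to prove is explicitly labelled a \emph{Conjecture} in the paper, not a theorem, and the authors state plainly in the remark immediately following it (and again in the introduction) that they cannot prove it: the missing ingredient is precisely an explicit reciprocity law relating the $p$-adic regulator of the Asai--Flach classes to the Asai $L$-function, analogous to \cite{BDR15b} and \cite[Theorem B]{kingsloefflerzerbes15b} in the Rankin--Selberg setting. Your proposal correctly diagnoses that such a reciprocity law is the substance of the conjecture, and your three-step outline is a sensible description of what a proof might eventually look like; but you yourself concede that the core machinery (a $p$-adic Rankin--Selberg integral on $Y_1^*(\fN)$ restricted along the codimension-one embedding $\iota$, together with a nearly-overconvergent projection formalism for Hilbert modular surfaces) is not yet available. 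That is not a gap to be filled in by routine calculation; it is the open problem itself, and the paper leaves the conjecture as a conjecture precisely because of it.

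One further point: your claim that analyticity at $\mathbf{j}=j$ for $k' < j \le k$ is ``immediate from the interpolation formula'' is not correct, and the paper's own remark shows why. By definition $L^{\imp}_{\fp,\Asai}(\cF)$ is ${}_c L^{\imp}_{\fp,\Asai}(\cF)$ divided by $\bigl(c^2 - c^{2\mathbf{j}-k-k'}\varepsilon_\cF(c)\bigr)$; when $\varepsilon_\cF|_{\hat\ZZ^\times}$ is trivial, this denominator vanishes at the integer $\mathbf{j} = \tfrac{k+k'}{2}+1$. Since the hypotheses force $k - k'$ even and $\ge 2$, this point always lies in the range $k' < j \le k$ considered by the conjecture. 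Ruling out a genuine pole there requires showing that ${}_c L^{\imp}_{\fp,\Asai}(\cF)$ vanishes at that point, which is itself a nontrivial assertion (parallel, on the archimedean side, to the absence of a pole of $L_{\Asai}(\cF,s)$ at $s = k+2$ when $k > k'$) and cannot be deduced formally from the Perrin-Riou map and (NEZ) alone.
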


   \begin{remark}
    As we have emphasised in the introduction, we \textbf{cannot} prove this conjecture, so we cannot rule out the possibility that $L_{\fp, \Asai}^{\imp}(\cF)$ is identically zero.
   \end{remark}

  \subsection{Big image results}
   \label{sect:bigimage}
   Let $\cF$ be any Hilbert modular eigenform for $F$, of level $U_1(\fN)$ for some $\fN$, and weight $(k+2, k'+2, t, t')$ with $k, k' \ge 0$. (We do \emph{not} assume in this section that $\cF$ be ordinary, that $p \mid \fN$, or that $p$ be split in $F$.)

   \begin{definition}
    We say that $\cF$ satisfies condition (BI) (for ``big image'') at $v$ if the following two statements hold for some (or, equivalently, any) $\Gal(\QQbar / \QQ)$-stable $\cO_{L, v}$-lattice $T$ in $\rho_{\cF, v}^{\Asai}$:
    \begin{enumerate}[(i)]
     \item $T \otimes k_v$ is an irreducible $k_v[\Gal(\QQbar / \QQ(\mu_{p^\infty}))]$-module, where $k_v$ is the residue field of $\cO_{L, v}$.
     \item There exists $\tau \in \Gal(\QQbar / \QQ(\mu_{p^\infty}))$, lifting the non-trivial element $\sigma \in \Gal(F / \QQ)$, such that $T / (\tau - 1)T$ is free of rank 1 over $\cO_{L, v}$.
    \end{enumerate}
   \end{definition}

   This is a slight strengthening of $\operatorname{Hyp}(K_\infty, T)$ of \cite{rubin00}, with the field $K_\infty$ in \emph{op.cit.} taken to be $\QQ(\mu_{p^\infty})$. (Our condition on $\tau$ is slightly more restrictive, since we also require $\tau$ to act nontrivially on $F$.)

   In the remainder of this section, we shall give some criteria which imply that condition (BI) is satisfied for a plentiful supply of primes $v$. As the isomorphism class of $\rho_{\cF, v}^{\Asai}$ depends only on the newform associated to $\cF$, we may assume without loss of generality that $\cF$ is itself a newform. We impose the following hypotheses on $\cF$:
   \begin{enumerate}
    \item $\cF$ is not of CM type;
    \item $\cF$ is not a twist of a base-change from $\GL_2 / \QQ$.
   \end{enumerate}

   \begin{theorem}[Lapid--Rogawski]
    Let $\sigma$ be the nontrivial element of $\Gal(F / \QQ)$, and let $\cF^\sigma$ be the internal conjugate of $\cF$ (the unique newform whose $\cT(\fn)$-eigenvalue is $\lambda(\fn^\sigma)$ for all $\fn$). Then there is no Hecke character $\kappa$ such that $\cF^\sigma = \cF \otimes \kappa$.
   \end{theorem}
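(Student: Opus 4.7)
The plan is to assume for contradiction that $\cF^\sigma \cong \cF \otimes \kappa$ for some Hecke character $\kappa$ of $F$ and deduce that $\cF$ must be a twist of a base-change from $\GL_2/\QQ$, contradicting the standing hypothesis. First I would apply $\sigma$ once more to the isomorphism: this gives $\cF \cong (\cF^\sigma)^\sigma \cong \cF \otimes \kappa \cdot \kappa^\sigma$, and multiplicity one for $\GL_2$ then forces $\kappa \cdot \kappa^\sigma = \mathbf{1}$, i.e.\ $\kappa^\sigma = \kappa^{-1}$.

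Next I would invoke Hilbert's Theorem 90 for the cyclic group $\Gal(F/\QQ)$ acting on Hecke characters. Global class field theory (equivalently, the Hasse norm theorem) gives $H^1(\Gal(F/\QQ), C_F) = 0$, where $C_F$ is the idele class group of $F$; Pontryagin duality for Tate cohomology of cyclic groups then yields $\hat H^{-1}(\Gal(F/\QQ), \hat C_F) = 0$, which translates to the assertion that every Hecke character $\kappa$ satisfying $\kappa^\sigma = \kappa^{-1}$ can be written as $\kappa = \mu^\sigma/\mu$ for some Hecke character $\mu$ of $F$. Substituting this factorisation into the hypothetical isomorphism, a direct manipulation yields $(\cF \otimes \mu^{-1})^\sigma \cong \cF \otimes \mu^{-1}$, so $\cF \otimes \mu^{-1}$ is a $\sigma$-invariant cuspidal Hilbert eigenform. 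By the cyclic base-change theorem for $\GL_2$ (Saito--Shintani, Langlands), it descends to a cuspidal automorphic representation $\pi_0$ of $\GL_2(\AA_\QQ)$; cuspidality of the descent is automatic, because the only way a non-cuspidal $\pi_0$ could base-change to a cuspidal representation would require $\pi_0$ to be dihedral induced from a Hecke character of $F$, and such a $\pi_0$ has non-cuspidal base change. Hence $\cF$ is a twist by $\mu$ of a base-change from $\GL_2/\QQ$, contradicting hypothesis (ii).

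The main technical obstacle is the Hilbert 90 step: while the underlying cohomology computation is classical, one must verify that the cocycle identity $\kappa = \mu^\sigma/\mu$ can be realised by a \emph{bona fide} Hecke character $\mu$ with admissible continuity and archimedean type. The local solvability is routine at finite places; at the archimedean places, the swapping action of $\sigma$ on the two real embeddings of $F$ means that the ratio $\mu_{\infty,2}/\mu_{\infty,1}$ is prescribed to equal $\kappa_{\infty,1}$, and the anti-invariance relation $\kappa_{\infty,2} = \kappa_{\infty,1}^{-1}$ guarantees that a consistent choice of $\mu_\infty$ exists. Finally, the non-CM hypothesis on $\cF$ plays no direct role in the present argument; it is retained because it is needed elsewhere, in particular for the big-image condition (BI) of \S\ref{sect:bigimage}.
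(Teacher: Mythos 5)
The paper's own proof of this theorem is a one-line citation of Lapid--Rogawski, whose result is established via a relative trace formula; your proposal is therefore a genuine attempt to reprove it by elementary means, and it has a real gap at the Hilbert 90 step. The Pontryagin duality is misapplied: for a finite cyclic $G$, the duality for Tate cohomology reads $\hat{H}^i(G, \Hom(M, \QQ/\ZZ)) \cong \hat{H}^{-1-i}(G, M)^\vee$, so the vanishing $H^1(G, C_F) = 0$ dualizes to $\hat{H}^0(G, \widehat{C_F}) = 0$, \emph{not} to the group $\hat{H}^{-1}(G, \widehat{C_F})$ that you need. In fact $\hat{H}^{-1}(G, \widehat{C_F}) \cong \hat{H}^0(G, C_F)^\vee = (C_\QQ / \Nm_{F/\QQ}C_F)^\vee \cong \Gal(F/\QQ)^\vee \cong \ZZ/2$ by the reciprocity isomorphism of class field theory, so ``Hilbert 90 for Hecke characters'' fails: an anti-norm character $\kappa$ is of the form $\mu^\sigma/\mu$ if and only if $\kappa(c) = 1$, where $c \in C_\QQ$ is the class of a prime inert in $F$. (A nontrivial example is the quadratic Hecke character of $F$ cutting out a cyclic quartic extension of $\QQ$, when one exists.) Your discussion of the archimedean places addresses a local concern, but this obstruction is global; it corresponds to the ``biquadratic'' case in Lapid--Rogawski's trichotomy and is precisely where the serious content of their theorem lies.

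Your closing remark that the non-CM hypothesis plays no direct role is also incorrect: it is already used when you conclude $\kappa\kappa^\sigma = 1$, since multiplicity one gives only $\cF \cong \cF \otimes \kappa\kappa^\sigma$, and you must then know that $\cF$ admits no nontrivial quadratic self-twist, which (for a holomorphic Hilbert eigenform) is exactly the non-CM condition. The same hypothesis would be needed to close the Hilbert 90 gap: for almost all $\fl$ inert in $F$ one has $(\cF^\sigma)_\fl \cong \cF_\fl$, hence $\cF_\fl \cong \cF_\fl \otimes \kappa_\fl$, and for $\cF_\fl$ unramified this forces $\kappa_\fl = 1$ unless $a_\fl(\cF) = 0$; non-CM rules the latter out for a density-one set of $\fl$, and $\kappa_\fl = 1$ at even one such $\fl$ gives $\kappa(c) = 1$ and kills the class. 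Without an argument of this kind your proof cannot exclude the possibility that $\cF$ is automorphically induced from a biquadratic extension of $\QQ$, and the contradiction is not established.
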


   \begin{proof}
    This is a special case of the main theorem of \cite{lapidrogawski98}.
   \end{proof}

   We have defined above Galois representations $\rho_{\cF, v}^{\Asai}$ and $\rstd$, for every prime $v$ of $L$, which are unique up to conjugation in $\GL_2(L_v)$. After conjugating appropriately, we can and do assume that the images of these representations lie in $\GL_2(\cO_{L, v})$.

   \begin{proposition}\label{prop:irreducible}
    The representation $\rho_{\cF, v}^{\Asai}$ is absolutely irreducible, and remains so as a representation of $G_{F^{\ab}}$, for all primes $v$ of $L$. For all but finitely many $v$ this remains true after reduction modulo $v$.
   \end{proposition}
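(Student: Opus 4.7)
The plan is to reduce the absolute irreducibility of $\rho_{\cF, v}^{\Asai}$ over $G_\QQ$ (and a fortiori over $G_{F^{\mathrm{ab}}}$) to a statement about its restriction to $G_F$. By the tensor-induction formula, $\rho_{\cF, v}^{\Asai}|_{G_F}$ is isomorphic to $\rho_{std} \otimes \rho_{std}^\sigma$ up to a one-dimensional twist, where $\sigma$ is the nontrivial element of $\Gal(F/\QQ)$. The basic linear-algebraic input is standard: for two absolutely irreducible 2-dimensional representations $V, W$ of a group $H$ (over a field of characteristic $\ne 2$), the tensor product $V \otimes W$ is absolutely irreducible if and only if $\ad^0(V) \not\cong \ad^0(W)$ as $H$-modules, equivalently if and only if $W$ is not a character twist of $V$. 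I will apply this with $V = \rho_{std}|_{G_{F^{\mathrm{ab}}}}$ and $W = \rho_{std}^\sigma|_{G_{F^{\mathrm{ab}}}}$.

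For absolute irreducibility of $\rho_{std}|_{G_{F^{\mathrm{ab}}}}$: since $\cF$ is not of CM type, the classical theorem of Ribet--Momose--Dimitrov asserts that the Zariski closure of the image of $\rho_{std}$ equals $\GL_2$. Because $G_{F^{\mathrm{ab}}}$ is normal in $G_F$ with abelian quotient, the Zariski closure of its image is a normal algebraic subgroup of $\GL_2$ with abelian quotient, hence contains $\SL_2$, giving the desired irreducibility. For the non-twist-equivalence: suppose, for contradiction, that $\rho_{std}^\sigma \cong \rho_{std} \otimes \chi$ on $G_{F^{\mathrm{ab}}}$ for some character $\chi$. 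Passing to the adjoint kills the twist and yields an isomorphism $\ad^0(\rho_{std}) \cong \ad^0(\rho_{std}^\sigma)$ of $G_{F^{\mathrm{ab}}}$-modules. A Clifford-theoretic argument, using that both sides carry natural $G_F$-actions extending the $G_{F^{\mathrm{ab}}}$-action and are absolutely irreducible, then promotes this to a $G_F$-equivariant isomorphism, so that $\rho_{std}^\sigma \cong \rho_{std} \otimes \kappa$ for some continuous character $\kappa$ of $G_F$. Comparing Hecke eigenvalues at unramified primes, $\kappa$ arises from a Hecke character and $\cF^\sigma$ is a twist of $\cF$ --- contradicting the Lapid--Rogawski theorem quoted above.

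For the residual statement, I will invoke Dimitrov's big-image theorem (see \cite{dimitrov05}), which asserts that for all but finitely many primes $v$ of $L$, the residual representation $\bar\rho_{std}$ has image containing $\SL_2(k_v)$. For such $v$, residual absolute irreducibility on $G_{F^{\mathrm{ab}}}$ is immediate by the Zariski-density argument above (now applied to the finite group image). For the residual non-twist-equivalence, a twist-equivalence $\bar\rho_{std}^\sigma \cong \bar\rho_{std} \otimes \bar\chi$ would force a congruence between the Hecke eigenvalues of $\cF^\sigma$ and $\cF \otimes \bar\kappa$ modulo $v$ for a Hecke character $\bar\kappa$ of bounded conductor (because the conductor of $\bar\chi$ is controlled by the level of $\cF$); there are only finitely many such $\bar\kappa$, and each can occur for only finitely many $v$, since otherwise the characteristic-zero Lapid--Rogawski relation would hold.

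The main technical obstacle is the Clifford-theoretic promotion step in the middle paragraph: one must carefully check that an isomorphism of the $G_{F^{\mathrm{ab}}}$-restrictions of two extendable $G_F$-representations, together with absolute irreducibility, lifts to a $G_F$-equivariant isomorphism without introducing a further twist by a character of $\Gal(F^{\mathrm{ab}}/F)$ that cannot be absorbed. Resolving this requires either a direct diagram chase or an appeal to Frobenius reciprocity for the induced representation $\Ind_{G_{F^{\mathrm{ab}}}}^{G_F}(\ad^0\rho_{std}|_{G_{F^{\mathrm{ab}}}})$, and some care is also needed in the residual case to quantify the finite exceptional set explicitly.
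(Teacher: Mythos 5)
Your proposal is correct in outline, but it takes a genuinely different route from the paper's, so let me compare the two. For the characteristic-zero statement, the paper simply cites Remark 5.21 of Nekov\'a\v{r} \cite{nekovar-semisimplicity}, whereas you give a direct proof from Ribet--Momose--Dimitrov big image together with a Clifford-theoretic promotion argument and Lapid--Rogawski. Your method is more elementary and self-contained; the paper's citation is shorter. For the residual statement, the paper invokes Loeffler's adelic joint-image theorem \cite[Theorem 3.4.1]{loeffler17} applied to the \emph{pair} $(\cF, \cF^\sigma)$, then splits into two cases according to whether $\cF$ is Galois-conjugate to a twist of $\cF^\sigma$; in the degenerate ``twisted-diagonal'' case it invokes Brauer--Nesbitt for the irreducibility of $\mathrm{std} \otimes \mathrm{std}^{(\alpha)}$ for $\SL_2$ of a finite field. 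You instead use big image for the single form $\cF$ and a counting argument: residual twist-equivalence forces a congruence $\cF^\sigma \equiv \cF \otimes \kappa \pmod{v}$ with $\kappa$ a Hecke character of bounded conductor, and for each such $\kappa$ the congruence can only occur for finitely many $v$. Both arguments work, but the paper's is cleaner and --- importantly --- the joint-image machinery is reused in a more essential way in the proof of Theorem \ref{thm:existtau} immediately afterward, so the authors had good reason to set it up here.

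A few places in your argument need more care than the sketch gives. First, in the Clifford promotion step: the $G_{F^{\mathrm{ab}}}$-isomorphism $\ad^0(\rho) \cong \ad^0(\rho^\sigma)$ a priori only yields $\ad^0(\rho^\sigma) \cong \ad^0(\rho) \otimes \eta$ as $G_F$-representations for some character $\eta$ of $\Gal(F^{\mathrm{ab}}/F)$; you must still rule out $\eta \ne 1$. This does go through --- taking determinants gives $\eta^3 = 1$, while self-duality of $\ad^0$ together with absolute irreducibility gives $\eta^2 = 1$, whence $\eta = 1$ --- but the step is not automatic and deserves to be spelled out (your ``care is also needed'' remark applies here). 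Second, in the residual finiteness argument, the characteristic-zero lift $\kappa$ of the residual twist character $\bar\kappa$ depends on $v$ (via the Teichm\"uller lift), so ``there are only finitely many $\kappa$'' requires observing that $\bar\kappa$ factors through a \emph{fixed} finite ray class group, that its order is bounded independently of $p$, and that for $p$ not dividing that bound the lift is canonical. Only then is the set of candidate characteristic-zero $\kappa$'s genuinely finite, allowing the ``each $\kappa$ occurs for finitely many $v$'' conclusion. Finally, note that your argument establishes irreducibility but does not by itself establish condition (BI), in particular the existence of the element $\tau$; the paper's joint-image framework is what makes Theorem \ref{thm:existtau} accessible, so your more hands-on approach would have to be supplemented there.
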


   \begin{proof}
    The characteristic 0 statement, for all $v$, follows from \cite[Remark 5.21]{nekovar-semisimplicity}; so let us prove the statement regarding reduction modulo $v$ for almost all $v$.

    We first consider the case where $\cF$ is not only not twist-equivalent to $\cF^\sigma$, but is not twist-equivalent to any Galois conjugate of $\cF^\sigma$. Then we may apply \cite[Theorem 3.4.1]{loeffler17} to $\cF$ and $\cF^\sigma$. The theorem is stated in \emph{op.cit.} for elliptic modular forms, but it applies also to Hilbert modular forms (as noted in Remark 2.3.2 of \emph{op.cit.}). This shows that there is a subfield $K$ of $L$ such that for all but finitely many $v$, the image of $G_{F^{\ab}}$ under $\rstd \times \rho_{\cF^\sigma, v}^{\mathrm{std}}$ is conjugate to $\SL_2(\cO_{K, u}) \times \SL_2(\cO_{K, u})$, where $u$ is the prime of $K$ below $v$. Hence the tensor product of these two representations is irreducible mod $v$ as a representation of $G_{F^{\ab}}$, and this coincides with the restriction of $\rho_{\cF, v}^{\Asai}$.

    We now consider the case where $\cF$ is Galois-conjugate to a twist of $\cF^\sigma$, but not equal to a twist of $\cF^\sigma$. In this case, the same argument shows that for almost all $v$, either the image of $G_{F^\ab}$ under $\rstd \times \rho_{\cF^\sigma, v}^{\mathrm{std}}$ is conjugate to $\SL_2(\cO_{K, u}) \times \SL_2(\cO_{K, u})$, or $[K_u : \Qp] > 1$ and the image of $G_{F^\ab}$ is conjugate to the image of $\SL_2(\cO_{K, u})$ under a map of the form $(\id, \alpha)$ for some $\alpha \in \Gal(K_u / \Qp)$. Finally, $\alpha$ cannot be the identity, since otherwise $\cF$ would be twist-equivalent to $\cF^\sigma$. If $v$ is not one of the finitely many primes ramifying in $L/\QQ$, it follows that $\alpha$ acts nontrivially on the residue field $k_u$ of $K_u$. Since the tensor product of the standard representation of $\SL_2(k_u)$ and its conjugate by $\alpha$ is irreducible (a simple case of the classification of irreducible representations of $\SL_2$ of a finite field in defining characteristic \cite[\S 30]{brauernesbitt41}), we are done.
   \end{proof}

   \begin{theorem}\label{thm:existtau}
    Suppose there is at least one ramified prime of $F$ which does not divide the level of $\cF$.

    If $\cF$ is not Galois-conjugate to any twist of $\cF^\sigma$, then Condition (BI) is satisfied at all but finitely many primes $v$ of $L$. If $\cF$ is Galois-conjugate to a twist of $\cF^\sigma$, then Condition (BI) is satisfied at all but finitely many degree 1 primes $v$ of $L$.
   \end{theorem}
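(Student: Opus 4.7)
The plan is to verify the two clauses of Condition (BI) separately, with the second clause requiring almost all the work.

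Clause (i) follows immediately from Proposition~\ref{prop:irreducible}: since $\QQ(\mu_{p^\infty})/\QQ$ is abelian we have $\QQ(\mu_{p^\infty}) \subseteq F^{\mathrm{ab}}$ and hence $G_{F^{\mathrm{ab}}} \subseteq G_{\QQ(\mu_{p^\infty})}$, so absolute irreducibility of $T \otimes k_v$ as a $k_v[G_{F^{\mathrm{ab}}}]$-module (which Proposition~\ref{prop:irreducible} establishes for all but finitely many $v$) entails the same for $k_v[G_{\QQ(\mu_{p^\infty})}]$.

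For Clause (ii), a direct computation from the definition of tensor induction (cf.\ \S\ref{sect:galrep}) shows that, for any $\tau \in G_{\QQ(\mu_{p^\infty})}$ lifting $\sigma$, the eigenvalues of $\rho_{\cF,v}^{\Asai}(\tau)$ on the underlying four-dimensional space are
\[ \{\, \lambda,\ \mu,\ +\sqrt{\lambda\mu},\ -\sqrt{\lambda\mu}\,\}, \]
where $\lambda, \mu$ are the eigenvalues of $\rstd(\tau^2)$; the Tate twist by $t_1 + t_2$ is trivial because $\chi_{\mathrm{cyc}}(\tau) = 1$. Since $p \ne 2$, exactly one of these four residue classes modulo $v$ is equal to $1$ precisely when (after relabelling) $\lambda \equiv 1$ and $\mu \not\equiv 1 \pmod v$, and a Jordan-type analysis over $\cO_{L,v}$ then shows that in that case $T/(\tau - 1)T$ is free of rank one. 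Clause (ii) therefore reduces to producing, for almost all $v$ in the first case (resp.\ almost all degree-one $v$ in the second), some $\tau$ for which $\rstd(\tau^2)$ has this residual shape.

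For $p$ outside a finite exceptional set one has $F \cap \QQ(\mu_{p^\infty}) = \QQ$, so we may fix some $\tau_0 \in G_{\QQ(\mu_{p^\infty})}$ lifting $\sigma$. Every other such element is of the form $\tau_0 h$ with $h \in G_{F \cdot \QQ(\mu_{p^\infty})}$, and
\[ \rstd\bigl((\tau_0 h)^2\bigr) = \rstd(\tau_0^2) \cdot \rstd(\tau_0^{-1} h \tau_0) \cdot \rstd(h), \]
in which the middle factor agrees, up to a fixed conjugation by some element of $\GL_2(L_v)$, with $\rho_{\cF^\sigma,v}^{\mathrm{std}}(h)$. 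In the first case of the theorem, the big-image input used in the proof of Proposition~\ref{prop:irreducible} (via Theorem~3.4.1 of \cite{loeffler17}) shows that the joint image of $h \mapsto (\rstd(h),\, \rho_{\cF^\sigma,v}^{\mathrm{std}}(h))$ contains an open subgroup of $\SL_2(\cO_{K,w}) \times \SL_2(\cO_{K,w})$, so that $\rstd(\tau^2)$ sweeps out a Zariski-dense subset of a coset of $\SL_2(\cO_{K,w})$ inside $\GL_2(L_v)$. This meets the codimension-one locus $\{M : \det(M - 1) = 0\}$ transversely, and outside a proper closed subset the second eigenvalue is a unit incongruent to $1$ modulo $v$; Chebotarev density then produces a genuine $\tau$ with the required behaviour. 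The hypothesis that some ramified prime of $F$ does not divide the level of $\cF$ is used to ensure that $\rstd(\tau_0^2)$ may be taken in a suitably non-degenerate position.

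The principal obstacle is the second case of the theorem: if $\cF$ is Galois-conjugate to a twist of $\cF^\sigma$, the joint image is confined to a twisted diagonal $\{(A,\,\alpha(A)) : A \in \SL_2(\cO_{K,w})\}$ for some non-trivial $\alpha \in \Gal(K_w/\Qp)$, and the product $\alpha(\rstd(h)) \cdot \rstd(h)$ no longer obviously fills enough of $\SL_2$ in every Galois-orbit direction. Restricting to degree-one primes $v$ of $L$ forces $L_v = \Qp$ and hence $K_w = \Qp$ with $\alpha$ trivial, which eliminates the coupling and lets the argument of the first case apply verbatim. Throughout one has to verify integrality -- freeness of the cokernel over $\cO_{L,v}$ rather than merely over $L_v$ -- which reduces to insisting that the residual eigenvalues be pairwise distinct modulo $v$, an open condition failing only for the finitely many $v$ already excluded.
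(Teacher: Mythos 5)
Clause (i) is handled correctly and agrees with the paper's approach, using Proposition~\ref{prop:irreducible} and the containment $G_{F^{\mathrm{ab}}}\subseteq G_{\QQ(\mu_{p^\infty})}$.

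Clause (ii) has a genuine flaw. The key Corollary in the paper shows that, for the relevant $\tau$, $\rstd(\tau^2)$ can be taken to be \emph{any} element of $\SL_2(\cO_{K,w})$ — in particular, $\det\rstd(\tau^2)=1$ is forced, because $\tau^2$ kills the cyclotomic character and all inner-twist characters. Consequently $\lambda\mu=1$, and your stated criterion ``$\lambda\equiv 1$, $\mu\not\equiv 1 \pmod v$'' for the good Jordan shape is simply unachievable: if $\lambda\equiv 1$ then $\mu=\lambda^{-1}\equiv 1$ as well. The correct options, given the constraint, are either (a) choose $\rstd(\tau^2)$ split semisimple with $\lambda\not\equiv\pm 1 \pmod v$, so the eigenvalues of $\rho^{\Asai}_{\cF,v}(\tau)$ become $\{\lambda,\lambda^{-1},1,-1\}$ exactly (not merely modulo $v$, since $\lambda\mu=1$ on the nose) — or (b), which is what the paper does, take $\rstd(\tau^2)$ unipotent, $\stbt 1 1 0 1$, whose Asai eigenvalues are $\{1,1,1,-1\}$ \emph{with a single Jordan block} of size 3 for eigenvalue 1. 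In that case, far from being diagonalizable with distinct residual eigenvalues, the matrix is deliberately \emph{non}-semisimple, and the freeness of the coinvariants follows from an explicit $4\times 4$ Jordan-form computation over $\Zp$ for $p\ne 2$. Your eigenvalue-multiplicity test would wrongly reject the paper's chosen $\tau$. More generally, knowing only the residual eigenvalues is insufficient: if an eigenvalue is $\equiv 1\pmod v$ but not exactly $1$ in $\cO_{L,v}$, the corresponding coinvariant contribution is a torsion module, not a free one, so the ``Jordan-type analysis over $\cO_{L,v}$'' needs the integral conjugacy class, not just the characteristic polynomial.

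Two smaller points. First, the hypothesis that some ramified prime of $F$ is coprime to the level is not used to control the ``non-degenerate position'' of $\rstd(\tau_0^2)$; the paper uses it to construct a lift $\tau_0\in G_\QQ\setminus G_F$ that is a square root of the identity modulo all the inner-twist characters and the cyclotomic character, which is precisely what places $\rstd(\tau^2)$ in $\SL_2(\cO_{K,w})$ and makes the large-image corollary available. Second, Chebotarev density is unnecessary: once the joint image of $(\rstd,\rho^{\mathrm{std}}_{\cF^\sigma,v})$ on $G_{F^{\mathrm{ab}}}$ is shown to be all of $\SL_2\times\SL_2$ (not merely a dense set), one obtains a genuine $\tau$ directly by multiplying $\tau_0$ by a suitable $\gamma\in G_{F^{\mathrm{ab}}}$; no approximation argument is required. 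Your treatment of the degree-one-prime reduction in the Galois-conjugate case does match the paper's.
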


   The proof of Theorem \ref{thm:existtau} will take several steps. We assume without loss of generality that $L$ is the smallest extension of $\QQ$ containing the Hecke eigenvalues of $\cF$.

   \begin{definition}[{cf.~\cite[\S B.3]{nekovar12}}]
    An \emph{inner twist} of $\cF$ is a pair $(\alpha, \chi)$, where $\alpha$ is an embedding $L \into \QQbar$ and $\chi$ is a finite-order $\QQbar$-valued Hecke character of $F$, such that $\alpha(\cF) = \cF \otimes \chi$.
   \end{definition}

   One knows that if $(\alpha, \chi)$ is an inner twist, then $\alpha(L) = L$ and $\chi$ takes values in $L$; since $\cF$ is non-CM-type, $\chi$ is uniquely determined by $\alpha$, and the $\alpha \in \operatorname{Aut}(L / \QQ)$ which give inner twists are precisely those which are trivial on the subfield $K \subseteq L$ generated by the quotients $\lambda_{\fp}(\cF)^2 / \varepsilon(\fp)$, as $\fp$ ranges over primes of $F$. Moreover, for all inner twists $(\alpha, \chi)$, the character $\chi$ is unramified outside the primes dividing the level of $\cF$.

   Note that $(\alpha, \chi) \to (\alpha, \chi^\sigma)$ gives a bijection between the inner twists of $\cF$ and those of $\cF^\sigma$.

   \begin{lemma}
    Suppose there is at least one ramified prime of $F$ which does not divide the level of $\cF$.

    Then, for all but finitely many primes $p$, there exists $\tau \in G_{\QQ} - G_F$ such that $\tau$ acts trivially on $\QQ(\mu_{p^\infty})$, and for any inner twist $(\alpha, \chi)$ of $\cF$ or $\cF^\sigma$, we have $\chi(\tau^2) = 1$.
   \end{lemma}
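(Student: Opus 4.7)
The strategy is to exhibit $\tau$ as a lift of an inertia element at a carefully chosen prime, inside a finite extension of $\QQ$ cut out by the inner twist characters. Let $\chi_1,\dots,\chi_r$ enumerate the (finitely many) characters arising as inner twists of $\cF$ or $\cF^\sigma$; each is a finite-order Hecke character of $F$ whose conductor divides $\fN$ or $\fN^\sigma$ respectively. Let $L'$ denote the Galois closure over $\QQ$ of the compositum of the fields $F(\chi_i)$ cut out by the $\chi_i$. Then $L'/F$ is a finite abelian extension, $L'/\QQ$ is Galois by construction, and $L'/F$ is unramified outside the primes of $F$ dividing $\fN\cdot\fN^\sigma$.

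Now pick a rational prime $q$ that ramifies in $F/\QQ$ and does not divide $\fN$. The unique prime $\fq$ of $F$ above $q$ is $\sigma$-stable (since it is ramified), so it does not divide $\fN^\sigma$ either, and consequently $L'/F$ is unramified at $\fq$. The short exact sequence
\[ 1 \to I_{\fq}(L'/F) \to I_q(L'/\QQ) \to I_q(F/\QQ) \to 1 \]
for inertia at a prime of $L'$ above $q$ then collapses to show that $I_q(L'/\QQ)$ has order exactly $2$, and its nontrivial element $\tau_0 \in \Gal(L'/\QQ)$ is an involution which restricts to the nontrivial element of $\Gal(F/\QQ)$.

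It remains to promote $\tau_0$ to an element of $G_{\QQ(\mu_{p^\infty})}$. For any prime $p\neq q$ with $p \nmid \fN\cdot\fN^\sigma$, the intersection $L'\cap \QQ(\mu_{p^\infty})$ is an abelian extension of $\QQ$ unramified at every finite prime (unramified outside $p$ because it sits inside $\QQ(\mu_{p^\infty})$, and unramified at $p$ because it sits inside $L'$), hence $L'\cap\QQ(\mu_{p^\infty}) = \QQ$ by Minkowski. Therefore
\[ \Gal\left(L'\cdot\QQ(\mu_{p^\infty})/\QQ\right) \cong \Gal(L'/\QQ)\times\Gal(\QQ(\mu_{p^\infty})/\QQ), \]
and we lift $(\tau_0, 1)$ under this decomposition to some $\tau \in G_\QQ$. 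By construction $\tau$ acts trivially on $\QQ(\mu_{p^\infty})$ and nontrivially on $F$, while $\tau^2$ restricts to $\tau_0^2 = 1$ on $L'$; hence $\chi_i(\tau^2)=1$ for every $i$. The excluded set of primes consists of $q$ together with those dividing $\fN\cdot \fN^\sigma$, which is finite.

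The main obstacle is producing the involution $\tau_0$ itself: in general the sequence $1\to\Gal(L'/F)\to\Gal(L'/\QQ)\to\Gal(F/\QQ)\to 1$ need not split, and no lift of the nontrivial element of $\Gal(F/\QQ)$ of order $2$ need exist in $\Gal(L'/\QQ)$. The hypothesis on the ramified prime $q$ not dividing $\fN$ is what resolves this, producing $\tau_0$ directly and geometrically as an inertia element, and without it the conclusion of the lemma could genuinely fail.
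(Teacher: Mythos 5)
Your construction is genuinely different from the paper's, and its core is correct. The paper does not use an inertia element at the ramified prime: it uses the hypothesis only to note that the ramified prime $\ell \mid \Delta$ is coprime to the conductors of the Dirichlet characters $\chi|_{\hat\ZZ^\times}$, so that the quadratic character of $F$ is independent of them; it then chooses (by Dirichlet) an auxiliary rational prime $q$ that is inert in $F$ and lies in the kernel of all these restricted characters, and takes $\tau_0$ to be the Frobenius at $q$ in $\Gal(F'/\QQ)$, which restricts to $\sigma$ on $F$ and satisfies $\tau_0^2=1$ because the Frobenius of $q\cO_F$ in the abelian extension $F'/F$ is trivial. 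Your route instead produces the involution directly and more geometrically, as the generator of the order-$2$ inertia group at the ramified prime $\fq \nmid \fN$ inside $\Gal(L'/\QQ)$, using that $L'/F$ is unramified at $\fq$; this avoids any appeal to Dirichlet's theorem and is arguably cleaner. Both proofs then conclude by splitting off $\QQ(\mu_{p^\infty})$ via linear disjointness.

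There is, however, one slip in your last step: you exclude only $q$ and the primes dividing $\fN\cdot\fN^\sigma$, and justify triviality of $L'\cap\QQ(\mu_{p^\infty})$ by saying this intersection is ``unramified at $p$ because it sits inside $L'$''. That justification presupposes that $L'/\QQ$ is unramified at $p$, which fails whenever $p \mid \Delta$ and $p \neq q$, since $F \subseteq L'$ is ramified there; and the conclusion itself can genuinely fail for such $p$ (for instance, if one of the inner-twist characters is an everywhere-unramified genus character, then $L'$ contains $\QQ(\sqrt{p^*})\subset\QQ(\mu_{p^\infty})$). This does not threaten the lemma, whose conclusion only requires all but finitely many $p$: the fix is simply to enlarge the excluded set to all primes ramifying in $L'/\QQ$ (in particular all primes dividing $\Delta$), exactly as the paper does by excluding the primes ramified in $F'/\QQ$, after which your direct-product decomposition and the lift of $(\tau_0,1)$ go through verbatim.
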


   \begin{proof}
    By assumption, there is some prime $\ell \mid D$ which is coprime to the level of $\cF$, and hence coprime to the conductors of all of the Dirichlet characters $\chi |_{\hat\ZZ^\times}$ where $(\alpha, \chi)$ varies over the inner twists of $\cF$ or $\cF^\sigma$. Therefore, we may find primes $q$ which are quadratic non-residues modulo $D$ and such that $\chi(q) = 1$ for all such $q$.

    Let $F'$ be the finite abelian extension of $F$ cut out by all of the characters $\chi$ and $\chi^\sigma$. Then $F' / \QQ$ is Galois, and if $\tau_0$ is the conjugacy class of any $q$ as above, we have $\tau_0^2 = 1$ in $\Gal(F' / F)$.

    If $p$ is not one of the finitely many primes ramifying in $F' / \QQ$, then $F'$ is linearly disjoint from $\QQ(\mu_{p^\infty})$ over $\QQ$ (since one field is unramified at $p$ and the other totally ramified). So we may find $\tau \in G_{\QQ}$ which acts trivially on the cyclotomic field and as $\tau_0$ on $F'$, and this $\tau$ satisfies the conditions.
   \end{proof}

   \begin{corollary}
    In the setting of the previous lemma, if $\cF$ is not Galois-conjugate to a twist of $\cF^\sigma$, then for all but finitely many primes $v$ of $L$ we have
    \[\SL_2(\cO_{K, u}) \subseteq \{ \rho(\tau^2): \tau \in G_{\QQ(\mu_{p^\infty})}, \tau \notin G_F\}, \]
    where $\rho = \rstd$ and $u$ is the prime of $K$ below $v$. If $\cF$ is Galois-conjugate to some twist of $\cF^\sigma$, then this holds for all but finitely many $v$ of degree 1.
   \end{corollary}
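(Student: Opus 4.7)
The plan is to show, for every $C \in \SL_2(\cO_{K,w})$, that some $\tau \in G_{\QQ(\mu_{p^\infty})} \setminus G_F$ exists with $\rho(\tau^2) = C$. Fix $\tau_0$ provided by the previous lemma, so that $\tau_0$ acts trivially on $\QQ(\mu_{p^\infty})$, represents the nontrivial coset of $G_F$ in $G_{\QQ(\mu_{p^\infty})}$, and satisfies $\chi(\tau_0^2)=1$ for every inner-twist character of $\cF$ or $\cF^\sigma$. Parameterising $\tau = \tau_0\gamma$ with $\gamma \in H:=G_{F\cdot\QQ(\mu_{p^\infty})}$, a direct calculation gives the key identity
\[ \rho\bigl((\tau_0\gamma)^2\bigr) \;=\; \rho(\tau_0^2)\cdot\rho^\sigma(\gamma)\cdot\rho(\gamma), \]
where $\rho^\sigma(\gamma):=\rho(\tau_0^{-1}\gamma\tau_0)$ is isomorphic as a representation of $H\subseteq G_F$ to the standard Galois representation of the internal conjugate $\cF^\sigma$.

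The joint big-image input from the proof of Proposition \ref{prop:irreducible} now enters: when $\cF$ is not Galois-conjugate to a twist of $\cF^\sigma$, the image of $G_{F^{\mathrm{ab}}}\subseteq H$ under $\rho\times\rho^\sigma$ is, for almost all $v$ and in a suitable basis, the full product $\SL_2(\cO_{K,w})\times\SL_2(\cO_{K,w})$; in the conjugate-twist case, restricting to primes $v$ of degree $1$ forces $K_w=\Qp$, which excludes the ``diagonal'' alternative in that theorem and places us in the same situation. Since the product map $(A,B)\mapsto BA$ from $\SL_2\times\SL_2$ to $\SL_2$ is surjective (take $A=I$, $B=C$), the set $\{\rho^\sigma(\gamma)\rho(\gamma):\gamma\in G_{F^{\mathrm{ab}}}\}$ is all of $\SL_2(\cO_{K,w})$, and the identity above shows that the image of $\tau_0\cdot G_{F^{\mathrm{ab}}}$ under $\tau\mapsto\rho(\tau^2)$ is precisely the coset $\rho(\tau_0^2)\cdot\SL_2(\cO_{K,w})$.

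The remaining step is to guarantee that $\rho(\tau_0^2)\in\SL_2(\cO_{K,w})$, so that this coset coincides with $\SL_2(\cO_{K,w})$ itself. The conditions $\chi_\alpha(\tau_0^2)=1$ from the lemma, combined with the defining compatibilities $\alpha(\Tr\rho)=\chi_\alpha\Tr\rho$ and $\alpha(\det\rho)=\chi_\alpha^2\det\rho$ for every $\alpha\in\Gal(L/K)$, force both $\Tr\rho(\tau_0^2)$ and $\det\rho(\tau_0^2)$ to lie in $K_w$; Momose's descent of $\rho$ to a $K_w$-form (up to a twist character) then places $\rho(\tau_0^2)$ in $\GL_2(K_w)$. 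The cyclotomic triviality $\chi_{\mathrm{cyc}}(\tau_0)=1$ reduces $\det\rho(\tau_0^2)$ to the nebentype value $\varepsilon_\cF(\tau_0^2)$, and this residual factor can be trivialised by replacing $\tau_0$ with $\tau_0\xi$ for a suitable $\xi\in H$ chosen so that $\Nm_{F/\QQ}(\varepsilon_\cF)(\xi) = \varepsilon_\cF(\tau_0^2)^{-1}$.

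The main obstacle is this final adjustment: the perturbation $\xi$ must simultaneously trivialise $\det\rho(\tau_0^2)$ and preserve all the conditions imposed in the previous lemma (most importantly the inner-twist identity $\chi_\alpha(\tau_0^2) = 1$). The existence of such a $\xi$ for generic $p$ reduces to verifying that the characters $\{\chi_\alpha\}_\alpha$ and $\Nm_{F/\QQ}(\varepsilon_\cF)$ are sufficiently independent on $H$, which in turn follows from the non-CM and non-base-change hypotheses on $\cF$ via a standard Chebotarev-type argument in the spirit of Ribet--Momose. An alternative packaging is to simply strengthen the previous lemma from the outset to also demand $\det\rho(\tau_0^2) = 1$, which is feasible under the same hypotheses.
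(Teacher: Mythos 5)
Your first two paragraphs reproduce the paper's argument exactly: writing $\tau = \tau_0\gamma$, using the identity $\rho\bigl((\tau_0\gamma)^2\bigr) = \rho(\tau_0^2)\,\rho(\tau_0^{-1}\gamma\tau_0)\,\rho(\gamma)$, invoking the joint surjectivity of $\rho \times \rho^\sigma$ onto $\SL_2(\cO_{K,w}) \times \SL_2(\cO_{K,w})$ from the proof of Proposition \ref{prop:irreducible} (with the degree-one reduction handling the conjugate-twist case), and concluding that the image of $\tau \mapsto \rho(\tau^2)$ on $\tau_0 G_{F^{\mathrm{ab}}}$ is the coset $\rho(\tau_0^2)\SL_2(\cO_{K,w})$. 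Up to that point you and the paper agree.

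The problem is your final step. You treat the possible nontriviality of $\det\rho(\tau_0^2) = \varepsilon_\cF(\tau_0^2)$ as ``the main obstacle'' and propose to remove it by replacing $\tau_0$ with $\tau_0\xi$, justified by an alleged independence of the inner-twist characters and $\Nm_{F/\QQ}(\varepsilon_\cF)$. This adjustment is both unnecessary and, as proposed, unworkable. It is unnecessary because $\varepsilon_\cF^{-1}$ is itself (the character of) an inner twist of $\cF$: complex conjugation $c$ on $L$ satisfies $c(\cF) = \cF \otimes \varepsilon_\cF^{-1}$ (note $c$ fixes $K$, since $\overline{\lambda_\fp^2/\varepsilon(\fp)} = \lambda_\fp^2/\varepsilon(\fp)$), and if $c$ is trivial on $L$ then, $\cF$ being non-CM, $\varepsilon_\cF$ is forced to be trivial. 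Hence the hypothesis of the lemma, $\chi(\tau_0^2)=1$ for every inner-twist character, already gives $\varepsilon_\cF(\tau_0^2)=1$, so $\det\rho(\tau_0^2) = \varepsilon_\cF(\tau_0^2)\chi_{\mathrm{cyc}}(\tau_0^2)^{w-1} = 1$ with no further work; this is exactly how the paper disposes of the point in one line. It is unworkable because any $\xi \in H$ for which $\tau_0\xi$ still satisfies the lemma's conclusions must have $(\chi\chi^\sigma)(\xi)=1$ for every inner-twist character $\chi$ (since $\chi\bigl((\tau_0\xi)^2\bigr) = \chi(\tau_0^2)\,(\chi^\sigma\chi)(\xi)$), and in the only case where $\varepsilon_\cF(\tau_0^2)$ could be nontrivial the character $\varepsilon_\cF^{-1}$ is among the $\chi$'s; so $(\varepsilon_\cF\varepsilon_\cF^\sigma)(\xi)=1$ and $\xi$ can never change $\det\rho\bigl((\tau_0\xi)^2\bigr)$. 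The ``Chebotarev-type independence'' you appeal to therefore fails precisely where you would need it. The correct repair is simply to delete that last paragraph and note the observation above (equivalently: the strengthening of the lemma you suggest is already implicit in its statement). One further small point worth a sentence in a written-up version: Momose--Ribet descent gives $\rho(\tau_0^2) \in \GL_2(K_w)$, and one should observe that integrality (membership in $\GL_2(\cO_{K,w})$) also holds for all but finitely many $v$, e.g.\ because the image of $\rho$ was normalised to lie in $\GL_2(\cO_{L,v})$.
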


   \begin{proof}
    Let $\tau$ be any element as in the previous lemma. Then $\rho(\tau^2)$ lies in $\SL_2(\cO_{K, w})$, since $\tau^2$ is in the kernel of all the inner twists of $\cF$ and of the cyclotomic character.

    However, if $\tau$ satisfies the conclusions of the lemma, so does $\gamma \tau$ for any $\gamma \in G_{F^{\ab}}$; and replacing $\tau$ by $\gamma \tau$ replaces $\rho(\tau^2)$ by $\rho(\gamma) \rho(\tau^2) \rho(\tau^{-1} \gamma \tau)$.

    If $\cF$ is not Galois-conjugate to any twist of $\cF^\sigma$, then (as we have seen in the proof of Proposition \ref{prop:irreducible}) as $\gamma$ varies over $G_{F^{\ab}}$, the pair $(\rho(\gamma), \rho(\tau^{-1} \gamma \tau))$ hits every element of $\SL_2(\cO_{K, u}) \times \SL_2(\cO_{K, u})$, so in particular $\rho( (\gamma \tau)^2 )$ can take every value in $\SL_2(\cO_{K, u})$. The same holds if $\cF$ is Galois-conjugate to some twist of $\cF^\sigma$, as long as the automorphism of $K$ mapping $\cF$ to a twist of $\cF^\sigma$ is not contained in the decomposition group of $u$; in particular this holds if $K_u = \Qp$ as claimed.
   \end{proof}

   \begin{proposition}
    If $p \ne 2$, and $\tau \in G_\QQ$ is such that $\tau \notin G_F$ and $\rstd(\tau^2)$ is conjugate in $\GL_2(\cO_{L, v})$ to $\stbt 1 1 0 1$, then the quotient $\cO_{L, v}^{\oplus 4} / (\rho_{\cF, v}^{\Asai}(\tau) - 1)$ is free of rank 1 over $\cO_{L, v}$.
   \end{proposition}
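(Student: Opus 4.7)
The plan is to reduce the claim to a direct linear-algebra computation on a fixed integral basis, using the concrete formula for tensor induction recalled in \S\ref{sect:galrep}. First I would exploit the hypothesis that $\rho^{\mathrm{std}}(\tau^2)$ is $\GL_2(\cO_{L,v})$-conjugate (not merely $\GL_2(L_v)$-conjugate) to $\stbt{1}{1}{0}{1}$, which lets me pass to a $\cO_{L,v}$-basis $(e_1,e_2)$ of the underlying lattice $V$ of $\rho = \rho^{\mathrm{std}}_{\cF,v}$ in which $\rho(\tau^2)$ is literally equal to this matrix. Since $\tau$ itself lies in $G_\QQ \smallsetminus G_F$, I can take $\tau$ as the representative of the nontrivial element of $\Gal(F/\QQ)$ in the definition of tensor induction, which then gives
\[
\tInd_F^\QQ(\rho)(\tau)(v\otimes w) \;=\; \rho(\tau^2)w \otimes v
\]
on $V \otimes V$. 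The Tate twist $L_v(t_1+t_2)$ simply multiplies this by the unit scalar $\chi_{\mathrm{cyc}}(\tau)^{t_1+t_2}$, which is $1$ in the intended application (where $\tau$ will be drawn from $G_{\QQ(\mu_{p^\infty})}$, per the corollary preceding the proposition); so I reduce to showing that the cokernel of $\tInd_F^\QQ(\rho)(\tau)-1$ on $V\otimes V \cong \cO_{L,v}^{\oplus 4}$ is free of rank $1$.

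Writing $e_{ij} = e_i \otimes e_j$ and applying the displayed formula, I would compute
\[
\rho^{\Asai}(\tau)(e_{11}) = e_{11},\quad \rho^{\Asai}(\tau)(e_{12}) = e_{11} + e_{21},\quad \rho^{\Asai}(\tau)(e_{21}) = e_{12},\quad \rho^{\Asai}(\tau)(e_{22}) = e_{12} + e_{22}.
\]
So the columns of $\rho^{\Asai}(\tau)-1$ in the basis $(e_{11},e_{12},e_{21},e_{22})$ are $0$, $(1,-1,1,0)$, $(0,1,-1,0)$, and $(0,1,0,0)$. Adding the second and third columns yields $e_{11}$, the fourth column is $e_{12}$, and subtracting the third column from the fourth yields $e_{21}$; all three relations use only integer coefficients, so the image contains the free submodule $\cO_{L,v}\langle e_{11},e_{12},e_{21}\rangle$. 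Since the bottom row of $\rho^{\Asai}(\tau)-1$ vanishes, the image in fact equals this submodule, and the coset of $e_{22}$ is therefore a free $\cO_{L,v}$-generator of the cokernel.

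The entire argument is a rank-and-pivot check; there is no genuine obstacle. The only substantive input is the correct identification of $\rho^{\Asai}(\tau)$ via the tensor-induction recipe, after which everything is a matrix computation with integer entries that goes through without any division. The hypothesis $p \ne 2$ is not needed for the linear algebra per se, but is harmless (and is in force throughout the paper).
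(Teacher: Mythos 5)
Your proof is correct, and it is in fact slightly cleaner than the argument in the paper. Both approaches write $\tau$ as the chosen representative of the nontrivial element of $\Gal(F/\QQ)$ in the tensor-induction recipe and compute the same $4\times 4$ matrix for $\rho^{\Asai}(\tau)$ in a tensor-product basis (yours, in the ordering $(e_{11},e_{12},e_{21},e_{22})$, agrees up to a transposition of the two middle basis vectors with the matrix $\left(\begin{smallmatrix} 1 & 0 & 1 & 0 \\ 0 & 0 & 1 & 0 \\ 0 & 1 & 0 & 1 \\ 0 & 0 & 0 & 1\end{smallmatrix}\right)$ in the paper). The difference is in the cokernel computation: the paper passes to the Jordan normal form $\left(\begin{smallmatrix} 1 & 1 & 0 & 0 \\ 0 & 1 & 1 & 0 \\ 0 & 0 & 1 & 0 \\ 0 & 0 & 0 & -1\end{smallmatrix}\right)$ and uses $p \ne 2$ at two points -- to guarantee that the conjugating matrix lies in $\GL_4(\Zp)$ (the eigenvalue decomposition requires inverting $2$), and because the $(4,4)$-entry of $J-1$ is $-2$ -- whereas your direct column-reduction amounts to putting $\rho^{\Asai}(\tau)-1$ into Smith normal form $\operatorname{diag}(1,1,1,0)$ by integer operations only, so the conclusion holds with no condition on $p$. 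Your proof also makes explicit the reduction needed to ignore the Tate twist $L_v(t_1+t_2)$: because in the intended application (the preceding corollary) $\tau$ is drawn from $G_{\QQ(\mu_{p^\infty})}$, the cyclotomic character is trivial on $\tau$; the paper's proof silently assumes this. So yours is a more elementary and marginally more general argument, and the paper's $p \ne 2$ hypothesis is not actually necessary for this particular proposition.
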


   \begin{proof}
    If we fix a basis $(v_1, v_2)$ of the underlying space of $\rstd$ in which $\tau^2$ acts as $\stbt 1 1 0 1$, then $(v_1 \otimes v_1, v_2 \otimes v_1, v_1 \otimes v_2, v_2 \otimes v_2)$ is a basis of $\rho_{\cF, v}^{\Asai}$ and the matrix of $\tau$ in this basis is $\left(\begin{smallmatrix} 1 & 0 & 1 & 0 \\ 0 & 0 & 1 & 0 \\ 0 & 1 & 0 & 1 \\ 0 & 0 & 0 & 1\end{smallmatrix}\right)$. The Jordan normal form of this matrix is $\left(\begin{smallmatrix} 1 & 1 & 0 & 0 \\ 0 & 1 & 1 & 0 \\ 0 & 0 & 1 & 0 \\ 0 & 0 & 0 & -1\end{smallmatrix}\right)$, and one can check that the similarity transformation relating these matrices lies in $\GL_4(\Zp)$ for any $p \ne 2$. So the space of coinvariants of $\rho_{\cF, v}^{\Asai}(\tau)$ is free of rank 1 as required.
   \end{proof}

   This completes the proof of Theorem \ref{thm:existtau}.

  \subsection{Bounding Selmer groups}
   \label{sect:boundSel}
   We shall now give the proof of Theorem \ref{lthm:boundSel} of the introduction. For the convenience of the reader, we shall recall the list of hypotheses we are imposing.

   \begin{itemize}
    \item $\cF$ is an eigenform of level $\fN$, with coefficients in a number field $L \supset F$ and weight $(k+2, k'+2, t, t')$, where $k, k' \ge 0$.
    \item $p$ is a rational prime, with $p = \fp \fq$ split in $F$ and $p \mathop{\|} \fN$.
    \item $v$ is a prime of $L$ above $\fp$.
    \item $\cF$ is ordinary at $p$ (with respect to $v$), and is the $p$-stabilisation of an eigenform of level $\fN/p$.
    \item The hypotheses (NEZ) of \S\ref{sect:motivicL} and (BI) of \S\ref{sect:bigimage} hold.
    \item $p \ge k + k' + 3$.
   \end{itemize}

   We also fix a choice of 1-dimensional subquotient $M_{\fp}$ of $M_{L_v}(\cF)^*$ as in Lemma \ref{lem:1dimquot}, and a basis $\Omega_{\fp}$ of the 1-dimensional $L_v$-vector space $\Dcris(M_\fp)$. Finally, we choose an integer $c > 1$ coprime to $6p\Nm_{F/\QQ}(\fN)$.

   Let $R = \cO_{L, v}$. We let $M_R(\cF)^*$ be the $R$-submodule of $M_{L_v}(\cF)^*$ generated by the image of $H^2\left(Y_1^*(\fN)_{\overline\QQ}, \TSym^{[k, k']} \sH_R(\cA)(2)\right)$; this is non-zero (by comparison with de Rham cohomology) and stable under $\Gal(\QQbar / \QQ)$, and hence must be a lattice of full rank, since we have shown that $M_{L_v}(\cF)^*$ is irreducible.

   \begin{lemma}
    For every finite extension $K / \QQ$, every finite set of primes $S$ containing all primes dividing $p \Nm_{F/\QQ}(\fN)$, and every $j \in \ZZ$, the projection map
    \[ \pr_{\cF}: H^3\left( Y_1^*(\fN)_{\cO_{K, S}}, \TSym^{[k, k']} \sH_R(\cA)(2-j)\right) \to H^1\left(\cO_{K, S}, M_{L_v}(\cF)^*(-j)\right)\]
    factors through $H^1\left(\cO_{K, S}, M_R(\cF)^*(-j)\right)$.
   \end{lemma}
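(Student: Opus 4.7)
The plan is to trace through the definitions, as the statement should reduce to a tautology once one unpacks the construction of $\pr_\cF$ and carries it out at the level of integral coefficient sheaves.

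First I would apply the Hochschild--Serre spectral sequence
\[ E_2^{p,q} = H^p\bigl(\cO_{K,S}, H^q(Y_1^*(\fN)_{\overline{\QQ}}, \TSym^{[k,k']}\sH_R(\cA)(2-j))\bigr) \Rightarrow H^{p+q}\bigl(Y_1^*(\fN)_{\cO_{K,S}}, \TSym^{[k,k']}\sH_R(\cA)(2-j)\bigr), \]
which is available with integral $R$-coefficients since $\TSym^{[k,k']}\sH_R(\cA)$ is a lisse \'etale sheaf of $R$-modules. This yields an edge map from the source of $\pr_\cF$ to $H^1\bigl(\cO_{K,S}, H^2(Y_1^*(\fN)_{\overline{\QQ}}, \TSym^{[k,k']}\sH_R(\cA)(2-j))\bigr)$, compatible with its characteristic-$0$ analogue after inverting $v$.

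Next I would recall that on characteristic-$0$ cohomology the map $\pr_\cF$ is defined as the composition of the pushforward
\[ \jmath_*: H^2(Y_1^*(\fN)_{\overline{\QQ}}, \TSym^{[k,k']}\sH_{L_v}(\cA)(2-j)) \to H^2(Y_1(\fN)_{\overline{\QQ}}, \sH^{[\mu]}_{L_v}(2-t_1-t_2-j)) \]
with the quotient map onto the maximal $\cF$-isotypic quotient. The pushforward $\jmath_*$ is defined at the integral level, since $\jmath$ is a finite morphism and the direct image of a lisse $R$-sheaf under such a morphism is again lisse; the subsequent quotient map, by the very \emph{definition} of $M_R(\cF)^*$ given immediately before the lemma, carries the integral cohomology into $M_R(\cF)^*(-j)$ (the Tate twist being a formal auto-equivalence of $R$-sheaves that commutes with everything in sight). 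Applying $H^1(\cO_{K,S}, -)$ to this factorisation and pre-composing with the edge map will give the required factorisation.

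I do not expect a substantive obstacle here: the real work in this lemma lies hidden in having set up the definitions --- both of $M_R(\cF)^*$ and of $\pr_\cF$ --- in a sufficiently functorial way that the integral factorisation falls out automatically. The only point requiring mild care is the bookkeeping needed to reduce the twisted case ($j \ne 0$) to the untwisted case used to define $M_R(\cF)^*$, but since Tate twists are auto-equivalences of the relevant sheaf categories this is purely formal.
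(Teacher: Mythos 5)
Your proposed proof has a genuine gap at the very first step, and it is precisely the step that the paper's proof is designed to address. You assert without justification that the Hochschild--Serre spectral sequence for the $R$-coefficient sheaf "yields an edge map" $H^3(Y_1^*(\fN)_{\cO_{K,S}}, -) \to H^1(\cO_{K,S}, H^2(Y_1^*(\fN)_{\overline\QQ},-))$. But the map $H^n \to E_2^{1,n-1}$ is not an automatic edge map of the spectral sequence: it exists only when $E_\infty^{0,n} = 0$, i.e.\ when $H^0\bigl(\cO_{K,S}, H^3(Y_{\overline\QQ}, \TSym^{[k,k']}\sH_R(\cA)(2-j))\bigr) = 0$. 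With $L_v$-coefficients this holds, because the $\cF$-generalised eigenspace of the cohomology of $Y_1(\fN)_{\overline\QQ}$ is concentrated in degree $2$ (this is how the paper constructs $\pr_\cF$ over $L_v$ in the first place; the variety is not affine, so the vanishing is \emph{not} a dimension argument but a cuspidality argument). With $R$-coefficients, however, $H^3(Y_1(\fN)_{\overline\QQ}, \sH^{[\mu]}_R)$ can a priori have $p$-torsion that is invisible after inverting $p$, and there is no tautological reason for it to die.

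This is exactly the point where the paper invokes Dimitrov's theorem: under the hypotheses of \S\ref{sect:boundSel} (in particular the big-image hypothesis (BI) and the inequality $p \ge k + k' + 3$), the maximal ideal $\fm$ of the integral Hecke algebra attached to $\cF$ satisfies the condition $(\mathbf{LI}_{\Ind \overline{\rho}})$ of \cite{dimitrov05}, and Theorem 0.3(ii) of \emph{op.cit.} then shows that the $\fm$-localised integral cohomology of $Y_1(\fN)$ vanishes outside the middle degree. It is this localised vanishing that makes the Hochschild--Serre argument go through over $R$; the factorisation through $M_R(\cF)^*$ is indeed essentially formal \emph{given} this vanishing, but the vanishing is the content of the lemma, not a bookkeeping triviality. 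Your proposal makes no use of the hypothesis $p \ge k + k' + 3$, which should already have been a warning sign, since the paper explicitly flags this lemma as the place where that hypothesis is used.
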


   \begin{proof}
    Let $\fm$ denote the maximal ideal of the Hecke algebra of level $U_1(\fN)$ (with $R$ coefficients) corresponding to $\cF$. Our conditions on $v$ imply that $\fm$ satisfies the condition $(\mathbf{LI}_{\Ind \overline{\rho}})$ of \cite{dimitrov05} (this is where $p \ge k + k' + 3$ is used); so by Theorem 0.3(ii) of \emph{op.cit.}, the localisation of the cohomology of $Y_1(\fN)$ at $\fm$ vanishes outside the middle degree. Thus the projection map $\pr_{\cF}$ is defined over $R$.
   \end{proof}

   We now define an appropriate Selmer group. We define $A = M_{R}(\cF)(1) \otimes \Qp/\Zp$; and we let $\Fil^\fp A$ be the submodule of $A$ (of corank 2) dual to the kernel of $\Fil^1 M_R(\cF)^* \to M_{\fp}$.

   \begin{definition}
    We set
    \[ \operatorname{Sel}^{(\fp)}(\QQ(\mu_{p^\infty}), A) =
    \left\{
     \begin{array}{c}
      x \in H^1(\QQ(\mu_{p^\infty}), \cF): \loc_\ell(x) = 0 \text{ for $\ell \ne p$}, \\
     \loc_p(x) \in \operatorname{image} H^1(\Qp(\mu_{p^\infty}), \Fil^\fp A)
     \end{array}
    \right\}
    \]
    and
    \[ X^{(\fp)}(\QQ(\mu_{p^\infty}), \cF) = \operatorname{Sel}^{(\fp)}(\QQ(\mu_{p^\infty}), \cF)^\vee \]
    (where $\vee$ denotes Pontryagin dual).
   \end{definition}

   \begin{theorem}[Theorem \ref{lthm:boundSel}]
    There exists an integer $n$ such that
    \[ \operatorname{char}_{\Lambda_\Gamma} X^{(\fp)}(\QQ(\mu_{p^\infty}), A) \mid  \frac{p^n{}_c L_{\fp, \Asai}^{\imp}(\cF)}{\Omega_\fp}. \]
   \end{theorem}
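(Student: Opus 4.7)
The plan is to execute the standard Euler system argument of Rubin (as in \cite{rubin00}), adapted to the $\Lambda$-adic setting along the lines of \cite[\S 11]{kingsloefflerzerbes15b} and \cite[\S 12]{leiloefflerzerbes14a}. The strategy is to deduce a divisibility for the dual Selmer group from the existence of a norm-compatible family of Galois cohomology classes whose image under a certain $p$-adic period map is ${}_c L^{\imp}_{\fp, \Asai}(\cF)$.

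First I would verify that the family $\{\cAF^{\cF}_{m, a}\}_m$, restricted to squarefree $m$ which are products of rational primes $\ell$ inert in $F$ and coprime to $6 p c \Nm_{F/\QQ}(\fN)$, constitutes an Euler system for the representation $T = M_R(\cF)^*$ with coefficients in the twisted Iwasawa algebra $\Lambda_\Gamma(-\mathbf{j})$. The norm relation in Corollary \ref{cor:AFf} (or equivalently Theorem \ref{thm:TlrelationB}(i)) gives precisely the prescribed Euler factor $P_\ell(\cF, \ell^{-1-\mathbf{j}} \sigma_\ell^{-1})$ for such $\ell$; note that the extra factor $(\ell-1)(1-\ell^{-2j}\langle\ell^{-1}\rangle R'(\ell) \sigma_\ell^{-2})$ appearing in the norm relation becomes a unit in $\Lambda_\Gamma$ (up to a bounded power of $p$) after projection to the $\cF$-isotypical component, so it can be absorbed into the fudge factor. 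Restricting to inert $\ell$ ensures that Frobenius at $\ell$ acts nontrivially on $F$, which is required in order that the Chebotarev argument behind Kolyvagin's derivative construction selects enough primes satisfying both the congruence conditions on $\Frob_\ell$ and the condition that $\tau = \Frob_\ell$ satisfies the big-image hypothesis (BI)(ii).

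Next I would invoke the general Euler system machine. Because condition (BI) of \S\ref{sect:bigimage} is precisely a strengthening of the hypothesis $\operatorname{Hyp}(K_\infty, T)$ of \cite{rubin00} with $K_\infty = \QQ(\mu_{p^\infty})$, the machinery of \emph{op.~cit.} applies directly and produces a bound
\[
 \operatorname{char}_{\Lambda_\Gamma}\!\Big(X^{(\fp)}(\QQ(\mu_{p^\infty}), A)\Big) \,\Big|\,
 \operatorname{char}_{\Lambda_\Gamma}\!\Big(H^1_{\mathrm{Iw}}\big/\Lambda_\Gamma \cdot \cAF^{\cF}_{1,a}\Big) \cdot (\text{bounded } p\text{-power}),
\]
where $H^1_{\mathrm{Iw}}$ denotes the Iwasawa cohomology group containing the bottom class. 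The local conditions of the Selmer group $X^{(\fp)}$ are tailored exactly so that (a) they are unramified outside $p$, and (b) at $p$ they are dual to the quotient $M_\fp$ chosen in Lemma \ref{lem:1dimquot}; this matches the fact, established in the previous section, that $\loc_p(\cAF^{\cF}_{1,a})$ lies in the image of the Iwasawa cohomology of $\Fil^1 M_{L_v}(\cF)^*$, as required for the Euler system argument to produce bounds in the corresponding dual Selmer group.

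Finally, the image of $\cAF^{\cF}_{1,a}$ under the composition of localisation at $p$, projection to $M_\fp$, and the Perrin-Riou big logarithm $\mathcal{L}^{\mathrm{PR}}$ is, by definition, the motivic $p$-adic $L$-function ${}_c L^{\imp}_{\fp, \Asai}(\cF)$. Under hypothesis (NEZ), $\mathcal{L}^{\mathrm{PR}}$ is an isomorphism of $L_v \otimes_{\Zp} \Lambda_\Gamma$-modules, so the characteristic ideal of the cokernel of the map $\Lambda_\Gamma \cdot \cAF^{\cF}_{1,a} \to H^1_{\mathrm{Iw}}$ in the direction of $M_\fp$ is generated (up to $p$-powers) by ${}_c L^{\imp}_{\fp, \Asai}(\cF)/\Omega_\fp$. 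Combining this with the Euler system bound yields the theorem.

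The main technical obstacle is the careful bookkeeping of the power of $p$ denoted $n$: contributions arise from (a) the factor $(a-\bar a)^j j! \binom{k}{j}\binom{k'}{j}$ in the interpolation of $\cAF^{\cF}_{m,a}$ (Theorem \ref{thm:iwasawaelt}), (b) the index of $M_R(\cF)^*$ inside its image under $\pr_{\cF}$ and the integral structure on $\Dcris(M_\fp)$, (c) the "Tamagawa-type" local factors at bad primes in Rubin's machine, and (d) the hypothesis $p \ge k+k'+3$, which via Dimitrov's multiplicity-one result ensures that the Hecke projector is integral and the pullback to $R$-coefficients is well-behaved. All of these contribute only bounded powers of $p$, so they can be absorbed into the single exponent $n$.
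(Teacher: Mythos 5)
Your proposal follows essentially the same route as the paper, which simply cites the Euler system machine of \cite[Theorem 11.6.4]{kingsloefflerzerbes15b} and notes that the restriction to inert primes is harmless because the Kolyvagin derivative construction only requires norm relations at primes $\ell$ whose Frobenius is conjugate to $\tau$, and $\tau$ maps to $\sigma \in \Gal(F/\QQ)$ forces any such $\ell$ to be inert. You make that same observation, and you also correctly identify the local vanishing result of \S 8.2 as the ingredient that puts $\loc_p(\cAF^{\cF}_{1,a})$ into the right local condition, and the Perrin--Riou map under (NEZ) as the link to ${}_cL^{\imp}_{\fp,\Asai}(\cF)$.

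One justification in your sketch is wrong, even though the step it supports is correct. You assert that the extra summand $\ell^j\sigma_\ell(\ell-1)\bigl(1-\ell^{-2j}\langle\ell^{-1}\rangle R'(\ell)\sigma_\ell^{-2}\bigr)$ in the norm relation ``becomes a unit in $\Lambda_\Gamma$ (up to a bounded power of $p$)'' after projecting to the $\cF$-eigenspace. It does not: the Kolyvagin primes $\ell$ are chosen so that $\ell\equiv 1 \pmod{p^r}$ with $r$ large, so $\ell-1$ is very far from a $p$-adic unit. The reason this term is harmless is the standard one from \cite{leiloefflerzerbes14a, kingsloefflerzerbes15b}: it is divisible by $\ell-1$, hence vanishes in the congruence modulo $\ell-1$ that the derivative-class construction actually uses; modulo $\ell-1$ the full Euler factor therefore agrees (up to the genuine unit $\ell^{1+j}\sigma_\ell$) with $-P_\ell'(\ell^{-1-j}\sigma_\ell^{-1})$, which is what Rubin's machinery needs.
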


   \begin{proof}
    This follows by exactly the same Euler system argument as in \cite[Theorem 11.6.4]{kingsloefflerzerbes15b}. (Note that the Euler system norm relations are only used for primes $\ell$ whose Frobenii act on $M_{L_v}(\cF)^*$ as a conjugate of $\tau$; all such primes $\ell$ are necessarily inert in $F$, because $\tau$ maps to $\sigma$ in $\Gal(F / \QQ)$. Hence the fact that we have not established the norm relations for all primes split in $F$ does not cause any trouble here.)
   \end{proof}

\renewcommand{\MR}[1]{MR \href{http://www.ams.org/mathscinet-getitem?mr=#1}{#1}.}
\providecommand{\bysame}{\leavevmode\hbox to3em{\hrulefill}\thinspace}
\providecommand{\MR}{\relax\ifhmode\unskip\space\fi MR }
\providecommand{\MRhref}[2]{%
  \href{http://www.ams.org/mathscinet-getitem?mr=#1}{#2}
}
\providecommand{\href}[2]{#2}

\end{document}